\theoremstyle:=definition,remark,plain\do{%
        \expandafter\g@addto@macro\csname th@\theoremstyle\endcsname{%
            \addtolength\thm@preskip\parskip
            }%
        }
\declaretheorem[name=Theorem,numberwithin=section]{thm}
\declaretheorem[name=Proposition,numberlike=thm]{prop}
\declaretheorem[name=Lemma,numberlike=thm]{lemma}
\declaretheorem[name=Corollary,numberlike=thm]{cor}
\declaretheorem[name=Definition,style=definition,qed=$\blacktriangle$,numberlike=thm]{defn}
\declaretheorem[name=Remark,style=definition,qed=$\blacktriangle$,numberlike=thm]{rmk}
\newcounter{commentCounter}
\newcommand{\bu}{\bullet}
\newcommand{\dd}{\mathrm{d}}
\newcommand{\cc}{\mathrm{c}}
\newcommand{\sta}{\star}
\newcommand{\cop}[1]{{#1}{}^{\cc}}
\newcommand{\dc}{\cop{\dd}}
\newcommand{\dop}[1]{{#1}{}^{\sta}}
\newcommand{\ds}{\dop{\dd}}
\newcommand{\ddc}{\dd \dc}
\newcommand{\vol}{\mathsf{vol}}
\newcommand{\G}{\mathrm{G}_2}
\newcommand{\Spin}[1]{\mathrm{Spin}(#1)}
\newcommand{\U}[1]{\mathrm{U}(#1)}
\newcommand{\SU}[1]{\mathrm{SU}(#1)}
\newcommand{\R}{\mathbb R}
\newcommand{\C}{\mathbb C}
\newcommand{\Z}{\mathbb Z}
\newcommand{\PR}{\mathbb P}
\newcommand{\ph}{\varphi}
\newcommand{\ps}{\psi}
\newcommand{\st}{\ast}
\newcommand{\hk}{\mathbin{\! \hbox{\vrule height0.3pt width5pt depth 0.2pt \vrule height5pt width0.4pt depth 0.2pt}}}
\newcommand{\tr}{\operatorname{Tr}}
\newcommand{\ddx}[1]{\frac{\del}{\del x^{#1}}}
\newcommand{\ddxs}[1]{\del_{#1}}
\newcommand{\dx}[1]{d x^{#1}}
\newcommand{\del}{\partial}
\newcommand{\delbar}{\overline{\partial}}
\newcommand{\DR}{\mathrm{dR}}
\newcommand{\im}{\operatorname{im}}
\newcommand{\curl}{\ensuremath{\operatorname{curl}}}
\newcommand{\grad}{\ensuremath{\operatorname{grad}}}
\newcommand{\dive}{\ensuremath{\operatorname{div}}}
\newcommand{\sym}{\ensuremath{S^2 (T^* M)}}
\newcommand{\symo}{\ensuremath{S^2_0 (T^* M)}}
\newcommand{\Sym}{\ensuremath{\mathcal{S}}}
\newcommand{\Symo}{\ensuremath{\mathcal{S}_0}}
\newcommand{\red}[1]{\textcolor{red}{#1}}
\newcommand{\blue}[1]{\textcolor{blue}{#1}}
\newcommand{\brown}[1]{\textcolor{brown}{#1}}
\newcommand{\magenta}[1]{\textcolor{magenta}{#1}}
\newcommand{\orange}[1]{\textcolor{orange}{#1}}
\newcommand{\cyan}[1]{\textcolor{cyan}{#1}}
\newcommand{\olive}[1]{\textcolor{olive}{#1}}
\newcommand{\teal}[1]{\textcolor{teal}{#1}}
\newcommand{\violet}[1]{\textcolor{violet}{#1}}
\newcommand{\gray}[1]{\textcolor{gray}{#1}}
\newcommand{\purple}[1]{\textcolor{purple}{#1}}
\let\c@equation\c@thm
\numberwithin{equation}{section}
\begin{document}

\title{The $\mathcal L_B$-cohomology on compact torsion-free $\G$~manifolds \\ and an application to `almost' formality}

\author{Ki Fung Chan \\ {\it Chinese University of Hong Kong} \\\tt{kifung@link.cuhk.edu.hk} \and Spiro Karigiannis \\ {\it Department of Pure Mathematics, University of Waterloo} \\ \tt{karigiannis@uwaterloo.ca} \and Chi Cheuk Tsang \\ {\it Chinese University of Hong Kong} \\ \tt{1155062770@link.cuhk.edu.hk} }

\date{January 18, 2018}

\maketitle

\begin{abstract}
We study a cohomology theory $H^{\bu}_{\ph}$, which we call the $\mathcal L_B$-cohomology, on compact torsion-free $\G$~manifolds. We show that $H^k_{\ph} \cong H^k_{\DR}$ for $k \neq 3, 4$, but that $H^k_{\ph}$ is infinite-dimensional for $k = 3,4$. Nevertheless there is a canonical injection $H^k_{\DR} \to H^k_{\ph}$. The $\mathcal L_B$-cohomology also satisfies a Poincar\'e duality induced by the Hodge star. The establishment of these results requires a delicate analysis of the interplay between the exterior derivative $\dd$ and the derivation $\mathcal L_B$, and uses both Hodge theory and the special properties of $\G$-structures in an essential way. As an application of our results, we prove that compact torsion-free $\G$~manifolds are `almost formal' in the sense that most of the Massey triple products necessarily must vanish.
\end{abstract}

\tableofcontents

\section{Introduction} \label{sec:intro}

Let $(M, \ph)$ be a manifold with $\G$-structure. Here $\ph$ is a smooth $3$-form on $M$ that is \emph{nondegenerate} in a certain sense that determines a Riemannian metric $g$ and a volume form $\vol$, hence a dual $4$-form $\ps$. We say that $(M, \ph)$ is a \emph{torsion-free} $\G$~manifold if $\nabla \ph = 0$. Note that this implies that $\nabla \ps = \dd \ph = \dd \ps = 0$ as well. In fact, it is now a classical result~\cite{FG} that the pair of conditions $\dd \ph = \dd \ps = 0$ are actually equivalent to $\nabla \ph = 0$.

The forms $\ph$ and $\ps$ can be used to construct a vector-valued $2$-form $B$ and a vector-valued $3$-form $K$, respectively, by raising an index using the metric. These vector-valued forms were studied in detail by Kawai--L\^e--Schwachh\"ofer in~\cite{KLS1} in the context of the Fr\"olicher--Nijenhuis bracket.

These vector-valued forms $B$ and $K$ induce \emph{derivations} $\mathcal L_B$ and $\mathcal L_K$ on the space $\Omega^{\bu}$ of forms on $M$, of degree $2$ and $3$, respectively. From these derivations we can define \emph{cohomology theories}. We call these the $\mathcal{L}_B$-cohomology, denoted $H^{\bu}_{\ph}$, and the $\mathcal{L}_K$-cohomology, denoted $H^{\bu}_{\ps}$. When $M$ is compact, the $\mathcal L_K$-cohomology was studied extensively by Kawai--L\^e--Schwachh\"ofer in~\cite{KLS2}. In the present paper we study in detail the $\mathcal L_B$-cohomology when $M$ is compact. Specifically, we compute $H^k_{\ph}$ for all $k$. The results are summarized in Theorem~\ref{thm:Hph}, which we restate here:

{\bf Theorem~\ref{thm:Hph}.} {\em The following relations hold.
\begin{itemize} \setlength\itemsep{-1mm}
\item $H^k_{\ph} \cong H^k_{dR}$ for $k=0,1,2,5,6,7$.
\item $H^k_{\ph}$ is infinite-dimensional for $k = 3,4$.
\item There is a canonical injection $\mathcal{H}^k \hookrightarrow H^k_{\ph}$ for all $k$.
\item The Hodge star induces isomorphisms $\st: H^k_{\ph} \cong H^{7-k}_{\ph}$.
\end{itemize}
}

The proof involves a very delicate analysis of the interplay between the exterior derivative $\dd$ and the derivation induced by $B$, and uses Hodge theory in an essential way.

As an application of our results, we study the question of \emph{formality} of compact torsion-free $\G$~manifolds. This is a longstanding open problem. It has been studied by many authors, including Cavalcanti~\cite{Cavalcanti-ddc}. In particular, the paper~\cite{Verbitsky} by Verbitsky has very close connections to the present paper. What is called $\dd_c$ in~\cite{Verbitsky} is $\mathcal L_B$ in the present paper. Verbitsky's paper contains many excellent ideas. Unfortunately, there are some gaps in several of the proofs in~\cite{Verbitsky}. Most important for us, there is a gap in the proof of~\cite[Proposition 2.19]{Verbitsky}, which is also used to prove~\cite[Proposition 2.20]{Verbitsky}, among several other results in~\cite{Verbitsky}. We give a different proof of this result, which is our Proposition~\ref{prop:quasi-isom}. We then use this to prove our Theorem~\ref{thm:almost-formal}, which essentially says that a compact torsion-free $\G$~manifold is `almost formal' in the sense that its de Rham complex is equivalent to a differential graded algebra with all differentials trivial except one.

A consequence of our Theorem~\ref{thm:almost-formal} is that almost all of the Massey triple products vanish on a compact torsion-free $\G$~manifold. This gives a new topological obstruction to the existence of torsion-free $\G$-structures on compact manifolds. The precise statement is the following:

{\bf Corollary~\ref{cor:Massey}.} {\em Let $M$ be a compact torsion-free $\G$~manifold. Consider cohomology classes $[\alpha]$, $[\beta]$, and $[\gamma] \in H^{\bu}_{\DR}$. If the Massey triple product $\langle [\alpha], [\beta], [\gamma] \rangle$ is defined and we have $|\alpha| + |\beta| \neq 4$ and $|\beta| + |\gamma| \neq 4$, then $\langle [\alpha], [\beta], [\gamma] \rangle = 0$.
}

We also prove the following stronger result in the case of full holonomy $\G$ (the ``irreducible'' case):

{\bf Theorem~\ref{thm:irreducibleMassey}.} {\em Let $M$ be a compact torsion-free $\G$~manifold with full holonomy $\G$, and consider cohomology classes $[\alpha]$, $[\beta]$, and $[\gamma] \in H^{\bu}_{\DR}$. If the Massey triple product $\langle [\alpha], [\beta], [\gamma] \rangle$ is defined, then $\langle [\alpha], [\beta], [\gamma] \rangle = 0$ except possibly in the case when $|\alpha| = |\beta| = |\gamma| = 2$.
}

The Massey triple products on a compact torsion-free $\G$~manifold are not discussed in~\cite{Verbitsky}.

{\bf Organization of the paper.} In the rest of this section, we discuss the domains of validity of the various results in this paper in Remark~\ref{rmk:when-torsion-free}, then we consider notation and conventions, and conclude with the statement of a trivial result from linear algebra that we use frequently.

Section~\ref{sec:main} is the heart of the paper, where we establish the various relations between the derivations $\dd$, $\iota_B$, $\iota_B$, $\mathcal L_B$, and $\mathcal L_K$. We begin with a brief summary of known facts about $\G$-structures that we will need in Section~\ref{sec:forms}. In Section~\ref{sec:dLap} we study the operators $\dd$ and $\Delta$ in detail. Some of the key results are Proposition~\ref{prop:dfigure}, which establishes Figure~\ref{figure:d}, and Corollary~\ref{cor:d-relations} and Proposition~\ref{prop:Laplacian} which establish second order differential identities. These have appeared before (without proof) in a paper of Bryant~\cite[Section 5.2]{Bryant}. But see Remark~\ref{rmk:Bryant}. A new and crucial result in Section~\ref{sec:dLap} is Theorem~\ref{thm:harmonic1} which relates the kernels of various operators on $\Omega^1$. In Section~\ref{sec:LBLK} we introduce the derivations $\iota_B$, $\iota_K$, $\mathcal L_B$, and $\mathcal L_K$ and study their basic properties. One of the highlights is Corollary~\ref{cor:LBLKfigures}, which establishes Figures~\ref{figure:LB} and~\ref{figure:LK}.

In Section~\ref{sec:cohom} we study and compute the $\mathcal L_B$-cohomology $H^{\bu}_{\ph}$ of a compact torsion-free $\G$~manifold. We use heavily both the results of Section~\ref{sec:main} and Hodge theory. This section culminates with the proof of Theorem~\ref{thm:Hph}. Then in Section~\ref{sec:formality} we apply the results of Section~\ref{sec:cohom} to study the Massey triple products of compact torsion-free $\G$~manifolds.

\begin{rmk} \label{rmk:when-torsion-free}
We summarize here the domains of validity of the various sections of the paper.
\begin{itemize} \setlength\itemsep{-1mm}
\item All results of Section~\ref{sec:forms} except the last one (Proposition~\ref{prop:Liederiv}), are valid for any $\G$-structure.
\item Proposition~\ref{prop:Liederiv} as well as \emph{the entirety of Section~\ref{sec:dLap}}, assume that $(M, \ph)$ is torsion-free.
\item In Section~\ref{sec:LBLK}, the results that only involve the algebraic derivations $\iota_B$ and $\iota_K$, up to and including Proposition~\ref{prop:iotaKfigure}, are valid for any $\G$-structure.
\item The rest of Section~\ref{sec:LBLK}, beginning with Corollary~\ref{cor:LBLKfigures}, uses the results of Section~\ref{sec:dLap} heavily and is only valid in the torsion-free setting.
\item The cohomology theories introduced in Section~\ref{sec:cohom-defn} make sense on any torsion-free $\G$~manifold. However, beginning in Section~\ref{sec:computeHph0123} and for the rest of the paper, we assume that $(M, \ph)$ is a \emph{compact} torsion-free $\G$~manifold, as we use Hodge theory throughout.
\end{itemize}
\end{rmk}

{\bf Notation and conventions.} We mostly follow the notation and conventions of~\cite{K-flows}, and we point out explicitly whenever our notation differs significantly. Let $(M, g)$ be an oriented smooth Riemannian $7$-manifold. Let $\{ e_1 , \ldots, e_7 \}$ be a local frame for $TM$ with dual coframe $\{ e^1, \ldots, e^7 \}$. It can be a local coordinate frame $\{ \ddx{1}, \ldots, \ddx{7} \}$ with dual coframe $\{ \dx{1}, \ldots, \dx{7} \}$ but this is not necessary. Note that the metric dual $1$-form of $e_i$ is $(e_i)^{\flat} = g_{ij} e^j$.

We employ the Einstein summation convention throughout. We write $\Lambda^k$ for the bundle $\Lambda^k (T^* M)$ and $\Omega^k$ for its space of smooth sections $\Gamma (\Lambda^k (T^* M))$. Then $\Lambda^{\bu} = \oplus_{k=1}^n \Lambda^k$ is the exterior algebra of $T^*M$ and $\Omega^{\bu} = \oplus_{k=0}^n \Omega^k$ is the space of smooth differential forms on $M$. Similarly, we use $\sym$ to denote the second symmetric power of $T^* M$, and $\Sym = \Gamma(\sym)$ to denote the space of smooth symmetric $2$-tensors on $M$. 

The Levi-Civita covariant derivative of $g$ is denoted by $\nabla$. Let $\nabla_p = \nabla_{e_p}$. The exterior derivative $d\alpha$ of a $k$-form $\alpha$ can be
written in terms of $\nabla$ as
\begin{equation} \label{eq:dd}
\begin{aligned}
\dd \alpha & = e^p \wedge \nabla_p \alpha, \\
(\dd \alpha)_{i_1 i_1 \cdots i_{k+1}} & = \sum_{j=1}^{k+1} \nabla_{i_j} \alpha_{i_1 \cdots \hat{i_j} \cdots i_k}.
\end{aligned}
\end{equation}
The adjoint $d^{\st}$ of $\dd$ with respect to $g$ satisfies $\ds = (-1)^k \st \dd \st$ on $\Omega^k$. It can be written in terms of $\nabla$ as
\begin{equation} \label{eq:ds}
\begin{aligned}
\ds \alpha & = - g^{pq} e_p \hk \nabla_q \alpha, \\
(\ds \alpha)_{i_1 \cdots i_{k-1}} & = - g^{pq} \nabla_p \alpha_{q i_1 \cdots i_{k-1}}.
\end{aligned}
\end{equation}

An element $h \in \Sym$ can be decomposed as $h = \tfrac{\tr_g h}{7} g + h^0$, where $\tr_g h = g^{ij} h_{ij}$ is the trace, and $h^0$ is the \emph{trace-free} component of $h$, which is orthogonal to $g$. We use $\symo$ to denote the bundle whose sections $\Symo = \Gamma(\symo)$ are the trace-free symmetric $2$-tensors. Finally, if $X$ is a vector field on $M$, we denote by $X^{\flat}$ the $1$-form metric dual to $X$ with respect to the metric $g$. Sometimes we abuse notation and write $X^{\flat}$ as simply $X$ when there is no danger of confusion.

We write $H^k_{\DR}$ for the $k^{\text{th}}$ de Rham cohomology over $\R$ and $\mathcal H^k$ for the space of harmonic $k$-forms. If $[\alpha]$ is a cohomology class, then $|\alpha|$ denotes the degree of any of its representative differential forms. That is, if $[\alpha] \in H^k_{\DR}$, then $|\alpha| = k$.

We use $C^{\bu}$ to denote a $\Z$-graded complex of real vector spaces. A degree $k$ map $P$ of the complex $C^{\bu}$ maps $C^i$ into $C^{i+k}$, and we write
\begin{equation} \label{eq:complexes}
\begin{aligned}
(\ker P)^i & = \ker (P : C^i \to C^{i+k}), \\
(\im P)^i & = \im (P : C^{i-k} \to C^i).
\end{aligned}
\end{equation}

\begin{lemma} \label{lemma:linalg}
We state two trivial results from linear algebra that we use several times in Section~\ref{sec:cohom}.
\begin{enumerate}[(i)]
\item Let $V \subseteq U \subseteq (V \oplus W)$ be nested subspaces. Then $U = V \oplus (W \cap U)$.
\item Let $U = A \oplus B \oplus C$ be a direct sum decomposition of a vector space into complementary subspaces $A, B, C$. Let $V, W$ be subspaces of $U$ such that $V = A' \oplus B' \oplus C'$ and $W = A'' \oplus B'' \oplus C''$ where $A',A''$ are subspaces of $A$, and $B',B''$ are subspaces of $B$, and $C',C''$ are subspaces of $C$. Then $V \cap W = (A' \cap A'') \oplus (B' \cap B'') \oplus (C' \cap C'')$.
\end{enumerate}
\end{lemma}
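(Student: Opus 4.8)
The plan is to prove both parts directly from the definitions of direct sum and intersection; no machinery is needed, and each argument is only a few lines.

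For part (i), I would first verify that the sum $V + (W \cap U)$ is direct: any vector in $V \cap (W \cap U)$ lies in $V \cap W$, which is $\{0\}$ since $V \oplus W$ is a direct sum by hypothesis. The inclusion $V \oplus (W \cap U) \subseteq U$ is then immediate from $V \subseteq U$ and $W \cap U \subseteq U$. For the reverse inclusion, take an arbitrary $u \in U$; since $U \subseteq V \oplus W$, write $u = v + w$ with $v \in V$ and $w \in W$, and observe that $w = u - v$ lies in $U$ because both $u$ and $v$ do, so $w \in W \cap U$. Hence $u \in V + (W \cap U)$, completing the proof of $U = V \oplus (W \cap U)$.

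For part (ii), I would first note that $(A' \cap A'') \oplus (B' \cap B'') \oplus (C' \cap C'')$ really is a direct sum, because the three summands lie inside $A$, $B$, $C$ respectively and $A \oplus B \oplus C$ is direct. The inclusion of this space into $V \cap W$ follows summand by summand: $A' \cap A'' \subseteq A' \subseteq V$ and $A' \cap A'' \subseteq A'' \subseteq W$, and similarly for the $B$ and $C$ pieces. For the reverse inclusion, take $x \in V \cap W$ and expand it two ways — once using $V = A' \oplus B' \oplus C'$ and once using $W = A'' \oplus B'' \oplus C''$. Since both expansions are also decompositions of $x$ with respect to $U = A \oplus B \oplus C$, uniqueness forces the $A$-components to coincide (hence to lie in $A' \cap A''$), and likewise for the $B$- and $C$-components. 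Therefore $x$ lies in the claimed direct sum.

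There is no genuine obstacle; the only point requiring care is to invoke the hypothesis that $V \oplus W$ (respectively $A \oplus B \oplus C$) is a direct sum at exactly the right moments — to conclude triviality of the relevant intersection in (i) and uniqueness of the component decomposition in (ii).
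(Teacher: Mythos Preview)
Your proof is correct and is exactly the standard argument one would give. The paper itself does not prove this lemma at all; it merely states the two results as ``trivial'' and moves on, so there is nothing to compare against.
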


{\bf Acknowledgments.} These results were obtained in 2017 as part of the collaboration between the COSINE program organized by the Chinese University of Hong Kong and the URA program organized by the University of Waterloo. The authors thank both universities for this opportunity. Part of the writing was done while the second author held a Fields Research Fellowship at the Fields Institute. The second author thanks the Fields Institute for their hospitality. The authors also thank the anonymous referee for pointing out that we had actually also established Theorem~\ref{thm:irreducibleMassey}, which is the stronger version of Corollary~\ref{cor:Massey} in the case of full $\G$ holonomy.

\section{Natural derivations on torsion-free $\G$~manifolds} \label{sec:main}

We first review some facts about torsion-free $\G$~manifolds and the decomposition of the exterior derivative $\dd$. Then we define two derivations on $\Omega^{\bu} $ and discuss their properties.

\subsection{$\G$-structures and the decomposition of $\Omega^{\bu}$} \label{sec:forms}

Let $(M^7, \ph)$ be a manifold with a $\G$-structure. Here $\ph$ is the positive $3$-form associated to the $\G$-structure, and we use $\ps$ to denote the dual $4$-form $\ps = \st \ph$ with respect to the metric $g$ induced by $\ph$. We will use the sign/orientation convention for $\G$-structures of~\cite{K-flows}. In this section we collect some facts about $\G$-structures, taken from~\cite{K-flows}, that we will need. We recall the fundamental relation between $\ph$ and $g$, which allows one to extract the metric from the $3$-form. This is:
\begin{equation} \label{eq:fund-eq}
(X \hk \ph) \wedge (Y \hk \ph) \wedge \ph = - 6 g(X, Y) \vol.
\end{equation}

\begin{lemma} \label{lemma:identities}
The tensors $g$, $\ph$, $\ps$ satisfy the following contraction identities in a local frame:
\begin{align*}
\ph_{ijk} \ph_{abc} g^{kc} & = g_{ia} g_{jb} - g_{ib} g_{ja} - \ps_{ijab}, \allowdisplaybreaks\\
\ph_{ijk} \ph_{abc} g^{jb} g^{kc} & = 6 g_{ia}, \allowdisplaybreaks\\
\ph_{ijk} \ph_{abc} g^{ia} g^{jb} g^{kc} & = 42, \allowdisplaybreaks\\
 \ph_{ijk} \ps_{abcd} g^{kd} & = g_{ia} \ph_{jbc} + g_{ib} \ph_{ajc} + g_{ic} \ph_{abj} - g_{aj} \ph_{ibc} - g_{bj} \ph_{aic} - g_{cj} \ph_{abi}, \allowdisplaybreaks\\
\ph_{ijk} \ps_{abcd} g^{jc} g^{kd} & = - 4 \ph_{iab}, \allowdisplaybreaks\\
\ph_{ijk} \ps_{abcd} g^{ib} g^{jc} g^{kd} & = 0, \allowdisplaybreaks\\
\ps_{ijkl} \ps_{abcd} g^{ld} & = -\ph_{ajk} \ph_{ibc} - \ph_{iak} \ph_{jbc} - \ph_{ija} \ph_{kbc} \allowdisplaybreaks\\
& \qquad {} + g_{ia} g_{jb} g_{kc} + g_{ib} g_{jc} g_{ka} + g_{ic} g_{ja} g_{kb} - g_{ia} g_{jc} g_{kb} - g_{ib} g_{ja} g_{kc} - g_{ic} g_{jb} g_{ka} \allowdisplaybreaks\\
& \qquad {} -g_{ia} \ps_{jkbc} - g_{ja} \ps_{kibc} - g_{ka} \ps_{ijbc} + g_{ab} \ps_{ijkc} - g_{ac} \ps_{ijkb}, \allowdisplaybreaks\\
\ps_{ijkl} \ps_{abcd} g^{kc} g^{ld} & = 4 g_{ia} g_{jb} - 4 g_{ib} g_{ja} - 2 \ps_{ijab}, \allowdisplaybreaks\\
\ps_{ijkl} \ps_{abcd} g^{jb} g^{kc} g^{ld} & = 24 g_{ia}, \allowdisplaybreaks\\
\ps_{ijkl} \ps_{abcd} g^{ia} g^{jb} g^{kc} g^{ld} & = 168.
\end{align*}
\end{lemma}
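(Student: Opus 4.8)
The plan is to derive all eleven contraction identities systematically from the single fundamental relation \eqref{eq:fund-eq} together with the normalization of $\vol$, reducing everything to pointwise linear algebra at a single point. First I would invoke the fact that the group $\G$ acts transitively on the set of admissible $3$-forms, so it suffices to verify each identity at one point in one standard frame: the point where $\ph$ takes its model form $\ph_0 = e^{123} + e^{145} + e^{167} + e^{246} - e^{257} - e^{347} - e^{356}$ (in the sign conventions of~\cite{K-flows}) with $g$ the Euclidean metric, so that $\ps_0 = \st\ph_0$ is the standard $4$-form and $g_{ij} = \dl_{ij}$. Since all quantities in each asserted identity are polynomial in the components of $g$, $\ph$, $\ps$, $g^{-1}$ and are $\G$-equivariant (indeed $\SO 7$-equivariant when written abstractly), checking them in this one frame establishes them in every frame on any $\G$-manifold.

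The key steps, in order, are as follows. (1) Record the model components of $\ph_0$ and $\ps_0$ explicitly; this is a finite list of $\pm 1$ entries. (2) Establish the first identity $\ph_{ijk}\ph_{abc}g^{kc} = g_{ia}g_{jb} - g_{ib}g_{ja} - \ps_{ijab}$ by a direct but careful index bookkeeping in the model frame: fix $i,j,a,b$ and sum over the seven values of $k=c$, comparing with the right-hand side case by case (the cases $\{i,j\}=\{a,b\}$, $|\{i,j\}\cap\{a,b\}|=1$, and $\{i,j\}\cap\{a,b\}=\emptyset$ behave differently, and the antisymmetrizing role of $\ps$ appears precisely in the last). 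Alternatively, one can obtain this identity invariantly by polarizing \eqref{eq:fund-eq}: writing $X = e_i$, $Y = e_j$ and expanding $(e_i\hk\ph)\wedge(e_j\hk\ph)\wedge\ph$ in components yields a contraction of three $\ph$'s which, after using the known decomposition $\Lambda^2 = \Lambda^2_7 \oplus \Lambda^2_{14}$ and $\Lambda^4 = \st\Lambda^3$, rearranges into the stated formula; I expect the index-chasing route to be cleaner here. (3) Contract the first identity with $g^{jb}$ and then additionally with $g^{ia}$ to get the second and third identities, using $g^{ij}g_{ij} = 7$, the antisymmetry $\ph_{iik} = 0$, and $\ps_{ijab}g^{jb} = 0$ (which itself follows from $\ph\wedge\ps$ being a multiple of the volume form, or from $\Lambda^2_7 \perp$ trace, depending on how one sets it up). (4) Derive the fourth identity, $\ph_{ijk}\ps_{abcd}g^{kd}$, either by differentiating/polarizing the relation $\ps = \st\ph$ in the model frame, or — more efficiently — by applying $\st$ to the first identity and tracking how the Hodge star converts a $g^{kc}$-contraction of two $\ph$'s into a $g^{kd}$-contraction of $\ph$ with $\ps$; the fully antisymmetric right-hand side with its six terms is forced by the antisymmetry of $\ps$ in $abcd$. (5) Contract the fourth identity successively with $g^{jc}$, then also $g^{ib}$, to obtain the fifth and sixth identities (the sixth vanishing because it is the contraction of something antisymmetric in $i,b$ with something that forces symmetry, or equivalently because $\ph$ has no $\Lambda^1$-component). (6) Finally, for the $\ps\ps$ identities, use $\ps_{ijkl}\ps_{abcd}g^{ld}$: the cleanest derivation is $\ps_{ijkl}\ps_{abcd}g^{ld} = (\st\ph)_{ijkl}(\st\ph)_{abcd}g^{ld}$, re-express via the first and fourth identities already proven (this is the standard "$\ps\ps = \ph\ph + gg + g\ps$" type reduction), or simply verify it in the model frame directly; then contract with $g^{kc}$, $g^{jb}$, $g^{ia}$ in turn to read off the remaining three identities, with the numerical constants $4, -2, 24, 168$ emerging from the combinatorics of how many index-patterns survive (note $168 = 24\cdot 7$ and $42 = 6\cdot 7$ are the expected traces, since $|\ph|^2$ and $|\ps|^2$ in an orthonormal frame count $7$ and $7$ monomials with multiplicities).

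The main obstacle I anticipate is purely organizational rather than conceptual: the first and fourth identities are the genuinely substantive ones, and proving them requires either a patient and error-prone case analysis over index patterns in the model frame, or a clean invariant argument that correctly identifies which $\G$-irreducible summand each term lives in. I would put the real effort into a careful, once-and-for-all verification of the first identity — every other identity in the list is an algebraic consequence obtained by contracting with the inverse metric and applying the Hodge star, using only the bookkeeping facts $g^{ij}g_{ij}=7$, $\ph_{ijk}$ totally antisymmetric, $\ps_{ijkl}$ totally antisymmetric, $\st\st = \mathrm{id}$ on this oriented $7$-manifold, and $\ps_{ijkl}g^{kl}=0$. Consequently I would present the first identity with full detail and then treat identities two through eleven as a sequence of short contraction computations, remarking only on the sign and combinatorial factors at each stage rather than rewriting every index sum.
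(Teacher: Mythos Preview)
Your proposal is correct and follows the standard route. Note, however, that the paper does not actually prove Lemma~\ref{lemma:identities} inline: its entire proof is the sentence ``This is proved in Lemmas A.12, A.13, and A.14 of~\cite{K-flows}.'' So there is no detailed argument in the paper to compare against; your plan is essentially a reconstruction of what that cited reference does, namely reduce to the pointwise model $(\R^7, \ph_0, g_0)$, verify the ``hard'' identities (the single-contraction ones: your first, fourth, and seventh) by direct computation or octonionic algebra, and obtain all the others by tracing with $g^{-1}$.

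One small caution: your step (6) asserts that the long $\ps\ps$ identity (the seventh) can be obtained from the first and fourth by a ``$\ps\ps = \ph\ph + gg + g\ps$'' reduction. This is true in principle --- substituting $\ps_{ijab} = g_{ia}g_{jb} - g_{ib}g_{ja} - \ph_{ijk}\ph_{abc}g^{kc}$ from the first identity and then repeatedly applying the first and fourth identities does close up --- but the bookkeeping is substantially messier than you suggest, and in practice the cited reference~\cite{K-flows} handles this identity by a separate direct argument rather than as a pure corollary of the earlier ones. If you carry out your plan, budget real effort for the seventh identity rather than treating it as a quick contraction; the remaining eight identities are indeed immediate traces of the first, fourth, and seventh, exactly as you say.
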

\begin{proof}
This is proved in Lemmas A.12, A.13, and A.14 of~\cite{K-flows}.
\end{proof}

For $k = 0, \ldots, 7, $ the bundle $\Lambda^k := \Lambda^k (T^* M)$ decomposes as follows:
\begin{equation} \label{eq:bundle-decomp}
\begingroup
\renewcommand*{\arraystretch}{1.2}
\begin{matrix}
\Lambda^0 & = & \Lambda^0_1, \\
\Lambda^1 & = & & & \Lambda^1_7, \\
\Lambda^2 & = & & & \Lambda^2_7 & \oplus & \Lambda^2_{14}, \\
\Lambda^3 & = & \Lambda^3_1 & \oplus & \Lambda^3_7 & & & \oplus & \Lambda^3_{27}, \\
\Lambda^4 & = & \Lambda^4_1 & \oplus & \Lambda^4_7 & & & \oplus & \Lambda^4_{27}, \\
\Lambda^5 & = & & & \Lambda^5_7 & \oplus & \Lambda^5_{14}, \\
\Lambda^6 & = & & & \Lambda^6_7, \\
\Lambda^7 & = & \Lambda^7_1.
\end{matrix}
\endgroup
\end{equation}
Here $\Lambda^k_l$ is a rank $l$ subbundle of $\Lambda^k$, and the decomposition is orthogonal with respect to $g$. Moreover, we have $\Lambda^{7-k}_l = \st \Lambda^k_l$. In fact there are isomorphisms $\Lambda^k_l \cong \Lambda^{k'}_l$, so the bundles in the same vertical column of~\eqref{eq:bundle-decomp} are all isomorphic. Moreover, the Hodge star $\st$ and the operations of wedge product with $\ph$ or with $\ps$ all commute with the projections $\pi_l$ for $l = 1, 7, 14, 27$.

We will denote by $\Omega^k_l$ the space of smooth sections of $\Lambda^k_l$. The isomorphisms $\Lambda^k_l \cong \Lambda^{k'}_l$ induce isomorphisms $\Omega^k_l \cong \Omega^{k'}_l$. The descriptions of the $\Omega^k_l$ and the particular identifications that we choose to use in this paper are given explicitly as follows:

\begin{equation} \label{eq:forms-isom}
\begin{aligned}
\Omega^0_1 & = C^{\infty} (M), \\
\Omega^1_7 & = \Gamma (T^* M) \cong \Gamma (TM), \\
\Omega^2_7 & = \{ X \hk \ph : \, X \in \Gamma(TM) \} \cong \Omega^1_7 \\
\Omega^2_{14} & = \{ \beta \in \Omega^2 : \, \beta \wedge \ps = 0 \} = \{ \beta \in \Omega^2 : \, \beta_{pq} g^{pi} q^{qj} \ph_{ijk} = 0 \}, \\
\Omega^3_1 & = \{ f \ph : \, f \in C^{\infty}(M) \} \cong \Omega^0_1, \\
\Omega^3_7 & = \{ X \hk \ps : \, X \in \Gamma(TM) \} \cong \Omega^1_7, \\
\Omega^3_{27} & = \{ \beta \in \Omega^3 : \, \beta \wedge \ph = 0 \text{ and } \beta \wedge \ps = 0 \} = \{ h_{ip} g^{pk} \dx{i} \wedge (\ddxs{k} \hk \ph) : \, h \in \Symo \} \\
\Omega^k_l & = \{ \st \beta: \, \beta \in \Omega^{7-k}_l \}, \quad \text{for $k = 4, 5, 6, 7$}.
\end{aligned}
\end{equation}

\begin{rmk} \label{rmk:not-isom}
We emphasize that the particular identifications we have chosen in~\eqref{eq:forms-isom} are \emph{not isometric}. Making them isometric identifications would require introducing irrational constant factors but this will not be necessary. See also Remark~\ref{rmk:adjoints}.
\end{rmk}

We will denote by $\pi^k_l$ the orthogonal projection $\pi^k_l : \Omega^k \to \Omega^k_l$. We note for future reference that $\beta \in \Omega^3_1 \oplus \Omega^3_{27}$ if and only if $\beta \perp (X \hk \ps)$ for all $X$, and $\beta \in \Omega^3_7 \oplus \Omega^3_{27}$ if and only if $\beta \perp \ph$. In a local frame these observations are
\begin{equation} \label{eq:omega3stuff}
\begin{aligned}
\beta \in \Omega^3_1 \oplus \Omega^3_{27} \, & \longleftrightarrow \, \beta_{ijk} g^{ia} g^{jb} g^{kc} \ps_{abcd} = 0, \\
\beta \in \Omega^3_7 \oplus \Omega^3_{27} \, & \longleftrightarrow \, \beta_{ijk} g^{ia} g^{jb} g^{kc} \ph_{abc} = 0.
\end{aligned}
\end{equation}
Similarly we have that $\gamma \in \Omega^4_1 \oplus \Omega^4_{27}$ if and only if $\gamma \perp (\ph \wedge X)$ for all $X$, and $\gamma \in \Omega^4_7 \oplus \Omega^4_{27}$ if and only if $\gamma \perp \ps$. In a local frame these observations are
\begin{equation} \label{eq:omega4stuff}
\begin{aligned}
\gamma \in \Omega^4_1 \oplus \Omega^4_{27} \, & \longleftrightarrow \, \gamma_{ijkl} g^{ia} g^{jb} g^{kc} \ph_{abc} = 0, \\
\gamma \in \Omega^4_7 \oplus \Omega^4_{27} \, & \longleftrightarrow \, \gamma_{ijkl} g^{ia} g^{jb} g^{kc} g^{ld} \ps_{abcd} = 0.
\end{aligned}
\end{equation}

\begin{lemma} \label{lemma:identities2}
The following identities hold:
\begin{align*}
\st (\ph \wedge X^{\flat}) & = X \hk \ps, & \st ( \ps \wedge X^{\flat}) & = X \hk \ph, \\
\ps \wedge \st (\ph \wedge X^{\flat}) & = 0, & \ph \wedge \st (\ps \wedge X^{\flat}) & = -2 \ps \wedge X^{\flat}, \\
\ph \wedge ( X \hk \ph) & = -2 \st (X \hk \ph), & \ps \wedge (X \hk \ph) & = 3 \st X^{\flat}, \\
\ph \wedge (X \hk \ps) & = -4 \st X^{\flat}, & \ps \wedge ( X \hk \ps) & = 0.
\end{align*}
\end{lemma}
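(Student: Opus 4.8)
`\textbf{Proof proposal for Lemma~\ref{lemma:identities2}.}

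The plan is to prove all eight identities by reducing each one to the contraction identities of Lemma~\ref{lemma:identities}, together with the two basic facts $\st\st = \mathrm{id}$ on $\Omega^k$ for $7$-manifolds and the defining relation $\ps = \st\ph$. The strategy throughout is: express wedge products and interior products of $\ph,\ps,X^\flat$ in a local frame using the Einstein convention, apply the appropriate contraction identity from Lemma~\ref{lemma:identities} to collapse the $\ph\ph$, $\ph\ps$, or $\ps\ps$ terms, and recognize the result as a multiple of one of the standard forms $X\hk\ph$, $X\hk\ps$, $\st X^\flat$, or $\st(X\hk\ph)$. In several cases it is cleaner to first establish the identities in the left column and then obtain the right column by applying $\st$ and using $\st(X\hk\ph)$, $\st(X\hk\ps)$ type relations derived along the way.

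First I would establish the top row. The identity $\st(\ph\wedge X^\flat) = X\hk\ps$: since $\ph\in\Lambda^3_1$ and $X^\flat\in\Lambda^1_7$, the wedge $\ph\wedge X^\flat$ lies in $\Lambda^4_7$, and $\Lambda^4_7 = \{Y\hk\ps\}$ under $\st$ corresponds to $\Lambda^3_7 = \{Y\hk\ps\}$ — one pins down the constant by contracting both sides against a test form, or by a direct index computation using $\st(e^{i_1}\wedge\cdots\wedge e^{i_4})$ and the component formula $\ps_{abcd}$. The companion $\st(\ps\wedge X^\flat) = X\hk\ph$ then follows by applying $\st$ to the first (using $\st\st=1$ and $\st\ph=\ps$, $\st\ps=\ph$). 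Next, the third row: $\ph\wedge(X\hk\ph)$ is a $5$-form; writing $X\hk\ph$ in components and wedging with $\ph$, the relevant contraction is $\ph_{ijk}\ph_{abc}g^{kc}$ from the first identity of Lemma~\ref{lemma:identities}, which produces $g g - g g - \ps$ terms; the metric terms recombine into a multiple of $\st(X\hk\ph)$ and the $\ps$ term drops or contributes, yielding the factor $-2$. Similarly $\ps\wedge(X\hk\ph)$ and $\ph\wedge(X\hk\ps)$ are handled with the mixed identities $\ph_{ijk}\ps_{abcd}g^{kd}$ and its contractions, landing on multiples of $\st X^\flat$; here I expect to use Lemma~\ref{lemma:identities2}'s own top row to rewrite $\st X^\flat$ in terms of $\ph\wedge$ or $\ps\wedge$ of lower-degree pieces, or conversely. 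The identity $\ps\wedge(X\hk\ps) = 0$ is a degree count combined with $\Lambda^8 = 0$ — actually $\ps\wedge(X\hk\ps)$ is a $7$-form, so this needs the contraction $\ph_{ijk}\ps_{abcd}g^{ib}g^{jc}g^{kd}=0$ or the analogous $\ps\ps$ triple contraction to see it vanishes.

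For the middle row, $\ps\wedge\st(\ph\wedge X^\flat) = \ps\wedge(X\hk\ps)$ by the top row, which is $0$ by the bottom row — so this is immediate once the other rows are done, and I would present it that way. Finally $\ph\wedge\st(\ps\wedge X^\flat) = \ph\wedge(X\hk\ph)$ by the top row, and then by the third row this equals $-2\st(X\hk\ph) = -2\st\st(\ph\wedge X^\flat) \cdot(\text{sign})$... more carefully, $-2\st(X\hk\ph)$, and since $X\hk\ph = \st(\ps\wedge X^\flat)$ we get $-2\st\st(\ps\wedge X^\flat) = -2\,\ps\wedge X^\flat$, as claimed. The main obstacle I anticipate is \emph{bookkeeping of the numerical constants and signs}: the identifications in~\eqref{eq:forms-isom} are deliberately non-isometric (Remark~\ref{rmk:not-isom}), the Hodge star on a $7$-manifold introduces $(-1)^{k(7-k)}$ signs, and the sign/orientation convention of~\cite{K-flows} must be tracked consistently; a single contraction can spawn six metric terms (as in the fourth and seventh identities of Lemma~\ref{lemma:identities}) whose antisymmetrization must be done with care. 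I would therefore fix one explicit local frame in which $\ph$ has its standard form, verify the constants $-2, 3, -4$ there by a finite computation, and then invoke equivariance under $\G$ (all objects being $\G$-invariant) to conclude the identities hold in general.

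\begin{proof}[Proof of Lemma~\ref{lemma:identities2}]
\textbf{(Sketch.)} Each identity is verified by passing to a local frame and applying the contraction identities of Lemma~\ref{lemma:identities}. For the first identity, $\ph\wedge X^\flat\in\Omega^4_7$, and a direct index computation using the component expression for $\st$ shows $\st(\ph\wedge X^\flat)=X\hk\ps$; applying $\st$ and using $\st\st=\mathrm{id}$, $\st\ph=\ps$ gives the second identity $\st(\ps\wedge X^\flat)=X\hk\ph$. For the third-row identities, writing $(X\hk\ph)_{bc}=X^a\ph_{abc}$ and wedging with $\ph$, the contraction $\ph_{ijk}\ph_{abc}g^{kc}=g_{ia}g_{jb}-g_{ib}g_{ja}-\ps_{ijab}$ collapses the expression to $-2\st(X\hk\ph)$; the identities $\ps\wedge(X\hk\ph)=3\st X^\flat$ and $\ph\wedge(X\hk\ps)=-4\st X^\flat$ follow analogously from the mixed $\ph\ps$ contractions, and $\ps\wedge(X\hk\ps)=0$ is a consequence of the vanishing triple contraction. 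The two middle-row identities now follow formally: $\ps\wedge\st(\ph\wedge X^\flat)=\ps\wedge(X\hk\ps)=0$, and $\ph\wedge\st(\ps\wedge X^\flat)=\ph\wedge(X\hk\ph)=-2\st(X\hk\ph)=-2\st\st(\ps\wedge X^\flat)=-2\,\ps\wedge X^\flat$. All numerical constants and signs are fixed by the orientation convention of~\cite{K-flows} and checked in a standard frame; $\G$-invariance of all tensors involved then yields the identities globally.
\end{proof}
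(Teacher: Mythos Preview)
The paper's own proof is a one-line citation: ``This is part of Proposition~A.3 in~\cite{K-flows}.'' No argument is given in the present paper. Your approach --- reducing everything to the contraction identities of Lemma~\ref{lemma:identities} via local-frame computations, then deducing the middle row formally from the top and bottom rows --- is correct and is essentially what the cited Proposition~A.3 does.

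One small correction worth flagging: your claim that the second top-row identity ``follows by applying $\st$ to the first'' is not literally right. Applying $\st$ to $\st(\ph\wedge X^\flat)=X\hk\ps$ gives $\ph\wedge X^\flat=\st(X\hk\ps)$, which is not the second identity. What is actually going on is that both top-row identities are instances of the general Riemannian formula $X\hk(\st\alpha)=\st(\alpha\wedge X^\flat)$ (valid on any odd-dimensional oriented Riemannian manifold, no $\G$-structure needed), applied with $\alpha=\ph$ and $\alpha=\ps$ respectively and using $\st\ph=\ps$, $\st\ps=\ph$. So in fact the top row requires no contraction identities at all, and then, as you correctly observe, the middle row follows formally from the top and bottom rows. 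Only the third and fourth rows genuinely need Lemma~\ref{lemma:identities}.
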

\begin{proof}
This is part of Proposition A.3 in~\cite{K-flows}. 
\end{proof}

\begin{lemma} \label{lemma:identities3}
Identify $\Omega^1 \cong \Gamma (TM)$ using the metric. The \emph{cross product} $\times : \Omega^1 \times \Omega^1 \to \Omega^1$ is defined by $X \times Y = Y \hk X \hk \ph = \st (X \wedge Y \wedge \ps)$. It satisfies the identity
\begin{equation*}
X \times (X \times Y) = - g(X, X) Y + g(X, Y) X.
\end{equation*}
\end{lemma}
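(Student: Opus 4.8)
The plan is to reduce the vector identity $X \times (X \times Y) = -g(X,X)Y + g(X,Y)X$ to the first contraction identity in Lemma~\ref{lemma:identities}, namely $\ph_{ijk}\ph_{abc}g^{kc} = g_{ia}g_{jb} - g_{ib}g_{ja} - \ps_{ijab}$. First I would fix a local frame and write out the cross product in index notation. Using the defining formula $X \times Y = Y \hk X \hk \ph$, the $1$-form $X\times Y$ has components $(X\times Y)_k = X^i Y^j \ph_{ijk}$, where indices on $X,Y$ have been raised with $g$. One must be slightly careful about the order of contractions and the resulting sign, but since $\ph$ is totally antisymmetric this only affects an overall sign, which is pinned down by the convention already used to define $\times$; I would simply record the correct sign once and proceed.

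Next I would iterate: writing $Z = X \times Y$, the $m$-th component of $X \times (X\times Y) = X\times Z$ is $X^a Z^b \ph_{abm} = X^a (X^i Y^j \ph_{ijk} g^{kb}) \ph_{abm}$, after raising the index on $Z$. Reorganizing the contractions, this equals $X^a X^i Y^j \ph_{ijk}\ph_{abm}g^{kb}$ (up to the bookkeeping sign). Now apply the first identity of Lemma~\ref{lemma:identities}, with the appropriate relabeling of indices, to replace $\ph_{ijk}\ph_{abm}g^{kb}$ — or rather $\ph_{ijk}\ph_{abc}g^{kc}$ after matching index names — by $g_{ia}g_{jm} - g_{im}g_{ja} - \ps_{ijam}$. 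Contracting each of the three resulting terms against $X^a X^i Y^j$ gives: the first term yields $g(X,X)\,Y_m$ (or $-g(X,X)Y_m$ depending on the sign); the second yields $g(X,Y)\,X_m$ with the opposite sign; and the $\ps$-term yields $X^a X^i Y^j \ps_{ijam}$, which vanishes because $\ps$ is antisymmetric in the pair of indices $i,a$ while $X^a X^i$ is symmetric in them. Comparing the surviving two terms with the claimed right-hand side fixes, and is consistent with, the overall sign convention.

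The only genuine subtlety — and thus the main obstacle, though it is a mild one — is tracking signs through the repeated interior-product-to-index-contraction translation, since $X \hk$ acts on the first slot and the precise sign of $(Y \hk X \hk \ph)$ in components depends on this. I would handle this by adopting the convention $(X \hk \alpha)_{i_1\cdots i_{k-1}} = X^p \alpha_{p i_1 \cdots i_{k-1}}$ consistently and verifying the sign in the base formula $X \times Y = Y \hk X \hk \ph$ directly against a known instance, e.g. antisymmetry $X \times Y = -Y \times X$. Once the sign is fixed, the computation above is entirely mechanical: it is a single application of one contraction identity from Lemma~\ref{lemma:identities} followed by the symmetry-kills-antisymmetry observation that eliminates the $\ps$ term. (Alternatively, one could invoke the identification $X \times Y = \st(X \wedge Y \wedge \ps)$ and use the $\ps$-contraction identities instead, but the route through $\ph_{ijk}\ph_{abc}g^{kc}$ is the shortest.)
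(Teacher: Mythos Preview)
Your proposal is correct and complete: the index computation using the first contraction identity of Lemma~\ref{lemma:identities} together with the symmetry argument killing the $\ps$ term is exactly the standard way to establish this identity, and your sign bookkeeping works out (one finds $(X\times(X\times Y))_m = X^a X^i Y^j \ph_{ijk}\ph_{abm}g^{kb} = -g(X,X)Y_m + g(X,Y)X_m$ after one antisymmetrization of $\ph_{abm}$ to match the form of the identity).

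The paper itself does not give a proof at all here --- it simply cites~\cite[Lemma~A.1]{K-flows}. Your argument is therefore strictly more self-contained than what the paper presents, and indeed is essentially what one would find upon following that citation. The only difference worth noting is that you have supplied the actual computation, whereas the paper treats this as a known background fact imported from the reference.
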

\begin{proof}
This is part of Lemma A.1 in~\cite{K-flows}. 
\end{proof}

In terms of a local frame, we define a map $\ell_{\ph} : \Gamma(T^* M \otimes T^* M) \to \Omega^3$ by
\begin{equation} \label{eq:ellphdefn}
\ell_{\ph} A = A_{ip} g^{pq} e^i \wedge (e_q \hk \ph).
\end{equation}
In components, we have
\begin{equation*}
(\ell_{\ph} A)_{ijk} = A_{ip} g^{pq} \ph_{qjk} + A_{jp} g^{pq} \ph_{iqk} + A_{kp} g^{pq} \ph_{ijq}.
\end{equation*}
Analogous to~\eqref{eq:ellphdefn}, we define $\ell_{\ps} :  \Gamma(T^* M \otimes T^* M) \to \Omega^4$ by
\begin{equation} \label{eq:ellpsdefn}
\ell_{\ps} A = A_{ip} g^{pq} e^i \wedge (e_q \hk \ps).
\end{equation}
In components, we have
\begin{equation*}
(\ell_{\ps} A)_{ijkl} = A_{ip} g^{pq} \ps_{qjkl} + A_{jp} g^{pq} \ps_{iqkl} + A_{kp} g^{pq} \ps_{ijql} + A_{lp} g^{pq} \ps_{ijkq}.
\end{equation*}
It is easy to see that when $A = g$ is the metric, then
\begin{equation} \label{eq:ellg}
\ell_{\ph} g = 3 \ph, \qquad \qquad \ell_{\ps} g = 4 \ps.
\end{equation}

In~\cite[Section 2.2]{K-flows} the map $\ell_{\ph}$ is written as $D$, but we use $\ell_{\ph}$ to avoid confusion with the many instances of `$D$' throughout the present paper to denote various natural linear first order differential operators. We can orthogonally decompose sections of $\Gamma(T^* M \otimes T^* M)$ into symmetric and skew-symmetric parts, which then further orthogonally decompose as
\begin{equation*}
\Gamma(T^* M \otimes T^* M) = \Omega^0_1 \oplus \Symo \oplus \Omega^2_7 \oplus \Omega^2_{14}.
\end{equation*}
 In~\cite[Section 2.2]{K-flows} it is shown that $\ell_{\ph}$ has kernel $\Omega^2_{14}$ and maps $\Omega^0_1$, $\Symo$, and $\Omega^2_7$ isomorphically onto $\Omega^3_1$, $\Omega^3_{27}$, and $\Omega^3_7$, respectively. One can similarly show that $\ell_{\ps}$ has kernel $\Omega^2_{14}$ and maps $\Omega^0_1$, $\Symo$, and $\Omega^2_7$ isomorphically onto $\Omega^4_1$, $\Omega^4_{27}$, and $\Omega^4_7$, respectively. (See also~\cite{KLL} for a detailed proof.) In particular, we note for future references that
\begin{equation} \label{eq:ellon14}
\begin{aligned}
\beta \in \Omega^2_{14} & \iff (\ell_{\ph} \beta)_{ijk} = \beta_{ip} g^{pq} \ph_{qjk} + \beta_{jp} g^{pq} \ph_{iqk} + \beta_{kp} g^{pq} \ph_{ijq} = 0, \\ & \iff (\ell_{\ps} \beta)_{ijkl} = \beta_{ip} g^{pq} \ps_{qjkl} + \beta_{jp} g^{pq} \ps_{iqkl} + \beta_{kp} g^{pq} \ps_{ijql} + \beta_{lp} g^{pq} \ps_{ijkq} = 0.
\end{aligned}
\end{equation}

When restricted to $\Sym$, the map $\ell_{\ph}$ is denoted by $i$ in~\cite{K-flows}. We use $\ell_{\ph}$ rather than $i$, to avoid confusion with the algebraic derivations $\iota_B$ and $\iota_K$ that we introduce later in Section~\ref{sec:LBLK}.

\begin{lemma} \label{lemma:starell}
Let $h \in \Symo$. Then $\st (\ell_{\ph} h) = - \ell_{\ps} h$.
\end{lemma}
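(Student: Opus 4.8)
The plan is to verify the identity $\st(\ell_\ph h) = -\ell_\ps h$ for $h \in \Symo$ by a direct computation in a local frame, exploiting the known mapping properties of $\ell_\ph$ and $\ell_\ps$ to reduce the work. First I would recall from the discussion preceding the statement that, for $h \in \Symo$, the form $\ell_\ph h$ lies in $\Omega^3_{27}$ and $\ell_\ps h$ lies in $\Omega^4_{27}$, and that $\st$ maps $\Lambda^3_{27}$ isomorphically onto $\Lambda^4_{27}$. So both sides of the claimed identity live in $\Omega^4_{27}$, and it suffices to check that the linear map $h \mapsto \st(\ell_\ph h) + \ell_\ps h$ from $\Symo$ to $\Omega^4_{27}$ is zero.

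There are two natural routes. The slicker one is to use the $\G$-equivariance: both $h \mapsto \st(\ell_\ph h)$ and $h \mapsto \ell_\ps h$ are $\G$-equivariant linear maps $\Symo \to \Omega^4_{27}$ (since $\st$, $\ell_\ph$, $\ell_\ps$ all commute with the $\G$-action, being built from $\ph$, $\ps$, $g$), and since $\Symo$ is an irreducible $\G$-representation (the $27$-dimensional one), Schur's lemma says any such map is a scalar multiple of a fixed isomorphism. Hence $\st \circ \ell_\ph = c\, \ell_\ps$ on $\Symo$ for some constant $c$, and one only needs to evaluate $c$ on a single convenient test tensor $h$ — for instance a diagonal trace-free $h$ adapted to the standard $\G$-frame in which $\ph$ and $\ps$ have their explicit coordinate expressions — to pin down $c = -1$. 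The second, more pedestrian route is to contract the component formula $(\ell_\ph h)_{ijk} = h_{ip}g^{pq}\ph_{qjk} + (\text{cyclic})$ against $\ph$ or $\ps$ using the identities of Lemma~\ref{lemma:identities}, or alternatively to start from $\st(e^i \wedge (e_q \hk \ph))$ and rewrite it in terms of $e^i \wedge (e_q \hk \ps)$ plus terms that vanish upon contraction with the trace-free symmetric $h$; this can be organized via Lemma~\ref{lemma:identities2} applied after decomposing appropriately. Either way the computation is routine bookkeeping with the contraction identities, and I would present the Schur's lemma argument as the main line with the explicit test-tensor evaluation as the verification of the constant.

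The main obstacle is essentially just correctly tracking signs and combinatorial factors: the maps $\ell_\ph$ and $\ell_\ps$ as defined here use the \emph{non-isometric} identifications flagged in Remark~\ref{rmk:not-isom}, so the proportionality constant $c$ is sensitive to these normalization choices as well as to the sign/orientation convention for $\G$-structures from~\cite{K-flows}, and one must be careful that the $4$ in $\ell_\ps g = 4\ps$ versus the $3$ in $\ell_\ph g = 3\ph$ does \emph{not} leak into $c$ (it does not, because the trace-free condition removes exactly the part of $h$ that would see that discrepancy — this is why the hypothesis $h \in \Symo$ rather than $h \in \Sym$ is essential). Concretely, for $h = g$ one has $\st(\ell_\ph g) = \st(3\ph) = 3\ps \neq -4\ps = -\ell_\ps g$, so the identity genuinely fails on the trace part and the proof must use $\tr_g h = 0$ at the point where the constant is evaluated. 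I would therefore do the test-tensor calculation with an explicitly trace-free diagonal $h$ to be safe, confirming $c=-1$.
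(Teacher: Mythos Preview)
Your proposal is correct and in fact provides considerably more than the paper does: the paper's own ``proof'' of this lemma is simply the one-line citation ``This is part of Proposition 2.14 in~\cite{K-flows}'' with no argument given. Both of your suggested routes are sound. The Schur's lemma argument is clean and conceptual --- $\Symo$ is indeed the irreducible $27$-dimensional $\G$-representation, both $\st\circ\ell_\ph$ and $\ell_\ps$ are $\G$-equivariant maps into $\Omega^4_{27}$, so proportionality is automatic and only the constant needs checking --- while the direct contraction route is closer in spirit to how such identities are established in the cited reference. Your observation that the identity fails on the trace part (since $\st(\ell_\ph g)=3\ps\neq-4\ps=-\ell_\ps g$), and hence that the hypothesis $h\in\Symo$ must enter explicitly when pinning down the constant, is exactly right and is the one place where care is genuinely needed.
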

\begin{proof}
This is part of Proposition 2.14 in~\cite{K-flows}.
\end{proof}

The next two propositions will be crucial to establish properties of the algebraic derivations $\iota_B$ and $\iota_K$ in Section~\ref{sec:LBLK}.

\begin{prop} \label{prop:special}
Let $h = h_{ij} e^i e^j$ be a \emph{symmetric} $2$-tensor. The following identities hold:
\begin{equation} \label{eq:specialprop}
\begin{aligned}
h^{pq} (e_p \hk \ph) \wedge (e_q \hk \ph) & = - 2 (\tr_g h) \ps + 2 \ell_{\ps} h, \\
h^{pq} (e_p \hk \ph) \wedge (e_q \hk \ps) & = 0, \\
h^{pq} (e_p \hk \ps) \wedge (e_q \hk \ps) & = 0.
\end{aligned}
\end{equation}
\end{prop}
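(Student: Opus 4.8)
The plan is to prove all three identities in \eqref{eq:specialprop} by a direct computation in a local frame, contracting the relevant identities from Lemma~\ref{lemma:identities}. The left-hand sides are $2$-index-contracted wedge products of $2$-forms with $2$-, $2$-, or $3$-forms, producing $4$-forms (first two lines) and a $6$-form (third line); each component is a sum over contractions of $\ph \otimes \ph$, $\ph \otimes \ps$, or $\ps \otimes \ps$ against the symmetric tensor $h^{pq}$. For the first identity, I would write $\bigl(h^{pq} (e_p \hk \ph) \wedge (e_q \hk \ph)\bigr)_{ijkl}$ as the appropriate antisymmetrization of $h^{pq} \ph_{pij} \ph_{qkl}$, substitute the first contraction identity $\ph_{ijk}\ph_{abc}g^{kc} = g_{ia}g_{jb} - g_{ib}g_{ja} - \ps_{ijab}$ (after relabeling so that the contracted index is the one carried by $h^{pq}$), and then simplify. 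The metric terms will recombine into $(\tr_g h)\ps$ and $\ell_{\ps} h$ after using the symmetry of $h$ and the definition \eqref{eq:ellpsdefn}; the $\ps \otimes \ps$ cross term must be handled using the $\ps_{ijkl}\ps_{abcd}g^{ld}$ identity, and I expect it to contribute an additional multiple of $\ell_{\ps}h$ and $(\tr_g h)\ps$ so that everything collapses to the stated coefficients $-2(\tr_g h)\ps + 2\ell_{\ps}h$. A useful sanity check at the end: setting $h = g$ and using $\tr_g g = 7$ together with $\ell_{\ps}g = 4\ps$ from \eqref{eq:ellg} gives $-14\ps + 8\ps = -6\ps$, which should match the direct evaluation of $g^{pq}(e_p \hk \ph)\wedge(e_q\hk\ph)$ (indeed, $g^{pq}(e_p\hk\ph)\wedge(e_q\hk\ph) = e^q\wedge(e_q\hk\ph)\wedge\ph$ up to sign, and $e^q\wedge(e_q\hk\ph) = 3\ph$, while $\ph\wedge\ph$... — rather, one uses $(X\hk\ph)\wedge(Y\hk\ph)\wedge\ph = -6g(X,Y)\vol$ from \eqref{eq:fund-eq} to cross-check the wedge with $\ph$).

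For the second identity $h^{pq}(e_p\hk\ph)\wedge(e_q\hk\ps) = 0$: the component expression is the antisymmetrization of $h^{pq}\ph_{pij}\ps_{qklm}$ over five free indices. I would contract the $\ph_{ijk}\ps_{abcd}g^{kd}$ identity (relabeled so $p$ or $q$ is the contracted index) and then observe that every resulting term is either manifestly symmetric in a pair of indices being antisymmetrized — hence killed — or cancels in pairs because of the symmetry $h^{pq} = h^{qp}$. The key structural point is that the $\ph_{ijk}\ps_{abcd}g^{kd}$ identity expands into a sum of terms each of the form $g_{**}\ph_{***}$; after wedging (antisymmetrizing the free indices) and contracting against $h^{pq}$, the metric factor forces one free index of $h$ into a slot that is antisymmetrized against another slot carrying that same metric contraction, producing cancellation. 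For the third identity, $h^{pq}(e_p\hk\ps)\wedge(e_q\hk\ps) = 0$ is a $6$-form: I would note that since $e_p\hk\ps$ is a $3$-form, the product $(e_p\hk\ps)\wedge(e_q\hk\ps)$ is already antisymmetric under $p\leftrightarrow q$, so contracting with the symmetric $h^{pq}$ gives $0$ immediately. (This is the easiest of the three and essentially requires no computation.)

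The main obstacle will be the first identity: bookkeeping the antisymmetrization over four free indices while juggling the $\ps\otimes\ps$ term that arises from the $-\ps_{ijab}$ piece of the first contraction identity. There is a genuine risk of sign and combinatorial-factor errors in matching the number of terms that collapse into $\ell_{\ps}h$ versus $(\tr_g h)\ps$. To control this I would work with the explicit component formula $(\alpha\wedge\beta)_{i_1\cdots i_{p+q}} = \frac{(p+q)!}{p!\,q!}\,\alpha_{[i_1\cdots i_p}\beta_{i_{p+1}\cdots i_{p+q}]}$ for $p = q = 2$, carefully track the $\binom{4}{2}=6$ terms, use the symmetry of $h$ to pair them, and at each stage confirm against the $h = g$ specialization. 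An alternative, which might be cleaner, is to use the known behavior of $\ell_{\ph}$ and $\ell_{\ps}$ on the decomposition $\Sym = \Omega^0_1 \oplus \Symo$: verify the first identity separately for $h = fg$ (where it reduces to a statement about $\ph\wedge\ph$ and $\ph$, provable via \eqref{eq:fund-eq} and Lemma~\ref{lemma:identities2}) and for $h \in \Symo$ (where $\tr_g h = 0$ and one needs $h^{pq}(e_p\hk\ph)\wedge(e_q\hk\ph) = 2\ell_{\ps}h$, possibly reachable via Lemma~\ref{lemma:starell} by taking Hodge stars), then combine by linearity. I would attempt the direct contraction first and fall back on the decomposition approach if the combinatorics prove unwieldy.
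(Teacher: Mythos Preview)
Your approach is correct in spirit but takes a different route from the paper. You propose brute-force index computations using the contraction identities of Lemma~\ref{lemma:identities}; the paper instead uses a single Leibniz-type manipulation,
\[
(e_p \hk \alpha) \wedge (e_q \hk \beta) = e_p \hk \big( \alpha \wedge (e_q \hk \beta) \big) - (-1)^{|\alpha|} \alpha \wedge (e_p \hk e_q \hk \beta),
\]
observes that the second term is skew in $p,q$ and so vanishes against the symmetric $h^{pq}$, and then reduces each identity to the already-known wedge formulas of Lemma~\ref{lemma:identities2} (namely $\ps \wedge (X \hk \ps) = 0$, $\ph \wedge (X \hk \ps) = -4 \st X^{\flat}$, and $\ph \wedge (X \hk \ph) = -2 \st(X \hk \ph)$). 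This avoids all antisymmetrization bookkeeping and makes the coefficients drop out with almost no computation. Your direct-contraction method would also work, but the combinatorics you flag as the ``main obstacle'' for the first identity are genuinely unpleasant, and the paper's trick sidesteps them entirely.

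One small correction: the second line of~\eqref{eq:specialprop} is a $5$-form, not a $4$-form (you do implicitly fix this later when you write five free indices). More notably, your argument for the third identity --- that $(e_p \hk \ps) \wedge (e_q \hk \ps)$ is antisymmetric in $p,q$ because $3$-forms anticommute --- is correct and is actually cleaner than the paper's treatment of that case. The same observation does \emph{not} help with the second identity, since a $2$-form wedged with a $3$-form has no useful symmetry under swapping the factors; there the paper's Leibniz trick is the efficient move.
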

\begin{proof}
Let $\alpha \in \Omega^k$ and $\beta \in \Omega^l$. Then we have
\begin{equation*}
(e_p \hk \alpha) \wedge (e_q \hk \beta) = e_p \hk \big( \alpha \wedge (e_q \hk \beta) \big) - (-1)^k \alpha \wedge (e_p \hk e_q \hk \beta).
\end{equation*}
Since the second term above is skew in $p,q$, when we contract with the symmetric tensor $h^{pq}$ we obtain
\begin{equation} \label{eq:specialtemp}
h^{pq} (e_p \hk \alpha) \wedge (e_q \hk \beta) = h^{pq} e_p \hk \big( \alpha \wedge (e_q \hk \beta) \big).
\end{equation}
We will repeatedly use the identities from Lemma~\ref{lemma:identities2}. When $\alpha = \beta = \ps$ in~\eqref{eq:specialtemp}, we have $\ps \wedge (e_q \hk \ps) = 0$, establishing the third equation in~\eqref{eq:specialprop}. When $\alpha = \ph$ and $\beta = \ps$ in~\eqref{eq:specialtemp}, we have
\begin{equation*}
\ph \wedge (e_q \hk \ps) = - 4 \st (g_{qm} e^m),
\end{equation*}
and hence using that $X \hk (\st \alpha) = - \st (X^{\flat} \wedge \alpha)$ for $\alpha \in \Omega^1$, we find
\begin{align*}
h^{pq} (e_p \hk \ph) \wedge (e_q \hk \ps) & = h^{pq} e_p \hk ( - 4 \st g_{qm} e^m ) = -4 h^{pq} g_{qm} e_p \hk (\st e^m) \\
& = + 4 h^{pq} g_{qm} \st \big( (e_p)^{\flat} \wedge e^m \big) = 4 h^{pq} g_{qm} g_{pl} \st (e^l \wedge e^m) \\
& = 4 h_{lm} \st (e^l \wedge e^m) = 0,
\end{align*}
establishing the second equation in~\eqref{eq:specialprop}. Finally, when $\alpha = \beta = \ph$ in~\eqref{eq:specialtemp}, we have
\begin{equation*}
\ph \wedge (e_q \hk \ph) = - 2 \st (e_q \hk \ph) = - 2 \big( \ps \wedge (e_q)^{\flat} \big) = - 2 g_{qm} e^m \wedge \ps,
\end{equation*}
and hence using~\eqref{eq:ellpsdefn} we find
\begin{align*}
h^{pq} (e_p \hk \ph) \wedge (e_q \hk \ph) & = h^{pq} e_p \hk (- 2 g_{qm} e^m \wedge \ps ) = -2 h^{pq} g_{qm} e_p \hk (e^m \wedge \ps) \\
& = - 2 h^{pq} g_{qm} \delta^m_p \ps + 2 h^{pq} g_{qm} e^m \wedge (e_p \hk \ps) \\
& = - 2 h^{pq} g_{pq} \ps + 2 h_{ml} g^{lp} e^m \wedge (e_p \hk \ps) = - 2 (\tr_g h) \ps + 2 \ell_{\ps} h,
\end{align*}
establishing the first equation in~\eqref{eq:specialprop}.
\end{proof}

\begin{prop} \label{prop:special2}
For any fixed $m$, the following identities hold:
\begin{equation} \label{eq:specialprop2}
\begin{aligned}
g^{pq} (e_p \hk \ph) \wedge (e_q \hk e_m \hk \ph) & = 3 (e_m \hk \ps), \\
g^{pq} (e_p \hk \ph) \wedge (e_q \hk e_m \hk \ps) & = -3 \st (e_m \hk \ps), \\
g^{pq} (e_p \hk \ps) \wedge (e_q \hk e_m \hk \ph) & = -3 \st (e_m \hk \ps), \\
g^{pq} (e_p \hk \ps) \wedge (e_q \hk e_m \hk \ps) & = 4 \st (e_m \hk \ph).
\end{aligned}
\end{equation}
\end{prop}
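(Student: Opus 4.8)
The plan is to split the four identities of~\eqref{eq:specialprop2} into two groups. The first and third will come almost for free from Proposition~\ref{prop:special}, and the second and fourth will follow from a single pointwise decomposition of the $2$-form $e_q \hk e_m \hk \ps$. First I exploit that $e_m \hk$ is an anti-derivation, that $g^{pq}$ is symmetric, and that interior products anticommute: applying $e_m \hk$ to the three identities of Proposition~\ref{prop:special} specialized to $h = g$ (where $\tr_g g = 7$ and, by~\eqref{eq:ellg}, $\ell_{\ps} g = 4 \ps$, so the right-hand sides become $-6 \ps$, $0$, and $0$) and relabelling the dummy indices $p \leftrightarrow q$ gives: from the first identity, $-2\, g^{pq}(e_p \hk \ph) \wedge (e_q \hk e_m \hk \ph) = -6 (e_m \hk \ps)$, which is exactly the first line of~\eqref{eq:specialprop2}; from the second identity, $g^{pq}(e_p \hk \ps) \wedge (e_q \hk e_m \hk \ph) = g^{pq}(e_p \hk \ph) \wedge (e_q \hk e_m \hk \ps)$, so the third line of~\eqref{eq:specialprop2} follows from the second line; the third identity yields only $0 = 0$.

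The pointwise decomposition I will use is
\begin{equation*}
e_q \hk e_m \hk \ps = (e_m)^{\flat} \wedge (e_q)^{\flat} - (e_m \times e_q) \hk \ph ,
\end{equation*}
obtained by reading the components $\ps_{mqkl}$ off the first contraction identity of Lemma~\ref{lemma:identities} and recognizing the $\ph\ph g$ term via $X \times Y = Y \hk X \hk \ph$ (Lemma~\ref{lemma:identities3}). I will also use the trivial $e_q \hk e_m \hk \ph = (e_m \times e_q)^{\flat}$, the algebraic ``degree operator'' identities $\sum_p (e_p \hk \ph) \wedge e^p = 3 \ph$ and $\sum_p (e_p \hk \ps) \wedge e^p = -4 \ps$, the contraction $g^{pq}(e_q)^{\flat} = e^p$, and the relations $\ph \wedge X^{\flat} = \st(X \hk \ps)$, $\ps \wedge X^{\flat} = \st(X \hk \ph)$, $\ps \wedge (X \hk \ph) = 3 \st X^{\flat}$ (which follow from Lemma~\ref{lemma:identities2}). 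For the second line of~\eqref{eq:specialprop2}, substitute the decomposition into $g^{pq}(e_p \hk \ph) \wedge (e_q \hk e_m \hk \ps)$: the contribution of $(e_m)^{\flat} \wedge (e_q)^{\flat}$, after contracting in $q$ and applying the degree operator and $\ph \wedge (e_m)^{\flat} = \st(e_m \hk \ps)$, equals $-3 \st(e_m \hk \ps)$, while the contribution of $(e_m \times e_q) \hk \ph$ vanishes because its coefficient $g^{pq} \ph_{mqs} g^{sr}$ is antisymmetric in $p \leftrightarrow r$ whereas $(e_p \hk \ph) \wedge (e_r \hk \ph)$ is symmetric in $p \leftrightarrow r$. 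This proves the second line, hence the third.

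For the fourth line I substitute the same decomposition into $g^{pq}(e_p \hk \ps) \wedge (e_q \hk e_m \hk \ps)$. The contribution of $(e_m)^{\flat} \wedge (e_q)^{\flat}$ is handled exactly as before, now with $\sum_p (e_p \hk \ps) \wedge e^p = -4 \ps$ and $\ps \wedge (e_m)^{\flat} = \st(e_m \hk \ph)$, and equals $4 \st(e_m \hk \ph)$. The remaining contribution $-g^{pq}(e_p \hk \ps) \wedge \big( (e_m \times e_q) \hk \ph \big)$ is \emph{not} killed by symmetry; instead I expand, using the anti-derivation rule and Lemma~\ref{lemma:identities2},
\begin{equation*}
(e_p \hk \ps) \wedge (X \hk \ph) = -3 \st\big((e_p)^{\flat} \wedge X^{\flat}\big) - \st\big((X \times e_p) \hk \ph\big),
\end{equation*}
apply it with the $q$-dependent vector $X = e_m \times e_q$, and find that the two pieces contribute $-6 \st(e_m \hk \ph)$ and $+6 \st(e_m \hk \ph)$ respectively — the latter via the iterated cross-product contraction $\sum_p g^{pq}\, (e_m \times e_q) \times e_p = -6\, e_m$ — so this contribution vanishes and the fourth line equals $4 \st(e_m \hk \ph)$.

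The main obstacle is precisely this last point: unlike the $(e_m \times e_q) \hk \ph$ term in the second line, its counterpart in the fourth line is not eliminated by any one-line symmetry argument, and one must actually evaluate the $\G_2$-type contraction $\sum_p g^{pq}\, (e_m \times e_q) \times e_p = -6\, e_m$ — either directly from the identities of Lemma~\ref{lemma:identities}, or by observing that the left-hand side is a $\G_2$-equivariant endomorphism of $T M$ (hence a scalar multiple of the identity on the irreducible $\Omega^1_7$) and pinning the scalar down by a single trace. A secondary but pervasive difficulty is sign bookkeeping: the conventions $X \hk \st\alpha = -\st(X^{\flat} \wedge \alpha)$ together with the various wedge reorderings must be tracked with care, since one misplaced sign corrupts all four identities.
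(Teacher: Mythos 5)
Your proposal is correct, and it reaches the four identities by a genuinely different route than the paper for all but the third one. The paper proves the first, second and fourth identities by brute-force local-frame computations, expanding $g^{pq}\ph_{pij}\ph_{mqk}$, $g^{pq}\ph_{pij}\ps_{mqkl}$ and $g^{pq}\ps_{pijk}\ps_{cqab}$ against the contraction identities of Lemma~\ref{lemma:identities} (the fourth identity in particular requires the long $\ps\ps$ contraction), and obtains the third identity from Proposition~\ref{prop:special} by an interior product with $e_m$ — exactly your argument for that line. You instead get the first identity for free by applying $e_m \hk$ to the first line of Proposition~\ref{prop:special} with $h = g$ (your sign bookkeeping here is right: the two terms combine to $-2\,g^{pq}(e_p\hk\ph)\wedge(e_q\hk e_m\hk\ph)$), and you get the second and fourth from the decomposition $e_q\hk e_m\hk\ps = (e_m)^\flat\wedge(e_q)^\flat - (e_m\times e_q)\hk\ph$, which is indeed just the first contraction identity of Lemma~\ref{lemma:identities} rewritten via Lemma~\ref{lemma:identities3}. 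I checked the two points you flag as delicate: your auxiliary formula $(e_p\hk\ps)\wedge(X\hk\ph) = -3\st\big((e_p)^\flat\wedge X^\flat\big) - \st\big((X\times e_p)\hk\ph\big)$ does follow in two lines from the anti-derivation rule, $\ps\wedge(X\hk\ph)=3\st X^\flat$, $\ps\wedge Z^\flat=\st(Z\hk\ph)$ and $Y\hk\st X^\flat = -\st(Y^\flat\wedge X^\flat)$; and the contraction $\sum_p (e_m\times e_p)\times e_p = -6\,e_m$ follows from Lemma~\ref{lemma:identities3} (or from $\ph_{ijk}\ph_{abc}g^{jb}g^{kc}=6g_{ia}$), so the $\pm 6\st(e_m\hk\ph)$ cancellation and hence the fourth line are correct; the $4\st(e_m\hk\ph)$ piece and the symmetric-versus-antisymmetric vanishing in the second line also check out. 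What the comparison buys: your route is largely coordinate-free, reuses Proposition~\ref{prop:special}, and completely avoids the heavy $\ps_{ijkl}\ps_{abcd}g^{ld}$ identity, at the cost of the auxiliary wedge formula and the iterated cross-product identity; the paper's route is uniform, mechanical index computation that needs no auxiliary identities beyond Lemma~\ref{lemma:identities}.
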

\begin{proof}
In this proof, we use $e^{ijk}$ to denote $e^i \wedge e^j \wedge e^k$ and similarly for any number of indices. First, we compute
\begin{align*}
g^{pq} (e_p \hk \ph) \wedge (e_q \hk e_m \hk \ph) & = \tfrac{1}{2} (g^{pq} \ph_{pij} \ph_{mqk}) e^{ijk} \\
& = \tfrac{1}{2} ( g_{ik} g_{jm} - g_{im} g_{jk} - \ps_{ijkm}) e^{ijk} \\
& = 0 - 0 - \tfrac{1}{2} \ps_{ijkm} e^{ijk} = 3 (\tfrac{1}{6} \ps_{mijk} e^{ijk}) = 3 (e_m \hk \ps),
\end{align*}
establishing the first equation in~\eqref{eq:specialprop2}.

Similarly we compute
\begin{align*}
g^{pq} (e_p \hk \ph) \wedge (e_q \hk e_m \hk \ps) & = \tfrac{1}{4} (g^{pq} \ph_{pij} \ps_{mqkl}) e^{ijkl} \\
& = \tfrac{1}{4} ( g_{ik} \ph_{jlm} + g_{il} \ph_{kjm} + g_{im} \ph_{klj} - g_{jk} \ph_{ilm} - g_{jl} \ph_{kim} - g_{jm} \ph_{kli} ) e^{ijkl} \\
& = 0 + 0 + \tfrac{1}{4} g_{im} \ph_{klj} e^{ijkl} + 0 + 0 - \tfrac{1}{4} g_{jm} \ph_{kli} e^{ijkl} = \tfrac{1}{2} g_{im} \ph_{jkl} e^{ijkl} \\
& = 3 (g_{mi} e^i) \wedge ( \tfrac{1}{6} \ph_{jkl} e^{jkl} ) = 3 (e_m)^{\flat} \wedge \ph = - 3 \st (e_m \hk \ps),
\end{align*}
establishing the second equation in~\eqref{eq:specialprop2}. Now let $h = g$ in the second equation of~\eqref{eq:specialprop}. Taking the interior product of $g^{pq} (e_p \hk \ph) \wedge (e_q \hk \ps) = 0$ with $e_m$, we obtain
\begin{equation*}
g^{pq} (e_m \hk e_p \hk \ph) \wedge (e_q \hk \ps) + g^{pq} (e_p \hk \ph) \wedge (e_m \hk e_q \hk \ps) = 0,
\end{equation*}
which, after rearrangement and relabelling of indices, becomes
\begin{equation*}
g^{pq} (e_p \hk \ph) \wedge (e_q \hk e_m \hk \ps) = g^{pq} (e_p \hk \ps) \wedge (e_q \hk e_m \hk \ph),
\end{equation*}
establishing the third equation in~\eqref{eq:specialprop2}.

Finally we compute
\begin{align*}
g^{pq} (e_p \hk \ps) \wedge (e_q \hk e_c \hk \ps) & = \tfrac{1}{12} (g^{pq} \ps_{pijk} \ps_{cqab}) e^{ijkab} \\
& = -\tfrac{1}{12} (g^{pq} \ps_{ijkp} \ps_{abcq} ) e^{ijkab} \\ 
& = -\tfrac{1}{12} \bigg( -\ph_{ajk} \ph_{ibc} - \ph_{iak} \ph_{jbc} - \ph_{ija} \ph_{kbc} + g_{ia} g_{jb} g_{kc} + g_{ib} g_{jc} g_{ka} \\
& \qquad \qquad {} + g_{ic} g_{ja} g_{kb} - g_{ia} g_{jc} g_{kb} - g_{ib} g_{ja} g_{kc} - g_{ic} g_{jb} g_{ka}  -g_{ia} \ps_{jkbc} \\
& \qquad \qquad {} - g_{ja} \ps_{kibc} - g_{ka} \ps_{ijbc} + g_{ab} \ps_{ijkc} - g_{ac} \ps_{ijkb} \bigg) e^{ijkab}.
\end{align*}
The first three terms above combine, and all the remaining terms except the last one vanish. Thus using Lemma~\ref{lemma:identities2} we have
\begin{align*}
g^{pq} (e_p \hk \ps) \wedge (e_q \hk e_c \hk \ps) & = -\tfrac{1}{12} ( -3 \ph_{ajk} \ph_{ibc} - g_{ac} \ps_{ijkb} ) e^{ijkab} \\
& = -\tfrac{1}{4} (\ph_{ajk} e^{ajk}) \wedge (\ph_{cib} e^{ib}) - \tfrac{1}{12} (g_{ac} e^a) \wedge (\ps_{ijkb} e^{ijkb}) \\
& = -3 ( \tfrac{1}{6} \ph_{ajk} e^{ajk} ) \wedge ( \tfrac{1}{2} \ph_{cib} e^{ib} ) - 2 (g_{ca} a^a) \wedge ( \tfrac{1}{24} \ps_{ijkb} e^{ijkb} ) \\
& = -3 \ph \wedge (e_c \hk \ph) - 2 (e_c)^{\flat} \wedge \ps = 6 \st (e_c \hk \ph) - 2 \st (e_c \hk \ph) = 4 \st (e_c \hk \ph),
\end{align*}
establishing the fourth equation in~\eqref{eq:specialprop2}.
\end{proof}

For the rest of this section and all of the next section, we assume $(M, \ph)$ is torsion-free. See also Remark~\ref{rmk:when-torsion-free}.

\begin{prop} \label{prop:Liederiv}
Suppose $(M, \ph)$ is a torsion-free $\G$~manifold. Then $\st (\pi_{27} \mathcal L_X \ph) = - \pi_{27} \mathcal L_X \ps$ for any vector field $X$. 
\end{prop}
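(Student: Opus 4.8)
The plan is to reduce everything to Lemma~\ref{lemma:starell}, which asserts exactly that $\st(\ell_{\ph} h) = -\ell_{\ps} h$ for $h \in \Symo$. The key observation is that both $\pi_{27} \mathcal{L}_X \ph$ and $\pi_{27} \mathcal{L}_X \ps$ are obtained by applying $\ell_{\ph}$, respectively $\ell_{\ps}$, to \emph{one and the same} trace-free symmetric $2$-tensor built from $\nabla X^{\flat}$, after which Lemma~\ref{lemma:starell} finishes the proof immediately.

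First I would rewrite the Lie derivatives in terms of $\nabla$. From Cartan's formula $\mathcal{L}_X = \dd \circ (X \hk \cdot) + (X \hk \cdot) \circ \dd$ together with \eqref{eq:dd} one gets the standard identity
\[
(\mathcal{L}_X \alpha)_{i_1 \cdots i_k} = X^p \nabla_p \alpha_{i_1 \cdots i_k} + \sum_{j=1}^{k} (\nabla_{i_j} X^p)\, \alpha_{i_1 \cdots i_{j-1}\, p\, i_{j+1} \cdots i_k}.
\]
Applying this to $\ph$ and $\ps$ and using the torsion-free hypothesis $\nabla \ph = \nabla \ps = 0$, the terms $X^p \nabla_p \ph$ and $X^p \nabla_p \ps$ vanish, and comparing the surviving terms with the component formulas following \eqref{eq:ellphdefn} and \eqref{eq:ellpsdefn} yields
\[
\mathcal{L}_X \ph = \ell_{\ph}(\nabla X^{\flat}), \qquad \mathcal{L}_X \ps = \ell_{\ps}(\nabla X^{\flat}),
\]
where $\nabla X^{\flat}$ denotes the (not necessarily symmetric) $2$-tensor with components $(\nabla X^{\flat})_{ij} = \nabla_i X_j$. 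Next, decompose $\nabla X^{\flat} \in \Gamma(T^* M \otimes T^* M) = \Omega^0_1 \oplus \Symo \oplus \Omega^2_7 \oplus \Omega^2_{14}$ as $\nabla X^{\flat} = \tfrac{1}{7}(\tr_g \nabla X^{\flat})\, g + h^0 + \omega_7 + \omega_{14}$ with $h^0 \in \Symo$, $\omega_7 \in \Omega^2_7$, $\omega_{14} \in \Omega^2_{14}$. By \eqref{eq:ellg}, the mapping properties of $\ell_{\ph}$ recalled before \eqref{eq:ellon14}, and \eqref{eq:ellon14} itself, $\ell_{\ph}$ sends the $g$-component into $\Omega^3_1$, sends $h^0$ into $\Omega^3_{27}$, sends $\omega_7$ into $\Omega^3_7$, and kills $\omega_{14}$; hence $\pi_{27} \mathcal{L}_X \ph = \ell_{\ph}(h^0)$. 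The identical argument for $\ell_{\ps}$ gives $\pi_{27} \mathcal{L}_X \ps = \ell_{\ps}(h^0)$ with the \emph{same} $h^0$. Applying Lemma~\ref{lemma:starell} to $h^0$ now gives $\st(\pi_{27} \mathcal{L}_X \ph) = \st(\ell_{\ph} h^0) = -\ell_{\ps} h^0 = -\pi_{27} \mathcal{L}_X \ps$.

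The only step that requires genuine care is the first one: correctly deriving the covariant-derivative formula for $\mathcal{L}_X$ on forms and matching its contraction pattern with the definition of $\ell_{\ph}$ — in particular checking that it is $\nabla_i X_j$ (rather than its transpose) that appears, so that the skew part of $\nabla X^{\flat}$ lands in $\Omega^2_7 \oplus \Omega^2_{14}$ and only the trace-free symmetric part $h^0$ contributes to the $\Omega^{\bu}_{27}$ components. Everything after that is a direct application of the $\G$-structure linear algebra recalled in Section~\ref{sec:forms}; note that no Hodge theory or compactness is used, only the torsion-free condition, consistent with Remark~\ref{rmk:when-torsion-free}.
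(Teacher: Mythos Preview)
Your proof is correct and follows essentially the same approach as the paper: both use the torsion-free condition to write $\mathcal{L}_X \ph = \ell_{\ph}(\nabla X^{\flat})$ and $\mathcal{L}_X \ps = \ell_{\ps}(\nabla X^{\flat})$, then project onto the $27$-component and invoke Lemma~\ref{lemma:starell}. The paper's proof is simply terser, citing \cite[equation (1.7)]{K-flows} for the Lie derivative formula and leaving implicit the decomposition of $\nabla X^{\flat}$ that you spell out explicitly.
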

\begin{proof}
Because $\ph$ and $\ps$ are both parallel, from~\cite[equation (1.7)]{K-flows} we have
\begin{equation*}
(\mathcal L_X \ph) = (\nabla_i X_p) g^{pq} e^i \wedge (e_q \hk \ph), \qquad (\mathcal L_X \ps) = (\nabla_i X_p) g^{pq} e^i \wedge (e_q \hk \ps).
\end{equation*}
Applying $\pi_{27}$ to both of the above expressions and using Lemma~\ref{lemma:starell} yields the desired result.
\end{proof}

\subsection{The exterior derivative $\dd$ and the Hodge Laplacian $\Delta$} \label{sec:dLap}

In this section we analyze the exterior derivative $\dd$ and the Hodge Laplacian $\Delta$ on a manifold with torsion-free $\G$-structure. Much, but not all, of the results in this section have appeared before, without proof, in~\cite[Section 5.2]{Bryant}. See Remark~\ref{rmk:Bryant} for details. Theorem~\ref{thm:harmonic1}, which relates kernels of various operators on $\Omega^1$, is fundamental to the rest of the paper and appears to be new.

We first define three first order operators on torsion-free $\G$~manifolds, which will be used to decompose $\dd : \Omega^k \to \Omega^{k+1}$ into components. More details can be found in~\cite[Section 4]{Knotes}.

\begin{defn} \label{defn:ops}
Let $(M, \ph)$ be a torsion-free $\G$~manifold. We define the following first order linear differential operators:
\begin{align*}
\grad : \Omega^0_1 & \to \Omega^1_7, & f & \mapsto \dd f, \\
\dive : \Omega^1_7 & \to \Omega^0_1, & X & \mapsto - \ds X, \\
\curl : \Omega^1_7 & \to \Omega^1_7, & X & \mapsto \st (\ps \wedge \dd X).
\end{align*}
In a local frame, these operators have the following form:
\begin{equation} \label{eq:opslocal}
(\grad f)_k = \nabla_k f, \qquad \dive X = g^{ij} \nabla_i X_j, \qquad (\curl X)_k = (\nabla_i X_j) g^{ip} g^{jq} \ph_{pqk}.
\end{equation}
\end{defn}

\begin{defn} \label{defn:Doperators}
Denote by $D^{l}_{m}$ the composition
\begin{equation*}
D^l_m : \Omega^k_l \hookrightarrow \Omega^k \xrightarrow{\dd} \Omega^{k+1} \twoheadrightarrow \Omega^{k+1}_{m},
\end{equation*}
where $k$ is the smallest integer such that this composition makes sense. Here the surjection is the projection $\pi^{k+1}_m$. That is, $D^l_m = \pi^{k+1}_m \circ {\left. {\dd} \right|}_{\Omega^k_l}$.
\end{defn}

\begin{prop} \label{prop:vanishingDs}
The operators $D^1_1$, $D^1_{14}$, $D^{14}_1$, $D^1_{27}$, $D^{27}_1$, and $D^{14}_{14}$ are all zero.
\end{prop}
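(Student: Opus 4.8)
The plan is to use the fact that on a compact torsion-free $\G$~manifold the de Rham cohomology decomposes according to the $\G$-irreducible pieces, together with representation-theoretic constraints on where $\dd$ can land. Actually, since we only want the statement for a general (not necessarily compact) torsion-free $\G$~manifold, I would instead argue purely locally/pointwise using the structure of $\dd$ via $\nabla$ from~\eqref{eq:dd} and the fact that $\nabla \ph = \nabla \ps = 0$.

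First I would recall that, because $\ph$ and $\ps$ are parallel, the operators $\dd$ and $\ds$ commute (in the appropriate sense) with the bundle projections $\pi^k_l$ only up to the `off-diagonal' terms that $\dd$ genuinely produces; equivalently, for $\alpha \in \Omega^k_l$ one has $\dd \alpha = e^p \wedge \nabla_p \alpha$ with $\nabla_p \alpha$ still a section of $\Lambda^k_l$ (since the subbundles are parallel). Thus $\dd$ restricted to $\Omega^k_l$ is, pointwise, the composition of the algebraic wedge map $\Lambda^1 \otimes \Lambda^k_l \to \Lambda^{k+1}$ with $\nabla$, and the image of $D^l_m$ is controlled by which $\G$-irreducible summands $\Lambda^{k+1}_m$ actually occur in the $\G$-equivariant image of $\Lambda^1_7 \otimes \Lambda^k_l \to \Lambda^{k+1}$. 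So the vanishing of $D^1_1$, $D^1_{14}$, $D^1_{27}$ would follow from the pointwise statement that the wedge map $\Lambda^1_7 \otimes \Lambda^1_7 \to \Lambda^2 = \Lambda^2_7 \oplus \Lambda^2_{14}$ has image landing in $\Lambda^2_7 \oplus \Lambda^2_{14}$ — wait, that is everything — so this crude count is not enough. The point is rather that $\dd f = \grad f \in \Omega^1_7$ tautologically (there is no other place for a $1$-form to go), so $D^1_1$ and $D^1_{14}$ are zero simply because $\Lambda^1 = \Lambda^1_7$, i.e. $\Omega^1$ has no $\Lambda^1_1$ or $\Lambda^1_{14}$ component; similarly $D^1_{27}$ is zero because $\Lambda^2$ has no $\Lambda^2_{27}$-component. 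So three of the six are immediate from~\eqref{eq:bundle-decomp}.

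The remaining three — $D^{14}_1$, $D^{27}_1$, and $D^{14}_{14}$ — require real input. For these I would use the characterizations of the subbundles via wedging with $\ph$ and $\ps$ together with $\dd \ph = \dd \ps = 0$. For $D^{27}_1$: given $\beta \in \Omega^3_{27}$ we have $\beta \wedge \ph = 0$, and since $\dd \ph = 0$ this gives $\dd \beta \wedge \ph = 0$, i.e. $\pi^5_{\text{something}}$... more precisely $\dd \beta \in \Omega^4$ with $(\dd\beta) \wedge \ph = 0$, which by~\eqref{eq:omega4stuff} says $\pi^4_1 (\dd \beta) \oplus \pi^4_{7}$... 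I need the statement that $\gamma \wedge \ph = 0$ forces the $\Lambda^4_1$-component to vanish. Indeed $\Lambda^4_1 = \R \ps$ and $\ps \wedge \ph = 7 \vol \neq 0$, so if $\gamma \wedge \ph = 0$ then the $\Lambda^4_1$-part of $\gamma$ is zero; hence $D^{27}_1 = \pi^4_1(\dd \beta) = 0$. For $D^{14}_1$: given $\beta \in \Omega^2_{14}$, we have $\beta \wedge \ps = 0$ and $\beta \wedge \ph = - \st \beta$ (a standard $\Omega^2_{14}$ identity), hence $\dd\beta \wedge \ph = \dd(\beta \wedge \ph) = - \dd \st \beta$; but also $\beta \wedge \ps = 0 \Rightarrow \dd \beta \wedge \ps = 0$, and $\dd\beta \in \Omega^3$ with $\dd\beta \wedge \ps = 0$ forces, via the analogue of~\eqref{eq:omega3stuff}, the $\Lambda^3_1$-component of $\dd \beta$ to vanish (since $\Lambda^3_1 = \R\ph$ and $\ph \wedge \ps = 7\vol \neq 0$); so $D^{14}_1 = 0$. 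Finally $D^{14}_{14} = \pi^3_{14}(\dd \beta)$ for $\beta \in \Omega^2_{14}$: here I would use that $\Omega^3_7 \oplus \Omega^3_{14}$ is not a thing — $\Lambda^3$ has no $\Lambda^3_{14}$ summand at all (see~\eqref{eq:bundle-decomp})! So $D^{14}_{14}$ maps into $\Omega^3_{14} = 0$ and is trivially zero.

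So the cleanest organization is: $D^1_1$, $D^1_{14}$, $D^1_{27}$, and $D^{14}_{14}$ vanish \emph{for free} from the bundle decomposition~\eqref{eq:bundle-decomp} (the target subbundle $\Lambda^{k+1}_m$ is zero), while $D^{14}_1$ and $D^{27}_1$ vanish by wedging with $\ps$ (resp. $\ph$) and using $\dd \ps = 0$ (resp. $\dd \ph = 0$) to kill the would-be image, which lies in the one-dimensional bundle $\Lambda^3_1 = \R\ph$ (resp. $\Lambda^4_1 = \R\ps$) and is detected by the nondegenerate pairing $\ph \wedge \ps = 7 \vol$. The main obstacle is simply making sure I invoke the correct $\Omega^2_{14}$ identity $\beta \wedge \ph = -\st\beta$ and the correct projection characterizations~\eqref{eq:omega3stuff}–\eqref{eq:omega4stuff}; once those are in hand the argument is a short computation with no analysis required, consistent with the remark in~\ref{rmk:when-torsion-free} that this proposition needs only torsion-freeness, not compactness.
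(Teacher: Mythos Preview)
Your treatment of $D^{14}_1$, $D^{27}_1$, and $D^{14}_{14}$ is correct and is exactly what the paper does. The gap is in your handling of $D^1_1$, $D^1_{14}$, and $D^1_{27}$: you have misread Definition~\ref{defn:Doperators}. The phrase ``smallest $k$ such that this composition makes sense'' means the smallest $k$ for which \emph{both} $\Omega^k_l$ and $\Omega^{k+1}_m$ actually occur in the decomposition~\eqref{eq:bundle-decomp}. Thus
\[
D^1_1 : \Omega^3_1 \to \Omega^4_1, \qquad
D^1_{27} : \Omega^3_1 \to \Omega^4_{27}, \qquad
D^1_{14} : \Omega^4_1 \to \Omega^5_{14},
\]
and none of these has a trivial target. (The paper's own proof makes the domains explicit, and Figure~\ref{figure:d} records precisely these missing arrows.) So these three do \emph{not} vanish ``for free'' from~\eqref{eq:bundle-decomp}; they require the torsion-free hypothesis.

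The correct argument, which is also short, is: for $f\ph \in \Omega^3_1$ one has $\dd(f\ph) = \dd f \wedge \ph + f\,\dd\ph = \dd f \wedge \ph$, and $\dd f \wedge \ph = -\st(\dd f \hk \ps) \in \Omega^4_7$ by Lemma~\ref{lemma:identities2}, so the $\Omega^4_1$ and $\Omega^4_{27}$ components of $\dd(f\ph)$ vanish, giving $D^1_1 = D^1_{27} = 0$. Similarly for $f\ps \in \Omega^4_1$ one has $\dd(f\ps) = \dd f \wedge \ps \in \Omega^5_7$, giving $D^1_{14} = 0$. Note that without $\dd\ph = \dd\ps = 0$ these computations fail, so your claim that ``no analysis is required'' beyond the bundle decomposition for these three is not right; torsion-freeness is genuinely used here, just as it is for $D^{14}_1$ and $D^{27}_1$.
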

\begin{proof}
It is clear from~\eqref{eq:bundle-decomp} that $D^{14}_{14} = 0$. The operators $D^1_1 : \Omega^3_1 \to \Omega^4_1$ and $D^1_{27} : \Omega^3_1 \to \Omega^4_{27}$ are both zero because $\dd (f \ph) = (\dd f) \wedge \ps \in \Omega^4_7$. Similarly, since $\dd (f \ps) = (\dd f) \wedge \ps \in \Omega^5_7$, we also have $D^1_{14} = 0$. If $\beta \in \Omega^2_{14}$, then $\beta \wedge \ps = 0$, so $(\dd \beta) \wedge \ps = 0$, and thus $\pi_1 (\dd \beta) = 0$, hence $D^{14}_1 = 0$. Similarly if $\beta \in \Omega^3_{27}$, then $\beta \wedge \ph = 0$, so $(\dd \beta) \wedge \ph = 0$, and thus $\pi_1 (\dd \beta) = 0$, hence $D^{27}_1 = 0$.
\end{proof}

\begin{figure}[H]
\begin{equation*}
\xymatrix{
\Omega^0_1 \ar@[red][drr]^{\red{D^1_7}} & & & & & & & & \\
& & \Omega^1_7 \ar@[orange][d]^{\orange{D^7_7}} \ar@[brown][drr]^{\brown{D^7_{14}}} & & & & & & \\
& & \Omega^2_7 \ar@[blue][dll]^{\blue{D^7_1}} \ar@/_/@[magenta][drrrr]^{\magenta{D^7_{27}}} \ar@[orange][d]^{\orange{-\frac{3}{2}D^7_7}} & & \Omega ^2_{14} \ar@[cyan][lld]^{\cyan{D^{14}_7}} \ar@[violet][rrd]^{\violet{D^{14}_{27}}} & & & & \\
\Omega^3_1 \ar@/_/@[red][drr]_{\red{-D^1_7}} & & \Omega^3_7 \ar@/_/@[blue][dll]_{\blue{\frac{4}{3}D^7_1}} \ar@[orange][d]^{\orange{-\frac{3}{2} D^7_7}} \ar@/^/@[magenta][drrrr]^{\magenta{-D^7_{27}}} & & & & \Omega^3_{27} \ar@/^/@[teal][dllll]^{\teal{D^{27}_7}} \ar@[olive][d]^{\olive{D^{27}_{27}}} & & \\
\Omega^4_1 \ar@[red][drr]^{\red{D^1_7}} & & \Omega^4_7 \ar@[orange][d]^{\orange{2 D^7_7}} \ar@[brown][drr]^{\brown{-D^7_{14}}} & & & & \Omega^4_{27} \ar@/_/@[teal][dllll]^{\teal{\frac{4}{3} D^{27}_7}} \ar@[gray][dll]^{\gray{D^{27}_{14}}} & \\
& & \Omega^5_7 \ar@[orange][d]^{\orange{3 D^7_7}} & & \Omega ^5_{14} \ar@[cyan][dll]^{\cyan{4 D^{14}_7}} & & & & \\
& & \Omega^6_7 \ar@[blue][dll]^{\blue{\frac{7}{3} D^7_1}} & & & & & & \\
\Omega^7_1 & & & & & & & &
} 
\end{equation*}
\caption{Decomposition of the exterior derivative $\dd$ into components} \label{figure:d}
\end{figure}

\begin{prop} \label{prop:dfigure}
With respect to the identifications described in~\eqref{eq:forms-isom}, the components of the exterior derivative $\dd$ satisfy the relations given in Figure~\ref{figure:d}.
\end{prop}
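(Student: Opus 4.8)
The plan is to compute each component $D^l_m$ of $\dd$ by applying $\dd$ to an explicit representative of $\Omega^k_l$ written in the normal form of~\eqref{eq:forms-isom}, using torsion-freeness ($\nabla \ph = \nabla \ps = 0$) to commute $\dd$ past $\ph$ and $\ps$, and then projecting onto the relevant $\Omega^{k+1}_m$ summand using the algebraic identities of Lemmas~\ref{lemma:identities}, \ref{lemma:identities2}, \ref{lemma:identities3}, \ref{lemma:starell} and Propositions~\ref{prop:special}, \ref{prop:special2}. The key structural input is that, because $\ph$ and $\ps$ are parallel, $\dd$ acting on $f\ph$, $f\ps$, $X\hk\ph$, $X\hk\ps$, $\ell_\ph h$, $\ell_\ps h$, etc., reduces to $\dd f$, $\nabla X$, $\nabla h$ wedged into fixed forms, so every component of $\dd$ is expressed through the three basic operators $\grad$, $\dive$, $\curl$ of Definition~\ref{defn:ops} (together with their natural analogues on $\Symo$, i.e. the divergence and a curl-type operator on trace-free symmetric $2$-tensors). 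Proposition~\ref{prop:vanishingDs} has already disposed of the six components that vanish identically, so the content of Figure~\ref{figure:d} is precisely the set of \emph{proportionality constants} relating the nonzero arrows that share a common ``source operator'' — e.g. the four arrows labelled with multiples of $D^7_7$, the three labelled with multiples of $D^7_1$, the two labelled with multiples of $D^1_7$, $D^{27}_7$, $D^{14}_7$, $D^7_{14}$ — and the claim that these constants are the rational numbers displayed.

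First I would fix representatives: an element of $\Omega^1_7$ is a $1$-form $X^\flat$, of $\Omega^2_7$ is $X\hk\ph$, of $\Omega^3_1$ is $f\ph$, of $\Omega^3_7$ is $X\hk\ps$, of $\Omega^3_{27}$ is $\ell_\ph h$ with $h\in\Symo$, and the $k\ge 4$ cases are obtained by applying $\st$ (using $\Lambda^{7-k}_l = \st\Lambda^k_l$ and that $\st$ commutes with the $\pi_l$). Then I would carry out, in order: (1) $\dd$ on $\Omega^0_1$ and $\Omega^1_7$, giving $D^1_7=\grad$, and the decomposition of $\dd X^\flat\in\Omega^2_7\oplus\Omega^2_{14}$ into $D^7_7$ (an $\Omega^2_7$-component, identified via $X\hk\ph$ with a curl-type expression) and $D^7_{14}$; (2) $\dd$ on $\Omega^2_7$, i.e. $\dd(X\hk\ph) = \dd X^\flat$-type terms wedged appropriately — here the parallel condition gives $\dd(X\hk\ph) = e^p\wedge\nabla_p(X\hk\ph) = e^p\wedge((\nabla_p X)\hk\ph)$, which I expand using~\eqref{eq:ellphdefn} and Proposition~\ref{prop:special2} to read off $D^7_1$, $D^7_7$, $D^7_{27}$ and check their ratios; (3) $\dd$ on $\Omega^2_{14}$, similarly producing $D^{14}_7$ and $D^{14}_{27}$; (4) $\dd$ on $\Omega^3_1$, $\Omega^3_7$, $\Omega^3_{27}$; and then (5) the degrees $4,5,6$ rows by Hodge-star duality, i.e. $\dd\st = \pm\st\ds$ together with the already-computed low-degree components and the fact that $\ds$ on $\Omega^k_l$ is (a constant times) the adjoint-transpose of the corresponding $D$ — the constants on the lower half of Figure~\ref{figure:d} (the $\tfrac43$, $2$, $3$, $4$, $\tfrac73$) are exactly the ratios forced by this duality and by the non-isometric identifications of Remark~\ref{rmk:not-isom}.

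The main obstacle will be \textbf{bookkeeping the non-isometric identifications} of~\eqref{eq:forms-isom}: because $X\mapsto X\hk\ph$, $X\mapsto X\hk\ps$, $f\mapsto f\ph$, $h\mapsto\ell_\ph h$, and their Hodge-star images carry different normalizing constants (the $-6$ in~\eqref{eq:fund-eq}, the $3$ in $\ell_\ph g = 3\ph$, the $4$ in $\ell_\ps g=4\ps$, the $-2,3,-4$ in Lemma~\ref{lemma:identities2}, etc.), a component of $\dd$ that is ``geometrically the same operator'' as another appears in Figure~\ref{figure:d} with a rational multiplier, and getting every one of these multipliers right — especially the signs, and the asymmetry between, say, $D^7_1$ appearing as $\blue{D^7_1}$ out of $\Omega^2_7$, as $\blue{\tfrac43 D^7_1}$ out of $\Omega^3_7$, and as $\blue{\tfrac73 D^7_1}$ out of $\Omega^6_7$ — requires careful, consistent contraction with the identities of Lemma~\ref{lemma:identities} and Propositions~\ref{prop:special}--\ref{prop:special2}. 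A secondary subtlety is verifying, at each stage, that the components into the ``forbidden'' summands genuinely vanish beyond what Proposition~\ref{prop:vanishingDs} already gives — e.g. that $\dd(X\hk\ph)$ has no $\Omega^3_{14}$ part (trivially, as $\Omega^3_{14}=0$) but also that its $\Omega^3_1$ and $\Omega^3_7$ and $\Omega^3_{27}$ parts are precisely the claimed multiples — which is where $d^2=0$ and Hodge-star compatibility serve as useful consistency checks on the constants.
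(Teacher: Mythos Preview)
Your plan is correct and follows essentially the same route as the paper's proof: fix the explicit representatives of~\eqref{eq:forms-isom}, use $\nabla\ph=\nabla\ps=0$ to reduce $\dd$ on each $\Omega^k_l$ to a first-order expression in $\nabla f$, $\nabla X$, or $\nabla h$, and then extract components by contraction with the identities of Lemma~\ref{lemma:identities} and Lemma~\ref{lemma:identities2}. The paper organizes the computation by target type $\pi_m\dd\pi_l$ rather than by degree, and for the high-degree arrows it works directly with the wedge-product forms $\ph\wedge X$, $\ps\wedge X$ rather than invoking $\dd\st=\pm\st\ds$ abstractly, but this is a cosmetic difference.

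Two small corrections to your toolkit. First, Propositions~\ref{prop:special} and~\ref{prop:special2} are not actually used in this proof; those identities are set up for the algebraic derivations $\iota_B$, $\iota_K$ in Section~\ref{sec:LBLK}. What you really want for $\dd(X\hk\ph)=(\nabla_p X_m)g^{mq}e^p\wedge(e_q\hk\ph)=\ell_\ph(\nabla X)$ is the decomposition of $\ell_\ph$ on $\Omega^0_1\oplus\Symo\oplus\Omega^2_7\oplus\Omega^2_{14}$ recorded just after~\eqref{eq:ellg}, together with Lemma~\ref{lemma:starell}. Second, for the arrow $\pi_{27}\dd:\Omega^3_7\to\Omega^4_{27}$ (the $-D^7_{27}$ in Figure~\ref{figure:d}) the paper invokes Proposition~\ref{prop:Liederiv}, writing both sides as Lie derivatives $\pi_{27}\mathcal L_X\ph$ and $\pi_{27}\mathcal L_X\ps$; your $\ell_\ph/\ell_\ps$ approach would reach the same conclusion via Lemma~\ref{lemma:starell} applied to the symmetric traceless part of $\nabla X$. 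Either way the constant comes out as $-1$.
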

\begin{proof}
We will use repeatedly the contraction identities of Lemma~\ref{lemma:identities} and the descriptions~\eqref{eq:forms-isom} of the $\Omega^k_l$ spaces.

(i) We establish the relations for $\pi_7 \dd \pi_1 : \Omega^k_1 \to \Omega^{k+1}_7$ for $k=0,3,4$.

$k=0$: Let $f \in \Omega^0_1$. By Definition~\ref{defn:Doperators}, we have $D^1_7 f = \dd f$.

$k=3$: Let $\beta = f \ph \in \Omega^3_1$. Since $\dd \beta = (\dd f) \wedge \ph \in \Omega^4_7$, we have $\pi_7 (\dd \beta) = - \ph \wedge (\dd f) = - \st ( (\dd f) \hk \ps)$, so $\pi_7 \dd \pi_1 : \Omega^3_1 \to \Omega^4_7$ is identified with $-D^1_7$.

$k=4$: Let $\gamma = f \ps \in \Omega^4_1$. Since $\dd \gamma = (\dd f) \wedge \ps \in \Omega^5_7$, we have $\pi_7 (\dd \gamma) = \ps \wedge (\dd f) = \st ( (\dd f) \hk \ph)$, so $\pi_7 \dd \pi_1 : \Omega^4_1 \to \Omega^5_7$ is identified with $D^1_7$.

(ii) We establish the relations for $\pi_1 \dd \pi_7 : \Omega^k_7 \to \Omega^{k+1}_1$ for $k=2,3,6$.

$k=2$: Let $\alpha = X \hk \ph \in \Omega^2_7$. Then $(\pi_1 \dd \alpha)_{ijk} = f \ph_{ijk}$ for some function $f$. Using~\eqref{eq:omega3stuff} we compute
\begin{align*}
(\dd \alpha)_{ijk} g^{ia} g^{jb} g^{kc} \ph_{abc} & = (\pi_1 \dd \alpha)_{ijk} g^{ia} g^{jb} g^{kc} \ph_{abc} = f \ph_{ijk} g^{ia} g^{jb} g^{kc} \ph_{abc}  = 42 f, \\
& = ( \nabla_i \alpha_{jk} + \nabla_j \alpha_{ki} + \nabla_k \alpha_{ij} ) g^{ia} g^{jb} g^{kc} \ph_{abc} = 3 (\nabla_i \alpha_{jk}) g^{ia} g^{jb} g^{kc} \ph_{abc},
\end{align*}
and thus $f = \tfrac{3}{42} (\nabla_i \alpha_{jk}) g^{ia} g^{jb} g^{kc} \ph_{abc}$. Substituting $\alpha_{jk} = X^m \ph_{mjk}$ we obtain
\begin{equation*}
f = \tfrac{3}{42} (\nabla_i X^m) \ph_{mjk} \ph_{abc} g^{ia} g^{jb} g^{kc} = \tfrac{18}{42} (\nabla_i X^m) g^{ia} g_{ma} = \tfrac{3}{7} \nabla_i X^i,
\end{equation*}
and comparing with Definition~\ref{defn:ops} we find that
\begin{equation} \label{eq:D71-1}
D^7_1 X = \pi_1 \dd (X \hk \ph) = f \ph = (\tfrac{3}{7} \dive X) \ph.
\end{equation}

$k=3$: Let $\beta = X \hk \ps \in \Omega^3_7$. Then $(\pi_1 \dd \beta)_{ijkl} = f \ps_{ijkl}$ for some function $f$. Using~\eqref{eq:omega4stuff} we compute
\begin{align*}
(\dd \beta)_{ijkl} g^{ia} g^{jb} g^{kc} g^{ld} \ps_{abcd} & = (\pi_1 \dd \beta)_{ijkl} g^{ia} g^{jb} g^{kc} g^{ld} \ps_{abcd} = f \ps_{ijkl} g^{ia} g^{jb} g^{kc} g^{ld} \ps_{abcd}  = 168 f, \\
& = ( \nabla_i \beta_{jkl} - \nabla_j \beta_{ikl} + \nabla_k \beta_{ijl} - \nabla_l \beta_{ijk} ) g^{ia} g^{jb} g^{kc} g^{ld} \ps_{abcd} \\
& = 4 (\nabla_i \beta_{jkl}) g^{ia} g^{jb} g^{kc} g^{ld} \ps_{abcd},
\end{align*}
and thus $f = \tfrac{4}{168} (\nabla_i \beta_{jkl}) g^{ia} g^{jb} g^{kc} g^{ld} \ps_{abcd}$. Substituting $\beta_{jkl} = X^m \ps_{mjkl}$ we obtain
\begin{equation*}
f = \tfrac{4}{168} (\nabla_i X^m) \ps_{mjkl} \ps_{abcd} g^{ia} g^{jb} g^{kc} g^{ld} = \tfrac{4 \cdot 24}{168} (\nabla_i X^m) g^{ia} g_{ma} = \tfrac{4}{7} \nabla_i X^i,
\end{equation*}
and comparing with~\eqref{eq:D71-1} we find that $\pi_1 \dd \pi_7 : \Omega^3_7 \to \Omega^4_1$ is identified with $\tfrac{4}{3} D^7_1$.

$k=6$: Let $\st X \in \Omega^6_7$. Then $\pi_1 \dd (\st X) = \dd \st X = \st^2 \dd \st X = - \st (\ds X) = - (\ds X) \vol$, where we have used $\ds = - \st \dd \st$ on odd forms. Comparing with Definition~\ref{defn:ops} and~\eqref{eq:D71-1} we find that $\pi_1 \dd \pi_7 : \Omega^6_7 \to \Omega^7_1$ is identified with $\tfrac{7}{3} D^7_1$.

(iii) We establish the relations for $\pi_7 \dd \pi_7 : \Omega^k_7 \to \Omega^{k+1}_7$ for $k=1,2,3,4,5$.

$k=1$: Let $X \in \Omega^1_7$. Then $(\pi_7 \dd X)_{ij} = Y^m \ph_{mij}$ for some vector field $Y$. We compute
\begin{align*}
(\dd X)_{ij} g^{ia} g^{jb} \ph_{kab} & = (\pi_7 \dd X)_{ij} g^{ia} g^{jb} \ph_{kab} = Y^m \ph_{mij} g^{ia} g^{jb} \ph_{kab}  = 6 Y_k, \\
& = ( \nabla_i X_j - \nabla_j X_i ) g^{ia} g^{jb} \ph_{kab} = 2 (\nabla_i X_j) g^{ia} g^{jb} \ph_{abk},
\end{align*}
from which it follows from Definition~\ref{defn:Doperators} that
\begin{equation} \label{eq:D77-1}
D^7_7 X = \pi_7 \dd X = Y = \tfrac{1}{3} \curl X.
\end{equation}

$k=2$: Let $\alpha = X \hk \ph \in \Omega^2_7$. Then $(\pi_7 \dd \alpha)_{ijk} = Y^m \ps_{mijk}$ for some vector field $Y$. Using~\eqref{eq:omega3stuff} we compute
\begin{align*}
(\dd \alpha)_{ijk} g^{ia} g^{jb} g^{kc} \ps_{labc} & = (\pi_7 \dd \alpha)_{ijk} g^{ia} g^{jb} g^{kc} \ps_{labc} = Y^m \ps_{mijk} g^{ia} g^{jb} g^{kc} \ps_{labc}  = 24 Y_l, \\
& = ( \nabla_i \alpha_{jk} + \nabla_j \alpha_{ki} + \nabla_k \alpha_{ij} ) g^{ia} g^{jb} g^{kc} \ps_{labc} = 3 (\nabla_i \alpha_{jk}) g^{ia} g^{jb} g^{kc} \ps_{labc},
\end{align*}
and thus $Y_l = \tfrac{1}{8} (\nabla_i \alpha_{jk}) g^{ia} g^{jb} g^{kc} \ps_{labc}$. Substituting $\alpha_{jk} = X^m \ph_{mjk}$ we obtain
\begin{equation*}
Y_l = \tfrac{1}{8} (\nabla_i X^m) \ph_{mjk} \ps_{labc} g^{ia} g^{jb} g^{kc} = -\tfrac{4}{8} (\nabla_i X^m) g^{ia} \ph_{mla} = - \tfrac{1}{2} \curl X,
\end{equation*}
and comparing with~\eqref{eq:D77-1} we find that $\pi_7 \dd \pi_7 : \Omega^2_7 \to \Omega^3_7$ is identified with $-\tfrac{3}{2} D^7_7$.

$k=3$: Let $\beta = X \hk \ps \in \Omega^3_7$. Then $\pi_7 (\dd \beta) = \st (Y \hk \ps) = \ph \wedge Y$ for some vector field $Y$. We have $(\pi_7 \dd \beta)_{ijkl} = \ph_{ijk} Y_l - \ph_{ijl} Y_k + \ph_{ikl} Y_j - \ph_{jkl} Y_i$. Using~\eqref{eq:omega3stuff} we compute
\begin{align*}
(\dd \beta)_{ijkl} g^{ia} g^{jb} g^{kc} \ph_{abc} & = (\pi_7 \dd \beta)_{ijkl} g^{ia} g^{jb} g^{kc} \ph_{abc} \\
& = (\ph_{ijk} Y_l - \ph_{ijl} Y_k + \ph_{ikl} Y_j - \ph_{jkl} Y_i) g^{ia} g^{jb} g^{kc} \ph_{abc} \\
& = 42 Y_l - 3 \ph_{ijl} Y_k  g^{ia} g^{jb} g^{kc} \ph_{abc} = 42 Y_l - 3(6 Y_k g^{kc} g_{lc}) = 24 Y_l.
\end{align*}
But we also have
\begin{align*}
(\dd \beta)_{ijkl} g^{ia} g^{jb} g^{kc} \ph_{abc} & = ( \nabla_i \beta_{jkl} - \nabla_j \beta_{ikl} + \nabla_k \beta_{ijl} - \nabla_l \beta_{ijk} ) g^{ia} g^{jb} g^{kc} \ph_{abc} \\
& = 3 (\nabla_i \beta_{jkl}) g^{ia} g^{jb} g^{kc} \ph_{abc} - (\nabla_l \beta_{ijk}) g^{ia} g^{jb} g^{kc} \ph_{abc}.
\end{align*}
Substituting $\beta_{ijk} = X^m \ps_{mijk}$ we obtain
\begin{align*}
(\dd \beta)_{ijkl} g^{ia} g^{jb} g^{kc} \ph_{cab} & = 3 (\nabla_i X^m) \ps_{mljk} g^{ia} g^{jb} g^{kc} \ph_{abc} - (\nabla_l X^m) \ps_{mijk} g^{ia} g^{jb} g^{kc} \ph_{abc} \\
& = - 12 (\nabla_i X^m) g^{ia} \ph_{aml} - 0,
\end{align*}
and thus $Y_l = -\tfrac{12}{24} (\nabla_i X^m) g^{ia} \ph_{aml} = - \tfrac{1}{2} \curl X$. Comparing with~\eqref{eq:D77-1} we find that $\pi_7 \dd \pi_7 : \Omega^3_7 \to \Omega^4_7$ is identified with $-\tfrac{3}{2} D^7_7$.

$k=4$: Let $\gamma = \st( X \hk \ps) = \ph \wedge X \in \Omega^4_7$. Then $\pi_7 (\dd \gamma) = \pi_7 \dd (\ph \wedge X) = -\pi_7 (\ph \wedge \dd X) = \st( Y \hk \ph)$ for some vector field $Y$. We compute
\begin{equation*}
\st( Y \hk \ph ) = -\pi_7 (\ph \wedge \dd X)  = - \ph \wedge (\pi_7 \dd X) = 2 \st (\pi_7 \dd X).
\end{equation*}
Comparing with~\eqref{eq:D77-1} we find that $\pi_7 \dd \pi_7 : \Omega^4_7 \to \Omega^5_7$ is identified with $2 D^7_7$.

$k=5$: Let $\eta = \st( X \hk \ph) = \ps \wedge X \in \Omega^5_7$. Then $\pi_7 (\dd \eta) = \dd \eta = \dd (\ps \wedge X) = \ps \wedge \dd X = \st Y$ for some vector field $Y$. Using Definition~\ref{defn:ops}, we compute
\begin{equation*}
Y = \st( \ps \wedge \dd X) = \curl X.
\end{equation*}
Comparing with~\eqref{eq:D77-1} we find that $\pi_7 \dd \pi_7 : \Omega^5_7 \to \Omega^6_7$ is identified with $3 D^7_7$.

(iv) We establish the relations for $\pi_{14} \dd \pi_7 : \Omega^k_7 \to \Omega^{k+1}_{14}$ for $k=1,4$.

$k=1$: Let $X \in \Omega^1_7$. By definition, we have $\pi_{14} \dd X = D^7_{14} X$.

$k=4$. Let $\gamma = \st (X \hk \ps) = \ph \wedge X \in \Omega^4_7$. Then $\dd \gamma = - \ph \wedge (\dd X)$, so $\pi_{14} \dd \gamma = - \pi_{14} (\ph \wedge \dd X) = - \ph \wedge (\pi_{14} \dd X) = - \st (\pi_{14} \dd X)$. Thus we find that $\pi_{14} \dd \pi_7 : \Omega^4_7 \to \Omega^5_{14}$ is identified with $- D^7_{14}$.

(v) We establish the relations for $\pi_7 \dd \pi_{14} : \Omega^k_{14} \to \Omega^{k+1}_7$ for $k=2,5$.

$k=1$: Let $\alpha \in \Omega^2_{14}$. By definition, we have $\pi_7 \dd \alpha = D^{14}_7 X$.

$k=4$. Let $\eta = \st \beta \in \Omega^5_{14}$ where $\beta \in \Omega^2_{14}$. We have $\st \beta = \ph \wedge \beta$, so $\pi_7 \dd (\st \beta) = \dd (\st \beta) = - \ph \wedge \dd \beta \in \Omega^6_7$. We can write $\pi_7 \dd \beta = Y \hk \ps \in \Omega^3_7$ for some vector field $Y$. Then using Lemma~\ref{lemma:identities2} we find $\pi_7 \dd (\st \beta) = - \ph \wedge \dd \beta = - \ph \wedge (\pi_7 \dd \beta) = - \ph \wedge (Y \hk \ps) = 4 \st Y$. Thus we find that $\pi_7 \dd \pi_{14} : \Omega^5_{14} \to \Omega^6_7$ is identified with $4 D^{14}_7$.

(vi) We establish the relations for $\pi_{27} \dd \pi_7 : \Omega^k_7 \to \Omega^{k+1}_{27}$ for $k=2,3$.

$k=2$: Let $\alpha \in \Omega^2_7$. By definition, we have $\pi_{27} \dd \alpha = D^7_{27} \alpha$.

$k=3$: Let $\beta = X \hk \ps \in \Omega^4_7$. Then $\pi_{27} \dd \beta = \pi_{27} \dd (X \hk \ps) = \pi_{27} \mathcal L_X \ps$. Consider $\alpha = X \hk \ph$. Then similarly we have $\pi_{27} \dd \alpha = \pi_{27} \mathcal L_X \ph$. By Proposition~\ref{prop:Liederiv}, we have $\pi_{27} \dd (X \hk \ps) = - \st (\pi_{27} \dd (X \hk \ph))$. Thus we find that $\pi_{27} \dd \pi_7 : \Omega^3_{27} \to \Omega^4_7$ is identified with $- D^7_{27}$.

(vii) We establish the relations for $\pi_7 \dd \pi_{27} : \Omega^k_{27} \to \Omega^{k+1}_7$ for $k=3,4$.

$k=3$: Let $\beta = \ell_{\ph} h \in \Omega^3_{27}$, where $h \in \Symo$. Then $\pi_7 (\dd \beta) = \st(Y \hk \ps) = \ph \wedge Y$ for some vector field $Y$. We have $(\pi_7 \dd \beta)_{ijkl} = \ph_{ijk} Y_l - \ph_{ijl} Y_k + \ph_{ikl} Y_j - \ph_{jkl} Y_i$. Using~\eqref{eq:omega4stuff} we compute
\begin{align*}
(\dd \beta)_{ijkl} g^{ia} g^{jb} g^{kc} \ph_{abc} & = (\pi_7 \dd \beta)_{ijkl} g^{ia} g^{jb} g^{kc} \ph_{abc} \\
& = (\ph_{ijk} Y_l - \ph_{ijl} Y_k + \ph_{ikl} Y_j - \ph_{jkl} Y_i) g^{ia} g^{jb} g^{kc} \ph_{abc} \\
& = 42 Y_l - 3 \ph_{ijl} Y_k  g^{ia} g^{jb} g^{kc} \ph_{abc} = 42 Y_l - 3(6 Y_k g^{kc} g_{lc}) = 24 Y_l.
\end{align*}
But we also have
\begin{align*}
(\dd \beta)_{ijkl} g^{ia} g^{jb} g^{kc} \ph_{abc} & = ( \nabla_i \beta_{jkl} - \nabla_j \beta_{ikl} + \nabla_k \beta_{ijl} - \nabla_l \beta_{ijk} ) g^{ia} g^{jb} g^{kc} \ph_{abc} \\
& = 3 (\nabla_i \beta_{jkl}) g^{ia} g^{jb} g^{kc} \ph_{abc} - (\nabla_l \beta_{ijk}) g^{ia} g^{jb} g^{kc} \ph_{abc}.
\end{align*}
Substituting $\beta_{ijk} = h_{ip} g^{pq} \ph_{qjk} + h_{jp} g^{pq} \ph_{qki} + h_{kp} g^{pq} \ph_{qjk}$ we obtain
\begin{align*}
24 Y_l & = (\dd \beta)_{ijkl} g^{ia} g^{jb} g^{kc} \ph_{cab} \\
& = 3 (\nabla_i (h_{jp} g^{pq} \ph_{qkl} + h_{kp} g^{pq} \ph_{qlj} + h_{lp} g^{pq} \ph_{qjk})) g^{ia} g^{jb} g^{kc} \ph_{abc} \\
& \qquad {} - (\nabla_l (h_{ip} g^{pq} \ph_{qjk} + h_{jp} g^{pq} \ph_{qki} + h_{kp} g^{pq} \ph_{qjk})) g^{ia} g^{jb} g^{kc} \ph_{abc} \\
& = 6 (\nabla_i h_{jp}) g^{pq} g^{ia} g^{jb} (g^{kc} \ph_{lqk} \ph_{abc}) + 3 (\nabla_i h_{lp}) g^{pq} g^{ia} (g^{jb} g^{kc} \ph_{qjk} \ph_{abc}) \\
& \qquad {} - 3 (\nabla_l h_{ip}) g^{pq} g^{ia} (g^{jb} g^{kc} \ph_{qjk} \ph_{abc}).
\end{align*}
We further simplify this as
\begin{align*}
24 Y_l & = 6 (\nabla_i h_{jp}) g^{pq} g^{ia} g^{jb} (g_{la} g_{qb} - g_{lb} g_{qa} - \ps_{lqab}) + 3 (\nabla_i h_{lp}) g^{pq} g^{ia} (6 g_{qa}) - 3 (\nabla_l h_{ip}) g^{pq} g^{ia} (6 g_{qa}) \\
& = 6 (\nabla_l h_{jp}) g^{jp} - 6 (\nabla_i h_{lp} )g ^{ip} - 0 + 18 (\nabla_i h_{lp}) g^{ip} - 18 (\nabla_l h_{ip}) g^{ip} \\
& = 6 \nabla_l (\tr h) - 6 (\nabla_{i} h_{jl}) g^{ij} + 18 (\nabla_i h_{jl}) g^{ij} - 18 \nabla_l (\tr h) = 12 g^{ij} (\nabla_{i} h_{jl}),
\end{align*}
and thus $Y_l = \tfrac{1}{2} g^{ij} (\nabla_{i} h_{jl})$. It follows from Definition~\ref{defn:Doperators} that
\begin{equation} \label{eq:D277-1}
D^{27}_7 h = \pi_7 \dd (\ell_{\ph} h) = \st ( Y \hk \ps), \qquad \text{ where $Y_l = \tfrac{1}{2} g^{ij} (\nabla_{i} h_{jl})$}.
\end{equation}

$k=4$: Let $\gamma = \st (\ell_{\ph} h) \in \Omega^4_{27}$, where $h \in \Symo$. Then $\pi_7 (\dd \gamma) = \st(Y \hk \ph)$ for some vector field $Y$. Taking Hodge star of both sides, we have $Y \hk \ph = \st \pi_7 (\dd \st \ell_{\ph} h) = \pi_7 \st \dd \st (\ell_{\ph} h) = -\pi_7 \ds (\ell_{\ph} h)$. Thus we have
\begin{align*}
-(\ds (\ell_{\ph} h))_{ij} g^{ia} g^{jb} \ph_{kab} & = -(\pi_7 \ds (\ell_{\ph} h))_{ij} g^{ia} g^{jb} \ph_{kab} = (Y \hk \ph)_{ij} g^{ia} g^{jb} \ph_{kab} \\
& = Y^m \ph_{mij} g^{ia} g^{jb} \ph_{kab} = 6 Y_k.
\end{align*}
But we also have
\begin{align*}
-(\ds (\ell_{\ph} h))_{ij} g^{ia} g^{jb} \ph_{kab} & = g^{pq} (\nabla_p (\ell_{\ph} h)_{qij}) g^{ia} g^{jb} \ph_{kab} \\
& = g^{pq} (\nabla_p (h_{ql} g^{lm} \ph_{mij} + h_{il} g^{lm} \ph_{mjq} + h_{jl} g^{lm} \ph_{mqi})) g^{ia} g^{jb} \ph_{kab} \\
& = g^{pq} (\nabla_p h_{ql}) g^{lm} (g^{ia} g^{jb} \ph_{mij} \ph_{kab}) + 2 g^{pq} (\nabla_p h_{il}) g^{lm} g^{ia} (g^{jb} \ph_{qmj} \ph_{kab}) \\
& = g^{pq} (\nabla_p h_{ql}) g^{lm} (6 g_{mk}) + 2 g^{pq} (\nabla_p h_{il}) g^{lm} g^{ia} (g_{qk} g_{ma} - g_{qa} g_{mk} - \ps_{qmka}) \\
& = 6 g^{pq} (\nabla_p h_{qk}) + 2 \nabla_k (\tr h) - 2 g^{ip} (\nabla_p h_{ik}) - 0 = 4 g^{ij} (\nabla_{i} h_{jk})
\end{align*}
Thus we have $Y_k = \tfrac{2}{3} g^{ij} (\nabla_{i} h_{jk}) = \tfrac{4}{3} (\tfrac{1}{2} g^{ij} (\nabla_{i} h_{jk}))$. Comparing with~\eqref{eq:D277-1} we find that $\pi_7 \dd \pi_{27} : \Omega^4_{27} \to \Omega^5_7$ is identified with $\tfrac{4}{3} D^{27}_7$.
\end{proof}

\begin{cor} \label{cor:d-relations}
The operators of Definition~\ref{defn:Doperators} satisfy the following fourteen relations:
\begin{equation} \label{eq:d-relations}
\begin{aligned}
D^7_7 D^1_7 & = 0, \qquad \qquad & D^7_{14} D^1_7 & = 0, \\
D^7_1 D^7_7 & = 0, \qquad \qquad & \tfrac{3}{2} D^7_7 D^7_7 - D^{14}_7 D^7_{14} & = 0, \\
- D^1_7 D^7_1 + \tfrac{9}{4} D^7_7 D^7_7 + D^{27}_7 D^7_{27} & = 0, \qquad \qquad & \tfrac{3}{2} D^7_{14} D^7_7 - D^{27}_{14} D^7_{27} & = 0, \\
\tfrac{3}{2} D^7_{27} D^7_7 + D^{27}_{27} D^7_{27} & = 0, \qquad \qquad & D^7_{27} D^7_7 + D^{14}_{27} D^7_{14} & = 0, \\
D^7_1 D^{14}_7 & = 0, \qquad \qquad & \tfrac{3}{2} D^7_7 D^{14}_7 - D^{27}_7 D^{14}_{27} & = 0, \\
D^7_{27} D^{14}_7 - D^{27}_{27} D^{14}_{27} & = 0, \qquad \qquad & D^7_7 D^{27}_7 + D^{14}_7 D^{27}_{14} & = 0, \\
\tfrac{3}{2} D^7_7 D^{27}_7 + D^{27}_7 D^{27}_{27} & = 0, \qquad \qquad & D^7_{14} D^{27}_7 - D^{27}_{14} D^{27}_{27} & = 0.
\end{aligned}
\end{equation}
\end{cor}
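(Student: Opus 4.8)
The plan is to derive all fourteen identities from the single relation $\dd^2 = 0$ together with the componentwise description of $\dd$ recorded in Figure~\ref{figure:d} (Proposition~\ref{prop:dfigure}). Since $\Omega^k = \bigoplus_l \Omega^k_l$ orthogonally, the exterior derivative splits as $\dd|_{\Omega^k_l} = \sum_m \pi^{k+1}_m \circ \dd|_{\Omega^k_l}$, so for every source type $l$ on $\Omega^k$ and every target type $n$ on $\Omega^{k+2}$ we get
\[
0 \;=\; \pi^{k+2}_n \circ \dd^2|_{\Omega^k_l} \;=\; \sum_{m} \big( \pi^{k+2}_n \circ \dd|_{\Omega^{k+1}_m} \big) \circ \big( \pi^{k+1}_m \circ \dd|_{\Omega^k_l} \big),
\]
where $m$ ranges over the types appearing in $\Omega^{k+1}$. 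By Proposition~\ref{prop:vanishingDs} the terms for which the relevant arrow in Figure~\ref{figure:d} is absent drop out, and by Proposition~\ref{prop:dfigure} each surviving factor $\pi^{j+1}_m \circ \dd|_{\Omega^j_p}$ equals a definite scalar times the abstract operator $D^p_m$. Thus each pair $(l,n)$ produces exactly one linear relation among the two-step compositions $D^p_n D^l_p$, with coefficients read straight off the figure.

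Carrying this out, I would run through the source spaces one at a time. For instance: the pair $(\Omega^0_1, \Omega^2_7)$ admits the single path $\Omega^0_1 \xrightarrow{D^1_7} \Omega^1_7 \xrightarrow{D^7_7} \Omega^2_7$, giving $D^7_7 D^1_7 = 0$, and similarly $(\Omega^0_1, \Omega^2_{14})$ gives $D^7_{14} D^1_7 = 0$; the pair $(\Omega^1_7, \Omega^3_7)$ admits the two paths through $\Omega^2_7$ and through $\Omega^2_{14}$, giving $-\tfrac32 D^7_7 D^7_7 + D^{14}_7 D^7_{14} = 0$; the pair $(\Omega^2_7, \Omega^4_7)$ admits three paths, through $\Omega^3_1$, $\Omega^3_7$ and $\Omega^3_{27}$, giving $-D^1_7 D^7_1 + \tfrac94 D^7_7 D^7_7 + D^{27}_7 D^7_{27} = 0$; and so on. It suffices to process the source spaces $\Omega^0_1, \Omega^1_7, \Omega^2_7, \Omega^2_{14}, \Omega^3_7, \Omega^3_{27}, \Omega^4_{27}$: these yield precisely the fourteen relations in~\eqref{eq:d-relations}, while the remaining source spaces in Figure~\ref{figure:d} only reproduce, up to an overall nonzero constant, relations already obtained (this should be checked, but is immediate once one notices, e.g., that $(\Omega^4_7, \Omega^6_7)$ gives back relation four, $(\Omega^5_7, \Omega^7_1)$ gives back $D^7_1 D^7_7 = 0$, etc.).

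There is no real obstacle here: the content lies entirely in Proposition~\ref{prop:dfigure}, and the corollary is a purely formal consequence of $\dd^2 = 0$. The one thing requiring genuine care is the bookkeeping forced by the non-isometric identifications $\Omega^k_l \cong \Omega^{k'}_l$ of~\eqref{eq:forms-isom}: the abstract operators $D^p_m$ are maps between the fixed model spaces ($\Omega^0_1$, $\Omega^1_7 \cong \Gamma(TM)$, $\Omega^2_{14}$, $\Symo$), the scalars in Figure~\ref{figure:d} encode exactly the discrepancy between a genuine component of $\dd$ and the corresponding $D^p_m$ under these identifications, and the compositions appearing in~\eqref{eq:d-relations} are to be interpreted on those model spaces. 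With that convention fixed, one must be consistent about which (curved) arrows in Figure~\ref{figure:d} carry minus signs, and verify exhaustively that no two-step path in the diagram has been missed; this is routine but tedious.
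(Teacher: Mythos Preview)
Your proposal is correct and is precisely the paper's approach: the authors likewise derive all fourteen relations by computing $\pi_{l'}\dd^2\pi_l : \Omega^k_l \to \Omega^{k+2}_{l'}$ using Figure~\ref{figure:d} and $\dd^2=0$, noting that some relations recur and that the pairs $(l,l')=(7,7),(7,27),(27,7)$ each yield two distinct relations. Your list of source spaces $\Omega^0_1,\Omega^1_7,\Omega^2_7,\Omega^2_{14},\Omega^3_7,\Omega^3_{27},\Omega^4_{27}$ is exactly right and your sample computations are accurate.
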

\begin{proof}
These relations all follow from Figure~\ref{figure:d} and the fact that $\dd^2 = 0$, by computing $\pi_{l'} \dd^2 \pi_{l} : \Omega^k_l \to \Omega^{k+2}_{l'}$ for all $l, l' \in \{ 1, 7, 14, 27 \}$ and all $k = 0, \ldots, 5$. Some of the relations arise multiple times this way. Moreover, there are \emph{two distinct relations} for $(l, l') = (7,7)$, $(7,27)$, and $(27,7)$.
\end{proof}

\begin{cor} \label{cor:adjoints}
Consider the maps $D^l_m : \Omega^k_l \to \Omega^{k+1}_m$ introduced in Definition~\ref{defn:Doperators}. Recall these were only defined for the smallest integer $k$ where the composition makes sense. The formal adjoint is a map $(D^l_m)^* : \Omega^{k+1}_m \to \Omega^l_m$. With respect to the identifications described in~\eqref{eq:forms-isom}, these adjoint maps are given by
\begin{equation} \label{eq:adjoints}
\begin{aligned}
(D^1_7)^* & = -\tfrac{7}{3} D^7_1, \qquad & (D^7_7)^* & = 3 D^7_7, \qquad & (D^7_{14})^* & = 4 D^{14}_7, \\
(D^7_1)^* & = - D^1_7, \qquad & (D^7_{27})^* & = - \tfrac{4}{3} D^{27}_7, \qquad & (D^{14}_7)^* & = D^7_{14}, \\
(D^{14}_{27})^* & = - D^{27}_{14}, \qquad & (D^{27}_7)^* & = - D^7_{27}, \qquad & (D^{27}_{27})^* & = D^{27}_{27}, \\
(D^{27}_{14})^* & = - D^{14}_{27}.
\end{aligned}
\end{equation} 
\end{cor}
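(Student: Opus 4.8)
The plan is to avoid any direct computation of $\ds$ and instead reduce the statement entirely to reading coefficients off Figure~\ref{figure:d} via the Hodge star. First I would use the factorization $D^l_m = \pi^{k+1}_m \circ \dd \circ \iota^k_l$, where $\iota^k_l \colon \Omega^k_l \hookrightarrow \Omega^k$ is the isometric inclusion. Since the formal adjoint of $\iota^k_l$ is the orthogonal projection $\pi^k_l$ and $\ds$ is the formal adjoint of $\dd$, taking formal adjoints gives
\[
(D^l_m)^* \;=\; \pi^k_l \circ \ds\big|_{\Omega^{k+1}_m} \colon \Omega^{k+1}_m \longrightarrow \Omega^k_l .
\]

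Next I would rewrite the codifferential using $\ds = (-1)^{k+1}\,\st\,\dd\,\st$ on $\Omega^{k+1}$, together with three facts from Section~\ref{sec:forms}: that $\st^2 = \mathrm{id}$ (as $\dim M = 7$ is odd), that $\st$ commutes with every projection $\pi_l$, and --- the point that makes the argument go through --- that $\st$ \emph{is} the identification $\Omega^j_l \cong \Omega^{7-j}_l$ built into~\eqref{eq:forms-isom}. Commuting $\st$ past $\pi^k_l$ and restricting, this gives
\[
(D^l_m)^* \;=\; (-1)^{k+1}\,\st \circ \big( \pi^{7-k}_l \circ \dd\big|_{\Omega^{6-k}_m} \big) \circ \st\big|_{\Omega^{k+1}_m},
\]
where the inner $\st$ identifies $\Omega^{k+1}_m$ with $\Omega^{6-k}_m$ and the outer $\st$ identifies $\Omega^{7-k}_l$ with $\Omega^k_l$. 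Thus, read through the identifications of~\eqref{eq:forms-isom}, $(D^l_m)^*$ is exactly $(-1)^{k+1}$ times the component $\pi^{7-k}_l \circ \dd\big|_{\Omega^{6-k}_m} \colon \Omega^{6-k}_m \to \Omega^{7-k}_l$ of the exterior derivative, which by Proposition~\ref{prop:dfigure} is a definite constant multiple of $D^m_l$ --- the multiple being the label on the arrow $\Omega^{6-k}_m \to \Omega^{7-k}_l$ in Figure~\ref{figure:d}.

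Finally I would carry out the bookkeeping for the ten operators. Each $D^l_m$ is defined at a definite degree $k$ --- $D^1_7$ at $k=0$; $D^7_7$ and $D^7_{14}$ at $k=1$; $D^7_1, D^7_{27}, D^{14}_7, D^{14}_{27}$ at $k=2$; $D^{27}_7$ and $D^{27}_{27}$ at $k=3$; and $D^{27}_{14}$ at $k=4$ --- so the sign $(-1)^{k+1}$ is fixed, and the relevant arrow of Figure~\ref{figure:d} supplies the constant. For instance $(D^1_7)^* = -\,\pi^7_1\dd|_{\Omega^6_7} = -\tfrac{7}{3} D^7_1$, $(D^7_7)^* = +\,\pi^6_7\dd|_{\Omega^5_7} = 3 D^7_7$, $(D^7_1)^* = -\,\pi^5_7\dd|_{\Omega^4_1} = -D^1_7$, $(D^{27}_7)^* = +\,\pi^4_{27}\dd|_{\Omega^3_7} = -D^7_{27}$, and $(D^{27}_{14})^* = -\,\pi^3_{27}\dd|_{\Omega^2_{14}} = -D^{14}_{27}$; the remaining five are obtained the same way, and altogether they reproduce~\eqref{eq:adjoints}.

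The only genuinely delicate point --- essentially the only place where anything beyond formal manipulation enters --- is the middle step: because the identifications in~\eqref{eq:forms-isom} are \emph{not} isometric (Remark~\ref{rmk:not-isom}), it is crucial that the map realizing $\Omega^j_l \cong \Omega^{7-j}_l$ is the honest Hodge star $\st$ and that $\st^2 = \mathrm{id}$, so that the two flanking copies of $\st$ cancel cleanly against the identifications and leave no stray constant; all the degree-dependent factors in~\eqref{eq:adjoints} then issue transparently from Figure~\ref{figure:d} alone. A pedestrian alternative --- writing $\ds$ on each $\Omega^{k+1}_m$ in a local frame and projecting, in the manner of the proof of Proposition~\ref{prop:dfigure} --- would also work but is much more laborious.
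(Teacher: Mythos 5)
Your proof is correct and is essentially the paper's own argument, just spelled out in detail: the paper's proof consists precisely of citing Figure~\ref{figure:d}, the identity $\ds = (-1)^k \st \dd \st$ on $\Omega^k$, and the compatibility of $\st$ with the identifications in~\eqref{eq:forms-isom}. Your bookkeeping of the signs $(-1)^{k+1}$ and the arrow coefficients reproduces~\eqref{eq:adjoints} correctly, and your emphasis on the flanking copies of $\st$ cancelling against the (non-isometric) identifications is exactly the point behind Remark~\ref{rmk:adjoints}.
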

\begin{proof}
These follow from Figure~\ref{figure:d} and the facts that $\ds = (-1)^k \st \dd \st$ on $\Omega^k$ and that $\st$ is compatible with the identifications given in~\eqref{eq:forms-isom}.
\end{proof}

\begin{rmk} \label{rmk:adjoints}
One has to be very careful with the ``equations'' in~\eqref{eq:adjoints}. In particular, taking the adjoint of both sides of an equation in~\eqref{eq:adjoints} in general violates $P^{**} = P$. This is because these are not really \emph{equalities}, but identifications, and recall that unfortunately the identifications in~\eqref{eq:bundle-decomp} are not isometries, as explained in Remark~\ref{rmk:not-isom}. However, this will not cause us any problems, because the notation $D^l_m$ will always only refer to the maps introduced in Definition~\ref{defn:Doperators}, and we will never have need to consider the adjoints of any other components of $\dd$.
\end{rmk}

We can now describe the Hodge Laplacian $\Delta = \dd \ds + \ds \dd$ on each summand $\Omega^k_l$ in terms of the operators of Definition~\ref{defn:Doperators}.

\begin{prop} \label{prop:Laplacian}
On $\Omega^k_l$, the Hodge Laplacian $\Delta$ can be written as follows:
\begin{equation} \label{eq:Laplacian}
\begin{aligned}
\left. \Delta \right|_{\Omega^k_1} & = -\tfrac{7}{3} D^7_1 D^1_7 & & \text{for $k=0,3,4,7$}, \\
\left. \Delta \right|_{\Omega^k_7} & = 9 D^7_7 D^7_7 - \tfrac{7}{3} D^1_7 D^7_1 & & \text{for $k=1,2,3,4,5,6$}, \\
\left. \Delta \right|_{\Omega^k_{14}} & = 5 D^7_{14} D^{14}_7 - D^{27}_{14} D^{14}_{27} & & \text{for $k=2,5$}, \\
\left. \Delta \right|_{\Omega^k_{27}} & = -\tfrac{7}{3} D^7_{27} D^{27}_7 - D^{14}_{27} D^{27}_{14} + (D^{27}_{27})^2 & & \text{for $k=3,4$}.
\end{aligned}
\end{equation}
\end{prop}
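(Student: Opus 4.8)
The plan is to expand $\Delta = \dd\ds + \ds\dd$ on each summand $\Omega^k_l$, reading the components of $\dd$ off Figure~\ref{figure:d} and the components of $\ds$ off the Hodge-star formula $\ds = (-1)^j\st\dd\st$ on $\Omega^j$, and then to collapse the resulting expression using the $\dd^2 = 0$ relations of Corollary~\ref{cor:d-relations}.

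First, since $(M,\ph)$ is torsion-free the subbundles $\Lambda^k_l$ are parallel, so the orthogonal projections $\pi^k_l$ commute with $\Delta$; hence $\Delta$ preserves every $\Omega^k_l$ and it suffices to compute the $\Omega^k_l\to\Omega^k_l$ component. Fix such a summand. By Figure~\ref{figure:d} the component $\dd\colon\Omega^k_l\to\Omega^{k+1}_m$ is a known constant multiple of $D^l_m$. Since $\st$ is compatible with the identifications~\eqref{eq:forms-isom} and commutes with the $\pi_l$, the component $\ds\colon\Omega^{k+1}_m\to\Omega^k_l$ is $(-1)^{k+1}\st\circ\big(\dd\colon\Omega^{6-k}_m\to\Omega^{7-k}_l\big)\circ\st$, again a known constant multiple of $D^m_l$ — and crucially, this constant must be read this way and \emph{not} by naively dualizing the identities of Corollary~\ref{cor:adjoints}, since (as Remark~\ref{rmk:adjoints} warns) the identifications~\eqref{eq:forms-isom} are not isometries; for example the component $\Omega^3_7\to\Omega^2_7$ of $\ds$ is $-2D^7_7$, not $-\tfrac32(D^7_7)^*$. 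Assembling the four pieces of $\dd\ds + \ds\dd$ and using orthogonality of the decomposition produces $\Delta|_{\Omega^k_l}$ as an explicit linear combination of operators $D^m_l D^l_m$ (and $(D^{27}_{27})^2$ when $l = 27$).

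One then substitutes the relations of Corollary~\ref{cor:d-relations}. The columns $l = 1, 14, 27$ require no relations: on $\Omega^0_1$ one has $\ds = 0$, so $\Delta = (D^1_7)^* D^1_7 = -\tfrac73 D^7_1 D^1_7$; on $\Omega^2_{14}$ the four pieces sum to $5 D^7_{14} D^{14}_7 - D^{27}_{14} D^{14}_{27}$; and on $\Omega^3_{27}$ they sum to $-\tfrac73 D^7_{27} D^{27}_7 - D^{14}_{27} D^{27}_{14} + (D^{27}_{27})^2$. The column $l = 7$ is where the relations do real work: on $\Omega^1_7$ the pieces sum to $3 D^7_7 D^7_7 + 4 D^{14}_7 D^7_{14} - \tfrac73 D^1_7 D^7_1$, and $\tfrac32 D^7_7 D^7_7 - D^{14}_7 D^7_{14} = 0$ turns this into $9 D^7_7 D^7_7 - \tfrac73 D^1_7 D^7_1$; on $\Omega^2_7$ the pieces sum to $6 D^7_7 D^7_7 - D^1_7 D^7_1 - \tfrac43 D^{27}_7 D^7_{27}$, and $-D^1_7 D^7_1 + \tfrac94 D^7_7 D^7_7 + D^{27}_7 D^7_{27} = 0$ turns this into the same expression $9 D^7_7 D^7_7 - \tfrac73 D^1_7 D^7_1$; the case $\Omega^3_7$ is similar.

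That the formula is independent of $k$ within a column follows by Hodge duality: $\st$ commutes with $\Delta$ and carries $\Omega^k_l$ onto $\Omega^{7-k}_l$ compatibly with~\eqref{eq:forms-isom}, so $\Delta|_{\Omega^k_l}$ and $\Delta|_{\Omega^{7-k}_l}$ coincide on the common model. This pairs the degrees, leaving only $k\in\{0,3\}$ for $l=1$, $k\in\{1,2,3\}$ for $l=7$, and one degree for $l\in\{14,27\}$; the needed coincidences (notably $\Delta|_{\Omega^1_7} = \Delta|_{\Omega^2_7} = \Delta|_{\Omega^3_7}$) emerge from the computations above. The main obstacle is exactly this bookkeeping — tracking every constant and sign in the $\ds$-components while avoiding the pitfall of Remark~\ref{rmk:adjoints}, and checking that the relations of Corollary~\ref{cor:d-relations} absorb the leftover terms so that a single $k$-uniform formula survives in each column. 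No input beyond Figure~\ref{figure:d} and Corollary~\ref{cor:d-relations} is needed.
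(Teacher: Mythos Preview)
Your proposal is correct and follows essentially the same approach as the paper: expand $\Delta = \dd\ds + \ds\dd$ componentwise using Figure~\ref{figure:d} and $\ds = (-1)^k\st\dd\st$, then simplify via the relations of Corollary~\ref{cor:d-relations}. You supply more detail than the paper (which merely states that the formulas ``can be checked on a case-by-case basis''), and your use of Hodge duality to halve the number of cases, together with your explicit warning about Remark~\ref{rmk:adjoints}, is a welcome elaboration rather than a different method.
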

\begin{proof}
Recall that $\ds = (-1)^k \st \dd \st$ on $\Omega^k$ and that $\st$ is compatible with the identifications given in~\eqref{eq:forms-isom}. The expressions in~\eqref{eq:Laplacian} can be checked on a case-by-case basis using these facts, Figure~\ref{figure:d}, and the relations in Corollary~\ref{cor:d-relations}. Note that one can show from general principles that $\Delta_{\dd}$ preserves the splittings~\eqref{eq:bundle-decomp} when $\ph$ is parallel, which we always assume. (See~\cite{Joyce} for details.) However, the proof of the present proposition gives an explicit verification of this fact, viewing it as a consequence of the fundamental relations~\eqref{eq:d-relations}.
\end{proof}

\begin{rmk} \label{rmk:TF}
We emphasize that for Proposition~\ref{prop:dfigure}, Corollary~\ref{cor:d-relations}, and Propostion~\ref{prop:Laplacian}, the torsion-free assumption is essential, as the proofs frequently made use of $\nabla \ph = \nabla \ps = \dd \ph = \dd \ps = 0$. For $\G$-structures with torsion, there would be many additional terms involving torsion, and in particular the Laplacian $\Delta$ would \emph{not} preserve the splittings~\eqref{eq:bundle-decomp}. See also Remark~\ref{rmk:when-torsion-free}.
\end{rmk}

\begin{rmk} \label{rmk:Bryant}
As mentioned in the introduction, the results of Proposition~\ref{prop:dfigure}, Corollary~\ref{cor:d-relations}, and Propostion~\ref{prop:Laplacian} have appeared before in~\cite[Section 5.2, Tables 1--3]{Bryant}, where Bryant says the results follow by routine computation. We have presented all the details for completeness and for readers to be able to use the computational techniques for possible future applications. Note that one has to be careful to compare our results with~\cite{Bryant}. First, we use a different orientation convention, which effectively replaces $\st$ by $-\st$ and $\ps$ by $-\ps$, although Bryant denotes the $3$-form by $\sigma$. Secondly, we use slightly different \emph{identifications} between the spaces $\Omega^k_l$ for different values of $k$. Finally, Bryant defines the ``fundamental'' operators differently. For example, Bryant's $\dd^7_7$ is our $3 D^7_7$, and Bryant's $- \tfrac{3}{7} \dd^7_1$ is our $D^7_1$. We did notice at least one typographical error in~\cite{Bryant}. The equation $\dd (\alpha \wedge \st_{\sigma} \sigma) = - \st_{\sigma} \dd^7_7 \alpha$ in Table 1 is inconsistent with the definition $\dd^7_7 \alpha = \st_{\sigma} ( \dd (\alpha \wedge \st_{\sigma} \sigma))$ on the previous page, since $(\st_{\sigma})^2 = +1$, not $-1$.
\end{rmk}

From now on we assume $M$ is \emph{compact}, as we will be using Hodge theory throughout. Moreover, we can integrate by parts, so if $P$ is a linear operator on forms, then $P \alpha = 0 \iff P^* P \alpha = 0$, which we will use often. The next result relates the kernel of the operators in Definition~\ref{defn:Doperators} with harmonic $1$-forms. This result is \emph{fundamental} and is used often in the rest of the paper.

\begin{thm} \label{thm:harmonic1}
We have $\ker D^7_7 = \ker D^7_{14}$. Furthermore, let $\mathcal H^1 = \ker \left. \Delta \right|_{\Omega^1}$ denote the space of harmonic $1$-forms. Then we have
\begin{equation} \label{eq:harmonic1}
\begin{aligned}
\mathcal H^1 & = \ker D^7_1 \cap \ker D^7_7 \cap \ker D^7_{14} \\
& = \ker D^7_1 \cap \ker D^7_7 \\
& = \ker D^7_1 \cap \ker D^7_{27} \\
& = \ker D^7_7 \cap \ker D^7_{27}.
\end{aligned}
\end{equation}
In particular, the intersection of \emph{any two of the three} spaces $\ker D^7_1$, $\ker D^7_7$, $\ker D^7_{27}$ is $\mathcal H^1$.
\end{thm}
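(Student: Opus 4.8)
The plan is to exploit two structural inputs: the formula for $\Delta|_{\Omega^1}$ from Proposition~\ref{prop:Laplacian}, and the relations among the $D^l_m$ coming from $\dd^2 = 0$ in Corollary~\ref{cor:d-relations}, together with the standard compactness fact that $P\alpha = 0 \iff P^*P\alpha = 0$. The first claim, $\ker D^7_7 = \ker D^7_{14}$, is where the real work lies; once it is in hand the chain of equalities in~\eqref{eq:harmonic1} should follow by combining it with the Laplacian formula and a couple of the $\dd^2=0$ relations.

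First I would establish $\ker D^7_7 = \ker D^7_{14}$. On $\Omega^1_7$ we have $\Delta = 9 D^7_7 D^7_7 - \tfrac{7}{3} D^1_7 D^7_1$ from Proposition~\ref{prop:Laplacian}. The second-order relations of Corollary~\ref{cor:d-relations} include $\tfrac{3}{2} D^7_7 D^7_7 - D^{14}_7 D^7_{14} = 0$, i.e. $D^7_7 D^7_7 = \tfrac{2}{3} D^{14}_7 D^7_{14}$ on $\Omega^1_7$, and also $-D^1_7 D^7_1 + \tfrac{9}{4} D^7_7 D^7_7 + D^{27}_7 D^7_{27} = 0$. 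Using the adjoint identities of Corollary~\ref{cor:adjoints} — in particular $(D^7_{14})^* = 4 D^{14}_7$, $(D^7_7)^* = 3 D^7_7$, $(D^7_1)^* = -D^1_7$, $(D^7_{27})^* = -\tfrac{4}{3} D^{27}_7$ — one can rewrite $\Delta|_{\Omega^1_7}$ as a nonnegative linear combination of $(D^7_7)^*D^7_7$, $(D^7_{14})^*D^7_{14}$, $(D^7_1)^*D^7_1$, $(D^7_{27})^*D^7_{27}$. Concretely I expect something like $\Delta|_{\Omega^1} = c_1 (D^7_7)^*D^7_7 + c_7 (D^7_1)^*D^7_1$ and simultaneously $= c_1' (D^7_{14})^*D^7_{14} + c_7 (D^7_1)^*D^7_1$ with positive constants, forcing $\ker\Delta|_{\Omega^1} = \ker D^7_7 \cap \ker D^7_1 = \ker D^7_{14}\cap\ker D^7_1$; then a separate argument — comparing $\|D^7_7 X\|^2$ with $\|D^7_{14}X\|^2$ via the relation $\tfrac32 (D^7_7)^*D^7_7 = (D^7_{14})^*D^7_{14}$ obtained from $D^7_7 D^7_7 = \tfrac23 D^{14}_7 D^7_{14}$ and the adjoint formulas — gives $\|D^7_7 X\|^2$ proportional to $\|D^7_{14}X\|^2$, hence $\ker D^7_7 = \ker D^7_{14}$ outright, independent of compactness modulo the integration-by-parts step.

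With $\ker D^7_7 = \ker D^7_{14}$ established, the first equality $\mathcal H^1 = \ker D^7_1\cap\ker D^7_7\cap\ker D^7_{14}$ is immediate because on $\Omega^1_7$ the three operators $D^7_1, D^7_7, D^7_{14}$ are precisely the three components of $\dd$ leaving $\Omega^1$ (Figure~\ref{figure:d}), so their common kernel is $\ker\dd|_{\Omega^1}$, and $\Delta$ applied to a closed $1$-form: by Proposition~\ref{prop:Laplacian} and integration by parts, $\Delta X = 0 \iff \dd X = 0$ here since $\Delta|_{\Omega^1}$ is a sum of $P^*P$ terms in those components — and $\ker D^7_7 = \ker D^7_{14}$ collapses the triple intersection to $\ker D^7_1\cap\ker D^7_7$, giving the second equality. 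For the third and fourth equalities I would use the relation $-D^1_7 D^7_1 + \tfrac94 D^7_7 D^7_7 + D^{27}_7 D^7_{27} = 0$: pairing against $X$ and using the adjoint formulas turns this into an identity of the form $a\|D^7_1 X\|^2 + b\|D^7_7 X\|^2 + c\|D^7_{27}X\|^2 = 0$ on $M$ — wait, more precisely it expresses one of these squared norms as a positive combination of the other two — so that vanishing of any two forces the third, and hence $\ker D^7_1\cap\ker D^7_7 = \ker D^7_1\cap\ker D^7_{27} = \ker D^7_7\cap\ker D^7_{27}$, all equal to $\mathcal H^1$. The final "in particular" sentence is then just a restatement.

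The main obstacle will be getting the signs and rational coefficients in the Laplacian-as-sum-of-squares rewriting exactly right, since Remark~\ref{rmk:adjoints} warns that the identifications in~\eqref{eq:forms-isom} are not isometric and so one cannot naively take adjoints of the "equations" in Corollary~\ref{cor:adjoints}; I would be careful to only ever use the $D^l_m$ as the honest maps of Definition~\ref{defn:Doperators} and to verify that the combination of Corollary~\ref{cor:d-relations} relations with the Corollary~\ref{cor:adjoints} adjoint formulas does produce genuinely nonnegative coefficients — if some coefficient came out with the wrong sign, one would need to massage which pair of $\dd^2=0$ relations to combine (recall there are two distinct relations for each of $(7,7)$, $(7,27)$, $(27,7)$). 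The secondary subtlety is confirming that $\ker D^7_7 = \ker D^7_{14}$ needs only compactness (via $P\alpha=0\iff P^*P\alpha=0$) and not some hidden positivity, but the relation $\tfrac32(D^7_7)^*D^7_7=(D^7_{14})^*D^7_{14}$ handles this cleanly.
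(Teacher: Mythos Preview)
Your approach is essentially the same as the paper's: the relation $\tfrac{3}{2}D^7_7 D^7_7 = D^{14}_7 D^7_{14}$ combined with the adjoint formulas yields $\ker D^7_7 = \ker D^7_{14}$, and the relation $-D^1_7 D^7_1 + \tfrac{9}{4}D^7_7 D^7_7 + D^{27}_7 D^7_{27} = 0$ rewrites as $(D^7_1)^*D^7_1 + \tfrac{3}{4}(D^7_7)^*D^7_7 - \tfrac{3}{4}(D^7_{27})^*D^7_{27} = 0$, so your sign worry is unfounded and ``any two vanish $\Rightarrow$ the third vanishes'' follows immediately. One slip to fix: $D^7_1$ is \emph{not} a component of $\dd|_{\Omega^1}$ (there is no $\Omega^2_1$); rather $\ds|_{\Omega^1} = (D^1_7)^* = -\tfrac{7}{3}D^7_1$, so the first equality is simply $\mathcal{H}^1 = (\ker\dd)^1 \cap (\ker\ds)^1 = \ker D^7_7 \cap \ker D^7_{14} \cap \ker D^7_1$, and the Laplacian formula from Proposition~\ref{prop:Laplacian} is not needed at all.
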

\begin{proof}
From Corollary~\ref{cor:adjoints}, on $\Omega^1_7$ we have that $\ds = (D^1_7)^* : \Omega^1_7 \to \Omega^0_1$ equals $-\tfrac{7}{3} D^7_1$, and thus
\begin{align*}
\mathcal{H}^1 &= (\ker \dd)^1 \cap (\ker \ds)^1 = \ker (D^7_7 +D^7_{14}) \cap \ker (-\tfrac{7}{3} D^7_1)\\ &= \ker D^7_7 \cap \ker D^7_{14} \cap \ker D^7_1,
\end{align*}
establishing the first equality in~\eqref{eq:harmonic1}.

Similarly from Corollary~\ref{cor:adjoints}, we have $(D^7_7)^* = 3 D^7_7$ and $(D^7_{14})^* = 4 D^{14}_7$. Hence, using $D^{14}_7 D^7_{14} = \tfrac{3}{2} D^7_7 D^7_7$ from~\eqref{eq:d-relations}, we have
\begin{align*}
D^7_7 \alpha = 0 &\iff (D^7_7)^* D^7_7 \alpha = 3 D^7_7 D^7_7 \alpha = 0 \\
& \iff D^{14}_7 D^7_{14} \alpha = \tfrac{1}{4} (D^7_{14})^* D^7_{14} \alpha = 0 \\
& \iff D^7_{14} \alpha = 0.
\end{align*}
Thus we deduce that $\ker D^7_7 = \ker D^7_{14}$ as claimed, and hence the second equality in~\eqref{eq:harmonic1} follows.

Finally, from Corollary~\ref{cor:adjoints} we have $(D^7_1)^* = - D^1_7$ and $(D^7_{27})^* = - \tfrac{4}{3} D^{27}_7$ and $(D^7_7)^* = 3 D^7_7$. Thus the relation $- D^1_7 D^7_1 + \tfrac{9}{4} D^7_7 D^7_7 + D^{27}_7 D^7_{27} = 0$ from~\eqref{eq:d-relations} can be written as
\begin{equation*}
(D^7_1)^* D^7_1 + \tfrac{3}{4} (D^7_7)^* D^7_7 - \tfrac{3}{4} (D^7_{27})^* D^7_{27} = 0.
\end{equation*}
From the above relation we easily deduce again by integration by parts that any two of $D^7_1 \alpha = 0$, $D^7_7 \alpha = 0$, $D^7_{27} \alpha = 0$ implies the third, establishing the remaining equalities in~\eqref{eq:harmonic1}.
\end{proof}

\subsection{The derivations $\mathcal L_B$ and $\mathcal L_K$ and their properties} \label{sec:LBLK}

We begin with a brief discussion of derivations on $\Omega^{\bu}$ arising from vector-valued forms on a general $n$-manifold $M$. A good reference for this material is~\cite{KMS}. We use notation similar to~\cite{CKT,dKS}.

Let $\Omega^r_{TM} = \Gamma(\Lambda^r (T^* M) \otimes TM)$ be the space of vector-valued $r$-forms on $M$. Given an element $K \in \Omega^r_{TM}$, it induces two derivations on $\Omega^{\bu}$. They are the \emph{algebraic derivation} $\iota_K$, of degree $r-1$, and the \emph{Nijenhuis-Lie derivation} $\mathcal{L}_K$, of degree $r$. They are defined as follows. Let $\{ e_1, \ldots, e_n \}$ be a (local) tangent frame with dual coframe $\{ e^1, \ldots, e^n \}$. Then locally $K = K^j e_j$ where each $K^j$ is an $r$-form. The operation $\iota_K : \Omega^k \to \Omega^{k+r-1}$ is defined to be
\begin{equation} \label{eq:alg-derivation}
\iota_K \alpha = K^j \wedge (e_j \hk \alpha),
\end{equation}
where $e_j \hk \cdot$ is the interior product with $e_j$. The operation $\iota_K$ is well-defined and is a derivation on $\Omega^{\bu}$. Moreover, $\iota_K$ vanishes on functions, so $\iota_K (h \alpha) = h (\iota_K \alpha)$ for any $h \in \Omega^0$ and $\alpha \in \Omega^k$, which justifies why $\iota_K$ is called algebraic. If $Y \in \Omega^1$, then
\begin{equation} \label{eq:alg-derivation1}
(\iota_K Y) (X_1, \ldots, X_r) = Y( K(X_1, \ldots, X_r) ).
\end{equation}
The operation $\mathcal{L}_K : \Omega^k \to \Omega^{k+r}$ is defined to be
\begin{equation} \label{eq:Lie-derivation}
\mathcal{L}_K \alpha =  \iota_K (\dd \alpha) - (-1)^{r-1} \dd (\iota_K \alpha) = [ \iota_K, \dd ] \alpha.
\end{equation}
That is, $\mathcal{L}_K$ is the graded commutator of $\iota_K$ and $\dd$. The graded Jacobi identity on the space of graded linear operators on $\Omega^{\bu}$ and $\dd^2 = 0$ together imply that
\begin{equation} \label{eq:commdL}
[ \dd, \mathcal{L}_K ] = \dd \mathcal{L}_K - (-1)^r \mathcal{L}_K \dd = 0.
\end{equation}

From now on, let $g$ be a Riemannian metric on $M$.

\begin{lemma} \label{lemma:iotaKfromform}
Let $K \in \Omega^r_{TM}$ be obtained from an $(r+1)$-form $\eta$ by raising the last index. That is, $g( K(X_1, \ldots, X_r), X_{r+1}) = \eta(X_1, \ldots, X_{r+1})$. In a local frame we have $K_{i_1 \cdots i_r}^q = \eta_{i_1 \cdots i_r p} g^{pq}$. The operator $\iota_K$ is of degree $r-1$. For any $\alpha \in \Omega^k$, the $(k+r-1)$-form $\iota_K \alpha$ is given by
\begin{equation} \label{eq:iotaKframe}
\iota_K \alpha = (-1)^r g^{pq} (e_p \hk \eta) \wedge (e_q \hk \alpha).
\end{equation}
\end{lemma}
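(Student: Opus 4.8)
The claim is a local identity, so I would work in a local tangent frame $\{e_1,\dots,e_n\}$ with dual coframe $\{e^1,\dots,e^n\}$, and simply unwind the definition~\eqref{eq:alg-derivation} of $\iota_K$ using the hypothesis on $K$. Writing $K = K^j e_j$, the components are $K^j = \tfrac{1}{r!} K^j_{i_1\cdots i_r}\, e^{i_1}\wedge\cdots\wedge e^{i_r}$ with $K^j_{i_1\cdots i_r} = \eta_{i_1\cdots i_r p}\, g^{pj}$, so as an $r$-form $K^j = g^{pj}\,(e_p \hk \eta)$, since contracting $\eta$ in its last slot by $e_p$ and then raising that index with $g^{pj}$ is exactly the same data. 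Substituting this into $\iota_K \alpha = K^j \wedge (e_j \hk \alpha)$ gives $\iota_K\alpha = g^{pj}\,(e_p\hk\eta)\wedge(e_j\hk\alpha)$, which is the desired formula up to the sign $(-1)^r$.

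So the only real content is bookkeeping the sign $(-1)^r$. The discrepancy arises because $(e_p \hk \eta)$ places the contracted index of $\eta$ (originally its \emph{last}, $(r+1)$-st slot) into the \emph{first} slot of the resulting $r$-form; moving it past the other $r$ slots of $\eta$ costs a sign $(-1)^r$. Concretely, $(e_p\hk\eta)_{i_1\cdots i_r} = \eta_{p i_1 \cdots i_r} = (-1)^r \eta_{i_1\cdots i_r p}$, so $g^{pj}(e_p\hk\eta)$ equals $(-1)^r$ times the $r$-form with components $K^j_{i_1\cdots i_r}$. Tracking this through yields $\iota_K\alpha = (-1)^r g^{pq}(e_p\hk\eta)\wedge(e_q\hk\alpha)$, matching~\eqref{eq:iotaKframe}.

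I would also record the degree statement at the start (it is immediate from~\eqref{eq:alg-derivation}: $\iota_K$ sends $\Omega^k$ to $\Omega^{k+r-1}$ since each $K^j$ is an $r$-form and $e_j\hk\alpha$ is a $(k-1)$-form) and, for rigour, note that the right-hand side of~\eqref{eq:iotaKframe} is frame-independent because $g^{pq}e_p\otimes e_q$ and the wedge-interior-product construction are. There is essentially no obstacle here; the statement is a definitional translation, and the ``hard part'' is merely being careful that the index being raised in $K$ is the last one, which is precisely what produces the $(-1)^r$ and which I would flag explicitly so the sign is not mysterious when~\eqref{eq:iotaKframe} is invoked later (e.g. for $B$ from $\ph$ with $r=2$, giving a plus sign, and for $K$ from $\ps$ with $r=3$, giving a minus sign).
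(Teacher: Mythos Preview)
Your proposal is correct and follows essentially the same approach as the paper: both unwind the definition $\iota_K\alpha = K^j\wedge(e_j\hk\alpha)$ in a local frame, substitute $K^q_{i_1\cdots i_r}=\eta_{i_1\cdots i_r p}g^{pq}$, and obtain the sign $(-1)^r$ by cycling the contracted index of $\eta$ from the last slot to the first so as to recognise $e_p\hk\eta$. Your extra remarks on the degree and frame-independence are fine additions but not needed for the argument.
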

\begin{proof}
In a local frame we have $K = \tfrac{1}{k!} K_{i_1 \cdots i_r}^q e^{i_1} \wedge \cdots \wedge e^{i_r} \otimes e_q$, and thus from~\eqref{eq:alg-derivation} we have
\begin{align*}
\iota_K \alpha & = \tfrac{1}{k!} K_{i_1 \cdots i_r}^q e^{i_1} \wedge \cdots \wedge e^{i_r} \wedge (e_q \hk \alpha) \\
& = \tfrac{1}{k!} \eta_{i_1 \cdots i_r p} g^{pq} e^{i_1} \wedge \cdots \wedge e^{i_r} \wedge (e_q \hk \alpha) \\
& = (-1)^r g^{pq} \big( \tfrac{1}{k!} \eta_{p i_1 \cdots i_r} e^{i_1} \wedge \cdots \wedge e^{i_r} \big) \wedge (e_q \hk \alpha) \\
& = (-1)^r g^{pq} (e_p \hk \eta) \wedge (e_q \hk \alpha)
\end{align*}
as claimed.
\end{proof}

\begin{cor} \label{cor:iotaKzero}
Let $K$ be as in Lemma~\ref{lemma:iotaKfromform}. If $\alpha \in \Omega^{n-(r-1)}$, then $\iota_K \alpha = 0$ in $\Omega^n$.
\end{cor}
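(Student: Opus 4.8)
The plan is to reduce to the explicit local formula for $\iota_K$ established in Lemma~\ref{lemma:iotaKfromform} and then combine a degree count with the symmetry of the metric. Since $\iota_K$ has degree $r-1$, for $\alpha \in \Omega^{n-(r-1)}$ the form $\iota_K \alpha$ lands in $\Omega^n$, and by~\eqref{eq:iotaKframe} it is given by $\iota_K \alpha = (-1)^r g^{pq} (e_p \hk \eta) \wedge (e_q \hk \alpha)$. So it suffices to prove that $g^{pq} (e_p \hk \eta) \wedge (e_q \hk \alpha) = 0$ in $\Omega^n$.

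First I would rewrite $(e_p \hk \eta) \wedge (e_q \hk \alpha)$ using the graded Leibniz rule for the interior product, exactly as in the computation opening the proof of Proposition~\ref{prop:special}. Applying $e_p \hk (\cdot)$ to the product $\eta \wedge (e_q \hk \alpha)$ yields
\[
e_p \hk \big( \eta \wedge (e_q \hk \alpha) \big) = (e_p \hk \eta) \wedge (e_q \hk \alpha) + (-1)^{r+1} \eta \wedge (e_p \hk e_q \hk \alpha).
\]

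Then I would make two observations. Since $\eta \in \Omega^{r+1}$ and $e_q \hk \alpha \in \Omega^{(n-r+1)-1} = \Omega^{n-r}$, their wedge $\eta \wedge (e_q \hk \alpha)$ lies in $\Omega^{n+1} = \{0\}$; hence the left-hand side above vanishes. On the other hand, $e_p \hk e_q \hk \alpha = - e_q \hk e_p \hk \alpha$, so the remaining term $\eta \wedge (e_p \hk e_q \hk \alpha)$ is skew-symmetric in $p$ and $q$, and therefore its contraction with the symmetric tensor $g^{pq}$ is zero. Combining these gives $g^{pq} (e_p \hk \eta) \wedge (e_q \hk \alpha) = 0$, hence $\iota_K \alpha = 0$.

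I do not anticipate any genuine obstacle; the argument is short and purely algebraic. The only points needing a little care are the bookkeeping of degrees (to see that $\eta \wedge (e_q \hk \alpha)$ is one degree above the top and hence vanishes) and the sign in the Leibniz rule, neither of which affects the conclusion, since the two surviving terms are dispatched by a vanishing-for-degree-reasons argument and a symmetry-against-$g^{pq}$ argument, respectively. Alternatively one could argue coordinate-freely using that $\iota_K$ is the algebraic derivation determined by its values on $1$-forms via~\eqref{eq:alg-derivation1}, but the local computation above is the most direct route.
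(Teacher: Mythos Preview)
Your proof is correct and follows essentially the same approach as the paper: both arguments write down an $(n+1)$-form that vanishes for degree reasons, apply the graded Leibniz rule for the interior product, and then kill the residual term by contracting a quantity skew in $p,q$ against the symmetric $g^{pq}$. The only cosmetic difference is that the paper starts from $(e_p \hk \eta) \wedge \alpha \in \Omega^{n+1}$ and contracts with $e_q$, whereas you start from $\eta \wedge (e_q \hk \alpha) \in \Omega^{n+1}$ and contract with $e_p$; these are mirror-image versions of the same computation.
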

\begin{proof}
Let $\alpha \in \Omega^{n-(r-1)}$. Since $e_p \hk \eta \in \Omega^r$, the form $(e_p \hk \eta) \wedge \alpha$ is of degree $(n+1)$ and hence zero. Taking the interior product with $e_q$, we have
\begin{equation*}
0 = e_q \hk \big( (e_p \hk \eta) \wedge \alpha \big) = (e_q \hk e_p \hk \eta) \wedge \alpha + (-1)^r (e_p \hk \eta) \wedge (e_q \hk \alpha).
\end{equation*}
Thus, by the skew-symmetry of $e_q \hk e_p \hk \eta$ in $p,q$, we find from~\eqref{eq:iotaKframe} that
\begin{equation*}
\iota_K \alpha = (-1)^r g^{pq} (e_p \hk \eta) \wedge (e_q \hk \alpha) = - g^{pq} (e_q \hk e_p \hk \eta) \wedge \alpha = 0
\end{equation*}
as claimed.
\end{proof}

\begin{cor} \label{cor:iotaKstar}
Let $K$ be as in Lemma~\ref{lemma:iotaKfromform}. Then the adjoint $\iota_K^*$ is a degree $-(r-1)$ operator on $\Omega^{\bu}$ and satisfies
\begin{equation} \label{eq:iotaKstar}
\iota_K^* \beta = (-1)^{nk + rk + nr + n + 1} \st \iota_K \st \beta \qquad \text{ for $\beta \in \Omega^k$}.
\end{equation}
\end{cor}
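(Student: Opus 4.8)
The plan is to reduce the statement to a sign-free pointwise Leibniz identity for $\iota_K$ and then convert it into a statement about the $L^2$-adjoint using the Hodge star. Recall that for $\alpha,\beta\in\Omega^j$ the $L^2$-pairing is $\langle\alpha,\beta\rangle=\int_M\alpha\wedge\st\beta$, and that $\st\st=(-1)^{j(n-j)}$ on $\Omega^j$. Since $\iota_K:\Omega^k\to\Omega^{k+r-1}$, its adjoint $\iota_K^*$ is the degree $-(r-1)$ operator characterized by $\int_M(\iota_K\alpha)\wedge\st\beta=\int_M\alpha\wedge\st(\iota_K^*\beta)$ for all $\alpha\in\Omega^{k-r+1}$ and $\beta\in\Omega^k$; note the proposed right-hand side $(-1)^{nk+rk+nr+n+1}\st\iota_K\st\beta$ lies in $\Omega^{k-r+1}$, so the degrees are consistent.

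First I would record the pointwise identity: if $\mu,\nu$ are forms with $|\mu|+|\nu|=n-r+1$, then $(\iota_K\mu)\wedge\nu=-(-1)^{(r-1)|\mu|}\,\mu\wedge(\iota_K\nu)$. This follows at once from the fact that $\iota_K$ is a graded derivation of degree $r-1$, so $\iota_K(\mu\wedge\nu)=(\iota_K\mu)\wedge\nu+(-1)^{(r-1)|\mu|}\mu\wedge(\iota_K\nu)$, together with Corollary~\ref{cor:iotaKzero}, which gives $\iota_K(\mu\wedge\nu)=0$ in $\Omega^n$ because $\mu\wedge\nu\in\Omega^{n-(r-1)}$. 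Applying this with $\mu=\alpha\in\Omega^{k-r+1}$ and $\nu=\st\beta\in\Omega^{n-k}$ (whose degrees sum to $n-r+1$) gives $(\iota_K\alpha)\wedge\st\beta=-(-1)^{(r-1)(k-r+1)}\alpha\wedge(\iota_K\st\beta)$, so $\langle\iota_K\alpha,\beta\rangle=-(-1)^{(r-1)(k-r+1)}\int_M\alpha\wedge\iota_K\st\beta$. On the other hand $\iota_K\st\beta\in\Omega^{n-k+r-1}$, hence $\iota_K\st\beta=(-1)^{(n-k+r-1)(k-r+1)}\st\st(\iota_K\st\beta)$ and therefore $\int_M\alpha\wedge\iota_K\st\beta=(-1)^{(n-k+r-1)(k-r+1)}\langle\alpha,\st\iota_K\st\beta\rangle$. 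Combining these two displays and using that $\alpha\in\Omega^{k-r+1}$ is arbitrary yields $\iota_K^*\beta=-(-1)^{(k-r+1)(n-k+2r-2)}\st\iota_K\st\beta$.

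The only step requiring care is the final parity check, namely that $-(-1)^{(k-r+1)(n-k+2r-2)}=(-1)^{nk+rk+nr+n+1}$. Since $2r-2$ is even, the exponent on the left is congruent mod $2$ to $1+(k-r+1)(n-k)$; expanding $(k-r+1)(n-k)=kn-k^2-rn+rk+n-k$ and using that $k^2+k=k(k+1)$ is even, one obtains $1+(k-r+1)(n-k)\equiv nk+rk+nr+n+1\pmod 2$, which is exactly the claimed exponent. Thus the whole argument is a two-line consequence of Corollary~\ref{cor:iotaKzero} and the graded Leibniz rule, and the only obstacle is the (routine but error-prone) bookkeeping of signs; if one prefers, the identical computation can be carried out pointwise with respect to the induced inner product on $\Lambda^{\bu}(T^*M)$ to avoid integrals, but the $L^2$ version above is the most direct since $\iota_K^*$ is by definition the $L^2$-adjoint.
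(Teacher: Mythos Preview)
Your proof is correct and follows essentially the same route as the paper's: both use Corollary~\ref{cor:iotaKzero} to obtain $\iota_K(\alpha\wedge\st\beta)=0$, expand via the graded Leibniz rule, insert $\st\st$ on $\iota_K\st\beta$, and then reduce the resulting sign. The only cosmetic difference is that the paper works pointwise with $g(\cdot,\cdot)\vol$ while you phrase things in terms of the $L^2$ pairing, and you carry out the final parity reduction a bit more explicitly.
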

\begin{proof}
Let $\alpha \in \Omega^{k-(r-1)}$ and $\beta \in \Omega^k$. Then $\alpha \wedge \st \beta \in \Omega^{n-(r-1)}$, so by Lemma~\ref{lemma:iotaKfromform} we have $\iota_K (\alpha \wedge \st \beta) = 0$. Since $\iota_K$ is a derivation of degree $r-1$, and $\iota_K \st \beta$ is an $(n-k+r-1)$-form, this can be written as
\begin{align*}
0 & = (\iota_K \alpha) \wedge \st \beta + (-1)^{(r-1)(k-(r-1))} \alpha \wedge (\iota_K \st \beta) \\
& = g(\iota_K \alpha, \beta) \vol + (-1)^{rk + k + r + 1} \alpha \wedge (-1)^{(n-k+r-1)(k-r+1)} \st (\st \iota_K \st \beta) \\
& = g(\iota_K \alpha, \beta) \vol + (-1)^{rk + k + r + 1} (-1)^{k + r + 1 + nk + nr + n} g(\alpha, \st \iota_K \st \beta) \vol \\
& = g(\iota_K \alpha, \beta) \vol + (-1)^{nk + rk + nr + n} g(\alpha, \st \iota_K \st \beta) \vol,
\end{align*}
and hence $\iota_K^* \beta = (-1)^{nk + rk + nr + n + 1} \st \iota_K \st \beta$ as claimed.
\end{proof}

Now let $(M, \ph)$ be a manifold with $\G$-structure. In particular, $n=7$ from now on.

\begin{defn} \label{defn:BK}
From the $\G$-structure $\ph$ on $M$, we obtain two particular vector-valued forms $B \in \Omega^2_{TM}$ and $K \in \Omega^3_{TM}$ by raising the last index on the forms $\ph$ and $\ps$, respectively. That is,
\begin{equation*}
g( B(X, Y), Z) = \ph(X, Y, Z), \qquad g( K(X, Y, Z), W) = \ps (X, Y, Z, W).
\end{equation*}
In local coordinates we have
\begin{equation*}
B_{ij}^q = \ph_{ijp} g^{pq}, \qquad K_{ijk}^q = \ps_{ijkp} g^{pq}.
\end{equation*}
The vector-valued $2$-form $B$ is also called the \emph{cross product} induced by $\ph$, and, up to a factor of $-\tfrac{1}{2}$, the vector-valued $3$-form $K$ is called the \emph{associator}. (See~\cite[p.116]{HL} for details.) Thus $\iota_B$ and $\iota_K$ are algebraic derivations on $\Omega^{\bu}$ of degrees $1$ and $2$, respectively. We also have the associated Nijenhuis-Lie derivations $\mathcal L_B$ and $\mathcal L_K$. From~\eqref{eq:Lie-derivation} we have
\begin{equation} \label{eq:LBK}
\mathcal L_B = \iota_B \dd + \dd \iota_B, \qquad \mathcal L_K = \iota_K \dd - \dd \iota_K.
\end{equation}
The operators $\mathcal L_B$ and $\mathcal L_K$ are of degree $2$ and $3$, respectively.
\end{defn}

\begin{rmk} \label{rmk:chi}
In much of the literature the associator $K$ is denoted by $\chi$, but we are following the convention of~\cite{CKT, dKS} of denoting vector-valued forms by capital Roman letters.
\end{rmk}

\begin{prop} \label{prop:derivationsstar}
Let $\iota_B$, $\iota_K$, $\mathcal L_B$, and $\mathcal L_K$ be as in Definition~\ref{defn:BK}. Then on $\Omega^k$, we have
\begin{equation} \label{eq:derivationsstar}
\begin{aligned}
\iota_B^* & = (-1)^k \st \iota_B \st, & \qquad \iota_K^* & = -\st \iota_K \st, \\
\mathcal L_B^* & = - \st \mathcal L_B \st, & \qquad \mathcal L_K^* & = (-1)^k \st \mathcal L_K \st.
\end{aligned}
\end{equation}
\end{prop}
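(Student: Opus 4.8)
The plan is to obtain the first pair of identities in~\eqref{eq:derivationsstar} as immediate corollaries of Corollary~\ref{cor:iotaKstar}, and then to bootstrap to the second pair using the definitions $\mathcal L_B = \iota_B\dd + \dd\iota_B$ and $\mathcal L_K = \iota_K\dd - \dd\iota_K$ from~\eqref{eq:LBK}, the fact that $\dd^* = \ds$ with $\ds = (-1)^k\st\dd\st$ on $\Omega^k$, and the rule $(PQ)^* = Q^*P^*$.

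First I would record the one structural fact that keeps the signs under control: since $n = 7$, for $\alpha\in\Omega^k$ we have $\st\st\alpha = (-1)^{k(7-k)}\alpha = \alpha$, because $k(7-k)$ is always even; hence $\st$ is an involution on $\Omega^{\bu}$ and adjacent copies of $\st$ may be freely cancelled. For the algebraic derivations, I would apply~\eqref{eq:iotaKstar} with $n = 7$: for $B$ we have $r = 2$, so the exponent $nk + rk + nr + n + 1 = 9k + 22$ is congruent to $k$ mod $2$, yielding $\iota_B^* = (-1)^k\st\iota_B\st$ on $\Omega^k$; for $K$ we have $r = 3$, so the exponent $10k + 29$ is odd, yielding $\iota_K^* = -\st\iota_K\st$. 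This is the top row of~\eqref{eq:derivationsstar}.

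For the bottom row I would expand term by term. Using $\mathcal L_B^* = (\iota_B\dd)^* + (\dd\iota_B)^* = \ds\iota_B^* + \iota_B^*\ds$ and substituting the formulas above, the only subtlety is that the sign $(-1)^j$ attached to each of $\ds$ and $\iota_B^*$ is the one for the degree $j$ at which that operator acts: applied to $\beta\in\Omega^k$, the composition $\ds\iota_B^*$ picks up $(-1)^k$ from $\iota_B^*$ (which sends $\Omega^k\to\Omega^{k-1}$) and then $(-1)^{k-1}$ from $\ds$ acting on $\Omega^{k-1}$, so the signs multiply to $-1$; after cancelling the adjacent $\st\st$ one gets $-\st\dd\iota_B\st$. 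The same count gives $\iota_B^*\ds = -\st\iota_B\dd\st$, and hence $\mathcal L_B^* = -\st(\dd\iota_B + \iota_B\dd)\st = -\st\mathcal L_B\st$. For $\mathcal L_K$ one runs the identical computation with $\mathcal L_K^* = \ds\iota_K^* - \iota_K^*\ds$, now using that $\iota_K^*$ has degree $-2$ and that $\iota_K^* = -\st\iota_K\st$ carries no $k$-dependent sign; both terms come out equal to $(-1)^{k+1}\st\dd\iota_K\st$ and $(-1)^{k+1}\st\iota_K\dd\st$ respectively, so $\mathcal L_K^* = (-1)^{k+1}\st(\dd\iota_K - \iota_K\dd)\st = (-1)^{k+1}\st(-\mathcal L_K)\st = (-1)^k\st\mathcal L_K\st$ on $\Omega^k$.

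The proof is entirely formal — there is no geometric or analytic input beyond the already-established Corollary~\ref{cor:iotaKstar} — so the only place to go wrong is the sign bookkeeping. The point to be careful about is exactly the one highlighted above: in a composite such as $\ds\iota_B^*$ acting on $\Omega^k$, the two sign factors are $(-1)^{k-1}$ and $(-1)^k$, not both $(-1)^k$, and one must remember that $\st\st = \mathrm{id}$ holds on $\Omega^{\bu}$ precisely because $\dim M = 7$.
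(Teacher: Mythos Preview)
Your proposal is correct and follows exactly the approach of the paper: apply Corollary~\ref{cor:iotaKstar} with $n=7$ and $r=2,3$ for the algebraic derivations, then take adjoints of~\eqref{eq:LBK} using $\ds = (-1)^k\st\dd\st$ and $\st^2 = 1$ for the Nijenhuis--Lie derivations. The paper's proof merely states that the second pair ``follows from these facts and taking adjoints'' without writing out the sign bookkeeping, whereas you have carried it through explicitly and correctly.
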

\begin{proof}
The first pairs of equations follow from~\eqref{eq:iotaKstar} with $n=7$ and $r=2,3$, respectively. In odd dimensions, $\ds = (-1)^k \st \dd \st$ on $k$-forms, and $\st^2 = 1$. The second pair of equations follows from these facts and taking adjoints of~\eqref{eq:LBK}.
\end{proof}

The operations $\iota_B$ and $\iota_K$ are morphisms of $\G$-representations, and in fact they are constants on $\Omega^l_{l'}$ after our identifications~\eqref{eq:forms-isom}. We will prove this in Propositions~\ref{prop:iotaBfigure} and~\ref{prop:iotaKfigure}, but first we need to collect several preliminary results.

\begin{lemma} \label{lemma:iotaBKprelim1}
Let $f \in \Omega^0$ and $X \in \Omega^1$. The following identities hold:
\begin{equation} \label{eq:iotaBKprelimeq1}
\begin{aligned}
\iota_B f & = 0, & \qquad \iota_K f & = 0, \\
\iota_B X & = X \hk \ph, & \qquad \iota_K X & = - X \hk \ps. 
\end{aligned}
\end{equation}
\end{lemma}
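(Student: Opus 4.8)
The plan is to compute both sides directly from the definitions in Section~\ref{sec:LBLK}, specialized to $n=7$. The four identities in~\eqref{eq:iotaBKprelimeq1} split into two trivial cases and two substantive ones.

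First, for the claims $\iota_B f = 0$ and $\iota_K f = 0$ with $f \in \Omega^0$: these are immediate from the observation made right after~\eqref{eq:alg-derivation} that any algebraic derivation $\iota_L$ vanishes on functions. Indeed $\iota_L$ has degree $r-1 \geq 0$ and is built from interior products $e_j \hk \cdot$, which annihilate $0$-forms; more precisely $\iota_L \alpha = L^j \wedge (e_j \hk \alpha)$ and $e_j \hk f = 0$. So I would just cite~\eqref{eq:alg-derivation}.

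Next, for $\iota_B X = X \hk \ph$ and $\iota_K X = -X \hk \ps$ with $X \in \Omega^1$: here I would use~\eqref{eq:alg-derivation1}, which says $(\iota_L Y)(X_1,\dots,X_r) = Y(L(X_1,\dots,X_r))$ for a vector-valued $r$-form $L$ and a $1$-form $Y$. Taking $L = B$ (so $r=2$) and $Y = X$, we get $(\iota_B X)(X_1, X_2) = X(B(X_1,X_2))$. By Definition~\ref{defn:BK}, $g(B(X_1,X_2), Z) = \ph(X_1,X_2,Z)$, so $X(B(X_1,X_2)) = g(X^\sharp, B(X_1,X_2)) = \ph(X_1, X_2, X^\sharp) = (X \hk \ph)(X_1, X_2)$, using the sign convention that interior product inserts into the \emph{first} slot and that $\ph$ is totally skew. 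This gives $\iota_B X = X \hk \ph$. Identically, with $L = K$ (so $r=3$) and $Y = X$, we get $(\iota_K X)(X_1,X_2,X_3) = X(K(X_1,X_2,X_3))$, and since $g(K(X_1,X_2,X_3), W) = \ps(X_1,X_2,X_3,W)$ we obtain $\ps(X_1,X_2,X_3,X^\sharp)$. Moving $X^\sharp$ from the last slot to the first picks up a sign $(-1)^3 = -1$, so this equals $-(X \hk \ps)(X_1,X_2,X_3)$, giving $\iota_K X = -X \hk \ps$. Alternatively one can read both off directly from~\eqref{eq:iotaKframe} in Lemma~\ref{lemma:iotaKfromform} with $\eta = \ph$, $r=2$ (giving $\iota_B X = g^{pq}(e_p \hk \ph)\wedge(e_q \hk X) = g^{pq} X_q (e_p \hk \ph) = X^\flat \hk \ph$) and $\eta = \ps$, $r=3$ (giving the extra minus sign).

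The only thing requiring care is bookkeeping of signs: the precise placement of the raised index in Definition~\ref{defn:BK} (the \emph{last} index is raised), and the convention for $\hk$ (insertion into the first slot), together determine the sign $(-1)^r$ that appears when one slides the contracted vector through the form — this is exactly the $(-1)^r$ visible in~\eqref{eq:iotaKframe}. For $r=2$ this sign is $+1$ and for $r=3$ it is $-1$, which is precisely the asymmetry between the two formulas in the second line of~\eqref{eq:iotaBKprelimeq1}. I expect this sign tracking to be the only (minor) obstacle; everything else is a one-line consequence of the definitions.
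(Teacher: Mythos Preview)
Your proposal is correct and essentially matches the paper's proof: both dispose of the function case by noting algebraic derivations vanish on $\Omega^0$, and both obtain the $1$-form case by unwinding the definitions. The only cosmetic difference is that you lead with the coordinate-free formula~\eqref{eq:alg-derivation1} while the paper goes straight to the local-frame expression~\eqref{eq:iotaKframe}; since you explicitly give that alternative as well, the two arguments coincide.
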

\begin{proof}
The first pair of equations are immediate since any algebraic derivation vanishes on functions. Letting $\alpha = X$ in~\eqref{eq:iotaKframe} gives $\iota_K X = (-1)^r g^{pq} (e_p \hk \eta) \wedge X_q = (-1)^r X^p e_p \hk \eta = (-1)^r X \hk \eta$. The second pair of equations now follows using $r = 2$ for $\eta = \ph$ and $r = 3$ for $\eta = \ps$.
\end{proof}

\begin{lemma} \label{lemma:iotaBKprelim2}
The following identities hold:
\begin{equation} \label{eq:iotaBKprelimeq2}
\begin{aligned}
\iota_B \ph & = - 6 \ps, & \qquad \iota_K \ph & = 0, \\
\iota_B \ps & = 0, & \qquad \iota_K \ps & = 0.
\end{aligned}
\end{equation}
\end{lemma}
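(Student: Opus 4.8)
\textbf{Proof proposal for Lemma~\ref{lemma:iotaBKprelim2}.}

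The plan is to compute each of the four quantities directly from the frame formula~\eqref{eq:iotaKframe}, using the contraction identities of Lemma~\ref{lemma:identities}. First I would handle $\iota_B \ph$: applying~\eqref{eq:iotaKframe} with $\eta = \ph$, $r=2$, $\alpha = \ph$ gives $\iota_B \ph = g^{pq}(e_p \hk \ph) \wedge (e_q \hk \ph)$. This is precisely the left-hand side of the first identity in Proposition~\ref{prop:special} with $h = g$, so using $\tr_g g = 7$ and $\ell_{\ps} g = 4\ps$ from~\eqref{eq:ellg} we get $\iota_B \ph = -2 \cdot 7 \, \ps + 2 \cdot 4 \, \ps = -14\ps + 8\ps = -6\ps$, as claimed. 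Similarly, $\iota_B \ps$ is obtained from~\eqref{eq:iotaKframe} with $\eta = \ph$, $r=2$, $\alpha = \ps$, giving $\iota_B \ps = g^{pq}(e_p \hk \ph)\wedge(e_q \hk \ps)$, which is exactly the left-hand side of the second identity in Proposition~\ref{prop:special} with $h = g$, hence equals $0$.

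Next I would treat $\iota_K \ph$ and $\iota_K \ps$. For $\iota_K \ps$, apply~\eqref{eq:iotaKframe} with $\eta = \ps$, $r = 3$, $\alpha = \ps$: this gives $\iota_K \ps = -g^{pq}(e_p \hk \ps)\wedge(e_q \hk \ps)$, which is $-1$ times the left-hand side of the third identity in Proposition~\ref{prop:special} with $h = g$, hence equals $0$. For $\iota_K \ph$, apply~\eqref{eq:iotaKframe} with $\eta = \ps$, $r = 3$, $\alpha = \ph$: this gives $\iota_K \ph = -g^{pq}(e_p \hk \ps)\wedge(e_q \hk \ph)$. This combination does not appear verbatim in Proposition~\ref{prop:special}, so I would instead argue by degree: $\iota_K$ is a degree $r-1 = 2$ operator, so $\iota_K \ph \in \Omega^5$, and one can either expand directly in components using $\ps_{pijk}\ph_{mpq}g^{pq}$-type contractions from Lemma~\ref{lemma:identities}, or observe that $g^{pq}(e_p \hk \ps) \wedge (e_q \hk \ph)$ is symmetric under a suitable manipulation and must vanish. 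Concretely, writing it out as $\tfrac{1}{12}(g^{pq}\ps_{pijk}\ph_{qlm})e^{ijklm}$ and applying the fourth identity of Lemma~\ref{lemma:identities} (the contraction $\ph_{ijk}\ps_{abcd}g^{kd}$), the terms reorganize into expressions each containing either a factor $g_{ab}e^{a}\wedge e^{b}$ or a wedge of a $1$-form with $\ph$ contracted against something antisymmetric, all of which vanish.

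The step I expect to be the main obstacle is $\iota_K \ph$, since it is the only one of the four not immediately reducible to Proposition~\ref{prop:special}. The cleanest route may actually be the slick one: apply the Hodge star and use Corollary~\ref{cor:iotaKstar} (or Proposition~\ref{prop:derivationsstar}). Since $\st \ph = \ps$ and $\iota_K$ relates to $\iota_K^*$ via $\iota_K^* = -\st\iota_K\st$ on any degree, we have $\iota_K^* \ps = -\st \iota_K \ph$; if one can independently see $\iota_K^* \ps = 0$ — for instance because $\ps = \st\ph$ pairs trivially against the image of $\iota_K$ on $\Omega^5$, or because $\iota_K \st(\cdot)$ applied to the relevant space lands outside $\Omega^0$ — then $\iota_K \ph = 0$ follows. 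Alternatively, and perhaps most robustly, I would just grind the component computation: expand $g^{pq}(e_p\hk\ps)\wedge(e_q\hk\ph) = \tfrac{1}{12}g^{pq}\ps_{pijk}\ph_{qlm}e^{i}\wedge e^{j}\wedge e^{k}\wedge e^{l}\wedge e^{m}$, substitute the identity $\ph_{qlm}\ps_{pijk}g^{pq}$ rewritten via the fourth line of Lemma~\ref{lemma:identities}, and check that every resulting term is a wedge product containing a repeated or metrically-contracted pair of coframe vectors and hence vanishes. This is routine but tedious, and I would present it compactly rather than in full.
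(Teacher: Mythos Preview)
Your treatment of $\iota_B \ph$, $\iota_B \ps$, and $\iota_K \ps$ is exactly what the paper does: apply~\eqref{eq:iotaKframe} and read off the result from Proposition~\ref{prop:special} with $h = g$.

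For $\iota_K \ph$ you work much harder than necessary. You correctly obtain $\iota_K \ph = -g^{pq}(e_p \hk \ps)\wedge(e_q \hk \ph)$, and then claim this ``does not appear verbatim in Proposition~\ref{prop:special}'' and propose either a component grind or a Hodge-star duality argument. But in fact it \emph{is} the second identity of Proposition~\ref{prop:special}: since $e_p \hk \ps$ is a $3$-form and $e_q \hk \ph$ is a $2$-form, the two factors commute under $\wedge$, and after relabelling the symmetric summation indices $p,q$ you get
\[
g^{pq}(e_p \hk \ps)\wedge(e_q \hk \ph) = g^{pq}(e_p \hk \ph)\wedge(e_q \hk \ps) = \iota_B \ps = 0.
\]
This is the paper's one-line argument: $\iota_K \ph = -\iota_B \ps = 0$. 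Your component computation would eventually succeed, but the Hodge-star route as you sketch it is circular (you would need to know $\iota_K$ on $\Omega^5$, which in the paper is deduced \emph{from} the present lemma), so only the tedious option actually stands. The commutativity-of-wedge observation makes both unnecessary.
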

\begin{proof}
To establish each of these, we use~\eqref{eq:iotaKframe} and Proposition~\ref{prop:special} with $h = g$. First, using~\eqref{eq:ellg} and $\tr_g g = 7$, we have
\begin{equation*}
\iota_B \ph = g^{pq} (e_p \hk \ph) \wedge (e_q \hk \ph) = - 2 (\tr_g g) \ps + 2 \ell_{\ps} g = - 14 \ps + 8 \ps = -6 \ps.
\end{equation*}
Similarly from Proposition~\ref{prop:special} we find that
\begin{equation*}
\iota_B \ps = g^{pq} (e_p \hk \ph) \wedge (e_q \hk \ps) = 0,
\end{equation*}
and hence also $\iota_K \ph = - g^{pq} (e_p \hk \ps) \wedge (e_q \hk \ph) = - \iota_B \ps = 0$. Finally, again from Proposition~\ref{prop:special} we deduce that
\begin{equation*}
\iota_K \ps = -g^{pq} (e_p \hk \ps) \wedge (e_q \hk \ps) = 0
\end{equation*}
as well.
\end{proof}

\begin{lemma} \label{lemma:iotaBKprelim3}
Let $X \in \Omega^1$. The following identities hold:
\begin{equation} \label{eq:iotaBKprelimeq3}
\begin{aligned}
\iota_B (X \hk \ph) & = 3 (X \hk \ps), & \qquad \iota_K (X \hk \ph) & = 3 \st (X \hk \ps), \\
\iota_B (X \hk \ps) & = -3 \st (X \hk \ps), & \qquad \iota_K (X \hk \ps) & = -4 \st (X \hk \ph).
\end{aligned}
\end{equation}
\end{lemma}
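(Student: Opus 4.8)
The plan is to reduce each of the four identities directly to the corresponding identity in Proposition~\ref{prop:special2}, using the frame formula~\eqref{eq:iotaKframe} from Lemma~\ref{lemma:iotaKfromform}. Write $X = X^m e_m$ in a local frame, so that $X \hk \ph = X^m (e_m \hk \ph)$ and $X \hk \ps = X^m (e_m \hk \ps)$. Recall that $B$ is obtained from $\eta = \ph$ (an $(r+1)$-form with $r=2$) by raising the last index, and $K$ from $\eta = \ps$ (with $r=3$); hence by~\eqref{eq:iotaKframe} we have $\iota_B \alpha = g^{pq}(e_p \hk \ph) \wedge (e_q \hk \alpha)$ and $\iota_K \alpha = - g^{pq}(e_p \hk \ps) \wedge (e_q \hk \alpha)$, the sign discrepancy being the factor $(-1)^r$.

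First I would apply these with $\alpha = X \hk \ph$. Since $e_q \hk (X \hk \ph) = X^m (e_q \hk e_m \hk \ph)$, linearity lets me pull $X^m$ outside, leaving (for fixed $m$) the expressions $g^{pq}(e_p \hk \ph) \wedge (e_q \hk e_m \hk \ph)$ and $g^{pq}(e_p \hk \ps) \wedge (e_q \hk e_m \hk \ph)$, which Proposition~\ref{prop:special2} evaluates to $3(e_m \hk \ps)$ and $-3\st(e_m \hk \ps)$ respectively. Re-contracting with $X^m$ and using $X^m(e_m \hk \ps) = X \hk \ps$, $X^m \st(e_m \hk \ps) = \st(X \hk \ps)$ gives $\iota_B(X \hk \ph) = 3(X \hk \ps)$ and $\iota_K(X \hk \ph) = -\bigl(-3\st(X\hk\ps)\bigr) = 3\st(X\hk\ps)$. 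Then I would repeat the argument with $\alpha = X \hk \ps$: now Proposition~\ref{prop:special2} supplies $g^{pq}(e_p \hk \ph) \wedge (e_q \hk e_m \hk \ps) = -3\st(e_m\hk\ps)$ and $g^{pq}(e_p \hk \ps) \wedge (e_q \hk e_m \hk \ps) = 4\st(e_m\hk\ph)$, yielding $\iota_B(X\hk\ps) = -3\st(X\hk\ps)$ and $\iota_K(X\hk\ps) = -4\st(X\hk\ph)$.

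I do not anticipate any genuine obstacle: the argument is pure bookkeeping on top of Proposition~\ref{prop:special2}. The only points requiring care are the overall sign $(-1)^r$ in~\eqref{eq:iotaKframe} (which is $+1$ for $\iota_B$ and $-1$ for $\iota_K$) and the fact that the Hodge star commutes with contracting against the scalar components $X^m$, so that $X^m \st(e_m \hk \ph) = \st(X \hk \ph)$ and likewise for $\ps$; with these observed, the four identities drop out immediately.
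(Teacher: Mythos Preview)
Your proposal is correct and is essentially identical to the paper's own proof: write $X = X^m e_m$, use~\eqref{eq:iotaKframe} with the appropriate sign $(-1)^r$ to express $\iota_B$ and $\iota_K$ in frame form, and then read off the four identities directly from Proposition~\ref{prop:special2}. The paper's proof is just a more condensed version of exactly this argument.
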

\begin{proof}
Let $X = X^m e_m$. By linearity of derivations and~\eqref{eq:iotaKframe} we have
\begin{align*}
\iota_B (X \hk \beta) & = X^m g^{pq} (e_p \hk \ph) \wedge (e_q \hk e_m \hk \beta), \\
\iota_K (X \hk \beta) & = - X^m g^{pq} (e_p \hk \ps) \wedge (e_q \hk e_m \hk \beta).
\end{align*}
The equations in~\eqref{eq:iotaBKprelimeq3} now follow immediately from Proposition~\ref{prop:special2}.
\end{proof}

\begin{lemma} \label{lemma:iotaBKprelim4}
Let $\beta \in \Omega^2_{14}$. The following identities hold:
\begin{equation} \label{eq:iotaBKprelimeq4}
\iota_B \beta = 0, \qquad \qquad \iota_K \beta = 0.
\end{equation}
\end{lemma}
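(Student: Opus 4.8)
The plan is to identify both $\iota_B \beta$ and $\iota_K \beta$, on the level of $2$-forms, with the operators $\ell_\ph$ and $\ell_\ps$ of~\eqref{eq:ellphdefn} and~\eqref{eq:ellpsdefn}, and then invoke the characterization~\eqref{eq:ellon14} of $\Omega^2_{14}$ as the common kernel of $\ell_\ph$ and $\ell_\ps$. So the target identity to prove is really: for \emph{every} $\beta \in \Omega^2$ one has $\iota_B \beta = - \ell_\ph \beta$ and $\iota_K \beta = - \ell_\ps \beta$ (this is consistent with $\iota_B, \iota_K$ being $\G$-equivariant maps $\Omega^2 \to \Omega^3$ and $\Omega^2 \to \Omega^4$, though we will not need that).

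First I would apply Lemma~\ref{lemma:iotaKfromform}: since $B$ and $K$ are obtained from $\ph$ and $\ps$ by raising the last index (Definition~\ref{defn:BK}), formula~\eqref{eq:iotaKframe} gives, for any $\beta \in \Omega^2$, that $\iota_B \beta = g^{pq} (e_p \hk \ph) \wedge (e_q \hk \beta)$ and $\iota_K \beta = - g^{pq} (e_p \hk \ps) \wedge (e_q \hk \beta)$. On the other hand, for a $2$-form $\beta$ we have $\beta_{ip} e^i = - (e_p \hk \beta)$, so~\eqref{eq:ellphdefn} and~\eqref{eq:ellpsdefn} can be rewritten as
\[
\ell_\ph \beta = - g^{pq} (e_p \hk \beta) \wedge (e_q \hk \ph), \qquad \ell_\ps \beta = - g^{pq} (e_p \hk \beta) \wedge (e_q \hk \ps).
\]

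Next I would match these up. In the expression for $\iota_B \beta$, commute the factors $(e_p \hk \ph)$ (a $2$-form) and $(e_q \hk \beta)$ (a $1$-form), which costs a sign $(-1)^{2 \cdot 1} = 1$, and then relabel the contracted indices $p \leftrightarrow q$ using $g^{pq} = g^{qp}$; this turns $\iota_B \beta$ into $g^{pq} (e_p \hk \beta) \wedge (e_q \hk \ph) = - \ell_\ph \beta$. Likewise, in the expression for $\iota_K \beta$, commuting $(e_p \hk \ps)$ (a $3$-form) past $(e_q \hk \beta)$ costs $(-1)^{3 \cdot 1} = -1$, which cancels against the minus sign already present; relabelling $p \leftrightarrow q$ then gives $\iota_K \beta = g^{pq} (e_p \hk \beta) \wedge (e_q \hk \ps) = - \ell_\ps \beta$. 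Finally, since $\beta \in \Omega^2_{14}$, the characterization~\eqref{eq:ellon14} yields $\ell_\ph \beta = 0$ and $\ell_\ps \beta = 0$, hence $\iota_B \beta = \iota_K \beta = 0$.

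The only place that requires care is the sign bookkeeping in the second step (the graded-commutativity sign of the wedge product together with the index relabelling); everything else is an immediate consequence of formulas already established in the paper, so I expect no genuine obstacle here.
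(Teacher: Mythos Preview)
Your proof is correct and follows essentially the same approach as the paper: both use~\eqref{eq:iotaKframe} to express $\iota_B \beta$ and $\iota_K \beta$, then identify the result with $-\ell_\ph \beta$ and $-\ell_\ps \beta$ respectively, and finally invoke~\eqref{eq:ellon14}. The paper carries out the identification via an explicit index antisymmetrization, whereas you do it by the cleaner frame-level manipulation (swapping wedge factors and relabelling the contracted indices), but the content is the same.
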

\begin{proof}
We use the notation of Proposition~\ref{prop:special2}. Let $\beta \in \Omega^2_{14}$. Using~\eqref{eq:iotaKframe} and~\eqref{eq:ellon14} we compute
\begin{align*}
\iota_B \beta & = g^{pq} (e_p \hk \ph) \wedge (e_q \hk \beta) \\
& = \tfrac{1}{2} g^{pq} \ph_{pij} \beta_{qk} e^{ijk} \\
& = - \tfrac{1}{6} (\beta_{kq} g^{qp} \ph_{pij} + \beta_{iq} g^{qp} \ph_{pjk} + \beta_{jq} g^{qp} \ph_{pki} ) e^{ijk} = 0.
\end{align*}
Similarly, again using~\eqref{eq:iotaKframe} and~\eqref{eq:ellon14} we compute
\begin{align*}
\iota_K \beta & = -g^{pq} (e_p \hk \ps) \wedge (e_q \hk \beta) \\
& = -\tfrac{1}{6} g^{pq} \ps_{pijk} \beta_{ql} e^{ijkl} \\
& = + \tfrac{1}{24} (\beta_{lq} g^{qp} \ps_{pijk} - \beta_{iq} g^{qp} \ps_{pljk} - \beta_{jq} g^{qp} \ps_{pilk} - \beta_{kq} g^{qp} \ps_{pijl} ) e^{ijkl} = 0
\end{align*}
as claimed.
\end{proof}

We are now ready to establish the actions of $\iota_B$ and $\iota_K$ on the summands of $\Omega^{\bu}$ with respect to the identifications~\eqref{eq:forms-isom}.

\begin{figure}[H]
\begin{equation*}
\xymatrix {
\Omega^0_1 & & & & & & \\
& & \Omega^1_7 \ar@[purple][d]^{\purple{1}} & & & & \\
& & \Omega^2_7 \ar@[purple][d]^{\purple{3}} & & \Omega^2_{14} & & \\
\Omega^3_1 \ar@[purple][d]^{\purple{-6}} & & \Omega^3_7 \ar@[purple][d]^{\purple{-3}} & & & & \Omega^3_{27} \ar@[purple][d]^{\purple{1}} \\
\Omega^4_1 & & \Omega^4_7 \ar@[purple][d]^{\purple{-4}} & & & & \Omega ^4_{27} \\
& & \Omega^5_7 \ar@[purple][d]^{\purple{3}} & & \Omega^5_{14} & & \\
& & \Omega^6_7 & & & & \\
\Omega^7_1 & & & & & &
}
\end{equation*}
\caption{Decomposition of the algebraic derivation $\iota_B$ into components} \label{figure:iotaB}
\end{figure}

\begin{prop} \label{prop:iotaBfigure}
With respect to the identifications described in~\eqref{eq:forms-isom}, the components of the operator $\iota_B$ satisfy the relations given in Figure~\ref{figure:iotaB}.
\end{prop}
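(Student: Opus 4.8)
The plan is to exploit that $\iota_B$ is a morphism of $\G$-representations, which forces its components with respect to the decomposition~\eqref{eq:bundle-decomp} to be scalars, and then to evaluate $\iota_B$ on convenient test elements to read off those scalars.

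First I would record the equivariance. Since $B$ is obtained by raising an index on the $\G$-invariant form $\ph$, the algebraic derivation $\iota_B : \Omega^k \to \Omega^{k+1}$ commutes pointwise with the $\G$-action. By Schur's lemma --- using that the irreducible summands $\mathbf 1$, $\mathbf 7$, $\mathbf{14}$, $\mathbf{27}$ of $\Lambda^{\bu}(\R^7)^*$ are pairwise non-isomorphic and each occurs with multiplicity at most one in every $\Lambda^k$, as in~\eqref{eq:bundle-decomp} --- the map $\iota_B$ sends $\Omega^k_l$ into $\Omega^{k+1}_l$ when that summand appears in~\eqref{eq:bundle-decomp}, annihilates $\Omega^k_l$ otherwise, and on each surviving pair $\Omega^k_l \to \Omega^{k+1}_l$ is a scalar multiple of the $\G$-equivariant identification fixed in~\eqref{eq:forms-isom}. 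Inspecting~\eqref{eq:bundle-decomp}, the surviving pairs are precisely the seven arrows of Figure~\ref{figure:iotaB}; the vanishing on $\Omega^0_1$, $\Omega^2_{14}$, $\Omega^4_1$, $\Omega^4_{27}$, $\Omega^5_{14}$, $\Omega^6_7$, $\Omega^7_1$ is then automatic (and on $\Omega^6_7$, $\Omega^7_1$ is also immediate from Corollary~\ref{cor:iotaKzero}, and on $\Omega^2_{14}$ from Lemma~\ref{lemma:iotaBKprelim4}). It remains to compute seven scalars.

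Six of these are immediate from the preliminary lemmas. For $\Omega^1_7 \to \Omega^2_7$, Lemma~\ref{lemma:iotaBKprelim1} gives $\iota_B X = X \hk \ph$, hence the scalar $1$. For $\Omega^2_7\to\Omega^3_7$ and $\Omega^3_7\to\Omega^4_7$, Lemma~\ref{lemma:iotaBKprelim3} gives $\iota_B(X\hk\ph)=3(X\hk\ps)$ and $\iota_B(X\hk\ps)=-3\st(X\hk\ps)$, hence $3$ and $-3$. For $\Omega^3_1\to\Omega^4_1$, algebraicity of $\iota_B$ gives $\iota_B(f\ph)=f\,\iota_B\ph=-6f\ps$ by Lemma~\ref{lemma:iotaBKprelim2}, hence $-6$. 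For $\Omega^4_7\to\Omega^5_7$ and $\Omega^5_7\to\Omega^6_7$ I would write the generic elements as $\ph\wedge X^{\flat}$ and $\ps\wedge X^{\flat}$ (which equal $\st(X\hk\ps)$ and $\st(X\hk\ph)$ by Lemma~\ref{lemma:identities2}), expand $\iota_B$ by the graded Leibniz rule --- minding the sign $(-1)^{|\omega_1|}$ on the second term, since $\iota_B$ has degree one --- and simplify using Lemmas~\ref{lemma:iotaBKprelim2}, \ref{lemma:iotaBKprelim3} and the wedge identities of Lemma~\ref{lemma:identities2}. This produces $\iota_B(\ph\wedge X^{\flat})=-4\st(X\hk\ph)$ and $\iota_B(\ps\wedge X^{\flat})=3\st X^{\flat}$, i.e. the scalars $-4$ and $3$.

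The one step that requires a genuine computation --- and the step I expect to be the main obstacle --- is the $\Omega^3_{27}\to\Omega^4_{27}$ component. Here I would take $h\in\Symo$ and apply $\iota_B$ to $\ell_{\ph} h = h_{ip}g^{pq}\,e^i\wedge(e_q\hk\ph)$: pulling out the coefficient by algebraicity, and using the graded Leibniz rule together with $\iota_B e^i = g^{ij}(e_j\hk\ph)$ and $\iota_B(e_q\hk\ph)=3(e_q\hk\ps)$ (Lemma~\ref{lemma:iotaBKprelim3}), one obtains $\iota_B(\ell_{\ph} h) = h^{jq}(e_j\hk\ph)\wedge(e_q\hk\ph) - 3\,h_{ip}g^{pq}\,e^i\wedge(e_q\hk\ps)$. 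The first term equals $2\ell_{\ps}h$ by the first identity of Proposition~\ref{prop:special} (the trace term drops since $h\in\Symo$), and the second equals $-3\ell_{\ps}h$ by the definition~\eqref{eq:ellpsdefn} of $\ell_{\ps}$, so $\iota_B(\ell_{\ph}h) = -\ell_{\ps}h = \st(\ell_{\ph}h)$ by Lemma~\ref{lemma:starell}. Since~\eqref{eq:forms-isom} identifies $\Omega^4_{27}$ with $\Symo$ via $\st(\ell_{\ph}h)\leftrightarrow h$, the scalar is $1$, completing the figure. The real difficulty throughout is bookkeeping rather than anything deep: keeping the non-isometric identifications of~\eqref{eq:forms-isom} and the various Hodge stars consistent (cf.\ Remark~\ref{rmk:adjoints}), and getting the Leibniz sign right for the degree-one derivation $\iota_B$. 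One could instead bypass the representation theory entirely and compute each component of $\iota_B$ directly on frame elements from formula~\eqref{eq:iotaKframe} and Propositions~\ref{prop:special},~\ref{prop:special2}; this reproduces the same seven scalars but obscures why the components are constant.
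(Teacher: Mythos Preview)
Your proof is correct and the computation of the seven nonzero scalars matches the paper's exactly, using the same preliminary Lemmas~\ref{lemma:iotaBKprelim1}--\ref{lemma:iotaBKprelim4}, Proposition~\ref{prop:special}, and Lemma~\ref{lemma:starell}. The genuine difference is in how you establish the \emph{structure} of Figure~\ref{figure:iotaB}. You invoke $\G$-equivariance and Schur's lemma up front to conclude that $\iota_B$ preserves the type decomposition and acts by scalars, which immediately forces the vanishing on $\Omega^4_1$, $\Omega^4_{27}$, and $\Omega^5_{14}$ (where the target summand is absent). The paper instead verifies each of these vanishings by direct computation: $\iota_B \ps = 0$ from Lemma~\ref{lemma:iotaBKprelim2}, $\iota_B(\st\beta) = -6\ps\wedge\beta = 0$ for $\beta\in\Omega^2_{14}$ via the derivation rule, and an explicit expansion of $\iota_B(\ell_{\ps} h)$ using Proposition~\ref{prop:special}. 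Your route is more conceptual and explains \emph{why} the components are constant scalars; the paper's route is more self-contained, avoiding any appeal to the representation theory of $\G$ beyond the decomposition~\eqref{eq:bundle-decomp} itself. Your closing remark already anticipates this trade-off.
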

\begin{proof}
The derivation $\iota_B$ is of degree $1$, so it vanishes on $\Omega^7$. Moreover, by Corollary~\ref{cor:iotaKzero} is also vanishes on $\Omega^6$. We establish the rest of Figure~\ref{figure:iotaB} by each vertical column.

$\Omega^k_1$ column: This follows from~\eqref{eq:iotaBKprelimeq1} and~\eqref{eq:iotaBKprelimeq2}. In particular, the map $\iota_B : \Omega^3_1 \to \Omega^4_1$ is identified with multiplication by $-6$.

$\Omega^k_7$ column: The map $\iota_B : \Omega^1_7 \to \Omega^2_7$ is identified with multiplication by $1$ by~\eqref{eq:iotaBKprelimeq1}. The maps $\iota_B : \Omega^2_7 \to \Omega^3_7$ and $\iota_B : \Omega^3_7 \to \Omega^4_7$ are identified with multiplication by $3$ and $-3$, respectively, by~\eqref{eq:iotaBKprelimeq3}. Let $\st (X \hk \ps) = \ph \wedge X \in \Omega^4_7$. Then
\begin{align*}
\iota_B \big( \st (X \hk \ps) \big) & = \iota_B (\ph \wedge X) = (\iota_B \ph) \wedge X - \ph \wedge (\iota_B X) \\
& = (-6 \ps) \wedge X - \ph \wedge (X \hk \ph) = - 6 \st (X \hk \ph) + 2 \st (X \hk \ph) \\
& = - 4 \st (X \hk \ph),
\end{align*}
and hence the map $\iota_B : \Omega^4_7 \to \Omega^5_7$ is identified with multiplication by $-4$. Finally, let $\st (X \hk \ph) = \ps \wedge X \in \Omega^5_7$. Then
\begin{align*}
\iota_B \big( \st (X \hk \ph) \big) & = \iota_B (\ps \wedge X) = (\iota_B \ps) \wedge X + \ps \wedge (\iota_B X) \\
& = 0 + \ps \wedge (X \hk \ph) = 3 \st X,
\end{align*}
and hence the map $\iota_B : \Omega^5_7 \to \Omega^6_7$ is identified with multiplication by $3$.

$\Omega^k_{14}$ column: The map $\iota_B$ on $\Omega^2_{14}$ is zero by Lemma~\ref{lemma:iotaBKprelim4}. Let $\mu = \st \beta \in \Omega^5_{14}$ where $\beta \in \Omega^2_{14}$. Then $\mu = \st \beta = \ph \wedge \beta$, so $\iota_B \mu = (\iota_B \ph) \wedge \beta - \ph \wedge (\iota_B \beta) = - 6 \ps \wedge \beta - 0 = 0$, by the description of $\Omega^2_{14}$ in~\eqref{eq:forms-isom}.

$\Omega^k_{27}$ column: Let $\gamma = \ell_{\ph} h \in \Omega^3_{27}$, where $h \in \symo$. By~\eqref{eq:ellphdefn} we have $\gamma = h_{kl} g^{lm} e^k \wedge (e_m \hk \ph)$. Since $\iota_B$ is algebraic, we can pull out functions, and using~\eqref{eq:iotaBKprelimeq1} and~\eqref{eq:iotaBKprelimeq3} we compute
\begin{align*}
\iota_B \gamma & = \iota_B \big( h_{kl} g^{lm} e^k \wedge (e_m \hk \ph) \big) \\
& = h_{kl} g^{lm} \big( (\iota_B e^k) \wedge (e_m \hk \ph) - e^k \wedge \iota_B (e_m \hk \ph) \big) \\
& = h_{kl} g^{lm} \big( g^{kp} (e_p \hk \ph) \wedge (e_m \hk \ph) - e^k \wedge (3 e_m \hk \ps) \big) \\
& = h^{pm} (e_p \hk \ph) \wedge (e_m \hk \ph) - 3 h_{kl} g^{lm} e^k \wedge (e_m \hk \ps).
\end{align*}
By~\eqref{eq:specialprop} and~\eqref{eq:ellpsdefn}, since $\tr_g h = 0$, the above expression is
\begin{equation*}
\iota_B \gamma = 2 \ell_{\ps} h - 3 \ell_{\ps} h = - \ell_{\ps} h.
\end{equation*}
Using Lemma~\ref{lemma:starell}, we conclude that $\iota_B (\ell_{\ph} h) = \st (\ell_{\ph} h)$, and thus the map $\iota_B : \Omega^3_{27} \to \Omega^4_{27}$ is identified with multiplication by $1$. Finally, let $\eta = \ell_{\ps} h \in \Omega^4_{27}$, where $h \in \symo$. By~\eqref{eq:ellpsdefn} we have $\eta = h_{kl} g^{lm} e^k \wedge (e_m \hk \ps)$. Computing as before, we find
\begin{align*}
\iota_B \eta & = \iota_B \big( h_{kl} g^{lm} e^k \wedge (e_m \hk \ps) \big) \\
& = h_{kl} g^{lm} \big( (\iota_B e^k) \wedge (e_m \hk \ps) - e^k \wedge \iota_B (e_m \hk \ps) \big) \\
& = h_{kl} g^{lm} \big( g^{kp} (e_p \hk \ph) \wedge (e_m \hk \ps) - e^k \wedge (-3 \st( e_m \hk \ps) ) \big) \\
& = h^{pm} (e_p \hk \ph) \wedge (e_m \hk \ps) + 3 h_{kl} g^{lm} e^k \wedge (\ph \wedge (e_m)^{\flat}).
\end{align*}
Using~\eqref{eq:specialprop}, the above expression becomes
\begin{equation*}
\iota_B \eta = 0 + 3 h_{kl} g^{lm} e^k \wedge \ph \wedge (g_{mp} e^p) = - 3 h_{kp} e^k \wedge e^p \wedge \ph = 0,
\end{equation*}
so the map $\iota_B$ on $\Omega^4_{27}$ is zero.
\end{proof}

\begin{figure}[H]
\begin{equation*}
\xymatrix {
\Omega^0_1 & & & \\
& \Omega^2_7 \ar@[purple][d]^{\purple{3}} & \Omega^{2}_{14} & \\
\Omega^4_1 & \Omega^4_7 \ar@[purple][d]^{\purple{4}} & & \Omega ^4_{27} \\
& \Omega^6_7 & & \\
} \qquad \qquad
\xymatrix {
 & \Omega^1_7 \ar@[purple][d]^{\purple{-1}} & & \\
\Omega^3_1 & \Omega^3_7 \ar@[purple][d]^{\purple{-4}} & & \Omega^3_{27} \\
& \Omega^5_7 & \Omega^5_{14} & \\
\Omega^7_1 & & & \\
}
\end{equation*}
\caption{Decomposition of the algebraic derivation $\iota_K$ into components} \label{figure:iotaK}
\end{figure}

\begin{prop} \label{prop:iotaKfigure}
With respect to the identifications described in~\eqref{eq:forms-isom}, the components of the operator $\iota_K$ satisfy the relations given in Figure~\ref{figure:iotaK}.
\end{prop}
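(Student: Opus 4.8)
The plan is to follow the proof of Proposition~\ref{prop:iotaBfigure} essentially verbatim, working column by column through the decomposition~\eqref{eq:bundle-decomp}. Since $\iota_K$ has degree $r-1=2$ it is an \emph{even} derivation, so $\iota_K(\alpha\wedge\beta)=(\iota_K\alpha)\wedge\beta+\alpha\wedge(\iota_K\beta)$ with no sign. Moreover Corollary~\ref{cor:iotaKzero} with $r=3$ shows at once that $\iota_K$ vanishes on $\Omega^5$, and it vanishes trivially on $\Omega^6$ and $\Omega^7$ since the target spaces are zero. This already accounts for the bottom rows of both diagrams in Figure~\ref{figure:iotaK}, in particular the claimed vanishing on $\Omega^5_{14}$ and $\Omega^7_1$.

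For the remaining rows I would argue as follows. On the $\Omega^k_1$ column, $\iota_K$ kills $\Omega^0_1=C^\infty(M)$ since algebraic derivations vanish on functions, and it kills $\Omega^3_1=\{f\ph\}$ and $\Omega^4_1=\{f\ps\}$ because $\iota_K\ph=\iota_K\ps=0$ by Lemma~\ref{lemma:iotaBKprelim2}. On the $\Omega^k_7$ column, the maps on $\Omega^1_7$, $\Omega^2_7$, $\Omega^3_7$ are read off directly as multiplication by $-1$, $3$, $-4$ from~\eqref{eq:iotaBKprelimeq1} and~\eqref{eq:iotaBKprelimeq3}, after matching the identifications in~\eqref{eq:forms-isom}; for $\iota_K$ on $\Omega^4_7$ I would write a typical element as $\st(X\hk\ps)=\ph\wedge X^\flat$, apply the derivation property together with $\iota_K\ph=0$ and $\iota_K X^\flat=-X\hk\ps$, and then use $\ph\wedge(X\hk\ps)=-4\st X^\flat$ from Lemma~\ref{lemma:identities2} to obtain multiplication by $4$. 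The $\Omega^k_{14}$ column is immediate: $\iota_K$ vanishes on $\Omega^2_{14}$ by Lemma~\ref{lemma:iotaBKprelim4} and on $\Omega^5_{14}$ by the observation above.

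The only genuine computation --- still routine --- is the $\Omega^k_{27}$ column, handled exactly as in Proposition~\ref{prop:iotaBfigure}. For $\gamma=\ell_\ph h\in\Omega^3_{27}$ with $h$ trace-free symmetric, I expand $\iota_K\gamma$ by the derivation rule applied to $\gamma=h_{kl}g^{lm}e^k\wedge(e_m\hk\ph)$, using $\iota_K e^k=-g^{ka}(e_a\hk\ps)$ from Lemma~\ref{lemma:iotaBKprelim1} and $\iota_K(e_m\hk\ph)=3\st(e_m\hk\ps)$ from Lemma~\ref{lemma:iotaBKprelim3}. The first resulting term is a contraction of the symmetric tensor $h$ against $(e_p\hk\ph)\wedge(e_q\hk\ps)$, which vanishes by the second identity of Proposition~\ref{prop:special}; the second term, after rewriting $\st(e_m\hk\ps)=\ph\wedge(e_m)^\flat$ via Lemma~\ref{lemma:identities2}, collapses to a multiple of $h_{kn}\,e^k\wedge e^n\wedge\ph=0$ by symmetry of $h$. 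Hence $\iota_K$ is zero on $\Omega^3_{27}$. An entirely analogous computation for $\eta=\ell_\ps h\in\Omega^4_{27}$ --- now using the third identity of Proposition~\ref{prop:special} (for $(e_p\hk\ps)\wedge(e_q\hk\ps)$), together with $\iota_K(e_m\hk\ps)=-4\st(e_m\hk\ph)$ and $\st(e_m\hk\ph)=\ps\wedge(e_m)^\flat$ --- shows $\iota_K$ is zero on $\Omega^4_{27}$. I expect the only mild obstacle to be careful bookkeeping of signs and of the (non-isometric) identifications~\eqref{eq:forms-isom}; there is no conceptual difficulty beyond what already appears in Proposition~\ref{prop:iotaBfigure}.
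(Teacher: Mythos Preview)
Your proposal is correct and follows essentially the same route as the paper's proof: the same degree/Corollary~\ref{cor:iotaKzero} argument for $\Omega^{\geq 5}$, the same column-by-column treatment using Lemmas~\ref{lemma:iotaBKprelim1}--\ref{lemma:iotaBKprelim4} and Proposition~\ref{prop:special}, and the same derivation-expansion for the $\Omega^k_{27}$ column. In fact your bookkeeping is slightly cleaner than the paper's, since you correctly note that $\iota_K$ is an \emph{even} derivation and carries no sign in the Leibniz rule (the paper's displayed computation on $\Omega^3_{27}$ and $\Omega^4_{27}$ writes a minus sign there, a harmless slip since both resulting terms vanish individually).
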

\begin{proof}
The derivation $\iota_K$ is of degree $2$, so it vanishes on $\Omega^6$ and $\Omega^7$. Moreover, by Corollary~\ref{cor:iotaKzero} is also vanishes on $\Omega^5$. We establish the rest of Figure~\ref{figure:iotaB} by each vertical column. Note that $\iota_K$ preserves the parity (even/odd) of forms.

$\Omega^k_1$ column: This follows from~\eqref{eq:iotaBKprelimeq1} and~\eqref{eq:iotaBKprelimeq2}.

$\Omega^k_7$ column: The map $\iota_K : \Omega^1_7 \to \Omega^3_7$ is identified with multiplication by $-1$ by~\eqref{eq:iotaBKprelimeq1}. The maps $\iota_K : \Omega^2_7 \to \Omega^4_7$ and $\iota_K : \Omega^3_7 \to \Omega^5_7$ are identified with multiplication by $3$ and $-4$, respectively, by~\eqref{eq:iotaBKprelimeq3}. Let $\st (X \hk \ps) = \ph \wedge X \in \Omega^4_7$. Then, since $\iota_K$ is an even derivation,
\begin{align*}
\iota_K \big( \st (X \hk \ps) \big) & = \iota_K (\ph \wedge X) = (\iota_K \ph) \wedge X + \ph \wedge (\iota_K X) \\
& = 0 + \ph \wedge (-X \hk \ps) = -\ph \wedge (X \hk \ps) = 4 \st X
\end{align*}
and hence the map $\iota_K : \Omega^4_7 \to \Omega^6_7$ is identified with multiplication by $4$.

$\Omega^k_{14}$ column: The map $\iota_K$ on $\Omega^2_{14}$ is zero by Lemma~\ref{lemma:iotaBKprelim4}.

$\Omega^k_{27}$ column: Let $\gamma = \ell_{\ph} h \in \Omega^3_{27}$, where $h \in \symo$. By~\eqref{eq:ellphdefn} we have $\gamma = h_{kl} g^{lm} e^k \wedge (e_m \hk \ph)$. Computing as in the proof of Proposition~\ref{prop:iotaKfigure}, we find that
\begin{align*}
\iota_K \gamma & = \iota_K \big( h_{kl} g^{lm} e^k \wedge (e_m \hk \ph) \big) \\
& = h_{kl} g^{lm} \big( (\iota_K e^k) \wedge (e_m \hk \ph) - e^k \wedge \iota_K (e_m \hk \ph) \big) \\
& = h_{kl} g^{lm} \big( -g^{kp} (e_p \hk \ps) \wedge (e_m \hk \ph) - e^k \wedge (3 \st( e_m \hk \ps) ) \big) \\
& = -h^{pm} (e_p \hk \ps) \wedge (e_m \hk \ph) - 3 h_{kl} g^{lm} e^k \wedge \ph \wedge (e_m)^{\flat}.
\end{align*}
The first term vanishes by~\eqref{eq:specialprop} and the second term vanishes as it is $-3 h_{kl} g^{lm} g_{mp} e^k \wedge \ph \wedge e^p = 3 h_{kp} e^k \wedge e^p \wedge \ph = 0$.
Thus the map $\iota_K$ vanishes on $\Omega^3_{27}$. Finally, let $\eta = \ell_{\ps} h \in \Omega^4_{27}$, where $h \in \symo$. By~\eqref{eq:ellpsdefn} we have $\eta = h_{kl} g^{lm} e^k \wedge (e_m \hk \ps)$. Computing as before, we find
\begin{align*}
\iota_K \eta & = \iota_K \big( h_{kl} g^{lm} e^k \wedge (e_m \hk \ps) \big) \\
& = h_{kl} g^{lm} \big( (\iota_K e^k) \wedge (e_m \hk \ps) - e^k \wedge \iota_K (e_m \hk \ps) \big) \\
& = h_{kl} g^{lm} \big( -g^{kp} (e_p \hk \ps) \wedge (e_m \hk \ps) - e^k \wedge (-4 \st( e_m \hk \ph) ) \big) \\
& = -h^{pm} (e_p \hk \ps) \wedge (e_m \hk \ps) + 4 h_{kl} g^{lm} e^k \wedge (\ps \wedge (e_m)^{\flat}).
\end{align*}
Again, the first term vanishes by~\eqref{eq:specialprop} and the second term vanishes as it is $4 h_{kl} g^{lm} g_{mp} e^k \wedge \ps \wedge e^p = 4 h_{kp} e^k \wedge e^p \wedge \ps = 0$.
Thus the map $\iota_K$ vanishes on $\Omega^4_{27}$.
\end{proof}

\begin{figure}[H]
\begin{equation*}
\xymatrix {
\Omega^0_1\ar@[red][dr]^{\red{D^1_7}} & & & \\
& \Omega^2_7 \ar@[blue][dl]_{\blue{-2 D^7_1}} \ar@[magenta][drr]_{\magenta{-2 D^7_{27}}} & \Omega^{2}_{14} \ar@/^/@[violet][dr]^{\violet{D^{14}_{27}}} \ar@/^/@[cyan][dl]_{\cyan{-3 D^{14}_7}} & \\
\Omega^4_1 \ar@[red][dr]_{\red{3 D^1_7}} & \Omega^4_7 \ar@[orange][d]^{\orange{-6 D^7_7}} & & \Omega ^4_{27} \ar@[teal][dll]^{\teal{4 D^{27}_7}} \\
& \Omega^6_7 & &
} \qquad \qquad
\xymatrix {
& \Omega^1_7 \ar@[blue][dl]_{\blue{D^7_1}} \ar@[orange][d]^{\orange{\frac{3}{2} D^7_7}} \ar@[magenta][drr]^{\magenta{D^7_{27}}} & & \\
\Omega^3_1 \ar@[red][dr]_{\red{-2 D^1_7}} & \Omega^3_7 \ar@/^/@[brown][dr]_{\brown{3 D^7_{14}}} & & \Omega^3_{27} \ar@[teal][dll]_{\teal{\, \, -\frac{8}{3} D^{27}_7}} \ar@[gray][dl]^{\gray{D^{27}_{14}}} \\
& \Omega^5_7 \ar@[blue][dl]^{\blue{7 D^7_1}} & \Omega^5_{14} & \\
\Omega^7_1 & & &
}
\end{equation*}
\caption{Decomposition of the Nihenhuis-Lie derivation $\mathcal L_B$ into components} \label{figure:LB}
\end{figure}

\begin{figure}[H]
\begin{equation*}
\xymatrix {
\Omega^0_1 \ar@[red][dr]^{\red{-D^1_7}} & & & \\
\Omega^3_1 \ar@[red][dr]_{\red{-4 D^1_7}} & \Omega^3_7 \ar@[orange][d]^{\orange{6 D^7_7}} & & \Omega^3_{27} \ar@[teal][dll]^{\teal{4 D^{27}_7}} \\
& \Omega^6_7 & & \\
} \qquad \qquad
\xymatrix {
& \Omega^1_7 \ar@[blue][dl]_{\blue{\frac{4}{3} D^7_1}} \ar@[orange][d]^{\orange{\frac{3}{2} D^7_7}} \ar@[magenta][drr]^{\magenta{-D^7_{27}}} & & \\
\Omega^4_1 & \Omega^4_7 \ar@[blue][dl]^{\blue{-\frac{28}{3} D^7_1}} & & \Omega ^4_{27} \\
\Omega^7_1 & & & \\
}
\end{equation*}
\begin{equation*}
\xymatrix {
& & \Omega^2_7 \ar@/^/@[brown][drr]^{\brown{3 D^7_{14}}} & & \Omega^2_{14} \ar@/^/@[cyan][dll]^{\cyan{\!\! -4 D^{14}_7}} & & & & \\
& & \Omega^5_7 & & \Omega^5_{14} & & & & \\
}
\end{equation*}
\caption{Decomposition of the Nijenhuis-Lie derivation $\mathcal L_K$ into components} \label{figure:LK}
\end{figure}

From now on in the paper, we always assume that $(M, \ph)$ is torsion-free. See also Remark~\ref{rmk:when-torsion-free}.

\begin{cor} \label{cor:LBLKfigures}
With respect to the identifications described in~\eqref{eq:forms-isom}, the components of the operators $\mathcal L_B$ and $\mathcal L_K$ satisfy the relations given in Figures~\ref{figure:LB} and~\ref{figure:LK}.
\end{cor}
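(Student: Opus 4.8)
The plan is to obtain Figures~\ref{figure:LB} and~\ref{figure:LK} by composing the component maps already determined in Figure~\ref{figure:d} (for $\dd$), Figure~\ref{figure:iotaB} (for $\iota_B$), and Figure~\ref{figure:iotaK} (for $\iota_K$), using only the defining identities $\mathcal L_B = \iota_B\dd + \dd\iota_B$ and $\mathcal L_K = \iota_K\dd - \dd\iota_K$ from~\eqref{eq:LBK}. Since $(M,\ph)$ is torsion-free, the components $\pi^{k+1}_m\,\dd|_{\Omega^k_l}$ of $\dd$ are exactly the scalars recorded in Figure~\ref{figure:d}, and after the identifications~\eqref{eq:forms-isom} each of $\iota_B$, $\iota_K$ sends a summand $\Omega^k_l$ into the like-typed summand one degree higher as multiplication by the scalar in Figure~\ref{figure:iotaB} or~\ref{figure:iotaK} (or to zero when no such summand exists). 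Consequently, for each source $\Omega^k_l$ and target $\Omega^{k+2}_m$ the component $\pi^{k+2}_m\mathcal L_B\pi^k_l$ is the sum of the two composites
\[
\Omega^k_l \xrightarrow{\ \dd\ } \Omega^{k+1}_{\star} \xrightarrow{\ \iota_B\ } \Omega^{k+2}_m \qquad\text{and}\qquad \Omega^k_l \xrightarrow{\ \iota_B\ } \Omega^{k+1}_{l} \xrightarrow{\ \dd\ } \Omega^{k+2}_m ,
\]
each a product of the scalars in the figures (with $\star$ ranging over the permitted intermediate types); for $\mathcal L_K$ one subtracts the second composite instead of adding it, and the degree shift is $3$ rather than $2$.

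I would verify the entries column by column (by type $l\in\{1,7,14,27\}$) and, within each column, degree by degree. For instance, on $\Omega^1_7$ the term $\iota_B\dd$ factors through $\Omega^2_7\oplus\Omega^2_{14}$ and contributes $3D^7_7$ into $\Omega^3_7$ (the $\Omega^2_{14}$ piece dies since $\iota_B$ vanishes on $\Omega^2_{14}$), while $\dd\iota_B$ factors through $\Omega^2_7$ with $\iota_B|_{\Omega^1_7}$ the scalar $1$ and contributes $D^7_1$, $-\tfrac32 D^7_7$, $D^7_{27}$ into $\Omega^3_1$, $\Omega^3_7$, $\Omega^3_{27}$ respectively; adding gives the arrows $D^7_1$, $\tfrac32 D^7_7$, $D^7_{27}$ of Figure~\ref{figure:LB}. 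Every remaining arrow is obtained in the same mechanical way, including the many that come out zero — for those the two composites either cancel (a consequence of the relations in Corollary~\ref{cor:d-relations}, i.e.\ of $\dd^2=0$) or there is simply no path connecting the two types. The one point that must be watched is that, the identifications~\eqref{eq:forms-isom} not being isometric (Remark~\ref{rmk:not-isom}), the scalar representing $\pi^{k+1}_m\,\dd|_{\Omega^k_l}$ genuinely depends on $k$: e.g.\ $\pi_7\dd\pi_7$ reads $D^7_7,-\tfrac32 D^7_7,-\tfrac32 D^7_7,2D^7_7,3D^7_7$ on $\Omega^1_7,\dots,\Omega^5_7$, and $\iota_B$ reads $1,3,-3,-4,3$ down the same column, so in each composite one must use the scalars attached to the degree actually in play. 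All of these values are already tabulated in Figures~\ref{figure:d},~\ref{figure:iotaB}, and~\ref{figure:iotaK}.

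The proof is therefore purely computational, and the main obstacle is disciplined bookkeeping: keeping straight all the scalars, all the signs — in particular the relative minus sign in $\mathcal L_K=\iota_K\dd-\dd\iota_K$ — and the degree-dependence just noted, across the roughly two dozen arrows of the two figures. Several internal consistency checks are available: $\dd$ must commute with $\mathcal L_B$ and anticommute with $\mathcal L_K$ by~\eqref{eq:commdL}; the components forced to vanish impose precisely the cancellations encoded in Corollary~\ref{cor:d-relations}; and $\mathcal L_B$, $\mathcal L_K$ must intertwine with the Hodge star as in Proposition~\ref{prop:derivationsstar}, which pairs up the degrees and lets one recover about half the entries of each figure from the other half. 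No idea beyond the material of Sections~\ref{sec:dLap} and~\ref{sec:LBLK} is needed.
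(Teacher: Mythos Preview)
Your proposal is correct and takes exactly the same approach as the paper: the paper's proof is the single sentence ``This is straightforward to verify from Figures~\ref{figure:d},~\ref{figure:iotaB}, and~\ref{figure:iotaK} using the equations in~\eqref{eq:LBK},'' and you have simply spelled out how that verification goes. One small inaccuracy: the vanishing entries in Figures~\ref{figure:LB} and~\ref{figure:LK} arise from direct arithmetic cancellation of the scalars (e.g.\ the $\Omega^2_7\to\Omega^4_7$ component of $\mathcal L_B$ is $\tfrac{9}{2}D^7_7-\tfrac{9}{2}D^7_7=0$), not from the $\dd^2=0$ relations of Corollary~\ref{cor:d-relations}, but this does not affect the argument.
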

\begin{proof}
This is straightforward to verify from Figures~\ref{figure:d},~\ref{figure:iotaB}, and~\ref{figure:iotaK} using the equations in~\eqref{eq:LBK}.
\end{proof}

Next we discuss some properties of $\mathcal L_B$ and $\mathcal L_K$.

\begin{lemma} \label{lemma:LBLKformula}
Let $\alpha$ be a form. In a local frame, the actions of $\mathcal L_B$ and $\mathcal L_K$ is given by
\begin{equation} \label{eq:LBLKframe}
\begin{aligned}
\mathcal L_B \alpha & = g^{pq} (e_p \hk \ph) \wedge (\nabla_q \alpha), \\
\mathcal L_K \alpha & = - g^{pq} (e_p \hk \ps) \wedge (\nabla_q \alpha).
\end{aligned}
\end{equation}
\end{lemma}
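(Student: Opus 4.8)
The statement to prove is Lemma~\ref{lemma:LBLKformula}, which gives the local-frame formulas
$\mathcal{L}_B \alpha = g^{pq}(e_p \hk \ph) \wedge (\nabla_q \alpha)$ and $\mathcal{L}_K \alpha = -g^{pq}(e_p \hk \ps) \wedge (\nabla_q \alpha)$.
The natural approach is to expand the definitions~\eqref{eq:LBK}, namely $\mathcal{L}_B = \iota_B \dd + \dd \iota_B$ and $\mathcal{L}_K = \iota_K \dd - \dd \iota_K$, using the frame expression for $\iota_B$ and $\iota_K$ from Lemma~\ref{lemma:iotaKfromform}, equation~\eqref{eq:iotaKframe}, and the frame expression for $\dd$ from~\eqref{eq:dd}, namely $\dd = e^r \wedge \nabla_r$. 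The point where torsion-freeness enters is that $\ph$ and $\ps$ are parallel, so $\nabla_q$ commutes past $e_p \hk \ph$ and $e_p \hk \ps$; this is what makes the two cross-terms telescope.

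First I would do $\mathcal{L}_B$. Write $\iota_B \alpha = g^{pq}(e_p \hk \ph) \wedge (e_q \hk \alpha)$ (the $(-1)^r$ from~\eqref{eq:iotaKframe} with $r=2$ is $+1$). Then
$\dd \iota_B \alpha = e^r \wedge \nabla_r \big( g^{pq}(e_p \hk \ph)\wedge (e_q \hk \alpha)\big) = g^{pq}(e_p \hk \ph)\wedge \big(e^r \wedge \nabla_r (e_q \hk \alpha)\big)$,
using $\nabla \ph = 0$ and $\nabla g = 0$, where the sign works out because $e_p \hk \ph$ is a $2$-form so commuting $e^r \wedge$ past it is sign-free. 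Separately,
$\iota_B \dd \alpha = g^{pq}(e_p\hk\ph)\wedge\big(e_q \hk (e^r \wedge \nabla_r \alpha)\big) = g^{pq}(e_p \hk \ph)\wedge\big(\delta_q^r \nabla_r \alpha - e^r \wedge (e_q \hk \nabla_r \alpha)\big)$.
Adding the two, the terms $g^{pq}(e_p\hk\ph)\wedge e^r \wedge \nabla_r(e_q \hk \alpha)$ and $-g^{pq}(e_p\hk\ph)\wedge e^r \wedge (e_q \hk \nabla_r\alpha)$ must be shown to cancel. Since $\nabla_r$ and $e_q \hk$ are both... no: we need $\nabla_r(e_q \hk \alpha) = e_q \hk \nabla_r \alpha$, which holds because $e_q$ is a frame vector and we should interpret the interior product at fixed index $q$; more carefully, $\nabla_r(e_q \hk \alpha) = (\nabla_r e_q)\hk \alpha + e_q \hk \nabla_r \alpha$, and the extra Christoffel term $(\nabla_r e_q)\hk\alpha$ is absorbed — this is exactly the subtlety to watch. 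The cleanest fix is to note that the combined expression is independent of the frame and compute in a normal frame at a point, where $\nabla_r e_q = 0$; then the cancellation is immediate and we are left with $g^{pq}(e_p\hk\ph)\wedge \nabla_q \alpha$, which is the claim.

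For $\mathcal{L}_K$, the computation is parallel but with two sign changes: $\iota_K \alpha = -g^{pq}(e_p \hk \ps)\wedge(e_q\hk\alpha)$ (the $(-1)^r$ with $r=3$ is $-1$), and $\mathcal{L}_K = \iota_K \dd - \dd \iota_K$; also $e_p \hk \ps$ is a $3$-form, so commuting $e^r \wedge$ past it picks up a factor $(-1)$. I expect these signs to conspire so that the telescoping again works and yields $\mathcal{L}_K \alpha = -g^{pq}(e_p \hk \ps)\wedge \nabla_q \alpha$. The main obstacle, as indicated above, is handling the interior-product-versus-covariant-derivative bookkeeping correctly: the interior products $e_p \hk$ and $e_q \hk$ are with respect to a chosen frame, and $\nabla$ does not literally commute with them unless one works at a point of a normal frame or phrases everything in index notation. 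I would therefore present the argument either by passing to a geodesic normal frame (so $\nabla e_i = 0$ pointwise and both formulas are tensorial hence valid everywhere), or by rewriting~\eqref{eq:iotaKframe}, \eqref{eq:dd}, and~\eqref{eq:Lie-derivation} purely in components and checking the identity index-by-index. The normal-frame route is shorter and I would use it.
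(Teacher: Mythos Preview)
Your proposal is correct and takes essentially the same approach as the paper: expand $\mathcal L_B = \iota_B \dd + \dd \iota_B$ (and similarly $\mathcal L_K$) using~\eqref{eq:iotaKframe} and~\eqref{eq:dd}, then observe that the cross-terms cancel once one works in a Riemannian normal frame at a point so that $\nabla_r e_q = 0$, with torsion-freeness ($\nabla \ph = 0$, $\nabla \ps = 0$) ensuring the remaining simplifications. The paper's proof is exactly this normal-frame computation and treats $\mathcal L_K$ by the phrase ``proved similarly''.
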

\begin{proof}
It is clear that both expressions in~\eqref{eq:LBLKframe} are independent of the choice of frame. To establish these expressions at $x \in M$, we choose a local frame determined by Riemannian normal coordinates centred at $x$. In particular, at the point $x$ we have $\nabla_p e_j = $ and $\nabla_p e^j = 0$. Recalling that $M$ is torsion-free, so $\nabla \ph = 0$, using~\eqref{eq:LBK},~\eqref{eq:iotaKframe}, and~\eqref{eq:dd} at the point $x$ we compute
\begin{align*}
\mathcal L_B \alpha & = (\iota_B \dd + \dd \iota_B) \alpha \\
& = \iota_B (e^m \wedge \nabla_m \alpha) + e^m \wedge \nabla_m (\iota_B \alpha) \\
& = g^{pq} (e_p \hk \ph) \wedge \big( e_q \hk (e^m \wedge \nabla_m \alpha) \big) + e^m \wedge \nabla_m \big( g^{pq} (e_p \hk \ph) \wedge (e_q \hk \alpha) \big) \\
& = g^{pq} (e_p \hk \ph) \wedge \big( \delta^m_q \nabla_m \alpha - e^m \wedge (e_q \hk \nabla_m \alpha) \big) + g^{pq} e^m \wedge (e_p \hk \ph) \wedge (e_q \hk \nabla_m \alpha) \\
& = g^{pq} (e_p \hk \ph) \wedge \nabla_q \alpha,
\end{align*}
establishing the first equation in~\eqref{eq:LBLKframe}. The other equation is proved similarly using $\nabla \ps = 0$.
\end{proof}

\begin{cor} \label{cor:LBLKds}
For any for $\alpha$, we have
\begin{equation} \label{eq:LBLKds}
\begin{aligned}
\mathcal L_B \alpha & = - \ds(\ph \wedge \alpha) - \ph \wedge \ds \alpha, \\
\mathcal L_K \alpha & = \ds (\ps \wedge \alpha) - \ps \wedge \ds \alpha.
\end{aligned}
\end{equation}
\end{cor}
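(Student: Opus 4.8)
The plan is to derive both identities directly from the local-frame formulas of Lemma~\ref{lemma:LBLKformula} together with the expression $\ds \beta = - g^{pq} e_p \hk \nabla_q \beta$ from~\eqref{eq:ds}, using crucially that $\ph$ and $\ps$ are parallel on a torsion-free $\G$~manifold.

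For the first identity, I would expand $\ds(\ph \wedge \alpha)$ using~\eqref{eq:ds}: since $\nabla \ph = 0$ we get $\ds(\ph \wedge \alpha) = - g^{pq} e_p \hk (\ph \wedge \nabla_q \alpha)$. Now apply the anti-derivation (Leibniz) property of the interior product, $e_p \hk (\ph \wedge \nabla_q \alpha) = (e_p \hk \ph) \wedge \nabla_q \alpha + (-1)^3 \ph \wedge (e_p \hk \nabla_q \alpha)$, where the sign is $-1$ because $\ph$ has degree $3$. Contracting with $g^{pq}$, the first term is exactly $- \mathcal L_B \alpha$ by Lemma~\ref{lemma:LBLKformula}, and the second term is $\ph \wedge (g^{pq} e_p \hk \nabla_q \alpha) = - \ph \wedge \ds \alpha$ again by~\eqref{eq:ds}. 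This yields $\ds(\ph \wedge \alpha) = - \mathcal L_B \alpha - \ph \wedge \ds \alpha$, which rearranges to the claimed formula for $\mathcal L_B \alpha$.

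The second identity is entirely parallel: $\ds(\ps \wedge \alpha) = - g^{pq} e_p \hk (\ps \wedge \nabla_q \alpha)$ since $\nabla \ps = 0$, and the Leibniz rule now gives $e_p \hk (\ps \wedge \nabla_q \alpha) = (e_p \hk \ps) \wedge \nabla_q \alpha + (-1)^4 \ps \wedge (e_p \hk \nabla_q \alpha)$, the sign now being $+1$ because $\ps$ has degree $4$. Contracting with $g^{pq}$, the first term is $+ \mathcal L_K \alpha$ by Lemma~\ref{lemma:LBLKformula} and the second is $- \ps \wedge \ds \alpha$, so $\ds(\ps \wedge \alpha) = \mathcal L_K \alpha + \ps \wedge \ds \alpha$, giving the stated formula. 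There is no real obstacle here beyond keeping track of the degree-dependent signs in the interior-product Leibniz rule; the torsion-free hypothesis enters only through $\nabla \ph = \nabla \ps = 0$, which is precisely what lets $\nabla_q$ pass through $\ph$ and $\ps$.
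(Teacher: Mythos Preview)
Your proof is correct and is essentially the same as the paper's: both use the local-frame formula of Lemma~\ref{lemma:LBLKformula} together with $\ds = -g^{pq} e_p \hk \nabla_q$, apply the Leibniz rule for the interior product, and invoke $\nabla\ph = \nabla\ps = 0$. The only cosmetic difference is that the paper starts from $\mathcal L_B \alpha = g^{pq}(e_p \hk \ph)\wedge \nabla_q \alpha$ and rewrites it as $-\ds(\ph\wedge\alpha) - \ph\wedge\ds\alpha$, whereas you start from $\ds(\ph\wedge\alpha)$ and extract $\mathcal L_B \alpha$; the computation is identical either way.
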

\begin{proof}
Consider a local frame determined by Riemannian normal coordinates centred at $x \in M$ as in the proof of Lemma~\ref{lemma:LBLKformula}. Using~\eqref{eq:LBLKframe} and~\eqref{eq:ds}, we compute
\begin{align*}
\mathcal L_B \alpha & = g^{pq} (e_p \hk \ph) \wedge (\nabla_q \alpha) \\
& = g^{pq} \big( e_p \hk ( \ph \wedge \nabla_q \alpha) + \ph \wedge (e_p \hk \nabla_q \alpha) \big) \\
& = g^{pq} e_p \hk \nabla_q (\ph \wedge \alpha) + \ph \wedge ( g^{pq} e_p \hk \nabla_q \alpha) \\
& = - \ds (\ph \wedge \alpha) - \ph \wedge (\ds \alpha),
\end{align*}
establishing the first equation in~\eqref{eq:LBLKds}. The other equation in proved similarly.
\end{proof}

\begin{prop} \label{prop:LBLKproperties}
The derivations $\mathcal L_B$ and $\mathcal L_K$ satisfy the following identities:
\begin{align} \label{eq:LBLKdscom}
\mathcal L_B \ds & = \ds \mathcal L_B, & \mathcal L_K \ds & = -\ds \mathcal L_K, \\ \label{eq:LBLKLap}
\mathcal L_B \Delta & = \Delta \mathcal L_B, & \mathcal L_K \Delta & = \Delta \mathcal L_K, \\
\label{eq:LcomposeL}
\mathcal L_B \mathcal L_K = \mathcal L_K \mathcal L_B & = 0, & (\mathcal L_K)^2 & = 0,
\end{align}
and
\begin{equation} \label{eq:LBLKonH}
\mathcal L_B = \mathcal L_K = 0 \text{ on } \mathcal{H}^k \quad \text{if $M$ is compact}.
\end{equation}
\end{prop}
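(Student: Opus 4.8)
The plan is to prove the four groups of identities in the order \eqref{eq:LBLKdscom}, \eqref{eq:LBLKLap}, \eqref{eq:LcomposeL}, \eqref{eq:LBLKonH}, since each later group uses the earlier ones. First I would establish the commutation relations \eqref{eq:LBLKdscom} with $\ds$. The cleanest route is to exploit the star-conjugation formulas from Proposition~\ref{prop:derivationsstar}, namely $\mathcal L_B^* = -\st \mathcal L_B \st$ and $\mathcal L_K^* = (-1)^k \st \mathcal L_K \st$ on $\Omega^k$, together with $\ds = (-1)^k \st \dd \st$. From \eqref{eq:commdL} we already have $[\dd, \mathcal L_K]=0$, i.e. $\dd \mathcal L_B = \mathcal L_B \dd$ (degree $2$, so the sign is $+$) and $\dd \mathcal L_K = -\mathcal L_K \dd$ (degree $3$). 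Taking formal adjoints of these two relations and using Proposition~\ref{prop:derivationsstar} to convert $\mathcal L_B^*$ and $\mathcal L_K^*$ back into $\st$-conjugates of $\mathcal L_B$ and $\mathcal L_K$, and using $(\dd)^* = \ds$, yields exactly $\mathcal L_B \ds = \ds \mathcal L_B$ and $\mathcal L_K \ds = -\ds \mathcal L_K$. One has to track the degree-dependent signs carefully, but it is a purely formal manipulation.

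Next, \eqref{eq:LBLKLap} is immediate: since $\Delta = \dd\ds + \ds\dd$, and $\mathcal L_B$ commutes with both $\dd$ (by \eqref{eq:commdL}) and $\ds$ (by the first part of \eqref{eq:LBLKdscom}), it commutes with $\Delta$; for $\mathcal L_K$ the two sign changes cancel: $\mathcal L_K \dd\ds = -\dd \mathcal L_K \ds = \dd\ds \mathcal L_K$ and similarly $\mathcal L_K \ds\dd = \ds\dd \mathcal L_K$, so $\mathcal L_K \Delta = \Delta \mathcal L_K$. For \eqref{eq:LcomposeL}, I would argue degree by degree using the component decompositions in Figures~\ref{figure:LB} and~\ref{figure:LK}: $\mathcal L_B$ has degree $2$ and $\mathcal L_K$ degree $3$, so $\mathcal L_B \mathcal L_K$, $\mathcal L_K \mathcal L_B$, and $(\mathcal L_K)^2$ are maps of degrees $5$, $5$, and $6$ respectively; one checks that every such composition, read off as a product of the component arrows in those figures, passes through a zero component or lands outside the range of nonzero components. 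Alternatively — and more conceptually — one can use Lemma~\ref{lemma:LBLKformula}: $(\mathcal L_K)^2 \alpha$ involves $g^{pq} g^{rs}(e_p\hk\ps)\wedge(e_r\hk\ps)\wedge\nabla_q\nabla_s\alpha$; symmetrizing the second covariant derivative (the antisymmetric part is curvature, which vanishes against... actually not, but the Bianchi-type symmetrization argument together with $g^{pq}(e_p\hk\ps)\wedge(e_q\hk\ps)=0$ from Proposition~\ref{prop:special}) forces it to zero; similarly $\mathcal L_B\mathcal L_K + \mathcal L_K\mathcal L_B$ reduces to a contraction of $g^{pq}(e_p\hk\ph)\wedge(e_q\hk\ps)=0$. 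I expect the figure-chasing version to be the one to write down, as it avoids delicate curvature bookkeeping, but I would double-check a couple of entries against the local-frame formulas.

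Finally, \eqref{eq:LBLKonH} follows from \eqref{eq:LBLKLap} plus Hodge theory. Since $\mathcal L_B$ and $\mathcal L_K$ commute with $\Delta$, they preserve the finite-dimensional space $\mathcal H^k$ of harmonic $k$-forms. On a compact manifold a harmonic form $\alpha$ satisfies $\dd\alpha = \ds\alpha = 0$, so from $\mathcal L_B = \iota_B\dd + \dd\iota_B$ we get $\mathcal L_B\alpha = \dd(\iota_B\alpha)$, which is exact; but $\mathcal L_B\alpha$ is also harmonic, hence (being both exact and harmonic) zero. The same argument with $\mathcal L_K = \iota_K\dd - \dd\iota_K$ gives $\mathcal L_K\alpha = -\dd(\iota_K\alpha)$, exact and harmonic, hence zero. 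The main obstacle in the whole proposition is really just the sign-tracking in \eqref{eq:LBLKdscom} and confirming \eqref{eq:LcomposeL}; everything downstream is formal. I would present \eqref{eq:LcomposeL} via the figures and relegate the sign computations in \eqref{eq:LBLKdscom} to a short explicit check on $\Omega^k$.
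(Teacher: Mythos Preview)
Your argument for \eqref{eq:LBLKdscom} via adjoints and $\st$-conjugation is circular. Taking the adjoint of $\dd\mathcal L_B = \mathcal L_B\dd$ gives $\mathcal L_B^*\ds = \ds\mathcal L_B^*$, and substituting $\mathcal L_B^* = -\st\mathcal L_B\st$ then conjugating by $\st$ turns $\ds$ back into $\pm\dd$ (since $\st\ds\st = \pm\dd$ with degree-dependent sign). Carrying this through carefully, you recover exactly $\dd\mathcal L_B = \mathcal L_B\dd$, not the desired $\ds\mathcal L_B = \mathcal L_B\ds$. Said differently: using only $[\dd,\mathcal L_B]=0$ and $\mathcal L_B^* = -\st\mathcal L_B\st$, the statement $[\ds,\mathcal L_B]=0$ is equivalent (via $\st$-conjugation) to $[\dd,\mathcal L_B^*]=0$, whereas the adjoint of $[\dd,\mathcal L_B]=0$ is $[\ds,\mathcal L_B^*]=0$; these are not the same, and neither follows from the other by formal manipulation alone. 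The same circularity afflicts the $\mathcal L_K$ case.

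The paper closes this gap by using Corollary~\ref{cor:LBLKds}, which expresses $\mathcal L_B$ as (minus) the graded commutator $[\ds, L_\ph]$ where $L_\ph$ is wedging by $\ph$. Then $(\ds)^2=0$ gives
\[
\mathcal L_B\ds\alpha = -\ds(\ph\wedge\ds\alpha) - \ph\wedge\ds\ds\alpha = -\ds(\ph\wedge\ds\alpha) = \ds\big(-\ds(\ph\wedge\alpha)-\ph\wedge\ds\alpha\big) = \ds\mathcal L_B\alpha,
\]
and similarly for $\mathcal L_K$. Alternatively one can verify \eqref{eq:LBLKdscom} directly from Figures~\ref{figure:d} and~\ref{figure:LB}--\ref{figure:LK}. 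Either route requires genuine input beyond the $\st$-conjugation and adjoint relations.

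The rest of your plan is sound and in places cleaner than the paper's. Your derivation of \eqref{eq:LBLKLap} from \eqref{eq:LBLKdscom} and \eqref{eq:commdL} is exactly right. For \eqref{eq:LcomposeL} the figure-chasing is what the paper does; your alternative local-frame sketch via Proposition~\ref{prop:special} is plausible but, as you suspect, the curvature terms make it delicate, so the figures are safer. Your argument for \eqref{eq:LBLKonH} --- that $\mathcal L_B\alpha = \dd(\iota_B\alpha)$ is simultaneously exact and harmonic, hence zero --- is correct and more conceptual than the paper's suggestion to verify from the figures.
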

\begin{proof}
The identities in~\eqref{eq:LBLKdscom}--\eqref{eq:LcomposeL} can be verified directly from the Figures~\ref{figure:d},~\ref{figure:LB}, and~\ref{figure:LK} using $\ds = (-1)^k \st \dd \st$ on $\Omega^k$ and $\Delta = \dd \ds + \ds \dd$, the identities in Corollary~\ref{cor:d-relations}, and recalling that our identifications were chosen compatible with $\st$.

However, we now give an alternative proof of the first equation in~\eqref{eq:LBLKdscom} that is less tedious and more illuminating. A similar proof establishes the second equation in~\eqref{eq:LBLKdscom}. (In fact this proof can be found in~\cite{KLS2}). Using~\eqref{eq:LBLKds} and $(\ds)^2 = 0$, we compute
\begin{align*}
\mathcal L_B \ds \alpha & = - \ds (\ph \wedge \ds \alpha) - \ph \wedge \big( \ds (\ds \alpha) \big) \\
& = \ds \big( - \ph \wedge (\ds \alpha) - \ds (\ph \wedge \alpha) \big) \\
& = \ds \mathcal L_B \alpha.
\end{align*}
The equations in~\eqref{eq:LBLKLap} can also be established from~\eqref{eq:LBLKdscom},~\eqref{eq:commdL}, and $\Delta = \dd \ds + \ds \dd$.

Equation~\eqref{eq:LBLKonH} can be similarly verified using Figures~\ref{figure:d},~\ref{figure:LB}, and~\ref{figure:LK}, noting that in the compact case, the space $\mathcal H^k$ of harmonic $k$-forms coincides with the space of $\dd$-closed and $\ds$-closed $k$-forms.
\end{proof}

\begin{rmk} \label{rmk:LBLKcomms}
For a $k$-form $\gamma$, let $L_{\gamma}$ be the linear operator of degree $k$ on $\Omega^{\bu}$ given by $L_{\gamma} \alpha = \gamma \wedge \alpha$. In terms of graded commutators, in the torsion-free case Corollary~\ref{cor:LBLKds} says that $[ \ds, L_{\ph} ] = - \mathcal L_B$ and $[ \ds, L_{\ps} ] = \mathcal L_K$, and Proposition~\ref{prop:LBLKproperties} says that $[ \ds, \mathcal L_B ] = [ \ds, \mathcal L_K ] = 0$, $[ \Delta, \mathcal L_B ] = [ \Delta, \mathcal L_K ] = 0$, and $[ \mathcal L_B, \mathcal L_K ] = [ \mathcal L_K, \mathcal L_K ] = 0$. (In fact the first equation in~\eqref{eq:LcomposeL} is actually stronger than $[ \mathcal L_B, \mathcal L_K ] = 0$.) These graded commutators and others are considered more generally for $\G$~manifolds with torsion in~\cite{K} using the general framework developed in~\cite{dKS} in the case of $\U{m}$-structures.
\end{rmk}

\section{The $\mathcal L_B$-cohomology $H^{\bu}_{\ph}$ of $M$ and its computation} \label{sec:cohom}

In this section we define two cohomologies on a torsion-free $\G$~manifold using the derivations $\mathcal L_B$ and $\mathcal L_K$. The cohomology determined by $\mathcal L_K$ was studied extensively by Kawai--L\^e--Schwachh\"ofer in~\cite{KLS2}. We recall one of the main results of~\cite{KLS2} on the $\mathcal L_K$-cohomology, stated here as Theorem~\ref{thm:KLS}.  We then proceed to compute the cohomology determined by $\mathcal L_B$. This section culminates with the proof of Theorem~\ref{thm:Hph}, which is our analogue of Theorem~\ref{thm:KLS} for the $\mathcal L_B$-cohomology. An application to formality of compact torsion-free $\G$~manifolds is given in Section~\ref{sec:formality}.

\subsection{Cohomologies determined by $\mathcal L_B$ and $\mathcal L_K$} \label{sec:cohom-defn}

Recall from~\eqref{eq:LcomposeL} that $(\mathcal L_K)^2 = 0$. This observation motivates the following definition.

\begin{defn} \label{defn:Hps}
For any $0 \leq k \leq 7$, we define
\begin{equation*}
H^k_{\ps} : = \frac{\ker(\mathcal L_K: \Omega^k \to \Omega^{k+3})}{\im(\mathcal L_K: \Omega^{k-3} \to \Omega^k)}.
\end{equation*}
We call these groups the $\mathcal L_K$-cohomology groups.
\end{defn}
The $\mathcal L_K$-cohomology is studied extensively in~\cite{KLS2}. Here is one of the main results of~\cite{KLS2}.

\begin{thm}[Kawai--L\^e--Schwachh\"ofer~\cite{KLS2}] \label{thm:KLS}
The following relations hold.
\begin{itemize} \setlength\itemsep{-1mm}
\item $H^k_{\ps} \cong H^k_{\DR}$ for $k=0,1,6,7$.
\item $H^k_{\ps}$ is infinite-dimensional for $k=2,3,4,5$.
\item There is a canonical injection $\mathcal{H}^k \hookrightarrow H^k_{\ps}$ for all $k$.
\item The Hodge star induces isomorphisms $\st: H^k_{\ps} \cong H^{7-k}_{\ps}$.
\end{itemize}
\end{thm}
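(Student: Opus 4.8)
The plan would follow the standard template: combine the component decomposition of $\mathcal{L}_K$ in Figure~\ref{figure:LK} with the kernel identifications of Theorem~\ref{thm:harmonic1} and with Hodge theory. I would first dispose of the extreme degrees $k = 0, 1, 7$, where $\mathcal{L}_K$ degenerates to classical operators. In degree $0$ there is no incoming $\mathcal{L}_K$, and since $\iota_K$ kills functions, $\mathcal{L}_K f = \iota_K(\dd f) = -(\dd f) \hk \ps$, which vanishes iff $\dd f = 0$; hence $H^0_{\ps}$ is the space of constants, which is $H^0_{\DR} = \mathcal{H}^0$. In degree $7$, $H^7_{\ps} = \Omega^7 / \im(\mathcal{L}_K \colon \Omega^4 \to \Omega^7)$, and by Figure~\ref{figure:LK} the only nonzero component of $\mathcal{L}_K$ here is a nonzero multiple of $\dive \colon \Omega^1_7 \cong \Omega^4_7 \to \Omega^7_1 \cong \Omega^0$, whose cokernel is $\R \cong H^7_{\DR}$ by Stokes' theorem and Hodge theory on functions. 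In degree $1$ there is again no incoming $\mathcal{L}_K$, so $H^1_{\ps} = \ker(\mathcal{L}_K \colon \Omega^1 \to \Omega^4) = \ker D^7_1 \cap \ker D^7_7 \cap \ker D^7_{27}$ by Figure~\ref{figure:LK}, and this is precisely $\mathcal{H}^1 \cong H^1_{\DR}$ by Theorem~\ref{thm:harmonic1}.

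Next I would treat the canonical injection and Poincar\'e duality together. The map $\mathcal{H}^k \to H^k_{\ps}$ is well defined because $\mathcal{L}_K$ vanishes on harmonic forms by~\eqref{eq:LBLKonH}; it is injective because $\st$ preserves $\mathcal{H}^k$ and $\mathcal{L}_K^* = (-1)^k \st \mathcal{L}_K \st$ by Proposition~\ref{prop:derivationsstar}, so $\mathcal{L}_K^*$ also vanishes on $\mathcal{H}^k$, whence $\mathcal{H}^k \subseteq \ker \mathcal{L}_K^* = (\im \mathcal{L}_K)^{\perp}$ and a harmonic form of the shape $\mathcal{L}_K \alpha$ is orthogonal to itself. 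For the duality, Proposition~\ref{prop:derivationsstar} shows that $\st$ intertwines $(\Omega^{\bu}, \mathcal{L}_K)$ with the adjoint complex $(\Omega^{\bu}, \mathcal{L}_K^*)$ and identifies $H^k_{\ps}$ with the degree-$(7-k)$ cohomology of $(\Omega^{\bu}, \mathcal{L}_K^*)$, so it is enough to show that the $\mathcal{L}_K$-complex and its formal adjoint have the same cohomology. Since $\mathcal{L}_K$ and $\mathcal{L}_K^*$ both commute with $\Delta$ by~\eqref{eq:LBLKLap}, the space $\Omega^{\bu}$ splits as a direct sum of the finite-dimensional $\Delta$-eigenspaces $E_{\lambda}$, each a subcomplex for both $\mathcal{L}_K$ and $\mathcal{L}_K^*$, and on a finite-dimensional complex with an inner product the cohomology of the complex and of its adjoint coincide (both equal $\ker \mathcal{L}_K \cap \ker \mathcal{L}_K^* \cap E_{\lambda}$). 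The natural map $\bigoplus_{\lambda} H^k(E_{\lambda}, \mathcal{L}_K) \to H^k_{\ps}$ is always injective (project a smooth primitive onto $E_{\lambda}$ and invoke elliptic regularity). The step I expect to be the main obstacle is the converse — showing this map is onto, equivalently that $\im \mathcal{L}_K$ is closed in every degree, i.e.\ a Hodge-type decomposition for the non-elliptic $\mathcal{L}_K$-complex; this reduces to a lower bound for $\mathcal{L}_K \mathcal{L}_K^* + \mathcal{L}_K^* \mathcal{L}_K$ on the orthocomplement of its kernel inside each $E_{\lambda}$, uniform in $\lambda$. Granting it, $H^k_{\ps} \cong \bigoplus_{\lambda} H^k(E_{\lambda}, \mathcal{L}_K)$, Poincar\'e duality holds eigenspace-by-eigenspace, and in particular $H^6_{\ps} \cong H^1_{\ps} \cong \mathcal{H}^1 \cong H^6_{\DR}$, finishing the list of isomorphisms.

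Finally, the infinite-dimensionality for $k = 2, 3, 4, 5$ needs only the easy injection above (or an even more direct choice of subspace). Since $\mathcal{L}_K$ maps $\Omega^2_7$ and $\Omega^2_{14}$ into the distinct summands $\Omega^5_{14}$ and $\Omega^5_7$, and there is no incoming $\mathcal{L}_K$ in degree $2$, we get $H^2_{\ps} \supseteq \ker(\mathcal{L}_K|_{\Omega^2_7}) = \{X \hk \ph : D^7_{14} X = 0\}$; since $\ker D^7_{14} = \ker D^7_7$ by Theorem~\ref{thm:harmonic1}, any $X$ in this common kernel has $\pi_7 \dd X = \pi_{14} \dd X = 0$, hence $\dd X = 0$, so the subspace is $\{X \hk \ph : \dd X = 0\} \cong \ker(\dd|_{\Omega^1}) \supseteq \dd(C^{\infty}(M))$, which is infinite-dimensional. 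For $k = 3$, Figure~\ref{figure:LK} identifies $\mathcal{L}_K|_{\Omega^3_{27}}$ with a nonzero multiple of $h \mapsto \dive h$ on $\Symo$, so it vanishes on $\{\ell_{\ph} h : h \in \Symo,\ \dive h = 0\} \subseteq \Omega^3_{27}$; this subspace lies in a summand disjoint from the image of the incoming $\mathcal{L}_K \colon \Omega^0 \to \Omega^3_7$, so it injects into $H^3_{\ps}$, and it is infinite-dimensional because $\dive$ on $\Symo$ is an underdetermined first-order operator (its principal symbol has a positive-dimensional kernel). The remaining cases $k = 4, 5$ follow from Poincar\'e duality ($H^4_{\ps} \cong H^3_{\ps}$, $H^5_{\ps} \cong H^2_{\ps}$) or directly — for instance $H^4_{\ps}$ contains a copy of $\Omega^4_{27}$, on which $\mathcal{L}_K$ vanishes and which meets $\im \mathcal{L}_K$ trivially by Theorem~\ref{thm:harmonic1} and~\eqref{eq:LBLKonH}, hence is infinite-dimensional. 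Apart from the closed-range statement underpinning the duality isomorphism, every step is routine bookkeeping with Figure~\ref{figure:LK}, Theorem~\ref{thm:harmonic1}, Proposition~\ref{prop:derivationsstar}, and standard Hodge theory.
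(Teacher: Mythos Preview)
The paper does not actually prove this theorem: its entire proof is the single sentence ``This is part of~\cite[Theorem 1.1]{KLS2}.'' The result is quoted from Kawai--L\^e--Schwachh\"ofer as background, and the present paper's contribution is the parallel Theorem~\ref{thm:Hph} for $\mathcal{L}_B$. So your proposal is not a comparison target at all --- you are attempting an independent proof of a cited external result.

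That said, your outline is largely sound for the pieces you claim are routine: the computations of $H^0_{\ps}$, $H^1_{\ps}$, $H^7_{\ps}$ from Figure~\ref{figure:LK}, the canonical injection via~\eqref{eq:LBLKonH} and Proposition~\ref{prop:derivationsstar}, and the infinite-dimensionality arguments (for $k=4$ your claim that $\Omega^4_{27} \cap (\im \mathcal{L}_K)^4 = 0$ is correct, precisely by the Theorem~\ref{thm:harmonic1} reasoning you invoke) are all fine. The genuine gap is exactly the one you flag yourself: the Poincar\'e duality step, and hence your computation of $H^6_{\ps}$, rests on $\im \mathcal{L}_K$ being closed, which you do not prove but only ``grant''. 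Your proposed reduction to a uniform spectral lower bound for $\mathcal{L}_K \mathcal{L}_K^* + \mathcal{L}_K^* \mathcal{L}_K$ across $\Delta$-eigenspaces is not the efficient route; the standard argument, and the one used in~\cite{KLS2}, is to show directly that $\mathcal{L}_K$ is regular in the sense of Definition~\ref{defn:regular} by checking that its principal symbol $\omega \mapsto -(\xi \hk \ps) \wedge \omega$ (from~\eqref{eq:LBLKframe}) is injective on $\Lambda^0, \Lambda^1, \Lambda^2$ and hence, by the $\st$-relation in Proposition~\ref{prop:derivationsstar}, surjective on $\Lambda^2, \Lambda^3, \Lambda^4$. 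This is exactly parallel to what the present paper does for $\mathcal{L}_B$ in Proposition~\ref{prop:LBregular}, and once established yields $(\im \mathcal{L}_K)^k = \st((\ker \mathcal{L}_K)^{7-k})^{\perp}$ as in Corollary~\ref{cor:LBregular}, from which duality and $H^6_{\ps} \cong H^6_{\DR}$ follow without the eigenspace decomposition.
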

\begin{proof}
This is part of~\cite[Theorem 1.1]{KLS2}.
\end{proof}

From Figure~\ref{figure:LB} and~\eqref{eq:d-relations} we see that in general $(\mathcal L_B)^2 \neq 0$. Because of this, we \emph{cannot} directly copy the definition of $H^k_{\ps}$ to define $\mathcal L_B$-cohomology groups. However, we can make the following definition.

\begin{defn} \label{defn:Hph}
For any $0 \leq k \leq 7$, we define
\begin{equation} \label{eq:Hphdefn}
H^k_{\ph} := \frac{\ker(\mathcal L_B: \Omega^k \to \Omega^{k+2})}{\im(\mathcal L_B: \Omega^{k-2} \to \Omega^k) \cap \ker(\mathcal L_B: \Omega^k \to \Omega^{k+2})}.
\end{equation}
We call these groups the $\mathcal L_B$-cohomology groups.
\end{defn}

In Sections~\ref{sec:computeHph0123} and~\ref{sec:computeHph4567} we compute these $\mathcal L_B$-cohomology groups and then in Section~\ref{sec:Hphthm} we prove Theorem~\ref{thm:Hph}, which is the analogue to Theorem~\ref{thm:KLS}.

\subsection{Computation of the groups $H^0_{\ph}$, $H^1_{\ph}$, $H^2_{\ph}$, and $H^3_{\ph}$} \label{sec:computeHph0123}

From now on we always assume that $(M, \ph)$ is a \emph{compact} torsion-free $\G$~manifold as we use Hodge theory frequently. See also Remark~\ref{rmk:when-torsion-free}.

\begin{rmk} \label{rmk:IBP}
In particular we will often use the following observations. (There is no summation over $l, l', l''$ in this remark. The symbols $l, l', l'' \in \{ 1, 7, 14, 27 \}$ are not indices.) By Corollary~\ref{cor:adjoints}, we have $D^{l'}_{l} = c (D^{l}_{l'})^*$ for some $c \neq 0$. Thus, by integration by parts,
\begin{equation*}
\text{whenever $D^{l'}_l D^l_{l'} \omega = 0$ for some $\omega$, then $D^l_{l'} \omega = 0$.}
\end{equation*}
More generally, by Corollary~\ref{cor:adjoints} an equation of the form $a D^{l'}_l D^l_{l'} \omega + b D^{l''}_l D^l_{l''} \omega = 0$ can be rewritten as $\tilde a (D^{l}_{l'})^* D^{l}_{l'} \omega + \tilde b (D^{l}_{l''})^* D^{l}_{l''} \omega = 0$ for some $\tilde a, \tilde b$. If $\tilde a, \tilde b$ \emph{have the same sign}, then again by integration by parts we conclude that both $D^l_{l'} \omega = 0$ and $D^l_{l''} \omega = 0$.
\end{rmk}

In the first two Propositions we establish that $H^k_{\ph} \cong H^k_{\DR}$ for $k = 0,1,2$.

\begin{prop} \label{prop:Hph01}
We have $H^0_{\ph} = \mathcal{H}^0 $ and $H^1_{\ph} = \mathcal{H}^1$.
\end{prop}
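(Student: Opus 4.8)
The plan is to use the fact that in degrees $0$ and $1$ the subspace subtracted in the definition~\eqref{eq:Hphdefn} is automatically trivial, so that $H^k_{\ph}$ collapses to $\ker(\mathcal L_B\colon \Omega^k \to \Omega^{k+2})$, and then to identify this kernel by reading off the components of $\mathcal L_B$ from Figure~\ref{figure:LB} and applying Theorem~\ref{thm:harmonic1}. First I would note that $\Omega^{-2} = \Omega^{-1} = 0$, so $\im(\mathcal L_B\colon \Omega^{k-2}\to\Omega^k) = 0$ for $k = 0,1$, whence $H^k_{\ph} = \ker(\mathcal L_B|_{\Omega^k})$ in these two degrees.

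For $k=0$: here $\Omega^0 = \Omega^0_1$, and Figure~\ref{figure:LB} shows that the only component of $\mathcal L_B$ out of $\Omega^0_1$ is $D^1_7$, which by Definition~\ref{defn:ops} and Figure~\ref{figure:d} is simply $\dd\colon \Omega^0 \to \Omega^1$. Hence $H^0_{\ph} = \ker(\dd|_{\Omega^0})$, the space of locally constant functions, which on a compact manifold is exactly $\mathcal H^0$. For $k=1$: here $\Omega^1 = \Omega^1_7$, and Figure~\ref{figure:LB} gives $\mathcal L_B X = D^7_1 X + \tfrac{3}{2} D^7_7 X + D^7_{27} X$ for $X \in \Omega^1_7$, with the three terms landing in the mutually orthogonal summands $\Omega^3_1$, $\Omega^3_7$, $\Omega^3_{27}$ of $\Omega^3$. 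Therefore $\mathcal L_B X = 0$ if and only if $D^7_1 X = D^7_7 X = D^7_{27} X = 0$, i.e.
\[
H^1_{\ph} = \ker D^7_1 \cap \ker D^7_7 \cap \ker D^7_{27}.
\]
Now I invoke Theorem~\ref{thm:harmonic1}: the intersection of any two of $\ker D^7_1$, $\ker D^7_7$, $\ker D^7_{27}$ equals $\mathcal H^1$; in particular $\mathcal H^1$ is contained in each of the three spaces, so the triple intersection is exactly $\mathcal H^1$, giving $H^1_{\ph} = \mathcal H^1$.

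There is no real obstacle here: the substantive analytic content — relating the kernels of $D^7_1$, $D^7_7$, $D^7_{27}$ on $\Omega^1$ to the harmonic $1$-forms — has already been carried out in Theorem~\ref{thm:harmonic1}, and the rest is bookkeeping with Figure~\ref{figure:LB}. The only points requiring care are (i) correctly identifying the arrows emanating from $\Omega^0_1$ and $\Omega^1_7$ in Figure~\ref{figure:LB}, and (ii) using the orthogonality of the $\Omega^3_l$ decomposition to split the vanishing of $\mathcal L_B X$ into the vanishing of its three components; both are routine.
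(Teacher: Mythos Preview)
Your proposal is correct and follows essentially the same approach as the paper's proof: both observe that the denominator in~\eqref{eq:Hphdefn} vanishes trivially for $k=0,1$, read off the components of $\mathcal L_B$ from Figure~\ref{figure:LB}, and invoke Theorem~\ref{thm:harmonic1} for the $k=1$ case. Your version is slightly more explicit about why the triple intersection equals $\mathcal H^1$, but the argument is the same.
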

\begin{proof}
From Figure~\ref{figure:LB} and Figure~\ref{figure:d}, we observe that
\begin{align*}
\im (\mathcal L_B: \Omega^{-2} \to \Omega^0) & = 0, \\
\ker (\mathcal L_B: \Omega^0 \to \Omega^2) & = \ker (D^1_7) = \mathcal{H}^0,
\end{align*}
and thus that $H^0_{\ph} = \mathcal{H}^0$.

Similarly, using Figure~\ref{figure:LB} and Theorem~\ref{thm:harmonic1}, we observe that
\begin{align*}
\im (\mathcal L_B: \Omega^{-1} \to \Omega^1) & = 0, \\
\ker (\mathcal L_B: \Omega^1 \to \Omega^3) & = \ker (D^7_1) \cap \ker (D^7_7) \cap \ker (D^7_{27}) = \mathcal{H}^1
\end{align*}
and hence $H^1_{\ph} = \mathcal{H}^1$.
\end{proof}

In the remainder of this section and the next we will often use the notation introduced in~\eqref{eq:complexes}.

\begin{prop} \label{prop:Hph2}
We have $H^2_{\ph} \cong \mathcal{H}^2 $.
\end{prop}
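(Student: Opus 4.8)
The plan is to prove the two inclusions $\mathcal H^2 \subseteq \ker(\mathcal L_B : \Omega^2 \to \Omega^4) \subseteq \mathcal H^2$ and to check that the coboundary term in~\eqref{eq:Hphdefn} is trivial, so that $H^2_\ph = \ker(\mathcal L_B : \Omega^2 \to \Omega^4) = \mathcal H^2$.

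First I would dispose of the denominator in~\eqref{eq:Hphdefn}. Since $\iota_B$ annihilates functions, $\mathcal L_B f = \iota_B(\dd f) \in \Omega^2_7$ for every $f \in \Omega^0$, so each element of $\im(\mathcal L_B : \Omega^0 \to \Omega^2)$ has this form and lies in $\Omega^2_7$. If such an element also lies in $\ker(\mathcal L_B : \Omega^2 \to \Omega^4)$, then reading off the $\Omega^4_1$- and $\Omega^4_{27}$-components of $\mathcal L_B$ from Figure~\ref{figure:LB} forces $D^7_1(\dd f) = 0$ and $D^7_{27}(\dd f) = 0$; by Theorem~\ref{thm:harmonic1} this means $\dd f \in \mathcal H^1$, hence $\dd f = 0$ and $\mathcal L_B f = 0$. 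Thus $\im(\mathcal L_B : \Omega^0 \to \Omega^2) \cap \ker(\mathcal L_B : \Omega^2 \to \Omega^4) = 0$ and $H^2_\ph = \ker(\mathcal L_B : \Omega^2 \to \Omega^4)$.

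It remains to identify this kernel with $\mathcal H^2$. The inclusion $\mathcal H^2 \subseteq \ker(\mathcal L_B : \Omega^2 \to \Omega^4)$ is~\eqref{eq:LBLKonH}. For the reverse, write $\alpha = \beta + \gamma$ with $\beta = \pi_7\alpha \in \Omega^2_7$ and $\gamma = \pi_{14}\alpha \in \Omega^2_{14}$. Reading the $\Omega^4_1$-, $\Omega^4_7$-, and $\Omega^4_{27}$-components of $\mathcal L_B\alpha = 0$ off Figure~\ref{figure:LB} — and using that the $\Omega^4_7$-component of $\mathcal L_B|_{\Omega^2_7}$ vanishes — gives exactly $D^7_1\beta = 0$, $D^{14}_7\gamma = 0$, and $D^{14}_{27}\gamma = 2 D^7_{27}\beta$. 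Pairing the last equation with $D^7_{27}\beta$ and integrating by parts, using $(D^{14}_{27})^* = -D^{27}_{14}$ and $(D^7_{14})^* = 4 D^{14}_7$ from Corollary~\ref{cor:adjoints} and the relation $D^{27}_{14}D^7_{27} = \tfrac32 D^7_{14}D^7_7$ from~\eqref{eq:d-relations}, one obtains
\[
2\|D^7_{27}\beta\|^2 = \langle D^{14}_{27}\gamma, D^7_{27}\beta\rangle = -\langle \gamma, D^{27}_{14}D^7_{27}\beta\rangle = -\tfrac32\langle \gamma, D^7_{14}D^7_7\beta\rangle = -6\langle D^{14}_7\gamma, D^7_7\beta\rangle = 0
\]
because $D^{14}_7\gamma = 0$. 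Hence $D^7_{27}\beta = 0$, and therefore also $D^{14}_{27}\gamma = 0$. Now $D^7_1\beta = D^7_{27}\beta = 0$ forces $\beta$ to correspond, under the identification $\Omega^2_7 \cong \Omega^1_7$, to a harmonic $1$-form (Theorem~\ref{thm:harmonic1}), so $\beta \in \mathcal H^2$; and $D^{14}_7\gamma = D^{14}_{27}\gamma = 0$ gives $\Delta\gamma = 5 D^7_{14}D^{14}_7\gamma - D^{27}_{14}D^{14}_{27}\gamma = 0$ by Proposition~\ref{prop:Laplacian}, so $\gamma \in \mathcal H^2$. Therefore $\alpha \in \mathcal H^2$, and $\ker(\mathcal L_B : \Omega^2 \to \Omega^4) = \mathcal H^2 = H^2_\ph$.

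I expect the integration-by-parts chain in the displayed line to be the only delicate point: one must select precisely the right quadratic identity from~\eqref{eq:d-relations} and the right adjoint relations from Corollary~\ref{cor:adjoints}, and invoke them in degree-compatible incarnations of the operators, bearing in mind that the identifications~\eqref{eq:forms-isom} are not isometric (cf.\ Remark~\ref{rmk:adjoints}). Since the endpoint is simply $0$, the numerical constants are not themselves delicate; what requires care is that every inner product in the chain is formed between forms living in the same space, so that $(D^{14}_{27})^* D^7_{27}\beta$, $D^7_{14}D^7_7\beta$, and $D^{14}_7\gamma$ are all legitimately paired with $\gamma$ (respectively $D^7_7\beta$).
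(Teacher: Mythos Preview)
Your proof is correct and follows essentially the same strategy as the paper: trivialize the denominator, then show $(\ker\mathcal L_B)^2 = \mathcal H^2$ by using the second equation $D^{14}_7\gamma = 0$ together with a relation from~\eqref{eq:d-relations} to decouple the mixed $\Omega^4_{27}$-equation and force $D^7_{27}\beta = 0$, after which Theorem~\ref{thm:harmonic1} and Proposition~\ref{prop:Laplacian} finish the job. The only cosmetic differences are that (i) for the denominator the paper uses just the $\Omega^4_1$ component and Remark~\ref{rmk:IBP} rather than both components plus Theorem~\ref{thm:harmonic1}, and (ii) for the decoupling step the paper applies $D^{27}_7$ to the mixed equation and uses $D^{27}_7 D^{14}_{27} = \tfrac{3}{2}D^7_7 D^{14}_7$ before invoking Remark~\ref{rmk:IBP}, whereas you pair directly with $D^7_{27}\beta$ and integrate by parts through the adjoint relation $D^{27}_{14}D^7_{27} = \tfrac{3}{2}D^7_{14}D^7_7$ --- these are equivalent manoeuvres.
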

\begin{proof}
We first show that the denominator in~\eqref{eq:Hphdefn} is trivial. Let $\omega \in (\ker \mathcal L_B)^2 \cap ( \im \mathcal L_B)^2$. Then by Figure~\ref{figure:LB} we have
\begin{equation*}
\omega = \mathcal L_B f = D^1_7 f \quad \text{for some } f \in \Omega^0_1
\end{equation*}
and also that
\begin{equation*}
0 = \mathcal L_B \omega = -2 D^7_1 (D^1_7 f) - 2 D^7_{27} (D^1_7 f).
\end{equation*}
Projecting onto the $\Omega^4_1$ component, we find that $D^7_1 D^1_7 f = 0$. By Remark~\ref{rmk:IBP}, we deduce that $\omega = D^1_7 f = 0$. Thus we have shown that $(\ker \mathcal L_B)^2 \cap ( \im \mathcal L_B)^2 = 0$. Hence $H^2_{\ph} = (\ker \mathcal L_B)^2$.

Write $\omega = \omega_7 + \omega_{14} \in \Omega^2_7 \oplus \Omega^2_{14}$. By Figure~\ref{figure:LB} we have
\begin{equation} \label{eq:Hph2temp}
\omega \in (\ker \mathcal L_B)^2 \iff
\left\{
\begin{aligned}
-2 D^7_1 \omega_7 & = 0, \\
-3 D^{14}_7 \omega_{14} & = 0, \\
-2 D^7_{27} \omega_7 + D^{14}_{27} \omega_{14} & = 0.
\end{aligned}
\right\}
\end{equation}
Taking $D^{27}_7$ of the third equation in~\eqref{eq:Hph2temp}, using Corollary~\ref{cor:d-relations} to write $D^{27}_7 D^{14}_{27} = \tfrac{3}{2} D^7_7 D^{14}_7$, and using the second equation in~\eqref{eq:Hph2temp}, we find that
\begin{align*}
0 & = D^{27}_7 (-2 D^7_{27} \omega_7 + D^{14}_{27} \omega_{14}) \\
& = -2 D^{27}_7 D^7_{27} \omega_7 + \tfrac{3}{2} D^7_7 D^{14}_7 \omega_{14} = -2 D^{27}_{7} D^7_{27} \omega_7,
\end{align*}
implying by Remark~\ref{rmk:IBP} that $D^7_{27} \omega_7 = 0$. Therefore we have established that
\begin{equation*}
\omega \in (\ker \mathcal L_B)^2 \iff 
\left\{
\begin{aligned}
D^7_1 \omega_7 & = 0, \\
D^{14}_7 \omega_{14} & = 0, \\
D^7_{27} \omega_7 & = 0, \\
D^{14}_{27} \omega_{14} & = 0,
\end{aligned}
\right\}
\iff
\left\{
\begin{aligned}
\omega_7 & \in \mathcal{H}^2_7 \cong \mathcal{H}^1_7 \text{ by Theorem~\ref{thm:harmonic1}}, \\
\omega_{14} & \in \mathcal{H}^2_{14} \text{ by Figure~\ref{figure:d} and Corollary~\ref{cor:adjoints}}.
\end{aligned}
\right\}
\end{equation*}
We conclude that $H^2_{\ph} = (\ker \mathcal L_B)^2 = \mathcal{H}^2$.
\end{proof}

\begin{prop} \label{prop:Hph3}
We have $H^3_{\ph} = \mathcal{H}^3 \oplus \big( (\im \ds)^3 \cap (\ker \mathcal L_B)^3 \big)$.
\end{prop}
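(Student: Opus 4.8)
The plan is to compute $H^3_\ph = (\ker \mathcal{L}_B)^3 / \big( (\im \mathcal{L}_B)^3 \cap (\ker \mathcal{L}_B)^3 \big)$ by first describing both spaces explicitly in terms of the $\Omega^3_l$ splitting and the operators $D^l_m$, and then invoking the linear algebra Lemma~\ref{lemma:linalg}. First I would decompose $\omega = \omega_1 + \omega_7 + \omega_{27} \in \Omega^3_1 \oplus \Omega^3_7 \oplus \Omega^3_{27}$ and use the right-hand column of Figure~\ref{figure:LB} to write down the system of equations defining $(\ker \mathcal{L}_B)^3$: the components of $\mathcal{L}_B \omega$ in $\Omega^5_7$, $\Omega^5_{14}$, and $\Omega^7_1$ must vanish, giving (schematically) $D^7_1 \omega_1$-type, $D^7_7 \omega_7$-type, $D^{27}_7 \omega_{27}$-type terms adding to zero in $\Omega^5_7$, a $D^7_{14}\omega_7 + D^{27}_{14}\omega_{27}$ equation in $\Omega^5_{14}$, and a $D^1_7{}^*$-type equation in $\Omega^7_1$. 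As in the proof of Proposition~\ref{prop:Hph2}, I would then apply suitable $D^m_l$ operators to these equations, use the quadratic relations of Corollary~\ref{cor:d-relations} to simplify, and use the integration-by-parts principle of Remark~\ref{rmk:IBP} to split the coupled system into the statement that each individual component $D^l_m \omega_{l'}$ vanishes. The upshot should be a clean characterization of $(\ker \mathcal{L}_B)^3$ as a direct sum across the three columns, where the $\Omega^3_1$ and $\Omega^3_7$ pieces are forced to be harmonic (via Theorem~\ref{thm:harmonic1} for the $\Omega^3_7 \cong \Omega^1_7$ identification), while the $\Omega^3_{27}$ piece is only constrained by $D^{27}_7$ and $D^{27}_{14}$ vanishing, which does not force harmonicity — this is the source of the extra $(\im \ds)^3 \cap (\ker \mathcal{L}_B)^3$ summand.

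Next I would analyze the denominator $(\im \mathcal{L}_B)^3$. An element here has the form $\mathcal{L}_B \eta$ for $\eta \in \Omega^1 = \Omega^1_7$; using the left column of Figure~\ref{figure:LB}, $\mathcal{L}_B \eta = D^7_1 \eta + \tfrac{3}{2} D^7_7 \eta + D^7_{27}\eta$ lands in $\Omega^3_1 \oplus \Omega^3_7 \oplus \Omega^3_{27}$. I would intersect this with the kernel description obtained above. The key observation is that $\mathcal{L}_B \eta$ being in $\ker \mathcal{L}_B$ forces, via the relations and Theorem~\ref{thm:harmonic1}, that the $\Omega^3_1$ and $\Omega^3_7$ components of $\mathcal{L}_B\eta$ vanish, so that any class-representative in the denominator is purely of type $\Omega^3_{27}$ — more precisely it should be expressible as $\ds$ of something (since $\mathcal{L}_B \eta$ modulo exact terms, together with Corollary~\ref{cor:LBLKds}, relates $\mathcal{L}_B$ to $\ds$-type expressions). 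Here I would use $\mathcal{L}_B \eta = -\ds(\ph \wedge \eta) - \ph \wedge \ds\eta$ from Corollary~\ref{cor:LBLKds} and the constraint equations to show $(\im \mathcal{L}_B)^3 \cap (\ker \mathcal{L}_B)^3 \subseteq (\im \ds)^3 \cap (\ker \mathcal{L}_B)^3$, and conversely that every element of $(\im\ds)^3 \cap (\ker \mathcal{L}_B)^3$ is realized.

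Finally, to assemble the quotient: I would show $(\ker \mathcal{L}_B)^3$ decomposes (as subspaces of $\Omega^3_1 \oplus \Omega^3_7 \oplus \Omega^3_{27}$, which are mutually orthogonal complementary summands) in a way compatible with the decomposition $\mathcal{H}^3 = \mathcal{H}^3_1 \oplus \mathcal{H}^3_7 \oplus \mathcal{H}^3_{27}$ and the summand $(\im \ds)^3 \cap (\ker \mathcal{L}_B)^3$, which will be supported in the $\Omega^3_1 \oplus \Omega^3_{27}$ part (since $\im \ds \perp \mathcal{H}^3$, and the $\Omega^3_7$-kernel component is already harmonic). Using Lemma~\ref{lemma:linalg}(i) with the nesting $\mathcal{H}^3 \subseteq (\ker \mathcal{L}_B)^3 \subseteq \mathcal{H}^3 \oplus (\im \ds)^3$ (this last inclusion needs the constraint equations plus Hodge decomposition $\Omega^3 = \mathcal{H}^3 \oplus \im\dd \oplus \im\ds$, noting $\im \dd$ contributes nothing extra to $\ker \mathcal{L}_B$ beyond what $\ds$-exact and harmonic give — this requires care), I would get $(\ker \mathcal{L}_B)^3 = \mathcal{H}^3 \oplus \big( (\im\ds)^3 \cap (\ker \mathcal{L}_B)^3 \big)$, and since the denominator equals exactly the second summand, the quotient is $\mathcal{H}^3$... but wait, the claimed statement is that $H^3_\ph$ itself equals $\mathcal{H}^3 \oplus \big((\im\ds)^3 \cap (\ker\mathcal{L}_B)^3\big)$ as the numerator, with the quotient being taken — so actually the Proposition is identifying the numerator $(\ker \mathcal{L}_B)^3$, and the displayed formula should be read as computing that numerator; the denominator is then a proper subspace and the honest cohomology $H^3_\ph$ remains infinite-dimensional. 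The main obstacle will be the bookkeeping in the first step: correctly extracting from the coupled kernel equations (using the right combinations of $D^m_l$ and the sign-sensitive integration-by-parts of Remark~\ref{rmk:IBP}, which only works when the coefficients $\tilde a, \tilde b$ share a sign) that the $\Omega^3_{27}$ component is genuinely unconstrained beyond $D^{27}_7$ and $D^{27}_{14}$ — this asymmetry between the $27$-column and the others is precisely what makes $H^3_\ph$ infinite-dimensional, and getting the relations to cooperate is the delicate heart of the argument.
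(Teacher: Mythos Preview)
There is a genuine gap in your plan: you have the denominator wrong. The paper shows that $(\im \mathcal{L}_B)^3 \cap (\ker \mathcal{L}_B)^3 = 0$, not that it equals $(\im \ds)^3 \cap (\ker \mathcal{L}_B)^3$. This is why the Proposition literally reads $H^3_\ph = \mathcal{H}^3 \oplus \big((\im\ds)^3 \cap (\ker \mathcal{L}_B)^3\big)$ --- the quotient \emph{is} the full kernel, because there is nothing to quotient by. To see the denominator vanishes, write $\omega = \mathcal{L}_B \alpha$ for $\alpha \in \Omega^1_7$; the condition $\mathcal{L}_B \omega = 0$, after projecting onto $\Omega^5_7$ and rewriting via Corollary~\ref{cor:adjoints}, becomes a \emph{positive} linear combination of $(D^7_1)^* D^7_1 \alpha$ and $(D^7_7)^* D^7_7 \alpha$, so Remark~\ref{rmk:IBP} forces $D^7_1 \alpha = D^7_7 \alpha = 0$, whence $\alpha \in \mathcal{H}^1$ by Theorem~\ref{thm:harmonic1} and $\omega = 0$. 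Your attempt to identify the denominator with the $(\im\ds)^3$ piece (and your resulting confusion about whether the quotient should be $\mathcal{H}^3$ or the larger space) stems from missing this vanishing.

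Your description of the kernel equations is also off in the details. There is no $\Omega^7_1$ component (the operator has degree $2$), and from $\Omega^3_7$ the only arrow in Figure~\ref{figure:LB} lands in $\Omega^5_{14}$ --- there is no $D^7_7 \omega_7$ term in the $\Omega^5_7$ equation. The actual kernel conditions are $-2 D^1_7 \omega_1 - \tfrac{8}{3} D^{27}_7 \omega_{27} = 0$ and $3 D^7_{14} \omega_7 + D^{27}_{14} \omega_{27} = 0$. These do \emph{not} decouple into ``$\omega_1$ harmonic, $\omega_7$ harmonic'': the $\omega_1$ and $\omega_{27}$ pieces remain coupled, and $\omega_7$ only satisfies $D^7_{14}\omega_7 = 0$ (equivalently $D^7_7\omega_7 = 0$), not full harmonicity. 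The paper's route to the inclusion $(\ker \mathcal{L}_B)^3 \subseteq \mathcal{H}^3 \oplus (\im\ds)^3$ is not to split by type but simply to observe that these two kernel equations, together with $D^7_7\omega_7 = 0$, are exactly (a scalar multiple of) the two components of $\ds\omega = 0$; hence $(\ker \mathcal{L}_B)^3 \subseteq (\ker\ds)^3 = \mathcal{H}^3 \oplus (\im\ds)^3$, and then Lemma~\ref{lemma:linalg}(i) finishes the job as you anticipated.
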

\begin{proof}
We first show that the denominator in~\eqref{eq:Hphdefn} is trivial. Let $\omega \in (\ker \mathcal L_B)^3 \cap ( \im \mathcal L_B)^3$. Then by Figure~\ref{figure:LB} we have
\begin{equation*}
\omega = \mathcal L_B \alpha = D^7_1 \alpha + \tfrac{3}{2} D^7_7 \alpha + D^7_{27} \alpha \quad \text{for some } \alpha \in \Omega^1_7.
\end{equation*}
Also, using Corollary~\ref{cor:d-relations} to write $D^{27}_{14} D^{7}_{27} = \tfrac{3}{2} D^7_{14} D^7_7$ and $D^{27}_7 D^7_{27} = D^1_7 D^7_1 - \tfrac{9}{4} D^7_7 D^7_7$, we have that
\begin{align*}
0 = \mathcal L_B \omega & = -2 D^1_7 (D^7_1 \alpha) + 3 D^7_{14} (\tfrac{3}{2} D^7_7 \alpha) + (- \tfrac{8}{3} D^{27}_7 + D^{27}_{14}) (D^7_{27} \alpha) \\
& = -2 D^1_7 D^7_1 \alpha - \tfrac{8}{3} D^{27}_7 D^7_{27} \alpha + \tfrac{9}{4} D^7_{14} D^7_7 \alpha + D^{27}_{14} D^7_{27} \alpha \\
& = - 2 D^1_7 D^7_1 \alpha - \tfrac{8}{3} (D^1_7 D^7_1 \alpha - \tfrac{9}{4} D^7_7 D^7_7 \alpha) + \tfrac{9}{4} D^7_{14} D^7_7 \alpha + \tfrac{3}{2} D^7_{14} D^7_7 \alpha \\
& = (-\tfrac{14}{3} D^1_7 D^7_1 \alpha + 6 D^7_7 D^7_7 \alpha) + \tfrac{15}{4} D^7_{14} D^7_7 \alpha.
\end{align*}
Projecting onto the $\Omega^5_7$ component, we find that
\begin{equation*}
-\tfrac{14}{3} D^1_7 D^7_1 \alpha + 6 D^7_7 D^7_7 \alpha = 0.
\end{equation*}
Using Corollary~\ref{cor:adjoints}, the above expression becomes
\begin{equation*}
\tfrac{14}{3} (D^7_1)^* D^7_1 \alpha + 2 (D^7_7)^* D^7_7 \alpha = 0,
\end{equation*}
and hence by Remark~\ref{rmk:IBP} we deduce that $D^7_1 \alpha = 0$ and $D^7_7 \alpha = 0$. By Theorem~\ref{thm:harmonic1}, we then have $D^7_{27} \alpha = 0$ automatically. Therefore we have shown that $(\ker \mathcal L_B)^3 \cap ( \im \mathcal L_B)^2 = 0$, and so $H^3_{\ph} = (\ker \mathcal L_B)^3$.

Write $\omega = \omega_1 + \omega_7 + \omega_{27} \in \Omega^3_1 \oplus \Omega^3_7 \oplus \Omega^3_{27}$. By Figure~\ref{figure:LB} we have
\begin{equation} \label{eq:Hph3temp}
\omega \in (\ker \mathcal L_B)^3 \iff
\left\{
\begin{aligned}
-2 D^1_7 \omega_1 - \tfrac{8}{3} D^{27}_7 \omega_{27} & = 0, \\
3 D^7_{14} \omega_7 + D^{27}_{14} \omega_{27} & = 0.
\end{aligned}
\right\}
\end{equation}
Taking $D^{14}_7$ of the second equation in~\eqref{eq:Hph3temp}, using Corollary~\ref{cor:d-relations} to write $D^{14}_7 D^{27}_{14} = - D^7_7 D^{27}_7$ and $D^7_7 D^1_7 = 0$, and using $D^{27}_7 \omega_{27} = - \tfrac{3}{4} D^1_7 \omega_1$ from the first equation in~\eqref{eq:Hph3temp}, we find that
\begin{align*}
0 & = D^{14}_7 (3 D^7_{14} \omega_7 + D^{27}_{14} \omega_{27}) \\
& = 3 D^{14}_7 D^7_{14} \omega_7 - D^7_7 D^{27}_7 \omega_{27} \\
& = 3 D^{14}_7 D^7_{14} \omega_7 + \tfrac{3}{4} D^7_7 D^1_7 \omega_1 = 3 D^{14}_7 D^7_{14} \omega_7,
\end{align*}
implying by Remark~\ref{rmk:IBP} that $D^7_{14} \omega_7 = 0$. Therefore we have established that
\begin{equation} \label{eq:Hph3temp2}
\omega \in (\ker \mathcal L_B)^3 \iff 
\left\{
\begin{aligned}
2 D^1_7 \omega_1 + \tfrac{8}{3} D^{27}_7 \omega_{27} & = 0, \\
D^7_{14} \omega_7 & = 0, \\
D^{27}_{14} \omega_{27} & = 0,
\end{aligned}
\right\}
\overset{\text{Theorem~\ref{thm:harmonic1}}}{\iff}
\left\{
\begin{aligned}
2 D^1_7 \omega_1 + \tfrac{8}{3} D^{27}_7 \omega_{27} & = 0, \\
D^7_7 \omega_7 & = 0, \\
D^{27}_{14} \omega_{27} & = 0.
\end{aligned}
\right\}
\end{equation}
From $\ds = - \st \dd \st$ on $\Omega^3$ and Figure~\ref{figure:d} we find that
\begin{equation} \label{eq:Hph3temp3}
\ds \omega = 0 \iff
\left\{
\begin{aligned}
D^1_7 \omega_1 + 2 D^7_7 \omega_7 + \tfrac{4}{3} D^{27}_7 \omega_{27} & = 0, \\
-D^7_{14} \omega_7 + D^{27}_{14} \omega_{27} & = 0.
\end{aligned}
\right\}
\end{equation}
Now equations~\eqref{eq:Hph3temp2} and~\eqref{eq:Hph3temp3} together imply that $(\ker \mathcal L_B)^3 \subseteq (\ker \ds)^3$. By the Hodge theorem we have $(\ker \ds)^3 = \mathcal{H}^3 \oplus (\im \ds)^3$, and by~\eqref{eq:LBLKonH} we have $\mathcal{H}^3 \subset (\ker \mathcal L_B)^3$. Thus
\begin{equation*}
\mathcal{H}^3 \subseteq (\ker \mathcal{L}_B)^3 \subseteq \mathcal{H}^3 \oplus (\im \ds)^3.
\end{equation*}
Applying Lemma~\ref{lemma:linalg}(i) we conclude that $H^3_{\ph} =(\ker \mathcal{L}_B)^3 = \mathcal{H}^3 \oplus \big( (\im \ds )^3 \cap (\ker \mathcal L_B)^3 \big)$. 
\end{proof}

We have thus far computed half of the $\mathcal L_B$-cohomology groups $H^k_{\ph}$, for $k = 0,1,2,3$. The other half, for $k=4,5,6,7$, will be computed rigorously in Section~\ref{sec:computeHph4567}. However, we can predict the duality result that $H^k_{\ph} \cong H^{7-k}_{\ph}$ by the following formal manipulation:
\begin{align*}
H^k_{\ph} & = \frac{(\ker \mathcal L_B)^k}{(\im \mathcal L_B)^k \cap (\ker \mathcal L_B)^k} \cong \frac{(\ker \mathcal L_B)^k + (\im \mathcal L_B)^k}{(\im \mathcal L_B)^k} \text{ by the second isomorphism theorem} \\
& \cong \frac{(\ker \mathcal L_B)^{7-k} + (\im \mathcal L_B)^{7-k}}{(\im \mathcal L_B)^{7-k}} \text{ by applying $\st$ and using equation~\eqref{eq:derivationsstar}} \\
& \overset{\text{\red{(!)}}}{=} \frac{\big( (\im \mathcal L_B)^{7-k} \big)^{\perp} + \big( (\ker \mathcal L_B)^{7-k} \big)^{\perp}}{\big( (\ker \mathcal L_B)^{7-k} \big)^{\perp}} \\
& = \frac{\big( (\im \mathcal L_B)^{7-k} \cap (\ker \mathcal L_B)^{7-k} \big)^{\perp}}{\big( (\ker \mathcal L_B)^{7-k} \big)^{\perp}} \text{ by properties of orthogonal complement} \\
& \overset{\text{\red{(!!)}}}{\cong} \frac{(\ker \mathcal L_B)^{7-k}}{(\im \mathcal L_B)^{7-k} \cap (\ker \mathcal L_B)^{7-k}}=H^{7-k}_{\ph}.
\end{align*}
Note that the above formal manipulation is not a rigorous proof of duality because at step \red{(!)}, we do not have $\im P^* = (\ker P)^{\perp}$ in general for an arbitrary operator $P$, and step \red{(!!)} is also not justified. Because $\Omega^k$ is not complete with respect to the $\mathcal{L}^2$-norm, the usual Hilbert space techniques do not apply. We will use elliptic operator theory to give a rigorous computation of $H^k_{\ph}$ for $k = 4,5,6,7$, in the next section.

\subsection{Computation of the groups $H^4_{\ph}$, $H^5_{\ph}$, $H^6_{\ph}$, and $H^7_{\ph}$} \label{sec:computeHph4567}

The material on regular operators in this section is largely based on Kawai--L\^e--Schwachh\"ofer~\cite{KLS2}.

\begin{defn} \label{defn:regular}
Let $P$ be a linear differential operator of degree $r$ on $\Omega^{\bu}$. Then $P : \Omega^{k - r} \to \Omega^k$ is said to be \emph{regular} if $\Omega^k = \im P \oplus \ker P^*$, where by $\ker P^*$ we mean the kernel of the formal adjoint $P^* : \Omega^k \to \Omega^{k - r}$ with respect to the $L^2$ inner product. The operator $P$ is said to be elliptic, overdetermined elliptic, underdetermined elliptic, if the principal symbol $\sigma_{\xi} (P)$ of $P$ is bijective, injective, surjective, respectively, for all $\xi \neq 0$.
\end{defn}

\begin{rmk} \label{rmk:regular}
It is a standard result in elliptic operator theory (see~\cite[p.464; 32 Corollary]{Besse}) that elliptic, overdetermined elliptic, and underdetermined elliptic operators are all regular.
\end{rmk}

\begin{prop} \label{prop:LBregular}
The operator $\mathcal L_B: \Omega^{k-2} \to \Omega^k$ is regular for all $k = 0, \ldots, 9$.
\end{prop}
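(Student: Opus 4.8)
The plan is to identify $\mathcal L_B$ as an overdetermined elliptic operator in low degrees and an underdetermined elliptic operator in high degrees, and then conclude regularity from Remark~\ref{rmk:regular}. So the whole proof reduces to computing the principal symbol of $\mathcal L_B$ and checking its pointwise injectivity/surjectivity.

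First I would read off the symbol. By Lemma~\ref{lemma:LBLKformula} we have $\mathcal L_B \alpha = g^{pq}(e_p \hk \ph)\wedge\nabla_q\alpha$, so for a nonzero covector $\xi\in T^*_xM$ with metric dual $v=\xi^\sharp$ the principal symbol $\sigma_\xi(\mathcal L_B):\Lambda^{k-2}_x \to \Lambda^k_x$ is simply $\alpha\mapsto (v\hk\ph)\wedge\alpha$ (equivalently, since $\mathcal L_B=[\iota_B,\dd]$ with $\iota_B$ algebraic and a degree-$1$ derivation, the first-order symbol collapses to $(\iota_B\xi)\wedge\cdot = (v\hk\ph)\wedge\cdot$ by Lemma~\ref{lemma:iotaBKprelim1}). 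Since scaling $\xi$ just scales the symbol, I may assume $|v|=1$. Then $v\hk(v\hk\ph)=0$, so $\omega:=v\hk\ph$ lies in $\Lambda^2 V^*$ where $V:=v^\perp$ is $6$-dimensional, and writing $\ph = v^\flat\wedge\omega + \bar\ph$ with $\bar\ph\in\Lambda^3 V^*$, the fundamental identity~\eqref{eq:fund-eq} gives $-6\vol = \omega\wedge\omega\wedge\ph = v^\flat\wedge\omega^3 + \omega^2\wedge\bar\ph = v^\flat\wedge\omega^3$ (the last term is a $7$-form on the $6$-dimensional $V$, hence zero). Thus $\omega^3\neq 0$, i.e.\ $\omega$ is a symplectic form on $V$, and under the splitting $\Lambda^\bu T^*_xM = \Lambda^\bu V^* \oplus v^\flat\wedge\Lambda^\bu V^*$ the symbol acts diagonally as $L_\omega := \omega\wedge\cdot$ in each summand.

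It then remains to invoke standard symplectic linear algebra (hard Lefschetz) on $(V,\omega)$ with $\dim V=6$: from the primitive decomposition one gets that $L_\omega:\Lambda^j V^* \to \Lambda^{j+2}V^*$ is injective for $j\le 2$ and surjective for $j\ge 2$ (an isomorphism when $j=2$). Applying this to the two summands, whose relevant exponents are $k-2$ and $k-3$, I would conclude that $\sigma_\xi(\mathcal L_B)$ is injective for all $k\le 4$ and surjective for all $k\ge 5$; the degenerate cases $k\in\{0,1\}$ (zero domain) and $k\in\{8,9\}$ (zero codomain) are immediate. Hence $\mathcal L_B:\Omega^{k-2}\to\Omega^k$ is overdetermined elliptic for $k\le 4$ and underdetermined elliptic for $k\ge 5$, and regularity for all $k=0,\dots,9$ follows from Remark~\ref{rmk:regular}.

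The point requiring care is the bookkeeping in the final step: correctly matching the summands $\Lambda^\bu V^*$ and $v^\flat\wedge\Lambda^\bu V^*$ to the exponents $k-2$ and $k-3$, and verifying that the injective/surjective transition for $L_\omega$ on $\R^6$ falls precisely between $k=4$ and $k=5$, which is exactly what makes the two regimes cover every $k$. This argument parallels the treatment of $\mathcal L_K$ in~\cite{KLS2}, the difference being that there the symbol is wedging with the $3$-form $v\hk\ps$ rather than the $2$-form $v\hk\ph$.
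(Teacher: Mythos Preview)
Your proof is correct and takes a genuinely different route from the paper. The paper computes the same symbol $\sigma_\xi(\mathcal L_B)=(\xi\hk\ph)\wedge\cdot$, then argues that injectivity for $k\le 4$ implies surjectivity for $k\ge 5$ via the Hodge star relation $\mathcal L_B^* = -\st\mathcal L_B\st$, and finally establishes injectivity case by case for $k=2,3,4$ through explicit manipulations with the $\G$ identities of Lemmas~\ref{lemma:identities2} and~\ref{lemma:identities3} (the $k=4$ case in particular is a page-long computation). Your argument instead observes that $\omega=v\hk\ph$ is a nondegenerate $2$-form on the $6$-dimensional $V=v^\perp$, splits $\Lambda^\bu$ along $v^\flat$, and reduces everything to the Lefschetz property of $L_\omega$ on a symplectic vector space. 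This is cleaner and more conceptual: it explains \emph{why} the injective/surjective transition occurs precisely at $k=4/5$ (it is the Lefschetz threshold $n-1=2$ on $\R^6$, shifted by the two summands), whereas in the paper this emerges from the case analysis. The cost is that you import the symplectic Lefschetz decomposition rather than staying entirely within the $\G$ toolkit developed in Section~\ref{sec:forms}; but since the Lefschetz statement you use is elementary linear algebra on a vector space (not the Hodge-theoretic hard Lefschetz), this is a mild dependency. One small stylistic point: the phrase ``hard Lefschetz'' may suggest more than you need; it would be clearer to say that $L_\omega^{n-j}:\Lambda^jV^*\to\Lambda^{2n-j}V^*$ is an isomorphism for $0\le j\le n$, from which the injectivity and surjectivity of a single $L_\omega$ follow.
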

\begin{proof}
Consider the symbol $P = \sigma_{\xi} (\mathcal L_B)$. By~\eqref{eq:LBLKframe}, this operator is $P (\omega) = (\xi \hk \ph) \wedge \omega$. Note that this is an algebraic (pointwise) map and thus at each point it is a linear map between finite-dimensional vector spaces. We will show that $P : \Omega^{k-2} \to \Omega^k$ is injective for $k = 0,1,2,3,4$ and surjective for $k=5,6,7,8,9$. The claim will then follow by Remark~\ref{rmk:regular}.

First we claim that injectivity of $P : \Omega^{k-2} \to \Omega^k$ for $k=0,1,2,3,4$ implies surjectivity of $P : \Omega^{k-2} \to \Omega^k$ for $k=5,6,7,8,9$. Suppose $P : \Omega^{k-2} \to \Omega^k$ is injective. Then the dual map $P^* : \Omega^k \to \Omega^{k-2}$ is surjective. But we have
\begin{equation*}
P^* = ( \sigma_{\xi} (\mathcal L_B) )^* = \sigma_{\xi} (\mathcal L_B^*),
\end{equation*}
and by~\eqref{eq:derivationsstar} this equals $\sigma_{\xi} (- \st \mathcal L_B \st) = - \st \sigma_{\xi} (\mathcal L_B) \st = - \st P \st$. Since $\st : \Omega^l \to \Omega^{7-l}$ is bijective, and we have that $\st P \st : \Omega^k \to \Omega^{k-2}$ is surjective, we deduce that $P : \Omega^{(9-k) - 2} \to \Omega^{9-k}$ is surjective. But $9-k \in \{ 5, 6, 7, 8, 9 \}$ if $k = \{ 0, 1, 2, 3, 4 \}$. Thus the claim is proved. 

It remains to establish injectivity of $P : \Omega^{k-2} \to \Omega^k$ for $k = 0, 1, 2, 3, 4$. This is automatic for $k = 0, 1$ since $\Omega^{k-2} = 0$ in these cases.

If $k = 2$, then $P : \Omega^0 \to \Omega^2$ is given by $Pf = (\xi \hk \ph) \wedge f = f (\xi \hk \ph)$. Suppose $Pf = 0$. Since $\xi \neq 0$, we have $\xi \hk \ph \neq 0$, and thus $f = 0$. So $P$ is injective for $k=2$.

If $k=3$, then $P : \Omega^1 \to \Omega^3$ is given by $P \alpha = (\xi \hk \ph) \wedge \alpha$. Suppose $P \alpha = 0$. Taking the wedge product of $P \alpha = 0$ with $\ps$ and using Lemma~\ref{lemma:identities2} gives
\begin{align*}
0 & = \ps \wedge (\xi \hk \ph) \wedge \alpha = 3 (\st \xi) \wedge \alpha \\
& = 3 g( \xi, \alpha) \vol.
\end{align*}
Thus $g(\xi, \alpha) = 0$. Similarly, taking the wedge product of $P \alpha = 0$ with $\ph$ and using Lemmas~\ref{lemma:identities2} and~\ref{lemma:identities3} gives
\begin{align*}
0 & = \ph \wedge (\xi \hk \ph) \wedge \alpha = -2 \big( \st (\xi \hk \ph) \big) \wedge \alpha \\
& = -2 \ps \wedge \xi \wedge \alpha = - 2 \st ( \xi \times \alpha ). 
\end{align*}
Thus $\xi \times \alpha = 0$. Taking the cross product of this with $\xi$ and using Lemma~\ref{lemma:identities3} gives
\begin{equation*}
- g(\xi, \xi) \alpha + g(\xi, \alpha) \xi = 0.
\end{equation*}
Since $g(\xi, \alpha) = 0$ and $\xi \neq 0$, we conclude that $\alpha = 0$. So $P$ is injective for $k=3$.

If $k=4$, then $P : \Omega^2 \to \Omega^4$ is given by $P \beta = (\xi \hk \ph) \wedge \beta$. Suppose $P \beta = 0$. This means
\begin{equation} \label{eq:regulartemp}
(\xi \hk \ph) \wedge \beta = 0.
\end{equation}
Write $\beta = \beta_7 + \beta_{14} \in \Omega^2_7 \oplus \Omega^2_{14}$, where by~\eqref{eq:forms-isom} we can write $\beta_7 = Y \hk \ph$ for some unique $Y$. Taking the wedge product of~\eqref{eq:regulartemp} with $\ph$ and using~\eqref{eq:forms-isom} and~\eqref{eq:fund-eq}, we have
\begin{align*}
0 & = (\xi \hk \ph) \wedge \ph \wedge \beta = (\xi \hk \ph) \wedge (- 2 \st \beta_7 + \st \beta_{14}) \\
& = - 2 (\xi \hk \ph) \wedge \st \beta_7 + 0 = (\xi \hk \ph) \wedge (Y \hk \ph) \wedge \ph = - 6 g(\xi, Y) \vol.
\end{align*}
Thus we have
\begin{equation} \label{eq:regulartemp2}
g(\xi, Y) = 0.
\end{equation}
Now we take the interior product of~\eqref{eq:regulartemp} with $\xi$. This gives $(\xi \hk \ph) \wedge (\xi \hk \beta) = 0$. By the injectivity of $P$ for $k=3$, we deduce that
\begin{equation} \label{eq:regulartemp2b}
\xi \hk \beta = 0.
\end{equation}
Using~\eqref{eq:regulartemp2b} and~\eqref{eq:forms-isom}, we can rewrite~\eqref{eq:regulartemp} as
\begin{equation*}
0 = \xi \hk (\ph \wedge \beta) = \xi \hk ( - 2 \st \beta_7 + \st \beta_{14} ).
\end{equation*}
Taking $\st$ of the above equation and using $\st ( \xi \hk \st \gamma) = \pm \xi \wedge \gamma$, where in general the sign depends on the dimension of the manifold and the degree of $\gamma$, we find that
\begin{equation} \label{eq:regulartemp3}
- 2 \xi \wedge \beta_7 + \xi \wedge \beta_{14} = 0.
\end{equation}
Equation~\eqref{eq:regulartemp3} implies that
\begin{equation} \label{eq:regulartemp4}
\xi \wedge \beta = \xi \wedge \beta_7 + \xi \wedge \beta_{14} = 3 \xi \wedge \beta_7.
\end{equation}
Taking the interior product of~\eqref{eq:regulartemp4} with $\xi$ and using~\eqref{eq:regulartemp2b} yields
\begin{equation} \label{eq:regulartemp5}
g(\xi, \xi) \beta = 3 g(\xi, \xi) \beta_7 - 3 \xi \wedge (\xi \hk \beta_7).
\end{equation}
By Lemma~\ref{lemma:identities3} we have $\xi \hk \beta_7 = \xi \hk Y \hk \ph = Y \times \xi$. Thus~\eqref{eq:regulartemp5} becomes
\begin{equation} \label{eq:regulartemp6}
g(\xi, \xi) \beta = 3 g(\xi, \xi) \beta_7 - 3 \xi \wedge (Y \times \xi).
\end{equation}
Now we take the wedge product of~\eqref{eq:regulartemp6} with $\ps$, use Lemma~\ref{lemma:identities3} again, and the fact that $\beta_{14} \wedge \ps = 0$ from~\eqref{eq:forms-isom}. We obtain
\begin{align*}
g(\xi, \xi) \beta_7 \wedge \ps & = 3 g(\xi, \xi) \beta_7 \wedge \ps - 3 \xi \wedge (Y \times \xi) \wedge \ps \\
& = 3 g(\xi, \xi) \beta_7 \wedge \ps - 3 \st (\xi \times (Y \times \xi)),
\end{align*}
which can be rearranged to give, using Lemma~\ref{lemma:identities3} and~\eqref{eq:regulartemp2}, that
\begin{equation} \label{eq:regulartemp7}
- 2 g(\xi, \xi) \beta_7 \wedge \ps = 3 \st (\xi \times (\xi \times Y)) = - 3 \st \big( g(\xi, \xi) Y \big).
\end{equation}
But from Lemma~\ref{lemma:identities2} we find $\beta_7 \wedge \ps = (Y \hk \ph) \wedge \ps = 3 \st Y$. Substituting this into~\eqref{eq:regulartemp7} and taking $\st$, we find that
\begin{equation*}
- 3 g(\xi, \xi) Y = -2 g(\xi, \xi) \st \big( 3 \st Y \big) = - 6 g(\xi, \xi) Y.
\end{equation*}
Since $\xi \neq 0$, we deduce that $Y = 0$ and thus $\beta_7 = 0$. Substituting back into~\eqref{eq:regulartemp5} then gives $g(\xi, \xi) \beta_{14} = 0$ and thus $\beta_{14} = 0$ as well. So $P$ is injective for $k=4$.
\end{proof}

\begin{cor} \label{cor:LBregular}
For any $k = 0, \ldots, 7$, we have
\begin{equation} \label{eq:LBregular}
(\im \mathcal L_B)^k = \st ( (\ker \mathcal L_B)^{7-k})^{\perp}.
\end{equation}
\end{cor}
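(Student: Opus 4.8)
The plan is to derive Corollary~\ref{cor:LBregular} directly from the regularity of $\mathcal L_B$ (Proposition~\ref{prop:LBregular}) together with the adjoint formula $\mathcal L_B^* = -\st \mathcal L_B \st$ (Proposition~\ref{prop:derivationsstar}); no new analysis is needed beyond what is already in place.

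First I would fix $k \in \{0, \ldots, 7\}$ and regard $\mathcal L_B$ as a degree-$2$ operator $\Omega^{k-2} \to \Omega^k$ with formal $L^2$-adjoint $\mathcal L_B^* \colon \Omega^k \to \Omega^{k-2}$. By Proposition~\ref{prop:LBregular} this operator is regular, so by Definition~\ref{defn:regular} we have a direct sum decomposition $\Omega^k = (\im \mathcal L_B)^k \oplus \ker(\mathcal L_B^* \colon \Omega^k \to \Omega^{k-2})$ of spaces of smooth forms. Since $\mathcal L_B^*$ is the formal adjoint, $\langle \mathcal L_B \alpha, \beta \rangle_{L^2} = \langle \alpha, \mathcal L_B^* \beta \rangle_{L^2} = 0$ whenever $\mathcal L_B^* \beta = 0$, so the two summands are $L^2$-orthogonal. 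The key (short) step is then to upgrade this to $(\im \mathcal L_B)^k = \big(\ker(\mathcal L_B^* \colon \Omega^k \to \Omega^{k-2})\big)^{\perp}$: the inclusion $\subseteq$ is the orthogonality just noted, and for $\supseteq$ one takes $v$ orthogonal to $\ker \mathcal L_B^*$, writes $v = a + b$ with $a \in (\im \mathcal L_B)^k$ and $b \in \ker \mathcal L_B^*$ using the direct sum, and pairs with $b$ to get $\lVert b \rVert_{L^2}^2 = 0$, hence $v = a$. The point to emphasize is that this argument uses only the algebraic direct sum furnished by regularity, so it is legitimate despite $\Omega^k$ not being $L^2$-complete — this is exactly how one sidesteps the Hilbert-space difficulties flagged after the formal computation at the end of Section~\ref{sec:computeHph0123}.

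Next I would identify $\ker \mathcal L_B^*$ on $\Omega^k$ using Proposition~\ref{prop:derivationsstar}: since $\mathcal L_B^* = -\st \mathcal L_B \st$ there and $\st \colon \Omega^k \to \Omega^{7-k}$ is a bijection, $\mathcal L_B^* \omega = 0$ iff $\st\omega \in \ker(\mathcal L_B \colon \Omega^{7-k} \to \Omega^{9-k}) = (\ker \mathcal L_B)^{7-k}$, i.e. $\ker(\mathcal L_B^* \colon \Omega^k \to \Omega^{k-2}) = \st\big((\ker \mathcal L_B)^{7-k}\big)$. Finally, because $\st$ is a pointwise isometry it preserves the $L^2$ inner product, hence sends orthogonal complements to orthogonal complements: $(\st V)^{\perp} = \st(V^{\perp})$ for any subspace $V \subseteq \Omega^{7-k}$. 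Combining the three steps with $V = (\ker\mathcal L_B)^{7-k}$ yields $(\im \mathcal L_B)^k = \big(\st((\ker \mathcal L_B)^{7-k})\big)^{\perp} = \st\big(((\ker \mathcal L_B)^{7-k})^{\perp}\big)$, which is the asserted identity.

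I do not anticipate a real obstacle here, since the one genuinely hard ingredient — regularity of $\mathcal L_B$ — is already established in Proposition~\ref{prop:LBregular}. The only things requiring care are bookkeeping: tracking the degree in which each copy of $\mathcal L_B$ and $\st$ acts, and being explicit that every orthogonal complement is taken inside the appropriate space of smooth forms (so that the regularity decomposition, rather than $L^2$-orthogonal projection, is what justifies the argument).
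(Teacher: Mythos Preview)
Your proposal is correct and follows essentially the same approach as the paper: both combine the regularity of $\mathcal L_B$ (Proposition~\ref{prop:LBregular}) with the adjoint identity $\mathcal L_B^* = -\st \mathcal L_B \st$ (Proposition~\ref{prop:derivationsstar}). The paper's proof is terser, first writing $(\im \mathcal L_B)^k = \st(\im \mathcal L_B^*)^{7-k}$ via the adjoint formula and then invoking regularity to get $(\im \mathcal L_B^*)^{7-k} = ((\ker \mathcal L_B)^{7-k})^\perp$, whereas you order the steps slightly differently and spell out more carefully why the regularity decomposition yields equality with the orthogonal complement; but the content is the same.
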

\begin{proof}
By~\eqref{eq:derivationsstar} we have $(\im \mathcal{L}_B)^k = \st (\im \mathcal{L}^*_B)^{7-k}$, and because $\mathcal L_B$ is regular by Proposition~\ref{prop:LBregular}, we have $(\im \mathcal{L}^*_B)^{7-k} = ((\ker \mathcal{L}_B)^{7-k})^{\perp}$. The result follows.
\end{proof}

\begin{prop} \label{prop:Hph67}
We have $H^7_\ph \cong \mathcal{H}^7$ and $H^6_\ph \cong \mathcal{H}^6$.
\end{prop}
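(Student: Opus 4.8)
The plan is to deduce both isomorphisms formally from the kernel computations $(\ker \mathcal L_B)^0 = \mathcal H^0$ and $(\ker \mathcal L_B)^1 = \mathcal H^1$ of Proposition~\ref{prop:Hph01}, together with the regularity of $\mathcal L_B$ as packaged in Corollary~\ref{cor:LBregular}.

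First I would observe that, since $\mathcal L_B$ has degree $2$, it maps $\Omega^6$ and $\Omega^7$ into $\Omega^8 = \Omega^9 = 0$, so $(\ker \mathcal L_B)^6 = \Omega^6$ and $(\ker \mathcal L_B)^7 = \Omega^7$. Hence the intersection in the denominator of~\eqref{eq:Hphdefn} is all of $(\im \mathcal L_B)^k$ for $k = 6,7$, and
\begin{equation*}
H^7_{\ph} = \frac{\Omega^7}{(\im \mathcal L_B)^7}, \qquad H^6_{\ph} = \frac{\Omega^6}{(\im \mathcal L_B)^6}.
\end{equation*}
Next I would apply Corollary~\ref{cor:LBregular} with $k = 7$ and $k = 6$, which gives $(\im \mathcal L_B)^7 = \st\big( ((\ker \mathcal L_B)^0)^{\perp} \big)$ and $(\im \mathcal L_B)^6 = \st\big( ((\ker \mathcal L_B)^1)^{\perp} \big)$. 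Substituting $(\ker \mathcal L_B)^0 = \mathcal H^0$ and $(\ker \mathcal L_B)^1 = \mathcal H^1$ from Proposition~\ref{prop:Hph01}, and using that $\st$ is an $L^2$-isometry (so it sends the orthogonal complement of a subspace of $\Omega^k$ to the orthogonal complement of its image in $\Omega^{7-k}$) with $\st \mathcal H^k = \mathcal H^{7-k}$, I obtain $(\im \mathcal L_B)^7 = (\mathcal H^7)^{\perp}$ inside $\Omega^7$ and $(\im \mathcal L_B)^6 = (\mathcal H^6)^{\perp}$ inside $\Omega^6$.

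Finally, since $\mathcal H^6$ and $\mathcal H^7$ are finite-dimensional subspaces of smooth forms, we have the orthogonal splittings $\Omega^7 = \mathcal H^7 \oplus (\mathcal H^7)^{\perp}$ and $\Omega^6 = \mathcal H^6 \oplus (\mathcal H^6)^{\perp}$, whence $H^7_{\ph} \cong \mathcal H^7$ and $H^6_{\ph} \cong \mathcal H^6$. This argument is essentially formal once Corollary~\ref{cor:LBregular} is available, so there is no genuine obstacle; the only points requiring care are keeping track of which ambient space each orthogonal complement lives in, and noting that the splitting off of a finite-dimensional subspace of harmonic forms is exactly what legitimises the last quotient (the ambient $\Omega^{\bu}$ being incomplete is irrelevant here). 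The remaining cases $k = 4, 5$ will be treated in the next results by a similar but more delicate argument combining Corollary~\ref{cor:LBregular} with the kernel computations of Propositions~\ref{prop:Hph2} and~\ref{prop:Hph3}, where $(\ker \mathcal L_B)^4$ and $(\ker \mathcal L_B)^5$ are proper subspaces and so the denominator does not collapse as cleanly.
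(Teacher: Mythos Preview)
Your proof is correct and follows essentially the same approach as the paper: both use $(\ker \mathcal L_B)^k = \Omega^k$ for $k=6,7$ together with Corollary~\ref{cor:LBregular} and the kernel computations of Proposition~\ref{prop:Hph01} to identify $(\im \mathcal L_B)^k$ with $(\mathcal H^k)^{\perp}$. The only cosmetic difference is that the paper writes $(\mathcal H^k)^{\perp}$ explicitly as $(\im \dd)^k \oplus (\im \ds)^k$ via the Hodge decomposition, whereas you invoke the finite-dimensionality of $\mathcal H^k$ directly to split $\Omega^k$; these are equivalent.
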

\begin{proof}
In the proof of Proposition~\ref{prop:Hph01}, we showed that $(\ker \mathcal{L}_B)^0 = \mathcal{H}^0$ and $(\ker \mathcal{L}_B)^1 = \mathcal{H}^1$. Thus using~\eqref{eq:LBregular} we have
\begin{align*}
(\im \mathcal{L}_B)^7 & = \st ((\ker \mathcal{L}_B)^0)^\perp = \st (\mathcal{H}^0)^{\perp} & & \\
& = (\im \dd)^7 \oplus (\im \ds)^7 \qquad & & \text{by the Hodge decomposition.}
\end{align*}
In exactly the same way we get $(\im \mathcal{L}_B)^6 = (\im \dd)^6 \oplus (\im \ds)^6$.

Moreover, since $\mathcal L_B$ has degree two, we have $(\ker \mathcal{L}_B)^6 = \Omega^6$ and $(\ker \mathcal{L}_B)^7 = \Omega^7$. Thus, we conclude that
\begin{equation*}
H^k_\ph = \frac{\Omega^k}{(\im \dd)^k \oplus (\im \ds)^k} \cong \mathcal{H}^k \qquad \text{ for $k = 6,7$.}
\end{equation*}
by the Hodge decomposition.
\end{proof}

\begin{prop} \label{prop:Hph5}
We have $H^5_\ph \cong \mathcal{H}^5$.
\end{prop}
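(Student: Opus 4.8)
The plan is to mirror the structure of the proof of Proposition~\ref{prop:Hph3}, but in degree $5$. First I would show the denominator in~\eqref{eq:Hphdefn} vanishes, i.e.\ $(\ker \mathcal L_B)^5 \cap (\im \mathcal L_B)^5 = 0$. An element of $(\im \mathcal L_B)^5$ has the form $\omega = \mathcal L_B \alpha$ for some $\alpha \in \Omega^3_1 \oplus \Omega^3_7 \oplus \Omega^3_{27}$; using Figure~\ref{figure:LB} to write out the components of $\mathcal L_B \alpha$ in $\Omega^5_7 \oplus \Omega^5_{14}$ and then imposing $\mathcal L_B \omega = 0$ (which lands in $\Omega^7_1$, since $\mathcal L_B$ has degree $2$) yields an equation of the form $a\, D^7_1 D^7_7 \alpha_7 + (\text{terms from } \alpha_1, \alpha_{27}) = 0$ on $\Omega^7_1$. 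Combining with the component relations from Corollary~\ref{cor:d-relations} and integrating by parts as in Remark~\ref{rmk:IBP}, I expect to conclude that the relevant components of $\alpha$ are killed by the $D$-operators, hence $\omega = \mathcal L_B \alpha = 0$. This gives $H^5_\ph = (\ker \mathcal L_B)^5$.

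Alternatively — and this is probably cleaner — I would use the duality machinery already set up: by Corollary~\ref{cor:LBregular} we have $(\im \mathcal L_B)^5 = \st\big((\ker \mathcal L_B)^2\big)^\perp$, and by Proposition~\ref{prop:Hph2} we know $(\ker \mathcal L_B)^2 = \mathcal H^2$, so $(\im \mathcal L_B)^5 = \st(\mathcal H^2)^\perp = (\im \dd)^5 \oplus (\im \ds)^5$ by the Hodge decomposition. Since $\mathcal L_B$ has degree $2$, every $5$-form lands in $\Omega^7$, so $(\ker \mathcal L_B)^5$ is a genuine constraint, unlike in degrees $6,7$; I need to identify it. The key computation is: write $\omega = \omega_7 + \omega_{14} \in \Omega^5_7 \oplus \Omega^5_{14}$, read off from Figure~\ref{figure:LB} the conditions for $\mathcal L_B \omega = 0$ in $\Omega^7_1$, and show that $(\ker \mathcal L_B)^5 \subseteq (\ker \ds)^5$. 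Combined with $\mathcal H^5 \subseteq (\ker \mathcal L_B)^5$ from~\eqref{eq:LBLKonH}, and $(\ker \ds)^5 = \mathcal H^5 \oplus (\im \ds)^5$, Lemma~\ref{lemma:linalg}(i) gives
\[
H^5_\ph = (\ker \mathcal L_B)^5 = \mathcal H^5 \oplus \big((\im \ds)^5 \cap (\ker \mathcal L_B)^5\big).
\]
At this point I still need to show the second summand is zero, i.e.\ $(\im \ds)^5 \cap (\ker \mathcal L_B)^5 = 0$, which should follow by writing $\omega = \ds \eta$ for $\eta \in \Omega^6$, using Corollary~\ref{cor:LBLKds} (that $[\ds, L_\ph] = -\mathcal L_B$) or the explicit component relations, and an integration-by-parts argument à la Remark~\ref{rmk:IBP}. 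That would complete $H^5_\ph = \mathcal H^5$.

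The main obstacle I anticipate is the final step — pinning down that $(\im \ds)^5 \cap (\ker \mathcal L_B)^5 = 0$ — which is exactly the analogue of the subtlety that does \emph{not} vanish in degrees $3, 4$ (where it produces the infinite-dimensional part). The reason it should vanish in degree $5$ is the asymmetry in Figure~\ref{figure:LB}: in degree $5$ there is no $\Omega^5_1$ or $\Omega^5_{27}$ summand, only $\Omega^5_7$ and $\Omega^5_{14}$, and the map $\mathcal L_B : \Omega^5_7 \to \Omega^7_1$ is (up to a nonzero constant) $D^7_1$, whose kernel on the relevant space is controlled by Theorem~\ref{thm:harmonic1}. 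So the expected argument is: an element of $(\im \ds)^5$ that is $\mathcal L_B$-closed must, after transporting through $\st$ or through the component identities, correspond to something in the kernel of two of the three operators $D^7_1, D^7_7, D^7_{27}$, hence harmonic, hence both exact and co-exact, hence zero. I would be careful to check the constants in Figure~\ref{figure:LB} for the degree-$5$ row and to confirm that the $\Omega^5_{14}$ component contributes nothing obstructive (its image under $\mathcal L_B$ in degree $7$ is zero, so the constraint on $\omega_{14}$ comes only from lower-degree considerations, i.e.\ from $\omega$ being co-closed).
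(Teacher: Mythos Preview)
Your proposal has a genuine gap: the denominator $(\ker \mathcal L_B)^5 \cap (\im \mathcal L_B)^5$ is \emph{not} zero, so you cannot mirror the structure of Proposition~\ref{prop:Hph3}. In fact it is infinite-dimensional. From Figure~\ref{figure:LB}, for $\omega = \omega_7 + \omega_{14} \in \Omega^5_7 \oplus \Omega^5_{14}$ one has $\mathcal L_B \omega = 7 D^7_1 \omega_7 \in \Omega^7_1$, so $(\ker \mathcal L_B)^5 = \{ \omega : D^7_1 \omega_7 = 0 \}$, with \emph{no} constraint on $\omega_{14}$. This immediately kills two of your claims: the inclusion goes the wrong way, namely $(\ker \ds)^5 \subseteq (\ker \mathcal L_B)^5$ and not conversely (any $\omega_{14}$ that is not coclosed lies in $(\ker \mathcal L_B)^5 \setminus (\ker \ds)^5$); and $(\im \ds)^5 \cap (\ker \mathcal L_B)^5$ is not zero but all of $(\im \ds)^5$, since for $\alpha \in \Omega^6_7$ one has $\ds \alpha = D^7_7 \alpha + D^7_{14} \alpha$ and $\mathcal L_B \ds \alpha = 7 D^7_1 D^7_7 \alpha = 0$ by~\eqref{eq:d-relations}. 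Your heuristic that ``there is no $\Omega^5_{27}$ summand so things should collapse'' points in exactly the wrong direction: the absence of constraints makes $(\ker \mathcal L_B)^5$ \emph{larger}, not smaller.

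The paper's argument does start as you do in your second approach, obtaining $(\im \mathcal L_B)^5 = (\im \dd)^5 \oplus (\im \ds)^5$ from regularity and $(\ker \mathcal L_B)^2 = \mathcal H^2$. But then the key observation is the one just made, that $(\im \ds)^5 \subseteq (\ker \mathcal L_B)^5$. Together with $\mathcal H^5 \subseteq (\ker \mathcal L_B)^5$ and Lemma~\ref{lemma:linalg}(i), this gives
\[
(\ker \mathcal L_B)^5 = \mathcal H^5 \oplus (\im \ds)^5 \oplus \big( (\im \dd)^5 \cap (\ker \mathcal L_B)^5 \big),
\]
and then Lemma~\ref{lemma:linalg}(ii) yields
\[
(\ker \mathcal L_B)^5 \cap (\im \mathcal L_B)^5 = (\im \ds)^5 \oplus \big( (\im \dd)^5 \cap (\ker \mathcal L_B)^5 \big).
\]
The quotient is therefore $\mathcal H^5$: both ``extra'' pieces of the kernel are already in the image and get cancelled. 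The mechanism here is thus the opposite of what happens in degree~$3$ --- in degree~$5$ the numerator and denominator are both large, but their difference is exactly $\mathcal H^5$.
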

\begin{proof}
In the proof of Proposition~\ref{prop:Hph2}, we showed that $(\ker \mathcal L_B)^2 = \mathcal{H}^2$, so using~\eqref{eq:LBregular} just as in the proof of Proposition~\ref{prop:Hph67} we deduce that
\begin{equation} \label{eq:Hph5temp}
(\im \mathcal{L}_B)^5 = (\im \dd)^5 \oplus (\im \ds)^5.
\end{equation}

Let $\alpha \in \Omega^6$. Then since $\ds = \st \dd \st$ on $\Omega^6$, we find from Figure~\ref{figure:d} that up to our usual identfications, $\ds \alpha = D^7_7 \alpha + D^7_{14} \alpha \in \Omega^5_7 \oplus \Omega^5_{14}$. Then Figure~\ref{figure:LB} and~\eqref{eq:d-relations} gives
\begin{equation*}
\mathcal L_B \ds \alpha = \mathcal L_B (D^7_7 \alpha + D^7_{14} \alpha) = 7 D^7_1 D^7_7 \alpha + 0 = 0,
\end{equation*}
so $(\im \ds)^5 \subset (\ker \mathcal L_B)^5$. We also have $\mathcal{H}^5 \subset (\ker \mathcal{L}_B)^5$ by~\eqref{eq:LcomposeL}. Using the Hodge decomposition of $\Omega^5$ we therefore have
\begin{equation*}
\mathcal{H}^5 \oplus (\im \ds)^5 \subseteq (\ker \mathcal{L}_B)^5 \subseteq \Omega^5 = \mathcal{H}^5 \oplus (\im \ds)^5 \oplus (\im \dd)^5.
\end{equation*}
Applying Lemma~\ref{lemma:linalg}(i) we deduce that
\begin{equation} \label{eq:Hph5temp2}
(\ker \mathcal L_B)^5 = \mathcal{H}^5 \oplus (\im \ds)^5 \oplus \big( (\im \dd)^5 \cap (\ker \mathcal L_B)^5 \big).
\end{equation}
Applying Lemma~\ref{lemma:linalg}(ii) to~\eqref{eq:Hph5temp},~\eqref{eq:Hph5temp2}, as subspaces of $\Omega^5 = \mathcal{H}^5 \oplus (\im \dd)^5 \oplus (\im \ds)^5$, we obtain
\begin{equation} \label{eq:Hph5temp3}
(\im \mathcal L_B)^5 \cap (\ker \mathcal L_B)^5 = (\im \ds)^5 \oplus \big( (\im \dd)^5 \cap (\ker \mathcal L_B)^5 \big).
\end{equation}
Therefore we find that
\begin{align*}
H^5_\ph & = \frac{(\ker \mathcal{L}_B)^5}{(\ker \mathcal{L}_B)^5 \cap (\im \mathcal{L}_B)^5} & & \\
& = \frac{\mathcal{H}^5 \oplus (\im \ds)^5 \oplus \big( (\im \dd)^5 \cap (\ker \mathcal{L}_B)^5 \big)}{(\im \ds)^5 \oplus (\im \dd)^5 \cap (\ker \mathcal{L}_B)^5} & & \text{by~\eqref{eq:Hph5temp2} and~\eqref{eq:Hph5temp3}} \\
& \cong \mathcal{H}^5 & & 
\end{align*}
as claimed. 
\end{proof}

Before we can compute $H^4_{\ph}$ we need two preliminary results.

\begin{lemma} \label{lemma:Hph4-1}
We have
\begin{equation} \label{eq:Hph4-1}
(\ker \mathcal{L}_B)^4 \cap \big( (\im \ds)^4 \oplus (\im \dd)^4 \big) = \big( (\ker \mathcal{L}_B)^4 \cap (\im \ds)^4 \big) \oplus \big( (\ker \mathcal{L}_B)^4 \cap (\im \dd)^4 \big).
\end{equation}
\end{lemma}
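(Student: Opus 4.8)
The plan is to reduce the statement to an application of Lemma~\ref{lemma:linalg}(ii), exactly as in the proof of Proposition~\ref{prop:Hph5}. The ambient decomposition is the Hodge decomposition $\Omega^4 = \mathcal H^4 \oplus (\im \dd)^4 \oplus (\im \ds)^4$ into three mutually orthogonal complementary subspaces, which will play the role of $U = A \oplus B \oplus C$ in Lemma~\ref{lemma:linalg}(ii). To apply that lemma I need the two subspaces $V := (\ker \mathcal L_B)^4$ and $W := (\im \ds)^4 \oplus (\im \dd)^4$ to each split compatibly with this triple decomposition. For $W$ this is immediate: $W = 0 \oplus (\im \dd)^4 \oplus (\im \ds)^4$. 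The work is therefore to show that $(\ker \mathcal L_B)^4$ also decomposes as a direct sum of a subspace of $\mathcal H^4$, a subspace of $(\im \dd)^4$, and a subspace of $(\im \ds)^4$.

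First I would observe $\mathcal H^4 \subseteq (\ker \mathcal L_B)^4$ by~\eqref{eq:LBLKonH}. Next, I claim $(\im \dd)^4 \cap (\ker \mathcal L_B)^4$ and $(\im \ds)^4 \cap (\ker \mathcal L_B)^4$ together with $\mathcal H^4$ span $(\ker \mathcal L_B)^4$. Given $\omega \in (\ker \mathcal L_B)^4$, write $\omega = \omega_{\mathcal H} + \omega_{\dd} + \omega_{\ds}$ with $\omega_{\mathcal H} \in \mathcal H^4$, $\omega_{\dd} \in (\im \dd)^4$, $\omega_{\ds} \in (\im \ds)^4$. I would try to show that $\mathcal L_B$ maps $(\im \dd)^4$ into $(\im \dd)^6$ and $(\im \ds)^4$ into $(\im \ds)^6$ (and kills $\mathcal H^4$), so that these three pieces of $\mathcal L_B \omega$ land in complementary subspaces of $\Omega^6$ and each must vanish separately, forcing $\omega_{\dd}, \omega_{\ds} \in \ker \mathcal L_B$. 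The statement $\mathcal L_B (\im \dd) \subseteq \im \dd$ follows from $[\dd, \mathcal L_B]=0$ (see~\eqref{eq:commdL} with $r=2$, i.e.\ $\dd \mathcal L_B = \mathcal L_B \dd$): if $\omega_{\dd} = \dd \eta$ then $\mathcal L_B \omega_{\dd} = \mathcal L_B \dd \eta = \dd \mathcal L_B \eta \in \im \dd$. Dually, $\mathcal L_B \ds = \ds \mathcal L_B$ from~\eqref{eq:LBLKdscom} gives $\mathcal L_B (\im \ds) \subseteq \im \ds$. Hence on $\Omega^6 = \mathcal H^6 \oplus (\im \dd)^6 \oplus (\im \ds)^6$ the three components of $\mathcal L_B \omega$ are $0$, $\mathcal L_B \omega_{\dd} \in (\im \dd)^6$, and $\mathcal L_B \omega_{\ds} \in (\im \ds)^6$; vanishing of $\mathcal L_B \omega$ forces each to vanish. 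This yields
\begin{equation*}
(\ker \mathcal L_B)^4 = \mathcal H^4 \oplus \big( (\im \dd)^4 \cap (\ker \mathcal L_B)^4 \big) \oplus \big( (\im \ds)^4 \cap (\ker \mathcal L_B)^4 \big),
\end{equation*}
which is precisely a decomposition $V = A' \oplus B' \oplus C'$ with $A' = \mathcal H^4 \subseteq A$, $B' = (\im \dd)^4 \cap (\ker \mathcal L_B)^4 \subseteq B = (\im \dd)^4$, $C' = (\im \ds)^4 \cap (\ker \mathcal L_B)^4 \subseteq C = (\im \ds)^4$.

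Finally I would invoke Lemma~\ref{lemma:linalg}(ii) with $U = \Omega^4 = \mathcal H^4 \oplus (\im \dd)^4 \oplus (\im \ds)^4$, $V = (\ker \mathcal L_B)^4$ decomposed as above, and $W = (\im \ds)^4 \oplus (\im \dd)^4 = 0 \oplus (\im \dd)^4 \oplus (\im \ds)^4$ (so $A''=0$, $B''=(\im \dd)^4$, $C''=(\im \ds)^4$). The lemma gives
\begin{equation*}
V \cap W = (\mathcal H^4 \cap 0) \oplus \big( (\im \dd)^4 \cap (\ker \mathcal L_B)^4 \big) \oplus \big( (\im \ds)^4 \cap (\ker \mathcal L_B)^4 \big),
\end{equation*}
which is exactly the right-hand side of~\eqref{eq:Hph4-1}, while the left-hand side is $V \cap W$ by definition. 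The main obstacle, such as it is, is verifying cleanly that $\mathcal L_B$ preserves $\im \dd$ and $\im \ds$ so that the three Hodge components decouple under $\mathcal L_B$; once that is in hand everything reduces to bookkeeping with Lemma~\ref{lemma:linalg}. One should double-check that no completeness/closedness subtlety enters — it does not, because we are only intersecting and summing fixed finite collections of subspaces of the (non-complete but perfectly good) vector space $\Omega^4$, and Lemma~\ref{lemma:linalg} is purely algebraic.
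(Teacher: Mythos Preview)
Your proof is correct and is genuinely different from --- and considerably cleaner than --- the paper's argument. The paper proves the lemma by brute force: it writes a general element of $(\im \ds)^4 \oplus (\im \dd)^4$ as $\ds\beta + \dd\gamma$, expands $\mathcal L_B \ds\beta$ and $\mathcal L_B \dd\gamma$ explicitly in terms of the operators $D^l_m$ using Figures~\ref{figure:d} and~\ref{figure:LB} and the relations~\eqref{eq:d-relations}, and then applies $D^7_1$ together with Remark~\ref{rmk:IBP} to force $\mathcal L_B \ds\beta = 0$ separately. Your approach bypasses all of this by using only the structural facts $[\dd,\mathcal L_B]=0$ from~\eqref{eq:commdL} and $[\ds,\mathcal L_B]=0$ from~\eqref{eq:LBLKdscom} (plus $\mathcal L_B|_{\mathcal H}=0$ from~\eqref{eq:LBLKonH}) to see that $\mathcal L_B$ respects the Hodge decomposition, so the three Hodge pieces of any $\omega\in(\ker\mathcal L_B)^4$ lie in $\ker\mathcal L_B$ individually. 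This immediately yields the splitting $(\ker\mathcal L_B)^4 = \mathcal H^4 \oplus ((\im\dd)^4\cap(\ker\mathcal L_B)^4)\oplus((\im\ds)^4\cap(\ker\mathcal L_B)^4)$, which is the paper's equation~\eqref{eq:Hph4temp2} --- proved there \emph{later}, as a consequence of the present lemma. So your route in fact establishes~\eqref{eq:Hph4temp2} first and obtains Lemma~\ref{lemma:Hph4-1} as an immediate corollary, reversing the paper's logical order and eliminating the computation entirely. The final appeal to Lemma~\ref{lemma:linalg}(ii) is fine, though once you have the triple splitting of $(\ker\mathcal L_B)^4$ it is hardly needed.
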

\begin{proof}
Let $\beta = \beta_7 + \beta_{14} \in \Omega^5_7 \oplus \Omega^5_{14}$, and $\gamma = \gamma_1 + \gamma_7 + \gamma_{27} \in \Omega^3_1 \oplus \Omega^3_7 \oplus \Omega^3_{27}$. We need to prove that
\begin{equation} \label{eq:Hph4-1temp}
\begin{aligned}
&\mathcal{L}_B \ds (\beta_7 + \beta_{14} ) + \mathcal{L}_B \dd (\gamma_1 + \gamma_7 + \gamma_{27}) = 0 \\
\iff & \mathcal{L}_B \ds (\beta_7 + \beta_{14}) = \mathcal{L}_B \dd (\gamma_1 + \gamma_7 + \gamma_{27}) = 0.
\end{aligned}
\end{equation}
From $\ds = - \st \dd \st$ on $\Omega^5$ and Figures~\ref{figure:d} and~\ref{figure:LB}, we have
\begin{align*}
\mathcal{L}_B\ds (\beta_7 + \beta_{14}) & = \mathcal{L}_B (-D^7_1 \beta_7 + \tfrac{3}{2} D^7_7 \beta_7 - D^7_{27} \beta_7 - D^{14}_7 \beta_{14} - D^{14}_{27} \beta_{14} ) \\
& = 3 D^1_7 (-D^7_1 \beta_7) - 6 D^7_7 ( \tfrac{3}{2} D^7_7 \beta_7  - D^{14}_7 \beta_{14} ) + 4 D^{27}_7 ( - D^7_{27} \beta_7 - D^{14}_{27} \beta_{14}) \\
& = - 3 D^1_7 D^7_1 \beta_7 - 9 D^7_7 D^7_7 \beta_7  + 6 D^7_7 D^{14}_7 \beta_{14} - 4 D^{27}_7 D^7_{27} \beta_7 - 4 D^{27}_7 D^{14}_{27} \beta_{14}.
\end{align*}
Using the relations in~\eqref{eq:d-relations}, the above expression simplifies to
\begin{equation} \label{eq:Hph4-1temp2}
\mathcal{L}_B\ds (\beta_7 + \beta_{14}) = -7 D^1_7 D^7_1 \beta_7.
\end{equation}
Similarly from Figures~\ref{figure:d} and~\ref{figure:LB} and $D^7_7 D^1_7 = 0$, we have
\begin{align*}
\mathcal{L}_B \dd (\gamma_1 + \gamma_7 + \gamma_{27}) & = \mathcal{L}_B (\tfrac{4}{3} D^7_1 \gamma_7 + ( - D^1_7 \gamma_1 - \tfrac{3}{2} D^7_7 \gamma_7 + D^{27}_7 \gamma_{27} ) + ( - D^7_{27} \gamma_7 + D^{27}_{27} \gamma_{27}) ) \\
& = 3 D^1_7 (\tfrac{4}{3} D^7_1 \gamma_7) - 6 D^7_7 ( - D^1_7 \gamma_1 - \tfrac{3}{2} D^7_7 \gamma_7 + D^{27}_7 \gamma_{27} ) + 4 D^{27}_7 ( - D^7_{27} \gamma_7 + D^{27}_{27} \gamma_{27}) ) \\
& = 4 D^1_7 D^7_1 \gamma_7 + 9 D^7_7 D^7_7 \gamma_7 - 6 D^7_7 D^{27}_7 \gamma_{27} - 4 D^{27}_7 D^7_{27} \gamma_7 + 4 D^{27}_7 D^{27}_{27} \gamma_{27}.
\end{align*}
Using the relations in~\eqref{eq:d-relations}, the above expression simplifies to
\begin{equation} \label{eq:Hph4-1temp3}
\mathcal{L}_B \dd (\gamma_1 + \gamma_7 + \gamma_{27}) = 18 D^7_7 D^7_7 \gamma_7 - 12 D^7_7 D^{27}_7 \gamma_{27}.
\end{equation}
Combining equations~\eqref{eq:Hph4-1temp2} and~\eqref{eq:Hph4-1temp3}, if $\mathcal{L}_B \ds (\beta_7 + \beta_{14}) + \mathcal{L}_B \dd (\gamma_1 + \gamma_7 + \gamma_{27}) = 0$, then we have 
\begin{equation*}
- 7 D^1_7 D^7_1 \beta_7 + 18 D^7_7 D^7_7 \gamma_7 - 12 D^7_7 D^{27}_7 \gamma_{27} = 0,
\end{equation*}
and thus, applying $D^7_1$ and using $D^7_1 D^7_7 = 0$, we deduce that
\begin{equation*}
7 D^7_1 D^1_7 D^7_1 \beta_7 = D^7_1 D^7_7 ( 18 D^7_7 \gamma_7 - 12 D^{27}_7 \gamma_{27} ) = 0.
\end{equation*}
Thus we have $D^7_1 D^1_7 D^7_1 \beta_7 = 0$. Applying Remark~\ref{rmk:IBP}, we deduce that $D^1_7 D^7_1 \beta_7 = 0$ and thus by~\eqref{eq:Hph4-1temp2} that $\mathcal L_B \ds (\beta_7 + \beta_{14}) = 0$. Thus we have established~\eqref{eq:Hph4-1temp} and consequently
\begin{equation*}
(\ker \mathcal{L}_B)^4 \cap \big( (\im \ds)^4 \oplus (\im \dd)^4 \big) = \big( (\ker \mathcal{L}_B)^4 \cap (\im \ds)^4 \big) \oplus \big( (\ker \mathcal{L}_B)^4 \cap (\im \dd)^4 \big)
\end{equation*}
as claimed.
\end{proof}

\begin{lemma} \label{lemma:Hph4-2}
We have
\begin{equation} \label{eq:Hph4-2}
(\im \dd)^4 \cap (\ker \mathcal{L}_B)^4 \cap (\im \mathcal{L}_B)^4 = 0.
\end{equation}
\end{lemma}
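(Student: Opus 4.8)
The plan is to show that any element of $(\im \dd)^4 \cap (\ker \mathcal{L}_B)^4 \cap (\im \mathcal{L}_B)^4$ must vanish by combining the structural description of $(\ker\mathcal L_B)^4 \cap (\im\dd)^4$ coming from Lemma~\ref{lemma:Hph4-1} and its proof with the regularity of $\mathcal L_B$ from Proposition~\ref{prop:LBregular} (via Corollary~\ref{cor:LBregular}). First I would take $\omega \in (\im\dd)^4 \cap (\ker\mathcal L_B)^4 \cap (\im\mathcal L_B)^4$. Since $\omega \in (\im\mathcal L_B)^4$, Corollary~\ref{cor:LBregular} gives $(\im\mathcal L_B)^4 = \st((\ker\mathcal L_B)^3)^\perp$; and in the proof of Proposition~\ref{prop:Hph3} we found $(\ker\mathcal L_B)^3 = \mathcal H^3 \oplus \big((\im\ds)^3 \cap (\ker\mathcal L_B)^3\big) \subseteq \mathcal H^3 \oplus (\im\ds)^3$. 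Applying $\st$ and taking orthogonal complements inside $\Omega^4 = \mathcal H^4 \oplus (\im\dd)^4 \oplus (\im\ds)^4$, one gets $(\im\mathcal L_B)^4 \subseteq \st(\mathcal H^3)^\perp = (\im\dd)^4 \oplus (\im\ds)^4$, so in particular $\omega$ has no harmonic component; combined with $\omega \in (\im\dd)^4$ this is consistent, but we need to extract more.

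The key is to write $\omega = \mathcal L_B\eta$ for some $\eta \in \Omega^2$, and to decompose $\eta = \eta_7 + \eta_{14} \in \Omega^2_7 \oplus \Omega^2_{14}$. Using Figure~\ref{figure:LB}, $\mathcal L_B\eta \in \Omega^4_1 \oplus \Omega^4_7 \oplus \Omega^4_{27}$ has components expressible via $D^7_1\eta_7$, $D^7_7\eta_7$, $D^{14}_7\eta_{14}$, $D^7_{27}\eta_7$, and $D^{14}_{27}\eta_{14}$. On the other hand, since $\omega \in (\im\dd)^4$, writing $\omega = \dd\zeta$ and projecting onto $\Omega^4$ shows (using Figure~\ref{figure:d}) that $\omega$ is orthogonal to $\ker\ds$; equivalently $\ds\omega \ne 0$ unless $\omega = 0$, so it suffices to prove $\ds\omega = 0$. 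I would compute $\ds\mathcal L_B\eta$ using $\mathcal L_B\ds = \ds\mathcal L_B$ from~\eqref{eq:LBLKdscom}: then $\ds\omega = \ds\mathcal L_B\eta = \mathcal L_B\ds\eta$. But $\ds\eta \in \Omega^1$, and by the proof of Proposition~\ref{prop:Hph01}, $\mathcal L_B$ restricted to $\Omega^1$ has kernel exactly $\mathcal H^1$; more usefully, the image $\mathcal L_B(\Omega^1) \subseteq \Omega^3$ lies in $(\ker\ds)^3$ only on harmonic inputs. The cleaner route: $\ds\omega \in (\im\ds)^4$ automatically, while $\ds\omega = \mathcal L_B(\ds\eta) \in (\im\mathcal L_B)^3 \cdot$ — wait, this lands in $\Omega^3$, not $\Omega^4$; instead I use $\dd\omega = 0$ (as $\omega\in\im\dd$) together with $\mathcal L_B\omega = 0$ and the fact, from Figure~\ref{figure:LB} applied one step up, that $\mathcal L_B$ on the relevant $\Omega^4_l$ summands involves the operators $D^1_7, D^7_7, D^{27}_7, D^7_{14}, D^{27}_{14}$, allowing me to peel off $\omega_7, \omega_{14}\equiv 0$ componentwise.

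Concretely, the main obstacle is disentangling the three constraints — $\omega\in\im\dd$, $\omega\in\ker\mathcal L_B$, $\omega\in\im\mathcal L_B$ — into a single over-determined linear system in the $D^l_m$'s that forces each component to vanish after integration by parts (Remark~\ref{rmk:IBP}). I expect the argument to run: write $\omega = \mathcal L_B\eta$ with $\eta = \eta_7 + \eta_{14}$; the condition $\mathcal L_B\omega = 0$ together with the relations~\eqref{eq:d-relations} (exactly as in the proof of Lemma~\ref{lemma:Hph4-1}, equations~\eqref{eq:Hph4-1temp2}--\eqref{eq:Hph4-1temp3}) forces $D^1_7 D^7_1\eta_7 = 0$, hence $D^7_1\eta_7 = 0$ by Remark~\ref{rmk:IBP}, and then $D^7_7\eta_7 = 0$ and $D^7_{27}\eta_7=0$ by Theorem~\ref{thm:harmonic1}, so $\eta_7\in\mathcal H^2_7$ and the $\Omega^4_1$ and $\Omega^4_{27}$ components of $\omega$ vanish; what remains is $\omega = D^{14}_7\eta_{14}\in\Omega^4_7$ plus $D^{14}_{27}\eta_{14}\in\Omega^4_{27}$, but $\omega_{27}=0$ gives $D^{14}_{27}\eta_{14}=0$, and then the condition $\omega\in(\im\dd)^4$ — i.e. $\ds\omega=0$ forces also $D^{14}_7\eta_{14}$ to be $\ds$-exact while lying in the image, which via Corollary~\ref{cor:adjoints} and integration by parts pins down $D^{14}_7\eta_{14}=0$. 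Hence $\omega=0$. The hard part will be making the last step — using $\omega\in\im\dd$ rather than merely $\omega\in\ker\ds$ — genuinely bite; I would handle it by noting $\im\dd \perp \mathcal H^4 \oplus \im\ds$ in the Hodge decomposition, so $\omega\in\im\dd$ combined with the already-derived $\omega\in\im\ds \oplus \mathcal H^4$ (from the $(\im\mathcal L_B)^4$ analysis above) immediately gives $\omega=0$ without needing any further computation.
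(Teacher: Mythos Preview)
Your proposal has a genuine gap at the crucial step. After correctly deriving $D^7_1\eta_7 = 0$ from $\mathcal L_B^2\eta = 0$ (this does follow, essentially as in the paper's own argument), you invoke Theorem~\ref{thm:harmonic1} to conclude $D^7_7\eta_7 = 0$ and $D^7_{27}\eta_7 = 0$, hence $\eta_7 \in \mathcal H^2_7$. But Theorem~\ref{thm:harmonic1} says that the intersection of \emph{any two} of $\ker D^7_1$, $\ker D^7_7$, $\ker D^7_{27}$ equals $\mathcal H^1$; knowing only $D^7_1\eta_7 = 0$ is one condition, not two, and does not force $\eta_7$ harmonic. Consequently your subsequent claim that ``the $\Omega^4_{27}$ component of $\omega$ vanishes'' is unfounded (indeed you immediately contradict it by writing $\omega_{27} = D^{14}_{27}\eta_{14}$), and the later assertion ``$\omega_{27}=0$ gives $D^{14}_{27}\eta_{14}=0$'' has no basis.

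Your fallback in the last sentence is also incorrect: what you actually derived in the first paragraph is $(\im\mathcal L_B)^4 \subseteq (\im\dd)^4 \oplus (\im\ds)^4$, not $(\im\mathcal L_B)^4 \subseteq (\im\ds)^4 \oplus \mathcal H^4$. The former, intersected with $(\im\dd)^4$, gives nothing. The missing ingredient is a genuine use of $\dd\omega = 0$ (from $\omega \in (\im\dd)^4$) beyond the mere orthogonality statement. In the paper's proof, the equations $\pi_7\dd\omega = 0$ and $\pi_{14}\dd\omega = 0$ are combined with $\mathcal L_B\omega = 0$ to extract $D^7_7\omega_7 = D^{27}_7\omega_{27} = D^{27}_{14}\omega_{27} = 0$; only \emph{then}, feeding these back into the expressions for $\omega_7,\omega_{27}$ in terms of $\eta$, can one obtain the second relation $D^7_{27}\eta_7 = 0$ needed to make $\eta_7$ harmonic, and separately show $D^{27}_{27}\omega_{27} = 0$ so that $\omega \in \mathcal H^4 \cap (\im\dd)^4 = 0$. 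You skip this entire layer of the argument.
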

\begin{proof}
Let $\omega \in (\im \dd)^4 \cap (\ker \mathcal{L}_B)^4 \cap (\im \mathcal{L}_B)^4$. We write $\omega = \mathcal{L}_B (\alpha_7 + \alpha_{14})$ for some $\alpha_7 + \alpha_{14} \in \Omega^2_7 \oplus \Omega^2_{14}$. Using Figure~\ref{figure:LB}, we find
\begin{equation*} 
\begin{aligned}
\omega = \mathcal{L}_B (\alpha_7 + \alpha_{14}) & = (- 2 D^7_1 \alpha_7) + (- 3 D^{14}_7 \alpha_{14}) + (- 2 D^7_{27} \alpha_7 + D^{14}_{27} \alpha_{14}) \\
& = \omega_1 + \omega_7 + \omega_{27} \in \Omega^4_1 \oplus \Omega^4_7 \oplus \Omega^4_{27}.
\end{aligned}
\end{equation*}
That is, we have
\begin{equation} \label{eq:Hph4-2LBo}
\begin{aligned}
\omega_1 & = - 2 D^7_1 \alpha_7, \\
\omega_7 & = - 3 D^{14}_7 \alpha_{14}, \\
\omega_{27} & = - 2 D^7_{27} \alpha_7 + D^{14}_{27} \alpha_{14}.
\end{aligned}
\end{equation}
Using Figure~\ref{figure:LB} again, the equation $\mathcal{L}_B \mathcal{L}_B (\alpha_7 + \alpha_{14}) = \mathcal{L}_B \omega = 0$ gives
\begin{align*}
0 & = \mathcal{L}_B \big( - 2 D^7_1 \alpha_7 - 3 D^{14}_7 \alpha_{14} + (- 2 D^7_{27} \alpha_7 + D^{14}_{27} \alpha_{14} ) \big) \\
& = 3 D^1_7( - 2 D^7_1 \alpha_7) - 6 D^7_7 ( - 3 D^{14}_7 \alpha_{14} ) + 4 D^{27}_7 (- 2 D^7_{27} \alpha_7 + D^{14}_{27} \alpha_{14} ) \\
& = -6 D^1_7D^7_1\alpha_7 + 18 D^7_7D^{14}_7 \alpha_{14} - 8 D^{27}_7 D^7_{27} \alpha_7 + 4 D^{27}_7 D^{14}_{27} \alpha_{14}.
\end{align*}
Using the relations~\eqref{eq:d-relations}, we can rewrite the above expression in two different ways, both of which will be useful. These are
\begin{align}
-6 D^1_7 D^7_1 \alpha_7 - 8 D^{27}_7 D^7_{27} \alpha_7 + 24 D^7_7 D^{14}_7 \alpha_{14} & = 0, \label{eq:Hph4-2temp} \\
-14 D^1_7 D^7_1 \alpha_7 + 18 D^7_7 D^7_7 \alpha_7 + 24 D^7_7 D^{14}_7 \alpha_{14} & = 0. \label{eq:Hph4-2temp2}
\end{align}
Applying $D^7_1$ to~\eqref{eq:Hph4-2temp2} and using $D^7_1 D^7_7 = 0$, we deduce that
\begin{equation*}
14 D^7_1 D^1_7 D^7_1 \alpha_7 = (D^7_1 D^7_7)(18 D^7_7 \alpha_7 + 24 D^{14}_7 \alpha_{14}) = 0.
\end{equation*}
Thus we have $D^7_1 D^1_7 D^7_1 \alpha_7 = 0$. Applying Remark~\ref{rmk:IBP} twice, we deduce first that $D^1_7 D^7_1 \alpha_7 = 0$ and then that
\begin{equation}
D^7_1 \alpha_7 = 0. \label{eq:Hph4-2temp3}
\end{equation}
Comparing~\eqref{eq:Hph4-2temp3} and~\eqref{eq:Hph4-2LBo} we find that $\omega_1 = 0$. Since $\omega \in (\im \dd)^4$, it is $\dd$-closed. Using Figures~\ref{figure:d} and~\ref{figure:LB}, the conditions $\pi_7 \dd \omega = 0$ and $\mathcal{L}_B \omega = 0$ give, respectively,
\begin{align*} 
2 D^7_7 \omega_7 + \tfrac{4}{3} D^{27}_7 \omega_{27} & = 0, \\
-6 D^7_7 \omega_7 + 4 D^{27}_7 \omega_{27} & = 0.
\end{align*}
These two equations together force
\begin{equation} \label{eq:Hph4-2temp4}
D^7_7 \omega_7 = 0 \qquad \text{and} \qquad D^{27}_7 \omega_{27} = 0.
\end{equation}
Also, from~\eqref{eq:Hph4-2LBo} we have $\omega_7 = - 3 D^{14}_7 \alpha_{14}$, and thus since $D^7_1 D^{14}_7 = 0$ we deduce that
\begin{equation} \label{eq:Hph4-2temp5}
D^7_1 \omega_7 = 0.
\end{equation}
Combining the first equation in~\eqref{eq:Hph4-2temp4} with~\eqref{eq:Hph4-2temp5} we find by Theorem~\ref{thm:harmonic1} that, considered as a $1$-form, $\omega_7 \in \mathcal{H}^1$ and in particular
\begin{equation} \label{eq:Hph4-2temp6}
D^7_{14} \omega_7 = 0 \qquad \text{and} \qquad D^7_{27} \omega_7 = 0.
\end{equation}
From Figure~\ref{figure:d}, the condition $\pi_{14} \dd \omega = 0$ gives $- D^7_{14} \omega_7 + D^{27}_{14} \omega_{27} = 0$, which, by the first equation in~\eqref{eq:Hph4-2temp6} implies that
\begin{equation} \label{eq:Hph4-2temp7}
D^{27}_{14} \omega_{27} = 0.
\end{equation}
Recalling from~\eqref{eq:Hph4-2LBo} that $\omega_7 = - 3 D^{14}_7 \alpha_{14}$, substituting~\eqref{eq:Hph4-2temp3} into~\eqref{eq:Hph4-2temp} and using the first equation in~\eqref{eq:Hph4-2temp4} now gives
\begin{equation*}
0 = -8 D^{27}_7 D^7_{27} \alpha_7 - 8 D^7_7 \omega_7 = -8 D^{27}_7 D^7_{27} \alpha_7,
\end{equation*}
which by Remark~\ref{rmk:IBP} implies that
\begin{equation} \label{eq:Hph4-2temp8}
D^7_{27} \alpha_7 = 0.
\end{equation}
Combining~\eqref{eq:Hph4-2temp8} with~\eqref{eq:Hph4-2temp3} and using Theorem~\ref{thm:harmonic1}, we find that $\alpha_7$ is harmonic.

Recalling from~\eqref{eq:Hph4-2LBo} that $\omega_{27} = - 2 D^7_{27} \alpha_7 + D^{14}_{27} \alpha_{14}$, substituting~\eqref{eq:Hph4-2temp8} and taking $D^{27}_{27}$, we obtain by the relations in~\eqref{eq:d-relations} that
\begin{equation*}
D^{27}_{27} \omega_{27} = D^{27}_{27} D^{14}_{27} \alpha_{14} = D^7_{27} D^{14}_7 \alpha_{14}.
\end{equation*}
Substituting $D^{14}_7 \alpha_{14} = - \tfrac{1}{3} \omega_7$ from~\eqref{eq:Hph4-2LBo} into the above expression and using the second equation in~\eqref{eq:Hph4-2temp6}, we find that
\begin{equation} \label{eq:Hph4-2temp9}
D^{27}_{27} \omega_{27} = - \tfrac{1}{3} D^7_{27} \omega_{7} = 0.
\end{equation}
Combining the second equation in~\eqref{eq:Hph4-2temp4}, equation~\eqref{eq:Hph4-2temp7}, and~\eqref{eq:Hph4-2temp9}, with equation~\eqref{eq:Laplacian}, we deduce that $\omega_{27}$ is a harmonic $\Omega^4_{27}$ form. We already showed that $\omega_7$ is a harmonic $\Omega^4_7$ form, and that $\omega_1 = 0$. Thus we have $\omega \in\mathcal{H}^4$ and moreover we assumed that $\omega \in (\im \dd)^4$. By Hodge theory we conclude that $\omega = 0$ as claimed.
\end{proof}

\begin{prop} \label{prop:Hph4}
We have $H^4_{\ph} \cong \mathcal{H}^4 \oplus \big( (\im \dd)^4 \cap (\ker \mathcal{L}_B)^4 \big)$. 
\end{prop}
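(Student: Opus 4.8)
The plan is to mimic the structure of the proof of Proposition~\ref{prop:Hph5}, except that now it is the Hodge piece $(\im \ds)^4$ that gets absorbed into the denominator of the quotient, while $(\im \dd)^4 \cap (\ker \mathcal L_B)^4$ survives. Throughout I will work inside the Hodge decomposition $\Omega^4 = \mathcal{H}^4 \oplus (\im \dd)^4 \oplus (\im \ds)^4$ and use the two bookkeeping statements in Lemma~\ref{lemma:linalg} together with the two preliminary lemmas just proved.

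First I would describe the numerator $(\ker \mathcal L_B)^4$. Since $\mathcal H^4 \subseteq (\ker \mathcal L_B)^4$ by~\eqref{eq:LBLKonH}, and $(\ker \mathcal L_B)^4 \subseteq \Omega^4 = \mathcal H^4 \oplus \big( (\im \dd)^4 \oplus (\im \ds)^4 \big)$, Lemma~\ref{lemma:linalg}(i) gives $(\ker \mathcal L_B)^4 = \mathcal H^4 \oplus \big( (\ker \mathcal L_B)^4 \cap ((\im \dd)^4 \oplus (\im \ds)^4) \big)$, and then Lemma~\ref{lemma:Hph4-1} splits the second factor, yielding
\[
(\ker \mathcal L_B)^4 = \mathcal H^4 \oplus \big( (\ker \mathcal L_B)^4 \cap (\im \dd)^4 \big) \oplus \big( (\ker \mathcal L_B)^4 \cap (\im \ds)^4 \big).
\]
Next I would pin down $(\im \mathcal L_B)^4$. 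By Corollary~\ref{cor:LBregular} with $k=4$, $(\im \mathcal L_B)^4 = \st\big( ((\ker \mathcal L_B)^3)^{\perp} \big)$. By Proposition~\ref{prop:Hph3}, $(\ker \mathcal L_B)^3 = \mathcal H^3 \oplus V$ with $V := (\im \ds)^3 \cap (\ker \mathcal L_B)^3 \subseteq (\im \ds)^3$; in particular $(\ker \mathcal L_B)^3$ contains all of $\mathcal H^3$ and lies in $\mathcal H^3 \oplus (\im \ds)^3$, so a short orthogonality computation gives $((\ker \mathcal L_B)^3)^{\perp} = (\im \dd)^3 \oplus V'$, where $V'$ is the orthogonal complement of $V$ inside $(\im \ds)^3$. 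Applying the isometry $\st$ and using $\st (\im \dd)^3 = (\im \ds)^4$ and $\st (\im \ds)^3 = (\im \dd)^4$, I obtain $(\im \mathcal L_B)^4 = (\im \ds)^4 \oplus W$ with $W := \st V' \subseteq (\im \dd)^4$; in particular $(\im \ds)^4 \subseteq (\im \mathcal L_B)^4$.

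Now I would compute $(\ker \mathcal L_B)^4 \cap (\im \mathcal L_B)^4$ via Lemma~\ref{lemma:linalg}(ii) relative to the splitting $\Omega^4 = \mathcal H^4 \oplus (\im \dd)^4 \oplus (\im \ds)^4$: the numerator has components $\mathcal H^4$, $(\ker \mathcal L_B)^4 \cap (\im \dd)^4$, $(\ker \mathcal L_B)^4 \cap (\im \ds)^4$, while $(\im \mathcal L_B)^4$ has components $0$, $W$, $(\im \ds)^4$. Hence
\[
(\ker \mathcal L_B)^4 \cap (\im \mathcal L_B)^4 = \big( (\ker \mathcal L_B)^4 \cap (\im \dd)^4 \cap W \big) \oplus \big( (\ker \mathcal L_B)^4 \cap (\im \ds)^4 \big).
\]
Since $W \subseteq (\im \mathcal L_B)^4$, the first summand is contained in $(\im \dd)^4 \cap (\ker \mathcal L_B)^4 \cap (\im \mathcal L_B)^4$, which vanishes by Lemma~\ref{lemma:Hph4-2}. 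Therefore $(\ker \mathcal L_B)^4 \cap (\im \mathcal L_B)^4 = (\ker \mathcal L_B)^4 \cap (\im \ds)^4$, and dividing the expression for $(\ker \mathcal L_B)^4$ from the first step by this subspace gives $H^4_{\ph} \cong \mathcal H^4 \oplus \big( (\im \dd)^4 \cap (\ker \mathcal L_B)^4 \big)$, as asserted.

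The substantive content—already isolated into the two preceding lemmas—is that the intersections with $(\im \dd)^4$ and $(\im \ds)^4$ behave additively (Lemma~\ref{lemma:Hph4-1}) and that no nonzero exact $4$-form can lie simultaneously in $\ker \mathcal L_B$ and $\im \mathcal L_B$ (Lemma~\ref{lemma:Hph4-2}), both of which rest on the coefficient relations of Corollary~\ref{cor:d-relations} and the integration-by-parts observation of Remark~\ref{rmk:IBP}. Granting those, the remaining argument is the purely linear-algebraic bookkeeping outlined above, so I expect no real obstacle beyond keeping careful track of which Hodge component each subspace sits in when invoking Lemma~\ref{lemma:linalg}(ii).
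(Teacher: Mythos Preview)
Your proposal is correct and follows essentially the same route as the paper's proof: you decompose $(\ker \mathcal L_B)^4$ via Lemma~\ref{lemma:linalg}(i) and Lemma~\ref{lemma:Hph4-1}, decompose $(\im \mathcal L_B)^4$ using Corollary~\ref{cor:LBregular} and the description of $(\ker \mathcal L_B)^3$ from Proposition~\ref{prop:Hph3}, intersect via Lemma~\ref{lemma:linalg}(ii), and kill the unwanted summand with Lemma~\ref{lemma:Hph4-2}. The only cosmetic difference is that the paper applies Lemma~\ref{lemma:linalg}(i) to write the $(\im \dd)^4$-component of $(\im \mathcal L_B)^4$ as $(\im \dd)^4 \cap (\im \mathcal L_B)^4$, whereas you track it explicitly as $W = \st V'$; since these are the same subspace, the arguments coincide.
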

\begin{proof}
In the proof of Proposition~\ref{prop:Hph3}, we showed that
\begin{equation*}
H^3_{\ph} = (\ker \mathcal{L}_B)^3 = \mathcal{H}^3 \oplus \big( (\im \ds)^3 \cap (\ker \mathcal{L}_B)^3 \big).
\end{equation*}
We also have $\mathcal{H}^3 \subset (\ker \mathcal{L}_B)^3$ by~\eqref{eq:LBLKonH}. Thus
\begin{equation*}
\mathcal{H}^3 \subseteq (\ker \mathcal{L}_B)^3 \subseteq \mathcal{H}^3 \oplus (\im \ds)^3.
\end{equation*}
Taking orthogonal complements of the above chain of nested subspaces and using the Hodge decomposition $\Omega^3 = \mathcal{H}^3 \oplus (\im \dd)^3 \oplus (\im \ds)^3$, we find
\begin{equation*}
(\im \dd)^3 \oplus (\im \ds)^3 \supseteq ((\ker \mathcal{L}_B)^3)^{\perp} \supseteq (\im \dd)^3.
\end{equation*}
Taking the Hodge star of the above chain of nested subspaces and using $(\im \mathcal{L}_B)^4 = \st ((\ker \mathcal{L}_B)^3)^{\perp}$ from~\eqref{eq:LBregular} we obtain
\begin{equation*}
(\im \ds)^4 \subseteq (\im \mathcal{L}_B)^4 \subseteq (\im \ds)^4 \oplus (\im \dd)^4.
\end{equation*}
Applying Lemma~\ref{lemma:linalg}(i) to the above yields
\begin{equation} \label{eq:Hph4temp}
(\im \mathcal{L}_B)^4 = (\im \ds)^4 \oplus \big( (\im \dd)^4 \cap (\im \mathcal{L}_B)^4 \big).
\end{equation}

Now recall that $\mathcal{H}^4 \subseteq (\ker \mathcal{L}_B)^4$ by~\eqref{eq:LBLKonH}. Thus we have
\begin{equation*}
\mathcal{H}^4 \subseteq (\ker \mathcal{L}_B)^4 \subseteq \Omega^4 = \mathcal{H}^4 \oplus (\im \dd)^4 \oplus (\im \ds)^4.
\end{equation*}
Applying Lemma~\ref{lemma:linalg}(i) to the above and using Lemma~\ref{lemma:Hph4-1} gives
\begin{equation} \label{eq:Hph4temp2}
(\ker \mathcal{L}_B)^4 = \mathcal{H}^4 \oplus \big( (\im \ds)^4 \cap (\ker \mathcal{L}_B)^4\big) \oplus \big( (\im \dd)^4 \cap (\ker \mathcal{L}_B)^4 \big).
\end{equation}

Thus, applying Lemma~\ref{lemma:linalg}(ii) to~\eqref{eq:Hph4temp},~\eqref{eq:Hph4temp2}, as subspaces of $\Omega^4 = \mathcal{H}^4 \oplus (\im \dd)^4 \oplus (\im \ds)^4$, we obtain
\begin{equation} \label{eq:Hph4temp3}
(\ker \mathcal{L}_B)^4 \cap (\im \mathcal{L}_B)^4 = \big( (\im \ds)^4 \cap (\ker \mathcal{L}_B)^4 \big) \oplus \big( (\im \dd)^4 \cap (\ker \mathcal{L}_B)^4 \cap (\im \mathcal{L}_B)^4 \big).
\end{equation}
By Lemma~\ref{lemma:Hph4-2}, equation~\eqref{eq:Hph4temp3} simplifies to
\begin{equation} \label{eq:Hph4temp4}
(\ker \mathcal{L}_B)^4 \cap (\im \mathcal{L}_B)^4 = (\im \ds)^4 \cap (\ker \mathcal{L}_B)^4.
\end{equation}

Hence, by~\eqref{eq:Hph4temp2} and~\eqref{eq:Hph4temp4}, we have
\begin{align*}
\mathcal{H}^4_{\ph} & = \frac{(\ker \mathcal{L}_B)^4}{(\ker \mathcal{L}_B)^4 \cap (\im \mathcal{L}_B)^4} \\
& = \frac{\mathcal{H}^4 \oplus \big( (\im \ds)^4 \cap (\ker \mathcal{L}_B)^4 \big) \oplus \big( (\im \dd)^4 \cap (\ker \mathcal{L}_B)^4 \big)}{(\im \ds)^4 \cap (\ker \mathcal{L}_B)^4}\\
&\cong \mathcal{H}^4 \oplus (\im \dd)^4 \cap (\ker \mathcal{L}_B)^4
\end{align*}
as claimed.
\end{proof}

\begin{lemma} \label{lemma:k3}
We have $(\im \ds)^3 \cap (\ker \mathcal L_B)^3=(\im \ds)^3 \cap (\ker \mathcal{L}^*_B)^3$.
\end{lemma}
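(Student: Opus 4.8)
The plan is to establish that, for every $\omega = \omega_1 + \omega_7 + \omega_{27} \in (\im \ds)^3 \subseteq \Omega^3_1 \oplus \Omega^3_7 \oplus \Omega^3_{27}$, each of the conditions $\mathcal L_B \omega = 0$ and $\mathcal L_B^* \omega = 0$ is equivalent to the single equation $D^7_7 \omega_7 = 0$, where $\omega_7$ is viewed as a $1$-form through~\eqref{eq:forms-isom}. Granting this, $(\ker \mathcal L_B)^3$ and $(\ker \mathcal L_B^*)^3$ intersect $(\im \ds)^3$ in the same subspace, which is exactly what the lemma asserts.

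First I would use that $(\im \ds)^3 \subseteq (\ker \ds)^3$, so that any $\omega \in (\im \ds)^3$ satisfies $\ds \omega = 0$; then~\eqref{eq:Hph3temp3} gives
\begin{equation} \label{eq:k3star}
D^1_7 \omega_1 + 2 D^7_7 \omega_7 + \tfrac{4}{3} D^{27}_7 \omega_{27} = 0 , \qquad -D^7_{14} \omega_7 + D^{27}_{14} \omega_{27} = 0 .
\end{equation}
(These are identities between $1$-forms, resp. $\Omega^2_{14}$-forms, after the identifications~\eqref{eq:forms-isom}.) By~\eqref{eq:Hph3temp}, $\mathcal L_B \omega = 0$ is equivalent to $-2 D^1_7 \omega_1 - \tfrac{8}{3} D^{27}_7 \omega_{27} = 0$ together with $3 D^7_{14} \omega_7 + D^{27}_{14} \omega_{27} = 0$. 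The left side of the first of these equals $-2$ times the left side of the first equation in~\eqref{eq:k3star} plus $4 D^7_7 \omega_7$, and the left side of the second equals the left side of the second equation in~\eqref{eq:k3star} plus $4 D^7_{14} \omega_7$; hence, given~\eqref{eq:k3star}, the condition $\mathcal L_B \omega = 0$ amounts to $D^7_7 \omega_7 = 0$ and $D^7_{14} \omega_7 = 0$, which by Theorem~\ref{thm:harmonic1} ($\ker D^7_7 = \ker D^7_{14}$ on $\Omega^1$) reduces to $D^7_7 \omega_7 = 0$.

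For the other condition I would use $\mathcal L_B^* = -\st \mathcal L_B \st$ from~\eqref{eq:derivationsstar}, so that $\mathcal L_B^* \omega = 0 \iff \mathcal L_B(\st\omega) = 0$. Since $\st$ is compatible with~\eqref{eq:forms-isom}, one has $\st\omega \in \Omega^4_1 \oplus \Omega^4_7 \oplus \Omega^4_{27}$ with the three summands identified with the same objects as $\omega_1, \omega_7, \omega_{27}$, and the $\Omega^4 \to \Omega^6 = \Omega^6_7$ portion of Figure~\ref{figure:LB} gives $\mathcal L_B(\st\omega) = 3 D^1_7 \omega_1 - 6 D^7_7 \omega_7 + 4 D^{27}_7 \omega_{27}$. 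This left side equals $3$ times the left side of the first equation in~\eqref{eq:k3star} minus $12 D^7_7 \omega_7$, so given~\eqref{eq:k3star} it vanishes iff $D^7_7 \omega_7 = 0$. Comparing the two computations proves the lemma.

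The only delicate point — and essentially the only work — is the bookkeeping: one must verify that the operators $D^l_m$ occurring in~\eqref{eq:Hph3temp},~\eqref{eq:Hph3temp3} and in Figure~\ref{figure:LB} all denote the \emph{same} maps on $\Omega^1_7$ and $\Omega^2_{14}$ after the identifications~\eqref{eq:forms-isom}, so that the linear combinations above are legitimate; this is precisely the $\st$-compatibility of~\eqref{eq:forms-isom} recorded throughout Section~\ref{sec:main}, and no input is needed beyond the trivial inclusion $(\im\ds)^3 \subseteq (\ker\ds)^3$.
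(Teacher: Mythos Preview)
Your argument is correct, and it takes a genuinely different route from the paper. The paper chooses a preimage $\gamma \in \Omega^4$ with $\omega = \ds\gamma$, then computes $\mathcal L_B \ds\gamma$ and $\mathcal L_B^* \ds\gamma$ directly in terms of the components of $\gamma$ using Figures~\ref{figure:d} and~\ref{figure:LB}, simplifying via the quadratic relations of Corollary~\ref{cor:d-relations}; both quantities turn out to be built from $D^7_7(-3D^7_7\gamma_7 + 2D^{27}_7\gamma_{27})$ (and its $D^7_{14}$ companion), and the reduction again uses $\ker D^7_7 = \ker D^7_{14}$. You instead work intrinsically with $\omega$, replacing the choice of preimage by the single observation $(\im\ds)^3 \subseteq (\ker\ds)^3$ and recycling the already-derived display~\eqref{eq:Hph3temp3}. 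This is cleaner: no second-order identities from~\eqref{eq:d-relations} are invoked, and the linear-algebra step reduces to subtracting suitable multiples of the $\ds\omega = 0$ equations. The paper's approach has the minor advantage of also recording the explicit formulas~\eqref{eq:k3temp2} and~\eqref{eq:k3temp3}, which feed into the proof of Theorem~\ref{thm:Hph}; your method gives the equivalent characterization $D^7_7\omega_7 = 0$ in terms of $\omega$ itself, which is just as usable (and indeed is $\tfrac{1}{2}D^7_7(-3D^7_7\gamma_7 + 2D^{27}_7\gamma_{27})$ once one substitutes $\omega_7$ from~\eqref{eq:k3temp} and uses $D^7_7D^1_7 = 0$).
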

\begin{proof}
Let $\omega = \ds (\gamma_1 + \gamma_7 + \gamma_{27}) \in (\im \ds)^3$ where $\gamma_1 + \gamma_7 + \gamma_{27} \in \Omega^4_1 \oplus \Omega^4_7 \oplus \Omega^4_{27}$. From $\ds = \st \dd \st$ on $\Omega^5$ and Figures~\ref{figure:d} we find that
\begin{equation} \label{eq:k3temp}
\ds (\gamma_1 + \gamma_7 + \gamma_{27}) = \tfrac{4}{3} D^7_1 \gamma_7 + (- D^1_7 \gamma_1 - \tfrac{3}{2} D^7_7 \gamma_7 + D^{27}_7 \gamma_{27}) + (-D^7_{27} \gamma_7 + D^{27}_{27} \gamma_{27}).
\end{equation}
Using~\eqref{eq:k3temp} and Figure~\ref{figure:LB}, we have
\begin{align*}
\mathcal L_B \ds (\gamma_1 + \gamma_7 + \gamma_{27}) & = - 2 D^1_7 ( \tfrac{4}{3} D^7_1 \gamma_7 ) + 3 D^7_{14} ( - D^1_7 \gamma_1 - \tfrac{3}{2} D^7_7 \gamma_7 + D^{27}_7 \gamma_{27}) \\
& \qquad {} + (-\tfrac{8}{3} D^{27}_7 + D^{27}_{14}) (-D^7_{27} \gamma_7 + D^{27}_{27} \gamma_{27}) \\
& = (-\tfrac{8}{3} D^1_7 D^7_1 \gamma_7 + \tfrac{8}{3} D^{27}_7 D^7_{27} \gamma_7 - \tfrac{8}{3} D^{27}_7 D^{27}_{27} \gamma_{27}) \\
& \qquad {} + (-3 D^7_{14} D^1_7 \gamma_1 - \tfrac{9}{2} D^7_{14} D^7_7 \gamma_7 + 3 D^7_{14} D^{27}_7 \gamma_{27} - D^{27}_{14} D^7_{27} \gamma_7 + D^{27}_{14} D^{27}_{27} \gamma_{27}).
\end{align*}
Using the various relations in~\eqref{eq:d-relations}, the above expression simplifies to
\begin{equation} \label{eq:k3temp2}
\begin{aligned}
\mathcal L_B \ds (\gamma_1 + \gamma_7 + \gamma_{27}) & = -6 D^7_7 D^7_7 \gamma_7 + 4 D^7_7 D^{27}_7 \gamma_{27} - 6 D^7_{14} D^7_7 \gamma_7 + 4 D^7_{14} D^{27}_7 \gamma_{27} \\
& = 2 D^7_7 ( -3 D^7_7 \gamma_7 + 2 D^{27}_7 \gamma_{27}) + 2 D^7_{14} ( -3 D^7_7 \gamma_7 + 2 D^{27}_7 \gamma_{27}).
\end{aligned}
\end{equation}
Using $\mathcal L_B^* = - \st \mathcal{L}_B \st$ from~\eqref{eq:derivationsstar}, equation~\eqref{eq:k3temp}, and Figure~\ref{figure:LB} again, we also have that
\begin{align*}
\mathcal{L}^*_B \ds (\gamma_1 + \gamma_7 + \gamma_{27}) & = - 3 D^1_7 ( \tfrac{4}{3} D^7_1 \gamma_7 ) + 6 D^7_7 ( - D^1_7 \gamma_1 - \tfrac{3}{2} D^7_7 \gamma_7 + D^{27}_7 \gamma_{27}) \\
& \qquad {} - 4 D^{27}_7 (-D^7_{27} \gamma_7 + D^{27}_{27} \gamma_{27}) \\
= & -4 D^1_7 D^7_1 \gamma_7 - 6 D^7_7 D^1_7 \gamma_1 - 9 D^7_7 D^7_7 \gamma_7 + 6 D^7_7 D^{27}_7 \gamma_{27} \\
& \qquad {} + 4 D^{27}_7 D^7_{27} \gamma_7 - 4 D^{27}_7 D^{27}_{27} \gamma_{27}.
\end{align*}
Using the various relations in~\eqref{eq:d-relations}, the above expression simplifies to
\begin{equation} \label{eq:k3temp3}
\begin{aligned}
\mathcal{L}^*_B \ds (\gamma_1 + \gamma_7 + \gamma_{27}) & = -18 D^7_7 D^7_7 \gamma_7 + 12 D^7_7 D^{27}_7 \gamma_{27}) \\
& = 6 D^7_7 (-3 D^7_7 \gamma_7 + 2 D^{27}_7 \gamma_{27}).
\end{aligned}
\end{equation}
Thus for $\omega \in (\im \ds)^3$ we conclude that
\begin{align*}
\omega \in (\ker \mathcal L_B)^3 & \iff
\left\{
\begin{aligned}
D^7_7 (-3 D^7_7 \gamma_7 + 2 D^{27}_7 \gamma_{27}) & = 0, \\
D^7_{14} (-3 D^7_7 \gamma_7 + 2 D^{27}_7 \gamma_{27}) & = 0,
\end{aligned}
\right\} & & \text{by~\eqref{eq:k3temp2}} \\
& \iff D^7_7 (- 3 D^7_7 \gamma_7 + 2 D^{27}_7 \gamma_{27} = 0) & & \text{by Theorem~\ref{thm:harmonic1}} \\
& \iff \omega \in (\ker \mathcal{L}^*_B)^3 & & \text{by~\eqref{eq:k3temp3}}
\end{align*}
which is what we wanted to show.
\end{proof}

\begin{cor} \label{H34cor}
We have
\begin{align*}
H^3_{\ph} & = \mathcal{H}^3 \oplus \big( (\im \ds)^3 \cap (\ker \mathcal L_B)^3 \cap (\ker \mathcal{L}^*_B)^3 \big), \\
H^4_{\ph} & = \mathcal{H}^4 \oplus \big( (\im \dd)^4 \cap (\ker \mathcal L_B)^4 \cap (\ker \mathcal{L}^*_B)^4 \big).
\end{align*}
\end{cor}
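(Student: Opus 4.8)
The plan is to read off both identities from Proposition~\ref{prop:Hph3}, Proposition~\ref{prop:Hph4} and Lemma~\ref{lemma:k3}, using the Hodge star to carry the degree~$3$ information over to degree~$4$.

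I would first dispose of $H^3_{\ph}$, which is immediate. Proposition~\ref{prop:Hph3} already gives $H^3_{\ph} = \mathcal{H}^3 \oplus \big( (\im \ds)^3 \cap (\ker \mathcal L_B)^3 \big)$, and Lemma~\ref{lemma:k3} says that $(\im \ds)^3 \cap (\ker \mathcal L_B)^3$ and $(\im \ds)^3 \cap (\ker \mathcal{L}^*_B)^3$ are equal. In particular this space is already contained in $(\ker \mathcal{L}^*_B)^3$, so intersecting it once more with $(\ker \mathcal{L}^*_B)^3$ changes nothing; substituting back into Proposition~\ref{prop:Hph3} gives the claimed description of $H^3_{\ph}$.

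For $H^4_{\ph}$, by Proposition~\ref{prop:Hph4} it suffices to show that $(\im \dd)^4 \cap (\ker \mathcal L_B)^4 \subseteq (\ker \mathcal{L}^*_B)^4$, after which the same substitution applies. The tool is the Hodge star, which is a linear isomorphism (hence commutes with finite intersections) and an involution. I would record three transport identities: $\st (\im \dd)^4 = (\im \ds)^3$, coming from $\ds = \st \dd \st$ on $\Omega^4$; and $\st (\ker \mathcal L_B)^4 = (\ker \mathcal{L}^*_B)^3$ together with $\st (\ker \mathcal{L}^*_B)^4 = (\ker \mathcal L_B)^3$, both coming from the relation $\mathcal L_B^* = - \st \mathcal L_B \st$ of Proposition~\ref{prop:derivationsstar}. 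Then $\st \big[ (\im \dd)^4 \cap (\ker \mathcal L_B)^4 \big] = (\im \ds)^3 \cap (\ker \mathcal{L}^*_B)^3$, which by Lemma~\ref{lemma:k3} equals $(\im \ds)^3 \cap (\ker \mathcal L_B)^3 \subseteq (\ker \mathcal L_B)^3 = \st (\ker \mathcal{L}^*_B)^4$; applying $\st$ once more yields the desired inclusion, and substituting into Proposition~\ref{prop:Hph4} gives the formula for $H^4_{\ph}$. (Equivalently, this same Hodge-star transport first proves the degree~$4$ analogue of Lemma~\ref{lemma:k3}, namely $(\im \dd)^4 \cap (\ker \mathcal L_B)^4 = (\im \dd)^4 \cap (\ker \mathcal{L}^*_B)^4$, and then one argues exactly as in the $H^3$ case.)

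I do not expect a genuine obstacle here, since the substance is already packaged in Propositions~\ref{prop:Hph3} and~\ref{prop:Hph4} and Lemma~\ref{lemma:k3}; the one point demanding a little care is keeping straight which subspace $\st$ sends where, and in particular using that $\mathcal L_B^* = -\st \mathcal L_B \st$ carries no degree-dependent sign, so that $\st$ swaps $\ker \mathcal L_B$ with $\ker \mathcal{L}^*_B$ uniformly across degrees.
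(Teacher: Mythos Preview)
Your proposal is correct and is essentially the same approach as the paper's proof: both use Lemma~\ref{lemma:k3} directly for $H^3_{\ph}$, and both transport Lemma~\ref{lemma:k3} to degree~$4$ via the Hodge star using $\mathcal L_B^* = -\st \mathcal L_B \st$ from Proposition~\ref{prop:derivationsstar}, then substitute into Propositions~\ref{prop:Hph3} and~\ref{prop:Hph4}. The only cosmetic difference is that the paper writes the degree~$4$ analogue of Lemma~\ref{lemma:k3} as the three-way equality $(\im \dd)^4 \cap (\ker \mathcal L_B)^4 = (\im \dd)^4 \cap (\ker \mathcal L_B)^4 \cap (\ker \mathcal{L}^*_B)^4 = (\im \dd)^4 \cap (\ker \mathcal{L}^*_B)^4$ in one step, whereas you first phrase it as an inclusion and only note the equivalent equality parenthetically at the end.
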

\begin{proof}
Lemma~\ref{lemma:k3} says that
\begin{equation*}
(\im \ds)^3 \cap (\ker \mathcal L_B)^3 =(\im \ds)^3 \cap (\ker \mathcal L_B)^3 \cap (\ker \mathcal{L}^*_B)^3 = (\im \ds)^3 \cap (\ker \mathcal{L}^*_B)^3.
\end{equation*}
Applying $\st$ to the above equation and using~\eqref{eq:derivationsstar} gives
\begin{equation*}
(\im \dd)^4 \cap (\ker \mathcal L_B)^4 = (\im \dd)^4 \cap (\ker \mathcal L_B)^4 \cap (\ker \mathcal{L}^*_B)^4 = (\im \dd)^4 \cap (\ker \mathcal{L}^*_B)^4.
\end{equation*}
The claim now follows from Propositions~\ref{prop:Hph3} and~\ref{prop:Hph4}.
\end{proof}

\subsection{The main theorem on $\mathcal L_B$-cohomology} \label{sec:Hphthm}

We summarize the results of Section~\ref{sec:cohom} in the following theorem, which is intentionally stated in a way to mirror Theorem~\ref{thm:KLS}.

\begin{thm} \label{thm:Hph} The following relations hold.
\begin{itemize} \setlength\itemsep{-1mm}
\item $H^k_{\ph} \cong H^k_{dR}$ for $k=0,1,2,5,6,7$.
\item $H^k_{\ph}$ is infinite-dimensional for $k = 3,4$.
\item There is a canonical injection $\mathcal{H}^k \hookrightarrow H^k_{\ph}$ for all $k$.
\item The Hodge star induces isomorphisms $\st: H^k_{\ph} \cong H^{7-k}_{\ph}$.
\end{itemize}
\end{thm}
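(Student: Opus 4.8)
The plan is to assemble Theorem~\ref{thm:Hph} from the computations already carried out in Sections~\ref{sec:computeHph0123} and~\ref{sec:computeHph4567}, together with one remaining infinite-dimensionality argument and the duality statement. First I would dispose of the cases $k=0,1,2,5,6,7$: Propositions~\ref{prop:Hph01},~\ref{prop:Hph2},~\ref{prop:Hph67}, and~\ref{prop:Hph5} show directly that $H^k_\ph \cong \mathcal H^k$, and since $\mathcal H^k \cong H^k_{\DR}$ by the Hodge theorem, the first bullet follows immediately. These same propositions show the canonical injection $\mathcal H^k \hookrightarrow H^k_\ph$ is in fact an isomorphism in those degrees. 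For $k=3,4$, the canonical injection comes from Propositions~\ref{prop:Hph3} and~\ref{prop:Hph4} (or Corollary~\ref{H34cor}), which exhibit $\mathcal H^k$ as a direct summand of $(\ker\mathcal L_B)^k$ that meets the denominator trivially; combined with~\eqref{eq:LBLKonH} (harmonic forms are $\mathcal L_B$-closed) this gives the third bullet in all degrees.

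The genuinely new work is the second bullet: that $H^3_\ph$ and $H^4_\ph$ are infinite-dimensional. By Corollary~\ref{H34cor} it suffices to show that $(\im\ds)^3 \cap (\ker\mathcal L_B)^3 \cap (\ker\mathcal L_B^*)^3$ is infinite-dimensional (the $k=4$ case then follows by applying $\st$). The strategy is to realize this space as the kernel, intersected with $(\im \ds)^3$, of a self-adjoint \emph{non-elliptic} second-order operator built from $\mathcal L_B$ and $\mathcal L_B^*$, or more concretely to find an infinite-dimensional family of explicit solutions. Following the structure of the calculations in Lemma~\ref{lemma:k3}: writing $\omega = \ds(\gamma_1+\gamma_7+\gamma_{27})$, the conditions $\mathcal L_B\omega = 0$ and $\mathcal L_B^*\omega = 0$ both reduce, via~\eqref{eq:d-relations} and Theorem~\ref{thm:harmonic1}, to the single equation $D^7_7(-3D^7_7\gamma_7 + 2D^{27}_7\gamma_{27}) = 0$. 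The key observation is that this is \emph{one} scalar-type constraint on the pair $(\gamma_7,\gamma_{27})$, which has far more degrees of freedom; e.g.\ choosing $\gamma_7 = 0$ and $\gamma_{27}$ ranging over the infinite-dimensional space $\ker(D^7_7 D^{27}_7) \subseteq \Omega^4_{27}$ (which contains $\ker D^{27}_7$, itself infinite-dimensional since $D^{27}_7$ is underdetermined — its adjoint $-\tfrac34 D^7_{27}$ is overdetermined elliptic with finite-dimensional kernel) produces an infinite-dimensional family of $\mathcal L_B$-closed and $\mathcal L_B^*$-closed elements of $(\im\ds)^3$. One must then check these descend to \emph{distinct} classes in $H^3_\ph$, i.e.\ that the map from this family into the quotient has infinite-dimensional image; this is where I expect the main obstacle to lie, since one is quotienting by $(\im\mathcal L_B)^3 \cap (\ker\mathcal L_B)^3$ and must show it does not swallow up almost all of the family — this should follow from a dimension/index count showing $(\im\mathcal L_B)^3$ has finite codimension-type behaviour only where $\mathcal L_B$ is elliptic, which it is not on $\Omega^1_7 \to \Omega^3_{27}$.

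Finally, for the duality bullet, the honest proof replaces the formal manipulation displayed after Proposition~\ref{prop:Hph3}. The rigorous version uses Corollary~\ref{cor:LBregular}: since $\mathcal L_B$ is regular (Proposition~\ref{prop:LBregular}), we have the \emph{orthogonal} decomposition $\Omega^k = (\im\mathcal L_B)^k \oplus (\ker\mathcal L_B^*)^k$, so the second-isomorphism-theorem step and the step marked $\red{(!)}$ are both legitimate once one works with $\ker\mathcal L_B^*$ in place of $(\im\mathcal L_B)^\perp$ — these genuinely coincide by regularity. Concretely: $H^k_\ph = (\ker\mathcal L_B)^k / \big((\ker\mathcal L_B)^k \cap (\im\mathcal L_B)^k\big) \cong \big((\ker\mathcal L_B)^k + (\im\mathcal L_B)^k\big)/(\im\mathcal L_B)^k$, and the latter is isomorphic, via orthogonal projection onto $(\ker\mathcal L_B^*)^k = \big((\im\mathcal L_B)^k\big)^\perp$, to $(\ker\mathcal L_B)^k \cap (\ker\mathcal L_B^*)^k$ — a ``Hodge--$\mathcal L_B$'' harmonic space. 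Applying $\st$ and using~\eqref{eq:derivationsstar}, which swaps $\mathcal L_B \leftrightarrow \mathcal L_B^*$ up to sign and intertwines degrees $k$ and $7-k$, identifies this space with its counterpart in degree $7-k$, giving $\st: H^k_\ph \cong H^{7-k}_\ph$. I would present this clean projection argument and explicitly flag that it repairs the gaps at $\red{(!)}$ and $\red{(!!)}$, the point being precisely that regularity of $\mathcal L_B$ substitutes for the $L^2$-completeness that $\Omega^\bu$ lacks.
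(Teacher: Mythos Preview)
Your treatment of the first and third bullets is fine and matches the paper. The issues are with the second and fourth bullets.

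\textbf{Infinite-dimensionality.} You are overcomplicating this. First, recall from the proof of Proposition~\ref{prop:Hph3} that $(\im\mathcal L_B)^3 \cap (\ker\mathcal L_B)^3 = 0$, so $H^3_\ph = (\ker\mathcal L_B)^3$ with \emph{no quotient at all}; your worry about classes ``descending'' is vacuous in degree~$3$. Second, the paper's choice of explicit family is much simpler than yours: take $\gamma_7 = \gamma_{27} = 0$ and let $\gamma_1$ range over $\Omega^4_1$. Then~\eqref{eq:k3temp2} shows immediately that $\mathcal L_B \ds\gamma_1 = 0$ (the right-hand side involves only $\gamma_7,\gamma_{27}$), and $\{\ds\gamma_1 : \gamma_1 \in \Omega^4_1\} \cong \im D^1_7 \cong (\im\dd)^1$ is manifestly infinite-dimensional. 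Your route through $\ker D^{27}_7 \subseteq \Omega^4_{27}$ would also work but requires the extra step of checking that $D^{27}_{27}$ has infinite-dimensional image on that kernel.

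\textbf{Duality.} Your proposed abstract argument has a genuine gap at the step where you claim that orthogonal projection onto $(\ker\mathcal L_B^*)^k$ carries $\big((\ker\mathcal L_B)^k + (\im\mathcal L_B)^k\big)/(\im\mathcal L_B)^k$ isomorphically onto $(\ker\mathcal L_B)^k \cap (\ker\mathcal L_B^*)^k$. The projection is certainly injective with image $\pi\big((\ker\mathcal L_B)^k\big)$, but there is no reason this image should lie inside $(\ker\mathcal L_B)^k$: if $\alpha \in (\ker\mathcal L_B)^k$ decomposes as $\alpha = \mathcal L_B\beta + \gamma$ with $\gamma \in (\ker\mathcal L_B^*)^k$, then $\mathcal L_B\gamma = -\mathcal L_B^2\beta$, which need not vanish precisely because $\mathcal L_B^2 \neq 0$. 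So regularity alone does not repair the step marked $\red{(!!)}$. The paper does not attempt an abstract proof of duality; instead it reads duality off from the explicit descriptions already computed. For $k \neq 3,4$ one has $H^k_\ph \cong \mathcal H^k$ on both sides, and for $k=3,4$ Corollary~\ref{H34cor} writes $H^3_\ph$ and $H^4_\ph$ in forms that are visibly interchanged by $\st$ (using~\eqref{eq:derivationsstar} to swap $\ker\mathcal L_B \leftrightarrow \ker\mathcal L_B^*$ and $\im\ds \leftrightarrow \im\dd$). That is the intended argument.
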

\begin{proof}
All that remains to show is that $H^3_{\ph}$ is indeed infinite-dimensional. But observe by~\eqref{eq:k3temp2} that for all $\alpha \in \Omega^4_1$, we have $\mathcal L_B \ds \alpha = 0$. Therefore, $\{ \ds \alpha : \alpha \in \Omega^4_1 \} \cong \im D^1_7 \cong (\im \dd)^1$ is an infinite-dimensional subspace of $(\im \ds)^3 \cap (\ker \mathcal L_B)^3 \subseteq H^3_{\ph}$.
\end{proof}

\section{An application to `almost' formality} \label{sec:formality}

In this section we consider an application of our results to the question of formality of compact torsion-free $\G$~manifolds. We discover a new topological obstruction to the existence of torsion-free $\G$-structures on compact manifolds, and discuss an explicit example in detail.

\subsection{Formality and Massey triple products} \label{sec:Massey}

Recall from~\eqref{eq:commdL} that $\dd$ commutes with $\mathcal L_B$. Hence $\dd$ induces a natural map
\begin{equation*}
\dd: H^k_{\ph} \to H^{k+1}_{\ph}.
\end{equation*}
Also, because $\mathcal L_B$ is a derivation, it is easy to check that the wedge product on $\Omega^{\bu}$ descends to $H^{\bu}_{\ph}$, with the Leibniz rule $\dd (\omega \wedge \eta) = (\dd \omega) \wedge \eta + (-1)^{|\omega|} \omega \wedge (\dd \eta)$ still holding on $H^{\bu}_{\ph}$. These two facts say that the complex $(H^{\bu}_{\ph}, \dd)$ is a \emph{differential graded algebra}, henceforth abbreviated \emph{dga}.

Additionally, because $[ \dd, \mathcal{L}_B ] = 0$, we also have that $( (\ker \mathcal L_B)^{\bu}, \dd)$ is a subcomplex of the de Rham complex $( \Omega^{\bu}, \dd)$. The natural injection and projection give homomorphisms of dga's
\begin{equation*}
(\Omega^{\bu}, \dd) \hookleftarrow ((\ker \mathcal L_B)^{\bu}, \dd) \twoheadrightarrow (H^{\bu}_{\ph}, \dd).
\end{equation*}

One goal of this section is to show that these two homomorphisms of dga's are both \emph{quasi-isomorphisms}. This means that they induce \emph{isomorphisms} on the cohomologies of the complexes. As mentioned in the introduction, some of the results in this section appeared earlier in work of Verbitsky~\cite{Verbitsky}. For example, our Proposition~\ref{prop:Verbitsky} is exactly~\cite[Proposition 2.21]{Verbitsky}, with the same proof. However, the proof of~\cite[Proposition 2.19]{Verbitsky} has several errors. The critical error is the following: first Verbitsky correctly shows that $\alpha - \Pi \alpha$ is an element of both $(\im \dd_c + \im \ds_c)$ and $(\im \ds_c)^{\perp}$. But then he incorrectly concludes that $\alpha - \Pi \alpha$ must be an element of $\im \dd_c$. This conclusion is only valid if $(\dd_c)^2 = 0$, which is not true in general. We give a correct proof of this result, which is our Proposition~\ref{prop:quasi-isom}. One consequence is the result about the Massey triple product in our Corollary~\ref{cor:Massey}, which appears to be new.

\begin{prop}[Verbitsky~\cite{Verbitsky}] \label{prop:Verbitsky}
The inclusion $((\ker \mathcal L_B)^{\bu}, \dd) \hookrightarrow (\Omega^{\bu}, \dd)$ is a quasi-isomorphism.
\end{prop}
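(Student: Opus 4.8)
The plan is to show that the inclusion $\iota \colon ((\ker \mathcal L_B)^{\bu}, \dd) \hookrightarrow (\Omega^{\bu}, \dd)$ induces an isomorphism on de Rham cohomology. Since $\iota$ is injective, the induced map $\iota_*$ on cohomology is automatically injective once we know that a closed form in $\ker \mathcal L_B$ which is exact in $\Omega^{\bu}$ is already exact within $\ker \mathcal L_B$; and surjectivity of $\iota_*$ amounts to showing that every closed form on $M$ is cohomologous to one lying in $\ker \mathcal L_B$. The key tool is the observation that the space $\mathcal H^{\bu}$ of harmonic forms lies inside $(\ker \mathcal L_B)^{\bu}$ by \eqref{eq:LBLKonH}, together with the fact that $\dd$ commutes with $\mathcal L_B$ by \eqref{eq:commdL}. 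So the strategy is: given a closed $k$-form $\omega$, write $\omega = \mathcal H \omega + \dd \ds G \omega$ by Hodge theory (where $\mathcal H \omega$ is its harmonic part and $G$ is the Green's operator); then $\mathcal H \omega \in (\ker \mathcal L_B)^k$, and $\omega$ and $\mathcal H \omega$ represent the same de Rham class, which gives surjectivity of $\iota_*$.

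For injectivity, suppose $\omega \in (\ker \mathcal L_B)^k$ with $\dd \omega = 0$ and $\omega = \dd \eta$ for some $\eta \in \Omega^{k-1}$. We need $\eta' \in (\ker \mathcal L_B)^{k-1}$ with $\omega = \dd \eta'$. The natural candidate is $\eta' = \ds G \omega$: indeed $\dd \ds G \omega = \omega - \mathcal H \omega = \omega$ since $\omega$ is exact hence has zero harmonic part, and I claim $\ds G \omega \in \ker \mathcal L_B$. This follows because $\mathcal L_B$ commutes with $\ds$ by \eqref{eq:LBLKdscom} and with $\Delta$ by \eqref{eq:LBLKLap}, hence with the Green's operator $G$ (which is a limit/functional-calculus expression in $\Delta$, and more concretely $G$ commutes with any operator commuting with $\Delta$ and annihilating $\mathcal H^{\bu}$); therefore $\mathcal L_B \ds G \omega = \ds G \mathcal L_B \omega = 0$. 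This simultaneously handles the well-definedness needed for injectivity of $\iota_*$ and completes the argument that $\iota_*$ is an isomorphism.

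More precisely, I would organize the proof as follows. First, record that $G$ commutes with $\mathcal L_B$: since $[\Delta, \mathcal L_B] = 0$ and $G$ is characterized by $G\Delta = \Delta G = \mathrm{Id} - \mathcal H$ with $G\mathcal H = 0$, one checks $\mathcal L_B G = G \mathcal L_B$ on the image of $\Delta$ and on $\mathcal H^{\bu}$ separately (on $\mathcal H^{\bu}$ both sides vanish by \eqref{eq:LBLKonH}), and these span $\Omega^{\bu}$. Second, for surjectivity of $\iota_*$: given $[\omega] \in H^k_{\DR}$ with $\dd\omega = 0$, the Hodge decomposition gives $\omega = \mathcal H\omega + \dd\ds G\omega + \ds\dd G\omega$, and $\dd\omega = 0$ forces $\ds\dd G\omega = 0$ (it is the exact–coexact cross term, orthogonal to the closed forms), so $\omega = \mathcal H\omega + \dd(\ds G\omega)$ with $\mathcal H\omega \in (\ker\mathcal L_B)^k$ representing $[\omega]$. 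Third, for injectivity of $\iota_*$: if $\omega \in (\ker\mathcal L_B)^k$ is $\dd$-exact, then $\mathcal H\omega = 0$, so $\omega = \dd(\ds G\omega)$ with $\ds G\omega = \mathcal L_B$-closed by the commutation just established, exhibiting $[\omega] = 0$ in the cohomology of $((\ker\mathcal L_B)^{\bu}, \dd)$.

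\textbf{Main obstacle.} The one point requiring genuine care is the commutation of the Green's operator $G$ with $\mathcal L_B$, since $G$ is not a differential operator and $\Omega^{\bu}$ is not $L^2$-complete; I would justify it by the standard elliptic theory argument that $G$ is the unique bounded operator with $G\Delta = \Delta G = \mathrm{Id} - \mathcal H$, $G\mathcal H = 0$, $\mathcal H G = 0$, together with the fact that $\mathcal L_B$ commutes with $\Delta$ and preserves smooth forms, so $\mathcal L_B G$ and $G \mathcal L_B$ satisfy the same defining relations. Everything else is a direct application of Hodge theory plus the already-established identities \eqref{eq:commdL}, \eqref{eq:LBLKdscom}, \eqref{eq:LBLKLap}, and \eqref{eq:LBLKonH}.
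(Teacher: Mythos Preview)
Your proof is correct and is essentially the same argument as the paper's, just phrased in terms of the Green's operator rather than the eigenspace decomposition of $\Delta$. The paper decomposes $\Omega^k = \bigoplus_{\lambda} \Omega^k_{\lambda}$ and, for $\lambda > 0$, writes a closed $\alpha \in \Omega^k_{\lambda} \cap (\ker \mathcal L_B)^k$ as $\dd(\tfrac{1}{\lambda}\ds\alpha)$, noting that $\tfrac{1}{\lambda}\ds\alpha \in \ker \mathcal L_B$ since $[\ds,\mathcal L_B]=0$; your primitive $\ds G\omega$ is precisely $\sum_{\lambda>0}\tfrac{1}{\lambda}\ds\omega_{\lambda}$, and your commutation $[G,\mathcal L_B]=0$ is exactly what the eigenspace-preservation $[\Delta,\mathcal L_B]=0$ together with $\mathcal L_B|_{\mathcal H}=0$ gives.
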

\begin{proof}
This is proved in~\cite[Proposition 2.11]{Verbitsky}. We reproduce the short proof here for completeness and convenience of the reader. Since the differential for both complexes $\Omega^{\bu}$ and $(\ker \mathcal L_B)^{\bu}$ is the same exterior derivative $\dd$, we will omit it from the notation for simplicity.

It is well-known that the Hodge Laplacian $\Delta$ determines an eigenspace decomposition $\Omega^k = \oplus_{\lambda} \Omega^k_{\lambda}$ where the sum is over all eigenvalues $\lambda$ of $\Delta$, which form a discrete set of non-negative real numbers, and $\Omega^k_{\lambda} = \{ \alpha \in \Omega^k : \Delta \alpha = \lambda \alpha \}$ are the associated eigenspaces. Note that $\Omega^k_0 = \mathcal{H}^k$ is the space of harmonic $k$-forms. It is well-known that the cohomology of $\Omega^k_{\lambda}$ is trivial for $\lambda > 0$. This is because, if $\alpha \in \Omega^k_{\lambda}$ with $\lambda > 0$ and $\dd \alpha = 0$, then
\begin{equation} \label{eq:Verbitsky-temp}
\alpha = \tfrac{1}{\lambda} \Delta \alpha = \tfrac{1}{\lambda} (\dd \ds \alpha + \ds \dd \alpha) = \dd (\tfrac{1}{\lambda} \ds \alpha)
\end{equation}
is exact.

By~\eqref{eq:LBLKLap}, the operator $\mathcal L_B$ commutes with $\Delta$, and thus we obtain a decomposition
\begin{equation*}
(\ker \mathcal L_B)^k = \oplus_{\lambda} \big( \Omega^k_{\lambda} \cap (\ker \mathcal L_B)^k \big).
\end{equation*}
Note by~\eqref{eq:LBLKonH} that $\Omega^k_0 \cap (\ker \mathcal L_B)^k = \mathcal{H}^k \cap (\ker \mathcal L_B)^k = \mathcal{H}^k = \Omega^k_0$. Thus it remains to show that the cohomology of $\Omega^k_{\lambda} \cap (\ker \mathcal L_B)^k$ is also trivial for all $\lambda > 0$. But if $\alpha \in \Omega^k_{\lambda} \cap (\ker \mathcal L_B)^k$, we have $\mathcal L_B \alpha = 0$ and $\alpha = \dd (\tfrac{1}{\lambda} \ds \alpha)$ by~\eqref{eq:Verbitsky-temp}. Since $\mathcal L_B$ commutes with $\ds$ by~\eqref{eq:LBLKdscom}, we have $\mathcal L_B (\tfrac{1}{\lambda} \ds \alpha) = \tfrac{1}{\lambda} \ds \mathcal L_B \alpha = 0$, so the class of $\alpha$ in the cohomology of $(\ker \mathcal L_B)^k$ is indeed trivial.
\end{proof}

In Section~\ref{sec:cohom}, while computing $H^{\bu}_{\ph}$, we explicitly computed the complex $((\ker \mathcal L_B)^{\bu}, \dd)$. The results are collected in Figure~\ref{figure:complex1}. The isomorphisms displayed in Figure~\ref{figure:complex1} are explained in Corollary~\ref{cor:formality}.

\begin{figure}[H]
\begin{equation*}
\xymatrix {
\mathcal{H}^0 \ar@[purple][d]^{\purple{0}} \\
\mathcal{H}^1 \ar@[purple][d]^{\purple{0}} \\
\mathcal{H}^2 \ar@[purple][d]^{\purple{0}} \\
\mathcal{H}^3 \oplus \teal{\big( (\im\ds)^3 \cap (\ker \mathcal L_B)^3 \cap (\ker \mathcal L_B^*)^3 \big)} \ar@[purple][]!<-20ex,-2ex>;[d]!<-30ex,1ex>_{\purple{0}} \ar@[teal][]!<-5ex,-2ex>;[d]!<-10ex,1ex>_{\teal{\cong}} \\
\mathcal{H}^4 \oplus \teal{\big( (\im \dd)^4 \cap (\ker \mathcal L_B)^4 \cap (\ker \mathcal L_B^*)^4 \big)} \oplus \orange{\big( (\im \ds)^4 \cap (\ker \mathcal L_B)^4 \big)} \ar@[purple][]!<-30ex,-2ex>;[d]!<-20ex,1ex>^{\purple{0}} \ar@[orange][]!<20ex,-2ex>;[d]!<5ex,1ex>_{\orange{\cong}} \\
\mathcal{H}^5 \oplus \blue{(\im \ds)^5} \oplus \orange{\big( (\im \dd)^5 \cap (\ker \mathcal L_B)^5 \big)} \ar@[purple][]!<-17ex,-2ex>;[d]!<-10ex,1ex>^{\purple{0}} \ar@[blue][]!<-8ex,-2ex>;[d]!<-2.5ex,1ex>^{\blue{\cong}} \\
\mathcal{H}^6 \oplus \blue{(\im \dd)^6} \oplus \gray{(\im \ds)^6} \ar@[purple][]!<-10ex,-2ex>;[d]!<-3.8ex,1ex>^{\purple{0}} \ar@[gray][]!<6ex,-2ex>;[d]!<3ex,1ex>_{\gray{\cong}} \\
\mathcal{H}^7 \oplus \gray{(\im \dd)^7}
}
\end{equation*}
\caption{The complex $((\ker \mathcal L_B)^{\bu}, \dd)$.} \label{figure:complex1}
\end{figure}

\begin{cor} \label{cor:formality}
For all $0 \leq k \leq 7$, we have $(\im \dd)^k \cap (\ker \mathcal L_B)^k = \dd (\ker \mathcal L_B)^{k-1}$.
\end{cor}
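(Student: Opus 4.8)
The claim is a statement about the complex $((\ker \mathcal L_B)^{\bu}, \dd)$, and the inclusion $(\im \dd)^k \cap (\ker \mathcal L_B)^k \supseteq \dd (\ker \mathcal L_B)^{k-1}$ is immediate, since $\dd$ commutes with $\mathcal L_B$ by~\eqref{eq:commdL}, so any $\dd \eta$ with $\mathcal L_B \eta = 0$ is both exact and $\mathcal L_B$-closed. The content is the reverse inclusion: if $\omega = \dd \alpha$ for some $\alpha \in \Omega^{k-1}$ and $\mathcal L_B \omega = 0$, then we must produce some $\eta \in (\ker \mathcal L_B)^{k-1}$ with $\dd \eta = \omega$. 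The natural strategy is to use the eigenspace decomposition of the Hodge Laplacian, exactly as in the proof of Proposition~\ref{prop:Verbitsky}: this is really the statement that the inclusion of complexes $((\ker \mathcal L_B)^{\bu}, \dd) \hookrightarrow (\Omega^{\bu}, \dd)$ induces an \emph{injection} on cohomology, which is half of the quasi-isomorphism already established.

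Here is the plan. Write $\omega = \dd \alpha$ with $\mathcal L_B \omega = 0$. Using the Hodge decomposition, write $\alpha = \alpha_{\mathcal H} + \dd \beta + \ds \gamma$; since $\dd \alpha = \dd \ds \gamma$, we may as well assume $\alpha = \ds \gamma$, i.e. $\alpha \in (\im \ds)^{k-1} \subseteq (\ker \dd^{\ast})^{\ast}$... more precisely we may take $\alpha$ to be $L^2$-orthogonal to $\ker \dd$, equivalently $\alpha \in \im \ds$. Now decompose $\alpha = \sum_\lambda \alpha_\lambda$ into $\Delta$-eigencomponents; since $\Delta$ commutes with $\dd$, we get $\omega = \dd \alpha = \sum_\lambda \dd \alpha_\lambda$ with $\dd \alpha_\lambda \in \Omega^k_\lambda$. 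Because $\alpha \perp \mathcal H^{k-1}$, only $\lambda > 0$ terms appear. For each such $\lambda$, set $\eta_\lambda = \tfrac{1}{\lambda} \ds \dd \alpha_\lambda$; then $\dd \eta_\lambda = \tfrac1\lambda \dd \ds \dd \alpha_\lambda = \tfrac1\lambda \Delta(\dd\alpha_\lambda) = \dd \alpha_\lambda$ since $\dd\alpha_\lambda$ is $\dd$-closed, so $\Delta$ acts as $\dd\ds$ on it. Summing, $\eta = \sum_{\lambda>0}\eta_\lambda$ satisfies $\dd \eta = \omega$ (convergence is not an issue if one works eigenspace-by-eigenspace, as in Proposition~\ref{prop:Verbitsky}, observing $\omega$ lives in finitely many eigenspaces only if $\omega$ is smooth — actually one argues on each $\Omega^k_\lambda$ separately and patches, exactly as in the cited proof).

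The remaining point is that $\mathcal L_B \eta = 0$. Here we use that $\mathcal L_B$ commutes with both $\Delta$ (by~\eqref{eq:LBLKLap}) and $\ds$ (by~\eqref{eq:LBLKdscom}), hence with $\ds \dd \ds \dd / \lambda$-type expressions; more directly, $\mathcal L_B \eta_\lambda = \tfrac1\lambda \mathcal L_B \ds \dd \alpha_\lambda = \tfrac1\lambda \ds \mathcal L_B \dd \alpha_\lambda = \tfrac1\lambda \ds \dd \mathcal L_B \alpha_\lambda$, using~\eqref{eq:LBLKdscom} and $[\dd,\mathcal L_B]=0$. This is $\tfrac1\lambda \ds \dd (\mathcal L_B \alpha)_\lambda$, the $\lambda$-eigencomponent of $\mathcal L_B \alpha$, run through $\ds \dd$. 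Now $\dd(\mathcal L_B\alpha) = \mathcal L_B \dd \alpha = \mathcal L_B \omega = 0$, so $\mathcal L_B\alpha$ is $\dd$-closed; thus $\ds\dd(\mathcal L_B\alpha)_\lambda = \Delta(\mathcal L_B\alpha)_\lambda - \dd\ds(\mathcal L_B\alpha)_\lambda = \lambda(\mathcal L_B\alpha)_\lambda - \dd\ds(\mathcal L_B\alpha)_\lambda$. Hmm — this shows $\mathcal L_B\eta_\lambda = (\mathcal L_B\alpha)_\lambda - \tfrac1\lambda\dd\ds(\mathcal L_B\alpha)_\lambda$, which need not vanish. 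The cleaner route, which I expect to be the actual argument: project $\eta$ further. Having found \emph{some} primitive $\eta$ with $\dd\eta = \omega$, decompose $\eta = \eta_{\mathcal H} + \dd\zeta + \ds\xi$; replace $\eta$ by $\eta' = \ds\xi$ (the $\im\ds$-part), still a primitive of $\omega$. Then run the eigenspace argument on $\eta'$ itself rather than on $\alpha$: $\eta' \in \im\ds$, $\dd\eta' = \omega$, and now one shows directly that the eigencomponents of $\eta'$ built this way lie in $\ker\mathcal L_B$. I expect the main obstacle is precisely this bookkeeping — ensuring the primitive one constructs is simultaneously in $\im \ds$ (so the eigenspace inversion formula applies) and annihilated by $\mathcal L_B$ — and the resolution is to observe that on $\Omega^{k-1}_\lambda \cap \im\ds$ with $\lambda>0$, the map $\dd$ is injective with two-sided inverse $\tfrac1\lambda\ds(-)$ up to the $\Delta$-formula, and $\mathcal L_B$ preserves this subspace by~\eqref{eq:LBLKdscom} and~\eqref{eq:LBLKLap}; so $\mathcal L_B\omega = 0$ forces $\mathcal L_B$ of the unique such primitive to be zero by injectivity of $\dd$ on that subspace. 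In other words: let $\eta'$ be the unique element of $(\im\ds)^{k-1}$ with $\dd\eta' = \omega$; then $\dd(\mathcal L_B\eta') = \mathcal L_B\omega = 0$ and $\mathcal L_B\eta' \in (\im\ds)^{k-1}$ by~\eqref{eq:LBLKdscom}, and $\dd$ is injective on $(\im\ds)^{k-1}$ (its kernel there is $\ker\dd\cap\im\ds = 0$), whence $\mathcal L_B\eta' = 0$. This last formulation is short and avoids eigenspaces entirely apart from invoking them to know $\dd\colon (\im\ds)^{k-1}\to(\im\dd)^k$ is a bijection; I would present it that way.
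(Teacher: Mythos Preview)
Your final argument is correct and is genuinely different from the paper's. After some false starts, you arrive at the following clean proof: given $\omega \in (\im \dd)^k \cap (\ker \mathcal L_B)^k$, take $\eta'$ to be the unique element of $(\im \ds)^{k-1}$ with $\dd \eta' = \omega$ (Hodge theory gives the bijection $\dd : (\im \ds)^{k-1} \to (\im \dd)^k$); then $\mathcal L_B \eta' \in \im \ds$ by~\eqref{eq:LBLKdscom}, and $\dd(\mathcal L_B \eta') = \mathcal L_B \omega = 0$ by~\eqref{eq:commdL}, so $\mathcal L_B \eta' \in \ker \dd \cap \im \ds = 0$. One small indexing slip: $\mathcal L_B \eta'$ lives in $(\im \ds)^{k+1}$, not $(\im \ds)^{k-1}$, but of course $\ker \dd \cap \im \ds = 0$ in every degree, so the conclusion stands.

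The paper's proof is quite different: it invokes the explicit description of each $(\ker \mathcal L_B)^k$ as $\mathcal H^k \oplus \tilde B^k \oplus \tilde C^k$ (with $\tilde B^k \subseteq (\im \dd)^k$ and $\tilde C^k \subseteq (\im \ds)^k$) assembled from the case-by-case computations of Section~\ref{sec:cohom}, and then uses the quasi-isomorphism of Proposition~\ref{prop:Verbitsky} to force $\dd \tilde C^{k-1} = \tilde B^k$. Your argument is more elementary and more portable: it uses only the two commutation relations $[\dd, \mathcal L_B] = 0$ and $[\ds, \mathcal L_B] = 0$ together with standard Hodge theory, and in particular it would work verbatim for any degree-shifting operator that commutes with both $\dd$ and $\ds$, without any need for the detailed structure of its kernel. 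The paper's route, on the other hand, makes visible exactly which pieces of $(\ker \mathcal L_B)^k$ are mapped isomorphically by $\dd$, which is what is displayed in Figure~\ref{figure:complex1} and used again immediately afterwards in Proposition~\ref{prop:quasi-isom}.
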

\begin{proof}
Let $\Omega^k = \mathcal{H}^k \oplus (\im \dd)^k \oplus (\im \ds)^k$ denote the Hodge decomposition of $\Omega^k$. For simplicity in this proof we will write $A^k = \mathcal{H}^k$, $B^k = (\im \dd)^k$, and $C^k = (\im \ds)^k$. Thus $\Omega^k = A^k \oplus B^k \oplus C^k$. We can see from Figure~\ref{figure:complex1} that for all $0 \leq k \leq 7$, we have $(\ker \mathcal L_B)^k = A^k \oplus \tilde B^k \oplus \tilde C^k$, where $\tilde B^k$ and $\tilde C^k$ are subspaces of $B^k$ and $C^k$, respectively. Depending on $k$, we can have $\tilde B^k = 0$, $\tilde B^k = B^k$, or $0 \subsetneq \tilde B^k \subsetneq B^k$ and similarly for $\tilde C^k$. By Hodge theory, $(\ker \dd)^k = A^k \oplus B^k$, so applying Lemma~\ref{lemma:linalg}(ii) we find that
\begin{equation} \label{eq:formalitycor1}
(\ker \mathcal L_B)^k \cap (\ker \dd)^k = A^k \oplus \tilde B^k.
\end{equation}
Applying $\dd$ to $(\ker \mathcal L_B)^{k-1}$, we have
\begin{equation} \label{eq:formalitycor2}
\dd (\ker \mathcal L_B)^{k-1} = \dd (\tilde C^{k-1}) \subseteq \tilde B^k.
\end{equation}
By Proposition~\ref{prop:Verbitsky}, the cohomology of $(\Omega^k, \dd)$ equals the cohomology of $( (\ker \mathcal L_B)^k, \dd)$. But by by Hodge theory the cohomology of $(\Omega^k, \dd)$ is $\mathcal{H}^k = A^k$, and equations~\eqref{eq:formalitycor1} and~\eqref{eq:formalitycor2} say that the cohomology of $( (\ker \mathcal L_B)^k, \dd)$ is $A^k \oplus \big( \tilde B^k / (\dd \tilde C^{k-1}) \big)$. Thus in fact we have $\dd \tilde C^{k-1} = \tilde B^k$, and since $\dd$ is injective on $C^k$, we deduce that
\begin{equation} \label{eq:formalitycor3}
\text{$\dd$ maps $\tilde C^{k-1}$ isomorphically onto $\tilde B^k$ for all $0 \leq k \leq 7$.}
\end{equation}
From $(\im \dd)^k \cap (\ker \mathcal L_B)^k = \tilde B^k$, and $\dd (\ker \mathcal L_B)^{k-1} = \dd (A^{k-1} \oplus \tilde B^{k-1} \oplus \tilde C^{k-1}) = \dd \tilde C^{k-1}$, we conclude that $(\im \dd)^k \cap (\ker \mathcal L_B)^k = \dd (\ker \mathcal L_B)^{k-1}$ as claimed.
\end{proof}

\begin{rmk} \label{rmk:Kahler}
Corollary~\ref{cor:formality} may be related to a $\G$-analogue of the \emph{generalized} $\partial \bar \partial$-lemma, called the $\dd \mathcal{L}_J$-lemma, introduced by the authors in~\cite{CKT} in the context of $\U{m}$-structures. See~\cite[Equation (3.27)]{CKT}.
\end{rmk}

\begin{prop} \label{prop:quasi-isom}
The quotient map $((\ker \mathcal L_B)^{\bu}, \dd) \twoheadrightarrow (H^{\bu}_{\ph}, \dd)$ is a quasi-isomorphism.
\end{prop}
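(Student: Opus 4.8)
The plan is to show that the quotient map induces an isomorphism on cohomology by analyzing the short exact sequence of complexes
\begin{equation*}
0 \to (\im \mathcal L_B \cap \ker \mathcal L_B)^{\bu} \to (\ker \mathcal L_B)^{\bu} \to (H^{\bu}_{\ph}, \dd) \to 0,
\end{equation*}
where the first complex is equipped with the differential $\dd$ (note $\dd$ preserves $\im \mathcal L_B$ since $[\dd, \mathcal L_B]=0$, and it preserves $\ker \mathcal L_B$ for the same reason). The associated long exact sequence in cohomology reduces the claim to showing that the complex $\big((\im \mathcal L_B \cap \ker \mathcal L_B)^{\bu}, \dd\big)$ is \emph{acyclic}. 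Equivalently, I want: for every $k$, if $\omega \in (\im \mathcal L_B)^k \cap (\ker \mathcal L_B)^k$ and $\dd \omega = 0$, then $\omega = \dd \eta$ for some $\eta \in (\im \mathcal L_B)^{k-1} \cap (\ker \mathcal L_B)^{k-1}$.

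First I would assemble, from Section~\ref{sec:cohom}, an explicit description of the spaces $(\im \mathcal L_B)^k \cap (\ker \mathcal L_B)^k = (\ker \mathcal L_B)^k \cap (\im \mathcal L_B)^k$ (these are exactly the denominators in the definition~\eqref{eq:Hphdefn} of $H^k_{\ph}$). For $k = 0,1,2,6,7$ these denominators vanish by Propositions~\ref{prop:Hph01}, \ref{prop:Hph2}, \ref{prop:Hph67}, so acyclicity is automatic in those degrees, and the remaining work is concentrated in degrees $3,4,5$. In degree $5$ we have $(\ker \mathcal L_B)^5 \cap (\im \mathcal L_B)^5 = (\im \ds)^5 \oplus \big((\im \dd)^5 \cap (\ker \mathcal L_B)^5\big)$ by~\eqref{eq:Hph5temp3}; in degree $4$ we have $(\ker \mathcal L_B)^4 \cap (\im \mathcal L_B)^4 = (\im \ds)^4 \cap (\ker \mathcal L_B)^4$ by~\eqref{eq:Hph4temp4}; and in degree $3$ the denominator is $0$ since $H^3_{\ph} = (\ker \mathcal L_B)^3$ (proof of Proposition~\ref{prop:Hph3}). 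So in fact the only nontrivial degrees for the acyclicity check are $4$ and $5$: I must show that any $\dd$-closed $\omega$ in $(\im \ds)^4 \cap (\ker \mathcal L_B)^4$ is $\dd$ of something in $(\im \mathcal L_B)^3 \cap (\ker \mathcal L_B)^3 = 0$, forcing $\omega = 0$; and that any $\dd$-closed $\omega$ in $(\im \ds)^5 \oplus \big((\im \dd)^5 \cap (\ker \mathcal L_B)^5\big)$ is $\dd$ of something in $(\im \mathcal L_B)^4 \cap (\ker \mathcal L_B)^4 = (\im \ds)^4 \cap (\ker \mathcal L_B)^4$.

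For the degree-$4$ case: if $\omega \in (\im \ds)^4$ is $\dd$-closed, then by Hodge theory $\omega \in (\im \ds)^4 \cap (\ker \dd)^4 = 0$ already, so nothing to prove. For the degree-$5$ case, suppose $\omega \in (\ker \mathcal L_B)^5 \cap (\im \mathcal L_B)^5$ with $\dd \omega = 0$. Decompose $\omega = \omega' + \omega''$ with $\omega' \in (\im \ds)^5$ and $\omega'' \in (\im \dd)^5 \cap (\ker \mathcal L_B)^5$. Since $\dd \omega'' = 0$ automatically and $\dd \omega = 0$, we get $\dd \omega' = 0$, hence $\omega' \in (\im \ds)^5 \cap (\ker \dd)^5 = 0$, so $\omega = \omega'' \in (\im \dd)^5 \cap (\ker \mathcal L_B)^5 = \dd(\ker \mathcal L_B)^4$ by Corollary~\ref{cor:formality}. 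Write $\omega = \dd \eta$ with $\eta \in (\ker \mathcal L_B)^4$. Using the Hodge decomposition $\eta = \eta_{\mathcal H} + \dd \zeta + \ds \xi$ and that $\dd \eta_{\mathcal H} = 0$, $\dd\dd\zeta = 0$, we may replace $\eta$ by $\eta - \eta_{\mathcal H} - \dd\zeta$, which still lies in $(\ker \mathcal L_B)^4$ (since $\mathcal L_B$ annihilates harmonic forms by~\eqref{eq:LBLKonH} and commutes with $\dd$), and now $\eta \in (\im \ds)^4 \cap (\ker \mathcal L_B)^4 = (\im \mathcal L_B)^4 \cap (\ker \mathcal L_B)^4$. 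This exhibits $\omega$ as a coboundary in the subcomplex, completing the acyclicity check and hence, via the long exact sequence, showing the quotient map is a quasi-isomorphism.

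The main obstacle, and the step I would spend the most care on, is verifying that the replacement $\eta \mapsto \eta - \eta_{\mathcal H} - \dd\zeta$ genuinely stays inside $(\ker \mathcal L_B)^4$ \emph{and} that the resulting representative lands in $(\im \ds)^4$; this is exactly the point where Verbitsky's argument went wrong (he tried to conclude membership in $\im \dd_c$ from membership in $\im \dd_c + \im \ds_c$ intersected with $(\im \ds_c)^\perp$, which needs $\dd_c^2 = 0$). Here the subtlety is finessed because we are only asking $\eta$ to lie in $\ker \mathcal L_B$, not $\im \mathcal L_B$, until the very end, and the identification $(\im \ds)^4 \cap (\ker \mathcal L_B)^4 = (\im \mathcal L_B)^4 \cap (\ker \mathcal L_B)^4$ from~\eqref{eq:Hph4temp4} is what rescues the argument. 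An alternative, cleaner route avoiding the short-exact-sequence bookkeeping: run the eigenspace-decomposition argument of Proposition~\ref{prop:Verbitsky} directly on $H^{\bu}_{\ph}$, noting that $\mathcal L_B$ commutes with $\Delta$ by~\eqref{eq:LBLKLap} so $\Delta$ descends to $H^{\bu}_{\ph}$, the positive-eigenvalue pieces are $\dd$-acyclic by the same formula $\alpha = \dd(\tfrac1\lambda \ds \alpha)$ (which makes sense on $H^{\bu}_{\ph}$ since $\ds$ descends by~\eqref{eq:LBLKdscom}), and the zero-eigenspace of $\Delta$ on $H^{\bu}_{\ph}$ is $\mathcal H^{\bu}$ (the canonical injection $\mathcal H^k \hookrightarrow H^k_{\ph}$ from Theorem~\ref{thm:Hph}), matching the zero-eigenspace in $(\ker \mathcal L_B)^{\bu}$; comparing degree by degree gives the isomorphism on cohomology. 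I would likely present this second argument as the proof, as it parallels Proposition~\ref{prop:Verbitsky} most transparently.
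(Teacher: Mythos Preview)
Your overall strategy---reduce to acyclicity of the subcomplex $((\im \mathcal L_B \cap \ker \mathcal L_B)^{\bu}, \dd)$ via the short exact sequence---is exactly the paper's approach. However, your execution has a genuine error and a justification gap.

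\textbf{The error (degrees $6$ and $7$).} You claim the denominators $(\im \mathcal L_B)^k \cap (\ker \mathcal L_B)^k$ vanish for $k=6,7$, citing Proposition~\ref{prop:Hph67}. That proposition shows $H^k_{\ph} \cong \mathcal H^k$, but \emph{not} that the denominator is zero. In fact $(\ker \mathcal L_B)^7 = \Omega^7$ and $(\im \mathcal L_B)^7 = (\im \dd)^7 \oplus (\im \ds)^7$ (from the proof of Proposition~\ref{prop:Hph67}), so the intersection is $(\im \dd)^7$, which is nonzero; similarly in degree $6$ the intersection is $(\im \dd)^6 \oplus (\im \ds)^6$. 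Thus you have \emph{not} checked acyclicity at degrees $6$ and $7$, and your degree-$5$ check is also incomplete: you showed the $\dd$-closed part of the degree-$5$ piece lies in $\dd$ of the degree-$4$ piece, but you did not show that the non-closed part maps \emph{onto} the closed part of degree $6$. These checks are straightforward (e.g.\ $(\im \ds)^6 \cap (\ker \dd)^6 = 0$ and $\dd : (\im \ds)^k \to (\im \dd)^{k+1}$ is an isomorphism by Hodge theory), but they are not automatic and must be included.

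\textbf{The justification gap (degree $5$).} In replacing $\eta$ by $\eta - \eta_{\mathcal H} - \dd\zeta$, you assert this stays in $(\ker \mathcal L_B)^4$ ``since $\mathcal L_B$ commutes with $\dd$''. But $\mathcal L_B \dd\zeta = \dd \mathcal L_B \zeta$ need not vanish; commutation alone does not give $\dd\zeta \in \ker \mathcal L_B$. What actually makes this work is Lemma~\ref{lemma:Hph4-1}: since $\dd\zeta + \ds\xi = \eta - \eta_{\mathcal H} \in (\ker \mathcal L_B)^4 \cap ((\im \dd)^4 \oplus (\im \ds)^4)$, that lemma gives $\dd\zeta$ and $\ds\xi$ \emph{separately} in $(\ker \mathcal L_B)^4$. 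You should cite Lemma~\ref{lemma:Hph4-1} here rather than the commutator identity.

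Your alternative eigenspace argument is not on firm ground either: $H^3_{\ph}$ and $H^4_{\ph}$ are infinite-dimensional quotient spaces, and it is not clear that $\Delta$ acting on them admits a spectral decomposition, nor that the ``zero-eigenspace of $\Delta$ on $H^k_{\ph}$'' coincides with the image of $\mathcal H^k$ (the condition $\Delta[\alpha]=0$ in $H^k_{\ph}$ only says $\Delta\alpha \in (\im \mathcal L_B)^k \cap (\ker \mathcal L_B)^k$). This route would require substantial additional justification.
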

\begin{proof}
We have a short exact sequence of chain complexes
\begin{equation*}
0 \to ((\ker \mathcal L_B)^{\bu} \cap (\im \mathcal L_B)^{\bu}, \dd) \to ((\ker \mathcal L_B)^{\bu}, \dd) \twoheadrightarrow (H^{\bu}_{\ph}, \dd) \to 0,
\end{equation*}
so it suffices to show that the cohomology of $((\ker \mathcal L_B)^{\bu} \cap (\im \mathcal L_B)^{\bu}, \dd)$ is trivial. In Section~\ref{sec:cohom}, while computing $H^{\bu}_{\ph}$, we explicitly computed the complex $((\ker \mathcal L_B)^{\bu} \cap (\im \mathcal L_B)^{\bu}, \dd)$. The results are collected in Figure~\ref{figure:complex2}. The isomorphisms in Figure~\ref{figure:complex2} are a subset of the isomorphisms from Figure~\ref{figure:complex1} and are coloured in the same way. It is clear from Figure~\ref{figure:complex2} that the cohomology of $((\ker \mathcal L_B)^{\bu} \cap (\im \mathcal L_B)^{\bu}, \dd)$ is trivial.
\end{proof}

\begin{figure}[H]
\begin{equation*}
\xymatrix {
0 \ar@[purple][d]^{\purple{0}} \\
0 \ar@[purple][d]^{\purple{0}} \\
0 \ar@[purple][d]^{\purple{0}} \\
0 \ar@[purple][d]^{\purple{0}} \\
\orange{(\im \ds)^4 \cap (\ker \mathcal L_B)^4} \ar@[orange][]!<-1ex,-2ex>;[d]!<4ex,1ex>^{\orange{\cong}} \\
\blue{(\im \ds)^5} \oplus \orange{\big( (\im \dd)^5 \cap (\ker \mathcal L_B)^5 \big)} \ar@[blue][]!<-11ex,-2ex>;[d]!<-6ex,1ex>^{\blue{\cong}} \\
\blue{(\im \dd)^6} \oplus \gray{(\im \ds)^6} \ar@[gray][]!<4.5ex,-2ex>;[d]!<-.5ex,1ex>_{\gray{\cong}} \\
\gray{(\im \dd)^7}
}
\end{equation*}
\caption{The complex $((\ker \mathcal L_B \cap \im \mathcal L_B)^{\bu}, \dd)$.} \label{figure:complex2}
\end{figure}

The next two definitions are taken from~\cite[Section 3.A]{Huybrechts}.

\begin{defn} \label{defn:equiv}
Let $(A, \dd_A)$ and $(B, \dd_B)$ be two differential graded algebras (dga's). We say that $A$ and $B$ are \emph{equivalent} if there exists a finite sequence of \emph{dga quasi-isomorphisms}
\begin{equation*}
\xymatrix {
& (C_1, \dd_{C_1}) \ar[ld] \ar[rd] & & \cdots \ar[ld] \ar [rd] & & (C_n, \dd_{C_n}) \ar[ld] \ar[rd] & \\
(A, \dd_A) & & (C_2, \dd_{C_2}) & & \cdots & & (B, \dd_B).\\
}
\end{equation*}
A dga $(A, \dd_A)$ is called \emph{formal} if it is equivalent to a dga $(B, \dd_B)$ with $\dd_B = 0$.
\end{defn}

It is well-known~\cite[Section 3.A]{Huybrechts} that a \emph{compact K\"ahler manifold} is formal. That is, the de Rham complex of a compact K\"ahler manifold is equivalent to a dga with zero differential. It is still an open question whether or not compact torsion-free $\G$~manifolds are formal. We show in Theorem~\ref{thm:almost-formal} below that compact torsion-free $\G$~manifolds are `almost formal' in the sense that the de Rham complex is equivalent to a dga which has \emph{only one nonzero differential}.

\begin{thm} \label{thm:almost-formal}
The de Rham complex of a compact torsion-free $\G$~manifold $(\Omega^{\bu}, \dd)$ is equivalent to $(H^{\bu}_{\ph}, \dd)$, which is a dga with all  differentials trivial except for $\dd: H^3_{\ph} \to H^4_{\ph}$.
\end{thm}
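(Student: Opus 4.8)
The plan is to assemble the statement from the two quasi-isomorphism results that were already established, namely Proposition~\ref{prop:Verbitsky} and Proposition~\ref{prop:quasi-isom}, together with the explicit computation of the differential on $H^\bu_\ph$ recorded in Figure~\ref{figure:complex1}. First I would note that the two dga homomorphisms
\begin{equation*}
(\Omega^\bu, \dd) \hookleftarrow ((\ker \mathcal L_B)^\bu, \dd) \twoheadrightarrow (H^\bu_\ph, \dd)
\end{equation*}
were observed at the start of Section~\ref{sec:formality} to be homomorphisms of dga's, and both are quasi-isomorphisms by Propositions~\ref{prop:Verbitsky} and~\ref{prop:quasi-isom}. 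Hence by Definition~\ref{defn:equiv}, taking the zig-zag through $((\ker \mathcal L_B)^\bu, \dd)$ as the single intermediate complex $C_1$, the de Rham complex $(\Omega^\bu, \dd)$ is equivalent to $(H^\bu_\ph, \dd)$. This takes care of the word ``equivalent.''

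Next I would verify that $(H^\bu_\ph, \dd)$ is a dga with all differentials trivial except $\dd \colon H^3_\ph \to H^4_\ph$. For this I would work degree by degree using the computations of $H^k_\ph$ from Section~\ref{sec:cohom}. For $k = 0, 1, 2, 5, 6, 7$, Theorem~\ref{thm:Hph} gives $H^k_\ph \cong \mathcal H^k$, represented by harmonic forms, and since $\mathcal H^\bu$ is $\dd$-closed, the induced map $\dd \colon H^k_\ph \to H^{k+1}_\ph$ is zero for $k = 0, 1, 2$ and for $k = 5, 6$ (and trivially for $k = 7$). The only remaining differentials are $\dd \colon H^3_\ph \to H^4_\ph$ and $\dd \colon H^4_\ph \to H^5_\ph$. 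By Corollary~\ref{H34cor} and Proposition~\ref{prop:Hph4}, we have $H^3_\ph = \mathcal H^3 \oplus \big( (\im \ds)^3 \cap (\ker \mathcal L_B)^3 \cap (\ker \mathcal L_B^*)^3 \big)$ and $H^4_\ph = \mathcal H^4 \oplus \big( (\im \dd)^4 \cap (\ker \mathcal L_B)^4 \big)$. On the harmonic summand $\mathcal H^3$, $\dd$ acts as zero; on the complementary summand $(\im \ds)^3 \cap (\ker \mathcal L_B)^3$, the exterior derivative lands in $(\im \dd)^4 \cap (\ker \mathcal L_B)^4$ and this is precisely the non-harmonic summand of $H^4_\ph$. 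This accounts for the one nonzero differential. Finally, the differential $\dd \colon H^4_\ph \to H^5_\ph$: its image is spanned by $\dd$ applied to $\mathcal H^4$ (which is zero) and to $(\im \dd)^4 \cap (\ker \mathcal L_B)^4$, whose elements are themselves exact, so $\dd$ of them is zero. Hence $\dd \colon H^4_\ph \to H^5_\ph$ is the zero map. This is exactly the content of Figure~\ref{figure:complex1}, where the only nonzero arrow (the teal isomorphism) is from $H^3_\ph$ to $H^4_\ph$, and the purple (zero) arrows cover all other degrees.

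The main obstacle, such as it is, is not in the formal bookkeeping but in making sure the identification of $\dd \colon H^3_\ph \to H^4_\ph$ with the teal isomorphism in Figure~\ref{figure:complex1} is airtight: one needs that $\dd$ maps $(\im \ds)^3 \cap (\ker \mathcal L_B)^3 \cap (\ker \mathcal L_B^*)^3$ \emph{isomorphically} onto $(\im \dd)^4 \cap (\ker \mathcal L_B)^4 \cap (\ker \mathcal L_B^*)^4$. Surjectivity and injectivity here follow from Corollary~\ref{cor:formality} (which gives $(\im \dd)^4 \cap (\ker \mathcal L_B)^4 = \dd(\ker \mathcal L_B)^3$ together with the isomorphism statement~\eqref{eq:formalitycor3} in its proof) combined with Corollary~\ref{H34cor}; equivalently, it is forced by the fact that the cohomology of $((\ker \mathcal L_B \cap \im \mathcal L_B)^\bu, \dd)$ is trivial (Figure~\ref{figure:complex2}) and that the cohomology of $(H^\bu_\ph, \dd)$ must therefore agree with $\mathcal H^\bu = H^\bu_\DR$ via the composite quasi-isomorphism. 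I would phrase the proof so that this identification is simply read off from the already-established Figures~\ref{figure:complex1} and~\ref{figure:complex2}, since all the analytic work was done in Section~\ref{sec:cohom}.

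\begin{proof}
At the beginning of this section we observed that the natural inclusion and projection are homomorphisms of dga's
\begin{equation*}
(\Omega^\bu, \dd) \hookleftarrow ((\ker \mathcal L_B)^\bu, \dd) \twoheadrightarrow (H^\bu_\ph, \dd).
\end{equation*}
By Proposition~\ref{prop:Verbitsky} the first map is a quasi-isomorphism, and by Proposition~\ref{prop:quasi-isom} the second map is a quasi-isomorphism. Hence, taking $C_1 = (\ker \mathcal L_B)^\bu$ in Definition~\ref{defn:equiv}, the de Rham complex $(\Omega^\bu, \dd)$ is equivalent to $(H^\bu_\ph, \dd)$.

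It remains to identify the differentials on $(H^\bu_\ph, \dd)$. For $k \in \{0,1,2,5,6,7\}$ we have $H^k_\ph \cong \mathcal H^k$ by Theorem~\ref{thm:Hph}, and harmonic forms are $\dd$-closed, so the induced maps $\dd \colon H^k_\ph \to H^{k+1}_\ph$ vanish for $k \in \{0,1,2,5,6\}$ (and trivially for $k = 7$). By Corollary~\ref{H34cor} and Proposition~\ref{prop:Hph4} we have
\begin{equation*}
H^3_\ph = \mathcal H^3 \oplus \big( (\im \ds)^3 \cap (\ker \mathcal L_B)^3 \cap (\ker \mathcal L_B^*)^3 \big), \qquad H^4_\ph = \mathcal H^4 \oplus \big( (\im \dd)^4 \cap (\ker \mathcal L_B)^4 \big).
\end{equation*}
On $\mathcal H^4$ the map $\dd \colon H^4_\ph \to H^5_\ph$ vanishes because harmonic forms are closed, and on the summand $(\im \dd)^4 \cap (\ker \mathcal L_B)^4$ it vanishes because those forms are exact; hence $\dd \colon H^4_\ph \to H^5_\ph$ is the zero map. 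Finally, for $\dd \colon H^3_\ph \to H^4_\ph$: on the harmonic summand $\mathcal H^3$ this map vanishes, while on the complementary summand the exterior derivative maps $(\im \ds)^3 \cap (\ker \mathcal L_B)^3 \cap (\ker \mathcal L_B^*)^3$ into $(\im \dd)^4 \cap (\ker \mathcal L_B)^4$. This map need not be zero; in fact, since the composite quasi-isomorphism identifies the cohomology of $(H^\bu_\ph, \dd)$ with $\mathcal H^\bu$, and the cohomology of $((\ker \mathcal L_B \cap \im \mathcal L_B)^\bu, \dd)$ is trivial by Figure~\ref{figure:complex2}, the differential $\dd$ must map $(\im \ds)^3 \cap (\ker \mathcal L_B)^3 \cap (\ker \mathcal L_B^*)^3$ isomorphically onto $(\im \dd)^4 \cap (\ker \mathcal L_B)^4 \cap (\ker \mathcal L_B^*)^4$, as displayed by the teal isomorphism in Figure~\ref{figure:complex1}. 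Thus $(H^\bu_\ph, \dd)$ is a dga with all differentials trivial except $\dd \colon H^3_\ph \to H^4_\ph$.
\end{proof}
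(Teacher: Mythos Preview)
Your proposal is correct and follows essentially the same approach as the paper. The paper's own proof is much terser: it simply points to Figure~\ref{figure:complex3} (the explicit display of $(H^\bu_\ph,\dd)$, introduced at that point) and notes that the zero maps come from $\mathcal H^k\subseteq(\ker\dd)^k$, leaving the equivalence itself implicit from the already-established Propositions~\ref{prop:Verbitsky} and~\ref{prop:quasi-isom}. Your version spells out both halves---the zig-zag equivalence and the degree-by-degree check of the differentials---which is a perfectly faithful expansion of the same argument; the only cosmetic point is that you cite Figure~\ref{figure:complex1} where the paper packages the result as the new Figure~\ref{figure:complex3}.
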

\begin{proof}
In Section~\ref{sec:cohom}, we explicitly computed the complex $(H^{\bu}_{\ph}, \dd)$. The results are collected in Figure~\ref{figure:complex3}. The isomorphism in Figure~\ref{figure:complex3} appeared already in Figure~\ref{figure:complex1} and is coloured in the same way. The zero maps in Figure~\ref{figure:complex3} are a consequence of $\mathcal{H}^k \subseteq (\ker \dd)^k$.
\end{proof}

\begin{figure}[H]
\begin{equation*}
\xymatrix {
\mathcal{H}^0 \ar@[purple][d]^{\purple{0}} \\
\mathcal{H}^1 \ar@[purple][d]^{\purple{0}} \\
\mathcal{H}^2 \ar@[purple][]!<1ex,-1ex>;[d]!<-22ex,1ex>_{\purple{0}} \\
\mathcal{H}^3 \oplus \teal{\big( (\im \ds)^3 \cap (\ker \mathcal L_B)^3 \cap (\ker \mathcal{L}^*_B)^3 \big)} \ar@[purple][]!<-19ex,-2ex>;[d]!<-19ex,1ex>_{\purple{0}} \ar@[teal][]!<1ex,-2ex>;[d]!<1ex,1ex>_{\teal{\cong}}  \\
\mathcal{H}^4 \oplus \teal{\big( (\im \dd)^4 \cap (\ker \mathcal L_B)^4 \cap (\ker \mathcal{L}^*_B)^4 \big)} \ar@[purple][]!<-22ex,-1ex>;[d]!<1ex,1ex>^{\purple{0}} \\
\mathcal{H}^5 \ar@[purple][d]^{\purple{0}} \\
\mathcal{H}^6 \ar@[purple][d]^{\purple{0}} \\
\mathcal{H}^7
}
\end{equation*}
\caption{The complex $(H^{\bu}_{\ph}, \dd)$.} \label{figure:complex3}
\end{figure}

One consequence of almost-formality is that \emph{most of the Massey triple products of the de Rham complex will vanish}. This is established in Corollary~\ref{cor:Massey} below.

\begin{defn} \label{defn:masseytriprod}
Let $(A,\dd_A)$ be a dga, and denote by $H^k (A)$ the degree $k$ cohomology of $A$ with respect to $\dd_A$. Let $[\alpha]\in H^p(A)$, $[\beta] \in H^q(A)$, $[\gamma] \in H^r(A)$ be cohomology classes satisfying
\begin{equation*}
[\alpha] [\beta] = 0 \in H^{p+q}(A) \qquad \text{and} \qquad [\beta] [\gamma] = 0 \in H^{q+r}(A).
\end{equation*}
Then $\alpha \beta =\dd f$ and $\beta \gamma = \dd g$ for some $f \in A^{p+q-1}$ and $g \in A^{q+r-1}$. Consider the class
\begin{equation*}
[f \gamma - (-1)^p \alpha g] \in H^{p+q+r-1}(A).
\end{equation*}
It can be checked that this class is well-defined up to an element of $H^{p+q-1} \cdot H^r + H^p \cdot H^{q+r-1}$. That is, it is well defined as an element of the quotient
\begin{equation*}
\frac{H^{p+q+r-1}(A)}{H^{p+q-1} \cdot H^r + H^p \cdot H^{q+r-1}}.
\end{equation*}
We call this element the \emph{Massey triple product} and write it as $\langle [\alpha], [\beta], [\gamma] \rangle$. It is easy to see that the Massey triple product is linear in each of its three arguments.
\end{defn}

In the following we only consider the case when $(A,\dd_A) = (\Omega^*, \dd)$ is the dga of smooth differential forms. If the dga $(A, \dd_A)$ is formal, then \emph{all the Massey triple products vanish} due to the naturality of the triple product (see~\cite[Proposition 3.A.33]{Huybrechts} for details). In fact, the proof of~\cite[Proposition 3.A.33]{Huybrechts} actually yields the following more general result.

\begin{cor} \label{cor:weak-Massey}
Let $(A, \dd_A)$ be a dga such that the differentials $\dd_A$ are all zero except for $\dd : A^{k-1} \to A^k$. Then if the Massey triple product $\langle [\alpha], [\beta], [\gamma] \rangle$ is defined and we have $|\alpha| + |\beta| \neq k$ and $|\beta| + |\gamma| \neq k$, then $\langle [\alpha], [\beta], [\gamma] \rangle = 0$.
\end{cor}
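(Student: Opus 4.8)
The plan is to follow the proof of \cite[Proposition 3.A.33]{Huybrechts} but keep careful track of where the hypothesis that $\dd_A = 0$ is actually used, showing that the weaker hypothesis (all differentials zero except $\dd : A^{k-1} \to A^k$) suffices under the stated degree restrictions. The setup: suppose $[\alpha] \in H^p(A)$, $[\beta] \in H^q(A)$, $[\gamma] \in H^r(A)$ with $[\alpha][\beta] = 0$, $[\beta][\gamma] = 0$, and $p + q \neq k$, $q + r \neq k$. Pick cocycle representatives $\alpha, \beta, \gamma$ and choose $f \in A^{p+q-1}$, $g \in A^{q+r-1}$ with $\dd f = \alpha\beta$ and $\dd g = \beta\gamma$; then $\langle[\alpha],[\beta],[\gamma]\rangle$ is represented by $f\gamma - (-1)^p \alpha g \in A^{p+q+r-1}$, well-defined modulo $H^{p+q-1}\cdot H^r + H^p \cdot H^{q+r-1}$.

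The key observation is that when $p + q \neq k$, the differential $\dd : A^{p+q-1} \to A^{p+q}$ is zero, so every element of $A^{p+q-1}$ is a cocycle; in particular $f$ is a cocycle, hence defines a class $[f] \in H^{p+q-1}(A)$. Similarly, when $q + r \neq k$, $g$ is a cocycle and $[g] \in H^{q+r-1}(A)$. I would then compute in cohomology: $[f\gamma - (-1)^p \alpha g] = [f]\cdot[\gamma] - (-1)^p [\alpha]\cdot[g]$, where I use that the product of two cocycles is a cocycle representing the product of their classes, and that $f\gamma$ and $\alpha g$ are genuine cocycles (their differentials vanish since $\dd f = \alpha\beta$, $\dd\gamma = 0$, $\dd\alpha = 0$, $\dd g = \beta\gamma$, and the Leibniz rule gives $\dd(f\gamma) = (\dd f)\gamma = \alpha\beta\gamma$ — wait, this need not vanish a priori, so care is needed; but $\alpha\beta\gamma$ has degree $p+q+r$ and since $\dd f = \alpha\beta$ is a coboundary hitting $A^{p+q}$, this is consistent, and in fact $f\gamma - (-1)^p \alpha g$ is closed because $\dd(f\gamma - (-1)^p\alpha g) = \alpha\beta\gamma - (-1)^p(-1)^p\alpha\beta\gamma = 0$, the standard Massey cocycle computation). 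So $[f\gamma - (-1)^p\alpha g] = [f][\gamma] - (-1)^p[\alpha][g]$ lies in $H^{p+q-1}\cdot H^r + H^p\cdot H^{q+r-1}$, which is precisely the indeterminacy subgroup modulo which the Massey product is defined. Hence $\langle[\alpha],[\beta],[\gamma]\rangle = 0$.

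The main subtlety — and the step I expect to require the most care — is handling the indeterminacy correctly and making sure the argument genuinely only needs $f$ (or $g$) to be \emph{a} cocycle rather than assuming $A$ has zero differential everywhere. In the fully formal case one works with a quasi-isomorphism to a zero-differential model; here, since $(H^\bu_\ph, \dd)$ is not itself a zero-differential dga, one cannot transport the vanishing along a formality map. Instead the direct cocycle-level argument above works, provided $p+q \neq k$ guarantees $f$ is automatically closed (this is the one place the degree hypothesis enters) and likewise $q + r \neq k$ for $g$. I would write this out carefully, noting that the representative $f\gamma - (-1)^p\alpha g$ of the Massey product is then expressed as a sum of a product of cohomology classes, landing in the indeterminacy subgroup; and since the Massey triple product is, by \cref{defn:masseytriprod}, an element of the quotient by exactly that subgroup, it must be zero. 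I would also remark that in \cref{cor:Massey} and \cref{thm:irreducibleMassey} this corollary is applied with $k = 4$ to the dga $(H^\bu_\ph, \dd)$ via \cref{thm:almost-formal}, together with naturality of Massey products under the dga quasi-isomorphisms of \cref{prop:Verbitsky} and \cref{prop:quasi-isom}.
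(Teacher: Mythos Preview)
Your proposal is correct and is essentially the argument the paper has in mind when it says ``the proof of~\cite[Proposition 3.A.33]{Huybrechts} actually yields the following more general result'': you identify precisely where the zero-differential hypothesis is used (to make $f$ and $g$ cocycles) and observe that the weaker degree conditions $p+q\neq k$, $q+r\neq k$ suffice for this. Your mid-proof ``wait'' about $\dd(f\gamma)=\alpha\beta\gamma$ is unnecessary, since under the hypothesis $p+q\neq k$ you have already shown $\dd f=0$, whence $\alpha\beta=0$ and $\dd(f\gamma)=0$ directly; cleaning this up would streamline the write-up, but the substance is right.
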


Combining Corollary~\ref{cor:weak-Massey} and Theorem~\ref{thm:almost-formal} yields the following.

\begin{cor} \label{cor:Massey}
Let $M$ be a compact torsion-free $\G$~manifold. Consider cohomology classes $[\alpha]$, $[\beta]$, and $[\gamma] \in H^{\bu}_{\DR}$. If the Massey triple product $\langle [\alpha], [\beta], [\gamma] \rangle$ is defined and we have $|\alpha| + |\beta| \neq 4$ and $|\beta| + |\gamma| \neq 4$, then $\langle [\alpha], [\beta], [\gamma] \rangle = 0$.
\end{cor}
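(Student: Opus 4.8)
The plan is to deduce Corollary~\ref{cor:Massey} directly from Theorem~\ref{thm:almost-formal} together with Corollary~\ref{cor:weak-Massey}, since essentially all the work has already been done. First I would recall that Theorem~\ref{thm:almost-formal} tells us the de Rham complex $(\Omega^{\bu}, \dd)$ is \emph{equivalent}, in the sense of Definition~\ref{defn:equiv}, to the dga $(H^{\bu}_{\ph}, \dd)$, in which every differential $\dd \colon H^k_{\ph} \to H^{k+1}_{\ph}$ vanishes except for the single map $\dd \colon H^3_{\ph} \to H^4_{\ph}$. Concretely, the chain of quasi-isomorphisms is $(\Omega^{\bu}, \dd) \hookleftarrow ((\ker \mathcal L_B)^{\bu}, \dd) \twoheadrightarrow (H^{\bu}_{\ph}, \dd)$, established in Proposition~\ref{prop:Verbitsky} and Proposition~\ref{prop:quasi-isom}.

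Next I would invoke the naturality of the Massey triple product: a dga quasi-isomorphism $f \colon (A, \dd_A) \to (B, \dd_B)$ induces a map on cohomology that sends $\langle [\alpha], [\beta], [\gamma] \rangle$ (computed in $A$) to $\langle f[\alpha], f[\beta], f[\gamma] \rangle$ (computed in $B$), and is an isomorphism of the relevant quotient groups. This is the content underlying~\cite[Proposition 3.A.33]{Huybrechts}, which is cited in the excerpt just before Corollary~\ref{cor:weak-Massey}. Chaining this through the finite sequence of quasi-isomorphisms exhibiting the equivalence of $(\Omega^{\bu}, \dd)$ with $(H^{\bu}_{\ph}, \dd)$, we conclude that the Massey triple product of classes $[\alpha], [\beta], [\gamma] \in H^{\bu}_{\DR}$ vanishes if and only if the corresponding Massey triple product of their images in $H^{\bu}_{\ph}$ vanishes. (One should note here that the degrees are preserved, so $|\alpha|, |\beta|, |\gamma|$ are unchanged under passage to $H^{\bu}_{\ph}$.)

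Finally I would apply Corollary~\ref{cor:weak-Massey} with $k = 4$ to the dga $(H^{\bu}_{\ph}, \dd)$: since all its differentials vanish except $\dd \colon H^3_{\ph} \to H^4_{\ph}$, whenever the Massey triple product $\langle [\alpha], [\beta], [\gamma] \rangle$ is defined and $|\alpha| + |\beta| \neq 4$ and $|\beta| + |\gamma| \neq 4$, it equals zero in $H^{\bu}_{\ph}$, hence also in $H^{\bu}_{\DR}$ by the previous paragraph. This completes the proof.

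There is no genuine obstacle here; the corollary is a formal consequence of the machinery already in place. The only point requiring mild care is making sure that the hypothesis ``the Massey triple product is defined'' is preserved under quasi-isomorphism — that is, that $[\alpha][\beta] = 0$ and $[\beta][\gamma] = 0$ in $H^{\bu}_{\DR}$ if and only if the same holds for the images in $H^{\bu}_{\ph}$ — but this is immediate since the quasi-isomorphisms are dga maps inducing ring isomorphisms on cohomology. Thus the bulk of the ``proof'' is simply the two-line citation of Corollary~\ref{cor:weak-Massey} and Theorem~\ref{thm:almost-formal}.
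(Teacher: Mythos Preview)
Your proposal is correct and matches the paper's approach exactly: the paper simply states that Corollary~\ref{cor:Massey} follows by combining Corollary~\ref{cor:weak-Massey} with Theorem~\ref{thm:almost-formal}, invoking the naturality of Massey products under dga quasi-isomorphisms as in~\cite[Proposition 3.A.33]{Huybrechts}. Your write-up is in fact more detailed than the paper's, which gives no proof beyond the one-line citation.
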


In Theorem~\ref{thm:irreducibleMassey} in the next section we establish a stronger version of Corollary~\ref{cor:Massey} when the holonomy of the metric on $M$ is exactly $\G$.

\subsection{A new topological obstruction to existence of torsion-free $\G$-structures} \label{sec:new-obstruction}

A key feature of the criterion in Corollary~\ref{cor:Massey} is that it is \emph{topological}. That is, \emph{it does not depend on the differentiable structure on $M$}. Therefore it gives \emph{a new topological obstruction} to the existence of torsion-free $\G$-structures on compact $7$-manifolds. There are several previously known topological obstructions to the existence of a torsion-free $\G$-structure on a compact $7$-manifold. These obstructions are discussed in detail in~\cite[Chapter 10]{Joyce}. We summarize them here. Let $\ph$ be a torsion-free $\G$-structure on a compact manifold $M$ with induced metric $g_{\ph}$. Let $b^k_M = \dim H^k_{\DR} (M)$. Then
\begin{equation} \label{eq:known-obstructions}
\left. \begin{aligned}
& b^3_M \geq b^1_M + b^0_M, \\
& b^2_M \geq b^1_M, \\
& b^1_M \in \{ 0, 1, 3, 7 \}, \\
& \text{if $g_{\ph}$ is not flat, then $p_1 (M) \neq 0$, where $p_1 (M)$ is the first Pontryagin class of $TM$, \phantom{A}} \\
& \text{if $g_{\ph}$ has full holonomy $\G$, then the fundamental group $\pi_1 (M)$ is finite}.
\end{aligned} \right\}
\end{equation}
Note that the first three conditions are simply obstructions to the existence of torsion-free $\G$-structures. The fourth condition can be used to rule out \emph{non-flat} torsion-free $\G$-structures,  and the fifth condition can be used to rule out \emph{non-irreducible} torsion-free $\G$-structures. In fact, the third condition determines the reduced holonomy of $g_{\ph}$, which is $\{1\}$, $\SU{2}$, $\SU{3}$, or $\G$, if $b^1_M = 7$, $3$, $1$, or $0$, respectively.

\begin{thm} \label{thm:irreducibleMassey}
Let $M$ be a compact torsion-free $\G$~manifold with full holonomy $\G$, and consider cohomology classes $[\alpha]$, $[\beta]$, and $[\gamma] \in H^{\bu}_{\DR}$. If the Massey triple product $\langle [\alpha], [\beta], [\gamma] \rangle$ is defined, then $\langle [\alpha], [\beta], [\gamma] \rangle = 0$ except possibly in the case when $|\alpha| = |\beta| = |\gamma| = 2$.
\end{thm}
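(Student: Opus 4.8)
The plan is to leverage Corollary~\ref{cor:Massey} together with Poincar\'e duality and the fact that full holonomy $\G$ forces $b^1_M = 0$, which by the known obstructions~\eqref{eq:known-obstructions} (and Poincar\'e duality) also forces $b^6_M = 0$. By Corollary~\ref{cor:Massey}, the only Massey triple products $\langle [\alpha], [\beta], [\gamma] \rangle$ that are not automatically zero are those where $|\alpha| + |\beta| = 4$ or $|\beta| + |\gamma| = 4$. Since all degrees are nonnegative integers and cohomology in degree $0$ is spanned by constants (for which any cup product with a closed form is just scaling, so Massey products involving a degree-$0$ class are trivially zero or undefined in a degenerate way), the relevant cases with $|\alpha| + |\beta| = 4$ are $(|\alpha|,|\beta|) \in \{(1,3),(2,2),(3,1)\}$, and symmetrically for $|\beta| + |\gamma| = 4$.

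First I would dispose of every case involving a degree-$1$ cohomology class: since the metric has full holonomy $\G$, we have $b^1_M = 0$, so $H^1_{\DR} = 0$ and any Massey product with an argument of degree $1$ is zero. This kills the cases $(|\alpha|,|\beta|) = (1,3)$ and $(3,1)$, and by the left-right symmetry of the hypothesis also $(|\beta|,|\gamma|) = (1,3)$ and $(3,1)$. What remains from Corollary~\ref{cor:Massey} is: either $|\alpha| = |\beta| = 2$, or $|\beta| = |\gamma| = 2$. So the only potentially nonzero Massey products have $|\beta| = 2$ and at least one of $|\alpha|, |\gamma|$ equal to $2$.

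Next I would rule out the ``mixed'' subcase where $|\beta| = 2$, one of $|\alpha|,|\gamma|$ equals $2$, but the other does not. Say $|\alpha| = 2$, $|\beta| = 2$, and $|\gamma| = r \neq 2$ (the case $|\gamma| = 2$, $|\alpha| \neq 2$ is symmetric, using that the Massey product obeys $\langle [\alpha],[\beta],[\gamma]\rangle = \pm \langle [\gamma],[\beta],[\alpha]\rangle$ up to sign and indeterminacy). For $\langle [\alpha],[\beta],[\gamma]\rangle$ to be defined we need $[\alpha][\beta] = 0$ and $[\beta][\gamma] = 0$, and the product lands in $H^{2 + 2 + r - 1}_{\DR} = H^{r+3}_{\DR}$, modulo $H^{3}_{\DR} \cdot H^{r}_{\DR} + H^{2}_{\DR} \cdot H^{r+1}_{\DR}$. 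I would argue this vanishes by a Poincar\'e-duality/degree-parity argument: the only values of $r$ with $r \neq 2$, $0 \le r \le 7$, $r + 3 \le 7$ are $r \in \{0,1,3,4\}$; $r=0$ and $r=1$ are excluded as above ($b^1_M = 0$, and degree $0$ gives a degenerate/zero product), and for $r = 4$ the target degree is $7$, where by Poincar\'e duality pairing against the (nonzero) generator of $H^0$ reduces the statement to showing the corresponding degree-$0$ Massey product vanishes, which it does. The genuinely delicate case is $r = 3$: the target is $H^6_{\DR}$, and here I would invoke $b^6_M = 0$ (equivalently $b^1_M = 0$ by Poincar\'e duality), so the Massey product lives in the zero group and hence vanishes.

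The main obstacle I anticipate is making the ``$r=3$, target $H^6_{\DR}$'' case fully rigorous: one must be careful that $b^6_M = 0$ really does follow from full holonomy $\G$. This is true because full holonomy $\G$ implies the metric is not a product and $\pi_1(M)$ is finite, so $b^1_M = 0$ (from $b^1_M \in \{0,1,3,7\}$ and the holonomy reduction dictionary in the discussion after~\eqref{eq:known-obstructions}), and then Poincar\'e duality on the compact oriented $7$-manifold gives $b^6_M = b^1_M = 0$. Once $H^6_{\DR} = 0$ the corresponding Massey products are forced to vanish since they take values in a subquotient of $H^6_{\DR}$. Assembling these observations — Corollary~\ref{cor:Massey} to cut down to $|\beta| = 2$ with a neighbour of degree $2$, then $b^1_M = b^6_M = 0$ plus Poincar\'e duality to kill the mixed cases — leaves exactly the case $|\alpha| = |\beta| = |\gamma| = 2$ as the only possible survivor, which is the claimed statement.
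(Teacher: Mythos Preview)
Your overall strategy matches the paper's exactly: invoke Corollary~\ref{cor:Massey}, use $b^1_M = 0$ (hence $b^6_M = 0$ by Poincar\'e duality) to eliminate cases involving a degree-$1$ class and to kill the $(2,2,3)$ and $(3,2,2)$ cases via $H^6_{\DR} = 0$, and handle degree-$0$ classes directly. After reducing to $|\alpha|,|\beta|,|\gamma| \geq 2$ and $|\alpha|+|\beta|+|\gamma| \leq 8$, the surviving triples are precisely $(2,2,2)$, $(2,2,3)$, $(2,2,4)$, $(3,2,2)$, $(4,2,2)$, as in the paper.

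There is, however, a genuine gap in your treatment of the case $r = 4$ (that is, $(|\alpha|,|\beta|,|\gamma|) = (2,2,4)$ or $(4,2,2)$). Your claim that ``Poincar\'e duality pairing against the generator of $H^0$ reduces the statement to showing the corresponding degree-$0$ Massey product vanishes'' is not a valid argument: Poincar\'e duality identifies $H^7_{\DR}$ with $H^0_{\DR}$ as vector spaces, but there is no mechanism by which it converts a Massey triple product landing in (a quotient of) $H^7_{\DR}$ into a Massey triple product of degree $0$. The Massey product lives in $H^7_{\DR}/(H^3_{\DR} \cdot H^4_{\DR} + H^2_{\DR} \cdot H^5_{\DR})$, and you must show this quotient is zero. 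The paper does this by observing that $\ph \wedge \ps = 7\,\vol$ generates $H^7_{\DR}$, so $H^3_{\DR} \cdot H^4_{\DR} = H^7_{\DR}$ and the quotient vanishes. This step genuinely uses the $\G$-structure (the existence of the closed forms $\ph$ and $\ps$ with $\ph \wedge \ps \neq 0$), not just Poincar\'e duality.
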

\begin{proof}
Recall that the hypothesis of full holonomy $\G$ implies that $b^1_M = 0$, so $H^1_{\DR} = \{0\}$. Suppose $|\alpha| = 1$. Then $[\alpha] \in H^1_{\DR}$, so $[\alpha] = 0$, and by linearity if follows that $\langle [\alpha], [\beta], [\gamma] \rangle = 0$. The same argument holds if $|\beta| = 1$ or $|\gamma| = 1$. Suppose $|\alpha| = 0$. Then $\alpha$ is a constant function. The condition $[\alpha \beta] = [\alpha][\beta] = 0$ forces the form $\alpha \beta$ to be exact, so either $\alpha = 0$ (in which case the Massey product vanishes), or $\beta$ is exact, so $[\beta] = 0$ and again the Massey product vanishes. A similar argument holds if $|\beta| = 0$ or $|\gamma| = 0$.

Thus we must have $|\alpha|, |\beta|, |\gamma| \geq 2$ if the Massey product has any chance of being nontrivial. Moreover, since $\langle [\alpha], [\beta], [\gamma] \rangle$ lies in a quotient of $H^{|\alpha| + |\beta| + |\gamma| - 1}_{\DR}$, we also need $|\alpha| + |\beta| + |\gamma| \leq 8$. Finally, Corollary~\ref{cor:Massey} tells us that we must have either $|\alpha| + |\beta| = 4$ or $|\beta| + |\gamma| = 4$. Hence the only possibilities for the triple $(|\alpha|, |\beta|, |\gamma|)$ to obtain a nontrivial Massey product are $(2,2,2)$, $(2,2,3)$, $(2,2,4)$, $(3,2,2)$, and $(4,2,2)$. For $(2,2,3)$ or $(3,2,2)$, the Massey product lies in a quotient of $H^6_{\DR}$, which is zero since $b^6_M = b^1_M = 0$. For $(2,2,4)$ or $(4,2,2)$ the Massey product lies inside $H^7_{\DR}/ (H^2_{\DR} \cdot H^6_{\DR} + H^3_{\DR} \cdot H^4_{\DR})$, but $H^3_{\DR} \cdot H^4_{\DR} = H^7_{\DR}$ since $\ph \wedge \ps = 7 \vol$ is a generator of $H^7_{\DR}$. Thus in this case the quotient space is zero. We conclude that the only possibly nontrivial Massey product corresponds to the case $(|\alpha|, |\beta|, |\gamma|) = (2,2,2)$.
\end{proof}

In the remainder of this section we will apply our new criterion to a particular nontrivial example. Consider a smooth compact connected oriented $7$-manifold $M$ of the form $M = W \times L$, where $W$ and $L$ are smooth compact connected oriented manifolds of dimensions $3$ and $4$, respectively. In order for $M$ to admit $\G$-structures, we must have $w_2 (M) = 0$, where $w_2 (M)$ is the second Stiefel-Whitney class of $TM$, by~\cite[p 348--349]{LM}.

Take $W$ to be the \emph{real Iwasawa manifold}, which is defined to be the quotient of the set
\begin{equation*}
\left \{ \begin{pmatrix} 1 & t_1 & t_2 \\ 0 & 1 & t_3 \\ 0 & 0 & 1 \end{pmatrix} : t_1, t_2, t_3 \in \R \right \} \cong \R^3
\end{equation*}
by the left multiplication of the group
\begin{equation*}
\left \{ \begin{pmatrix} 1 & a & b \\ 0 & 1 & c \\ 0 & 0 & 1 \end{pmatrix} : a, b, c \in \Z \right \}.
\end{equation*}
The manifold $W$ is a compact orientable $3$-manifold, so it is parallelizable and hence $w_2 (W) = 0$. Moreover, it is shown in~\cite[Example 3.A.34]{Huybrechts} that $b^1_W = 2$ and that
\begin{equation} \label{eq:W-Massey}
\text{ there exist $\alpha, \beta \in H^1_{\DR}(W)$ such that $\langle \alpha, \beta, \beta \rangle \neq 0$}.
\end{equation}

By the Whitney product formula, we have $w_2 (M) = w_2 (W) + w_2 (L)$. Thus if we choose $L$ to have vanishing $w_2$, then $w_2 (M)$ will vanish as required, and $M = W \times L$ will admit $\G$-structures.

\begin{thm} \label{thm:new-obstruction-example}
Let $L$ be a smooth compact connected oriented $4$-manifold with $w_2 (L) = 0$, and let $W$ be the real Iwasawa manifold described above. Then $M = W \times L$ admits $\G$-structures but cannot admit any \emph{torsion-free} $\G$-structures.
\end{thm}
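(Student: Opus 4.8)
The plan is to combine the new topological obstruction (Corollary~\ref{cor:Massey}) with the classical fact that the real Iwasawa manifold $W$ carries a nonvanishing Massey triple product in degree $1$, using the Künneth isomorphism to transport this Massey product to $M = W \times L$. The existence half of the statement is immediate: since $w_2(M) = w_2(W) + w_2(L) = 0 + 0 = 0$ by the Whitney product formula, and since the only obstruction to the existence of a (not necessarily torsion-free) $\G$-structure on a compact oriented spin $7$-manifold is the vanishing of $w_2$ (see~\cite[p.~348--349]{LM}), the manifold $M$ admits $\G$-structures. So the content is in showing $M$ admits no torsion-free $\G$-structure.

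First I would recall that if $\alpha, \beta \in H^1_{\DR}(W)$ are the classes with $\langle \alpha, \beta, \beta \rangle \neq 0$ from~\eqref{eq:W-Massey}, then pulling back along the projection $p \colon M = W \times L \to W$ gives classes $p^*\alpha, p^*\beta \in H^1_{\DR}(M)$, each of degree $1$. Massey triple products are natural with respect to dga morphisms (here $p^* \colon \Omega^{\bu}(W) \to \Omega^{\bu}(M)$), so $\langle p^*\alpha, p^*\beta, p^*\beta \rangle$ contains $p^*\langle \alpha, \beta, \beta \rangle$; more precisely, the indeterminacy behaves well under $p^*$ and one checks, using the Künneth theorem $H^{\bu}_{\DR}(M) \cong H^{\bu}_{\DR}(W) \otimes H^{\bu}_{\DR}(L)$, that $p^*$ is injective on $H^{\bu}_{\DR}(W)$ and that the image of the indeterminacy subgroup for $\langle \alpha,\beta,\beta\rangle$ under $p^*$ sits inside the indeterminacy subgroup for $\langle p^*\alpha, p^*\beta, p^*\beta\rangle$. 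Since $\langle \alpha, \beta, \beta \rangle$ is a nonzero element of $H^2_{\DR}(W)/(H^1_{\DR}(W)\cdot H^1_{\DR}(W))$ and $p^*$ is injective, we conclude that $\langle p^*\alpha, p^*\beta, p^*\beta \rangle \neq 0$ in $H^2_{\DR}(M)/(H^1_{\DR}\cdot H^1_{\DR})$.

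Now suppose for contradiction that $M$ admits a torsion-free $\G$-structure. The classes $[\alpha']=p^*\alpha$, $[\beta']=p^*\beta$ all have degree $1$, so $|\alpha'| + |\beta'| = 2 \neq 4$ and $|\beta'| + |\beta'| = 2 \neq 4$; the Massey product $\langle [\alpha'], [\beta'], [\beta'] \rangle$ is defined (since $[\alpha'][\beta'], [\beta'][\beta']$ vanish in cohomology --- which follows from the corresponding vanishing on $W$ pulled back via $p^*$). Hence Corollary~\ref{cor:Massey} forces $\langle [\alpha'], [\beta'], [\beta'] \rangle = 0$, contradicting the previous paragraph. Therefore $M$ admits no torsion-free $\G$-structure.

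The main obstacle is the bookkeeping in the second paragraph: one must verify carefully that a nonzero Massey triple product on $W$ pulls back to a nonzero Massey triple product on $W \times L$, i.e. that the naturality of Massey products together with Künneth genuinely yields nonvanishing rather than merely "well-definedness." The key points are (i) injectivity of $p^* \colon H^{\bu}_{\DR}(W) \hookrightarrow H^{\bu}_{\DR}(M)$, which is immediate from Künneth since $H^0_{\DR}(L) \neq 0$; (ii) that a choice of primitives $\alpha\beta = \dd f$, $\beta\beta = \dd g$ on $W$ pulls back to such choices on $M$, so $p^*(f\beta - \alpha g)$ is a valid representative of a Massey product class on $M$; and (iii) that the indeterminacy group $H^1_{\DR}(M)\cdot H^1_{\DR}(M)$ pulls back from, and does not strictly enlarge modulo $p^*$, the indeterminacy $H^1_{\DR}(W)\cdot H^1_{\DR}(W)$ in the relevant quotient --- again a direct Künneth computation using $b^1_L$ arbitrary but $H^1_{\DR}(W)$ mapping injectively onto a Künneth summand. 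None of this is deep, but it is the place where care is required.
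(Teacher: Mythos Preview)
Your proposal is correct and follows essentially the same route as the paper: pull back the nonvanishing Massey product $\langle\alpha,\beta,\beta\rangle$ from $W$ to $M$ via the projection, check it remains nonzero, and invoke Corollary~\ref{cor:Massey} with $|\alpha'|+|\beta'|=|\beta'|+|\beta'|=2\neq 4$ to obtain a contradiction.

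The one place where the paper is more efficient is your point (iii). Rather than analysing the indeterminacy via the K\"unneth decomposition, the paper simply picks a section $s\colon W\to W\times L$ (choose any point of $L$) and observes that $s^*\pi^*=\mathrm{Id}$. Since $s^*$ is a dga morphism, it sends $H^1(M)\cdot H^1(M)$ into $H^1(W)\cdot H^1(W)$, so $\pi^*$ descends to an injection
\[
\frac{H^2(W)}{H^1(W)\cdot H^1(W)} \hookrightarrow \frac{H^2(M)}{H^1(M)\cdot H^1(M)},
\]
and the nonvanishing of $\langle\pi^*\alpha,\pi^*\beta,\pi^*\beta\rangle$ follows immediately. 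This bypasses the K\"unneth bookkeeping you flag as the ``main obstacle''; in fact, if you try to verify your (iii) directly you will likely find yourself using such a section anyway.
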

\begin{proof}
Let $\pi : M \to W$ be the projection map. Consider the classes $\pi^* \alpha, \pi^* \beta \in H^1_{\DR}(M)$. By naturality of the Massey triple product, and since $p=q=r=1$, we have
\begin{equation*}
\langle \pi^* \alpha, \pi^* \beta, \pi^* \beta \rangle = \pi^* \langle \alpha, \beta, \beta \rangle \in \frac{H^2 (M)}{H^1 (M) \cdot H^1 (M)}.
\end{equation*}
Let $s: W \to W \times L$ be any section of $\pi$. Since $s^* \pi^* = (\pi \circ s)^* = \mathrm{Id}$, we deduce that
\begin{equation*}
\pi^*: \frac{H^2(W)}{H^1(W) \cdot H^1(W)} \to \frac{H^2(M)}{H^1(M) \cdot H^1(M)} \qquad \text{ is injective.}
\end{equation*}
Thus since $\langle \alpha, \beta, \beta \rangle \neq 0$ we have
\begin{equation*}
\langle \pi^* \alpha, \pi^* \beta, \pi^* \beta \rangle = \pi^* \langle \alpha, \beta, \beta \rangle \neq 0.
\end{equation*}
Since $| \pi^* \alpha | = | \pi^* \beta | = 1$ and $1 + 1 \neq 4$, we finally conclude by Corollary~\ref{cor:Massey} that $M$ \emph{does not admit a torsion-free $\G$-structure}.
\end{proof}

It remains to find an $L$ with $w_2 (L) = 0$ such that no previously known topological obstructions~\eqref{eq:known-obstructions} are violated, so that we have indeed established something new. We first collect several preliminary results that we will require.

By Poincar\'e duality $b^3_L = b^1_L$ and $b^2_W = b^1_W = 2$. The K\"unneth formula therefore yields
\begin{equation} \label{eq:M-Bettis}
\left. \begin{aligned}
b^1_M & = b^1_W + b^1_L = 2 + b^1_L, \\
b^2_M & = b^2_W + b^1_W b^1_L + b^2_L = 2 + 2 b^1_L + b^2_L, \\
b^3_M & = b^3_W + b^2_W b^1_L + b^1_W b^2_L + b^3_L = 1 + 2 b^1_L + 2 b^2_L + b^1_L = 1 + 3 b^1_L + 2 b^2_L. \phantom{A}
\end{aligned} \right\}
\end{equation}

\begin{rmk} \label{rmk:connect-sum}
Let $M$, $N$ be smooth compact oriented $n$-manifolds. There is a canonical way to make the connected sum $M \# N$ smooth, by smoothing around the $S^{n-1}$ with which we paste them together. With coefficients in either $R = \Z$ or $R = \Z / 2 \Z$, we have $H^k (M \# N, R) \cong H^k (M, R) \oplus H^k (N, R)$ for $k = 1, \ldots ,n-1$. This can be seen using the Mayer-Vietoris sequence. The isomorphism is induced by the map $p : M \# N \to M$ collapsing $N$, and the map $q: M \# N \to N$ collapsing $M$. For $k = n$, we have $H^k (M \# N)  \cong H^n (M, R) \cong H^n (N, R)$ with isomorphisms induced by $p$ and $q$ as before.
\end{rmk}

\begin{lemma} \label{lemma:4manifolds}
Let $L$ be a \emph{simply-connected} smooth compact oriented $4$-manifold, with intersection form
\begin{equation*}
Q: H_2 (L, \Z) \times  H_2 (L, \Z) \to \Z.
\end{equation*}
If the signature of $Q$ is $(p, q)$, let $\sigma (L) = p - q$. Then we have
\begin{itemize}
\item $w_2 (L) = 0$ if and only if $Q(a, a) \in 2 \Z$ for all $a \in H^2 (L, \Z)$;
\item $p_1(L) = 0$ if and only if $\sigma (L)$ is zero.
\end{itemize}
\end{lemma}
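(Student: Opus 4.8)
The plan is to deduce both equivalences from two classical results about closed oriented $4$-manifolds: Wu's formula for the Stiefel--Whitney classes, and the Hirzebruch signature theorem. Since $L$ is simply-connected, the groups $H^{\bu}(L;\Z)$ are torsion-free, so by the universal coefficient theorem the mod-$2$ reduction $\rho : H^2(L;\Z) \to H^2(L;\Z/2)$ is an isomorphism; moreover $H^4(L;\Z) \cong \Z$ and $H^4(L;\Z/2) \cong \Z/2$, with evaluation against the fundamental class an isomorphism in each case. These facts will be used freely, and we identify $H_2(L;\Z)$ with $H^2(L;\Z)$ via Poincar\'e duality, so that $Q(a,b) = \langle a \cup b, [L] \rangle$.

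\textbf{First equivalence.} Let $v_1 \in H^1(L;\Z/2)$ and $v_2 \in H^2(L;\Z/2)$ be the Wu classes, characterized by $\langle v_k \cup x, [L] \rangle = \langle \mathrm{Sq}^k x, [L] \rangle$ for all $x$. Since $L$ is oriented, $w_1(L) = 0$, hence $v_1 = 0$, and the identity $w = \mathrm{Sq}(v)$ gives $w_2(L) = v_2 + v_1^2 = v_2$. On $H^2(L;\Z/2)$ the operation $\mathrm{Sq}^2$ is the cup square, so for every $\bar x \in H^2(L;\Z/2)$ we obtain $\langle w_2(L) \cup \bar x, [L] \rangle = \langle \bar x \cup \bar x, [L] \rangle$ in $H^4(L;\Z/2) \cong \Z/2$. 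The mod-$2$ intersection pairing on $H^2(L;\Z/2)$ is nondegenerate by Poincar\'e duality, so $w_2(L) = 0$ if and only if $\bar x \cup \bar x = 0$ for all $\bar x$. Finally, for $a \in H^2(L;\Z)$ the integer $Q(a,a) = \langle a \cup a, [L] \rangle$ reduces mod $2$ to $\langle \rho(a) \cup \rho(a), [L] \rangle$; since $\rho$ is surjective, $Q(a,a) \in 2\Z$ for all $a$ precisely when $\bar x \cup \bar x = 0$ for all $\bar x$, i.e.\ precisely when $w_2(L) = 0$.

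\textbf{Second equivalence.} The Hirzebruch signature theorem in dimension $4$ states that $\sigma(L) = \tfrac{1}{3} \langle p_1(TL), [L] \rangle$, equivalently $\langle p_1(L), [L] \rangle = 3\,\sigma(L)$. Since evaluation on $[L]$ identifies $H^4(L;\Z)$ with $\Z$, the class $p_1(L) \in H^4(L;\Z)$ vanishes if and only if $\langle p_1(L), [L] \rangle = 0$, which by the formula holds if and only if $3\,\sigma(L) = 0$, i.e.\ if and only if $\sigma(L) = 0$.

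There is no genuine obstacle here; the only points requiring care are the coefficient bookkeeping (using simple-connectivity to ensure $\rho$ is an isomorphism and that $H^4$ is infinite cyclic) and recalling that on an oriented $4$-manifold the second Wu class coincides with $w_2$. Both cited theorems are standard, and a reference such as Milnor--Stasheff suffices.
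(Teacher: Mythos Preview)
Your argument is correct and follows exactly the route the paper indicates by citation: Wu's formula (equivalently, \cite[Corollary~2.12]{LM}) for the first statement and the Hirzebruch signature theorem $p_1(L) = 3\sigma(L)$ for the second. One small slip: the mod-$2$ reduction $\rho : H^2(L;\Z) \to H^2(L;\Z/2)$ is of course not an isomorphism but only a surjection (since the integral cohomology is torsion-free); fortunately surjectivity is all you actually use, so the proof stands as written once that word is corrected.
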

\begin{proof}
The first statement can be found in~\cite[Corollary 2.12]{LM}. The Hirzebruch signature theorem for $4$-manifolds, which can be found in~\cite[Theorem 1.4.12]{GS}, says that $p_1(L) = 3 \sigma(L)$. This immediately implies the second statement.
\end{proof}

Recall that K3 is the unique connected simply-connected smooth manifold underlying any compact complex surface with vanishing first Chern class. One way to define the K3 surface is by
\begin{equation*}
\text{K3} = \{ [z_0 : z_1 : z_2 : z_3] \in \C \PR^3 : z_0^4 + z_1^4 +z_2^4 + z_3^4 = 0 \}.
\end{equation*}
It is well-known (see~\cite[Page 75]{GS} or~\cite[Pages 127--133]{Sc}) that K3 has intersection form $Q_{\text{K3}} = -2 E_8 \oplus 3 H$, where $E_8$ is a certain even positive definite bilinear form, and $H = \begin{bmatrix} 0 & 1 \\ 1 & 0 \end{bmatrix}$, which is also even and has signature $0$. It follows that $Q_{\text{K3}}$ has signature $(3,19)$ and thus $\sigma(\text{K3}) = -16$. We also have that the Betti numbers of K3 are $b^1_{\text{K3}} = b^3_{\text{K3}} = 0$ and $b^2_{\text{K3}} = 22$.

\begin{prop} \label{prop:L-is-lemma}
Let $L = \text{K3} \, \# (S^1 \times S^3)$. Then $w_2 (L) = 0$, and for $M = W \times L$ where $W$ is the real Iwasawa manifold, none of the first four topological obstructions~\eqref{eq:known-obstructions} are violated. Thus $M$ cannot admit any torsion-free $\G$-structure.
\end{prop}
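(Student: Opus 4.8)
The plan is to compute directly the invariants of $M = W \times L$ that appear in the first four conditions of~\eqref{eq:known-obstructions}, check that none of them is violated, and then invoke Theorem~\ref{thm:new-obstruction-example} for the conclusion that $M$ carries no torsion-free $\G$-structure. Throughout I use the decomposition $L = \text{K3} \, \# (S^1 \times S^3)$ together with Remark~\ref{rmk:connect-sum} and the known data for $\text{K3}$ recalled just before the statement.

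First I would show $w_2(L) = 0$. By Remark~\ref{rmk:connect-sum} with $R = \Z/2\Z$, the collapsing maps induce $H^2(L, \Z/2\Z) \cong H^2(\text{K3}, \Z/2\Z) \oplus H^2(S^1 \times S^3, \Z/2\Z)$, and by naturality of $w_2$ the class $w_2(L)$ corresponds to $(w_2(\text{K3}), w_2(S^1 \times S^3))$. Now $S^1 \times S^3$ is parallelizable, so $w_2(S^1 \times S^3) = 0$; and $\text{K3}$ is simply-connected with the even intersection form $Q_{\text{K3}} = -2 E_8 \oplus 3 H$, so $w_2(\text{K3}) = 0$ by Lemma~\ref{lemma:4manifolds}. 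Hence $w_2(L) = 0$, and since $W$ is parallelizable, $w_2(M) = w_2(W) + w_2(L) = 0$, so $M$ admits $\G$-structures.

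Next I would compute the relevant Betti numbers and the first Pontryagin class. By the K\"unneth formula $b^1_{S^1 \times S^3} = 1$ and $b^2_{S^1 \times S^3} = 0$, while $b^1_{\text{K3}} = 0$ and $b^2_{\text{K3}} = 22$; Remark~\ref{rmk:connect-sum} over $\Q$ then gives $b^1_L = 1$ and $b^2_L = 22$. Substituting $b^1_L = 1$, $b^2_L = 22$ into~\eqref{eq:M-Bettis} yields $b^1_M = 3$, $b^2_M = 26$, $b^3_M = 48$, and of course $b^0_M = 1$. The first three conditions of~\eqref{eq:known-obstructions} then read $48 \geq 3 + 1$, $26 \geq 3$, and $3 \in \{0,1,3,7\}$, all of which hold. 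For the fourth condition I must show $p_1(M) \neq 0$. Since $TM \cong \pi_W^* TW \oplus \pi_L^* TL$ and $W$ is parallelizable, $p_1(M) = \pi_L^* p_1(L)$ in $H^4(M, \Q)$. Applying Lemma~\ref{lemma:4manifolds} to $\text{K3}$ gives $\langle p_1(\text{K3}), [\text{K3}] \rangle = 3\sigma(\text{K3}) = -48 \neq 0$; since the signature is additive under connected sum and $\sigma(S^1 \times S^3) = 0$, we get $\sigma(L) = -16$, and by the Hirzebruch signature theorem $\langle p_1(L), [L] \rangle = 3 \sigma(L) = -48 \neq 0$, so $p_1(L) \neq 0$ in $H^4(L, \Q) \cong \Q$. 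As $L$ is a retract of $M$, the map $\pi_L^*$ is injective, hence $p_1(M) = \pi_L^* p_1(L) \neq 0$ and the fourth condition is not violated. That $M$ admits no torsion-free $\G$-structure is then precisely Theorem~\ref{thm:new-obstruction-example}.

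The step I expect to require the most care is the interplay between the Mayer--Vietoris identifications of Remark~\ref{rmk:connect-sum} and the naturality and multiplicativity of $w_2$ and $p_1$: one must make sure that the nontrivial classes $w_2(\text{K3})$ and $p_1(\text{K3})$ land in the summands of $H^{\bu}(L)$ that genuinely survive, and that additivity of the signature (equivalently, of $\langle p_1, [\cdot] \rangle$) under connected sum is applied to the correct closed oriented pieces. None of this is conceptually difficult, but it is the place where a careless argument could fail.
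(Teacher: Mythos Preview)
Your argument is correct and follows essentially the same route as the paper: compute $b^k_L$ from Remark~\ref{rmk:connect-sum}, verify $w_2(L)=0$ and $p_1(L)\neq 0$ from the intersection-form and signature data of $\text{K3}$ and $S^1\times S^3$, plug into~\eqref{eq:M-Bettis} to check the first three conditions of~\eqref{eq:known-obstructions}, and then show $p_1(M)\neq 0$. The only cosmetic difference is in this last step: the paper restricts along the inclusion $\iota:L\hookrightarrow\{*\}\times L\subset M$ to get $\iota^* p_1(M)=p_1(L)\neq 0$, whereas you use the projection $\pi_L$ and injectivity of $\pi_L^*$ (via a section); these are two sides of the same observation.
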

\begin{proof}
Since $b^1_{S^1 \times S^3} = b^3_{S^1 \times S^3} = 1$ and $b^2_{S^1 \times S^3} = 0$, Remark~\ref{rmk:connect-sum} tells us that the Betti numbers of $L$ are $b^1_L = b^3_L = 1$ and $b^2_L = 22$. In~\cite[Pages 20, 456]{GS} it is shown that $Q_{M \# N} = Q_M \oplus Q_N$, and consequently $\sigma(M \# N) = \sigma(M) + \sigma(N)$. Since $Q_{S^1 \times S^3} = 0$, we find that $Q_L$ is even and has nonzero signature. Thus by Lemma~\ref{lemma:4manifolds} we deduce that $p_1(L) \neq 0$ and $w_2 (L) = 0$. Now the equations~\eqref{eq:M-Bettis} tell us that the Betti numbers of $M$ are $b^1_M = 3$, $b^2_M = 26$, and $b^3_M = 48$. In particular, the first three conditions in~\eqref{eq:known-obstructions} are satisfied.

We now claim that $p_1 (M) \neq 0$. To see this, consider the inclusion $\iota : L \to M = W \times L$ into some vertical fibre $\{ \ast \} \times L$ of $M$ over $W$. Then $\iota^* (TM) = TL \oplus E$ where $W$ is the trivial rank $3$ real vector bundle over $L$. If $p_1 (TM) = 0$, then by naturality we he have $p_1 (TL) = \iota^* (p_1 (TM)) = 0$, which we showed was not the case. Thus, the fourth condition in~\eqref{eq:known-obstructions} is satisfied.
\end{proof}

\begin{rmk} \label{rmk:L-holonomy}
Because $b^1_M = 3$, it $M$ had any compact torsion-free $\G$-structure, it would have reduced holonomy $\SU{2}$. We have shown in Proposition~\ref{prop:L-is-lemma} that such a Riemannian metric cannot exist on $M$. It is not clear if there is any simpler way to rule out such a Riemannian metric on $M$.
\end{rmk}

Other examples of compact orientable spin $7$-manifolds that cannot be given a torsion-free $\G$-structure can likely be constructed similarly.   

\begin{rmk} \label{rmk:other-work}
The formality of compact $7$-manifolds with additional structure has been studied by several authors, in particular recently by Crowley--Nordstr\"om~\cite{CN} and Munoz--Tralle~\cite{MT}. Two of the results in~\cite{CN} are: there exist non-formal compact $7$-manifolds that have only trivial Massey triple products; and a non-formal compact manifold $M$ with $\G$ holonomy must have $b^2 (M) \geq 4$. One of the results in~\cite{MT} is that a compact simply-connected 7-dimensional Sasakian manifold is formal if and only if all its triple Massey products vanish.
\end{rmk}

\begin{rmk} \label{rmk:future}
A natural question is: can we actually establish \emph{formality} by extending our chain of quasi-isomorphisms? One idea is to quotient out the unwanted summands, but such a quotient map is not a dga morphism. One can also try to involve $\mathcal{L}_K$ or other operators that can descend to $H^{\bu}_{\ph}$, but the authors have so far had no success in this direction.
\end{rmk}

\end{document}